\documentclass[11pt]{article}
\usepackage{macros}

\title{Towards the Relative Langlands Duality for Orthosymplectic Pairs}
\author{Dor Mezer}
\date{\today}

\begin{document}
\maketitle
\begin{abstract}
    In this paper we prove a conjectured equivalence of categories, showing that the S-dual of $\SO_{2n}\times \Sp_{2n}$ acting on $\CC_+^{2n}\otimes \CC_-^{2n}$ is equal to $\SO_{2n+1}\times \SO_{2n}\circlearrowright T^*\SO_{2n+1}$. This result is a particular case of a non-polarized version of the (local) relative Langlands duality of \cite{BZSV}. Similar results for the pairs $(\SO_{2n+1}, \Sp_{2n})$ and $(\GL_n, \GL_m)$ were proved in \cite{BFKT} and \cite{Fu} respectively, whereas the converse result was proved in \cite{BFT}. As a consequence of our main result, we prove that Langlands functoriality of the Derived Satake isomorphism for the pair $\Sp_{2n},\SO_{2n}$ is given by the theta correspondence.
    Our approach works (with appropriate modifications) in the general even orthosymplectic case of $\mathfrak{osp}(2m|2n)$.
\end{abstract}
\tableofcontents
\section{Introduction}
\subsection{The topic of this paper}
The main result of this paper is Theorem \ref{thm: main} below, which gives an equivalence of categories. We will call them the ``geometric category" and the ``spectral cateogry", in line with the common terminology in the Langlands program and that of \cite{BZSV} in particular.

We start with the describing the spectral category.
Given a Lie algebra $\mathfrak{g}$ we will denote by $D_\mathrm{perf}^{[]}(\mathfrak{g}^*)$ the category of perfect modules of the dg-algebra $\Sym(\mathfrak{g}[-2])$. Similarly, given a reductive group $G$, we will denote by $D_\mathrm{perf}^{[]}(T^*G)$ the category of perfect modules for the dg-algebra $\CC[G]\otimes \Sym(\mathfrak{g}[-2])$. See \ref{sec: derived Satake} as well as \cite[§6]{BZSV} for general discussion of shearing. This algebra carries a natural action of $G\times G$. There is also the natural moment map morphism $$D_\mathrm{perf}^{[]}(\mathfrak{g}^*)\otimes D_\mathrm{perf}^{[]}(\mathfrak{g}^*)\to D_\mathrm{perf}^{[]}(T^*G),$$
Corresponding to the usual moment map $T^*G\to \mathfrak{g}^*\times \mathfrak{g}^*$, which under the isomorphism $T^*G\cong G\times \mathfrak{g}^*$ corresponds to $(g,\xi)\mapsto (-\xi, \mathrm{Ad}_g(\xi))$. It is compatible with the (weak) $G\times G$ action on both sides.
More generally, given $H_1,H_2$ subgroups of $G$, we can restrict the moment map to
$$D_\mathrm{perf}^{[]}(\mathfrak{h}_1^*)\otimes D_\mathrm{perf}^{[]}(\mathfrak{h}_2^*)\to D_\mathrm{perf}^{[]}(T^*G),$$
and moreover take $H_1\times H_2$ invariants to get the equivariant moment map morphism
$$D_{\mathrm{perf}, H_1}^{[]}(\mathfrak{h}_1^*)\otimes D_{\mathrm{perf}, H_2}^{[]}(\mathfrak{h}_2^*)\to D_{\mathrm{perf}, H_1, H_2}^{[]}(T^*G).$$

Let us now describe the geometric category.
We consider the symplectic $4n^2$-dimensional vector space $\CC_-^{2n}\otimes \CC_+^{2n}$, and its loop vector space $(\CC_-^{2n}\otimes \CC_+^{2n})_\mathcal{K}$, equipped with the symplectic form given by the residue of the natural $\mathcal{K}$-valued symplectic form defined on it. We consider the completed Weyl algebra $\mathcal{W}$ of this (loop) symplectic vector space, and the category $\operatorname{\mathcal{W}-\mathrm{mod}}$ of renormalized discrete modules over it, along with its natural t-structure (see e.g. \cite[§2.1]{BFKT}).
There is a categorical action of $\operatorname{D-mod}_{-1/2}^*(\Sp_{4n^2,\mathcal{K}})$ on $\operatorname{\mathcal{W}-\mathrm{mod}}$, constructed in \cite[Lemma 2.4.1]{BDFRT}. Here the subscript $-1/2$ stands for considering D-modules twisted by the inverse of the square root of the determinant line bundle on $\Sp_{4n^2,\mathcal{K}}$. Note that by the determinant line bundle we mean the one whose fibers are isomorphic to the dual spaces to the relative determinant of the Lagrangian associated to $g\in \Sp_{4n^2,\mathcal{K}}$. This line bundle is ample. See Appendix \ref{appendix: det line bundle} for some further details and discussions on it.

We may restrict this action to $\SO_{2n,\mathcal{K}}\times \Sp_{2n, \mathcal{K}}$ to get an action of twisted D-modules on $\SO_{2n,\mathcal{K}}\times \Sp_{2n, \mathcal{K}}$. The twisting is by inverse of the square root of the restriction of the determinant line bundle on $\Sp_{4n^2,\mathcal{K}}$. However, this restriction has a square root (equivariant with respect to $\SO_{2n,\O}\times \Sp_{2n, \O}\times \SO_{2n,\O} \times \Sp_{2n, \O}$), and so we may untwist by this square root to identify the above category of twisted D-modules with untwisted D-modules (this equivalence is compatible with convolution), and in particular we have an action of $\operatorname{D-mod}^*(\SO_{2n,\mathcal{K}})\otimes \operatorname{D-mod}^*(\Sp_{2n, \mathcal{K}})$.
Taking (strong) $\SO_{2n,\O}\times \Sp_{2n,\O}$-invariants and restricting to locally compact objects (i.e. the full subcategory consisting of objects which are compact after forgetting the equivariant structure), we obtain a categorical action of
$\operatorname{D-mod}_{\SO_{2n,\O}}(\Gr_{\SO_{2n}})^c \otimes \operatorname{D-mod}_{\Sp_{2n,\O}}(\Gr_{\Sp_{2n}})^c$ on $\operatorname{\mathcal{W}-\mathrm{mod}}^{\SO_{2n,\O}\times \Sp_{2n,\O},\mathrm{lc}}$.
Here we use the notation of Section \ref{sec: D-modules}, so that when we write $\operatorname{D-mod}_{\SO_{2n,\O}}(\Gr_{\SO_{2n}})^c \otimes \operatorname{D-mod}_{\Sp_{2n,\O}}(\Gr_{\Sp_{2n}})^c$ we actually mean the full subcategories of locally compact objects.
By the derived Satake isomorphism of \cite{BF} (see Section \ref{sec: derived Satake}), we have an equivalence of monoidal categories
$$\operatorname{D-mod}_{\SO_{2n,\O}}(\Gr_{\SO_{2n}})^c \otimes \operatorname{D-mod}_{\Sp_{2n,\O}}(\Gr_{\Sp_{2n}})^c\cong D_{\mathrm{perf},\SO_{2n}}^{[]}(\mathfrak{so}^*_{2n})\otimes D_{\mathrm{perf},\SO_{2n+1}}^{[]}(\mathfrak{so}^*_{2n+1}).$$

We also let $E_0:=\mathcal{W} / (V\otimes V')_\O\in \operatorname{\mathcal{W}-\mathrm{mod}}^{\SO_{2n,\O}\times \Sp_{2n,\O},\mathrm{lc}}$ (see Section \ref{sec: notation} below).

The following is our main theorem in this paper.
\begin{theorem}\label{thm: main}
    There is an equivalence of categories:
    $$\operatorname{\mathcal{W}-\mathrm{mod}}^{\SO_{2n,\O}\times \Sp_{2n,\O},\mathrm{lc}}\cong D_{\mathrm{perf},\SO_{2n}\times \SO_{2n+1}}^{[]}(T^*\SO_{2n+1}),$$
    which is compatible with the actions of $D_{\mathrm{perf},\SO_{2n}}^{[]}(\mathfrak{so}^*_{2n})\otimes D_{\mathrm{perf},\SO_{2n+1}}^{[]}(\mathfrak{so}^*_{2n+1})$ on both sides, and under which, the object $E_0$ corresponds to the structure sheaf on $T^*\SO_{2n+1}$.
\end{theorem}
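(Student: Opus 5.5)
The plan follows the strategy of \cite{BFKT} and \cite{BFT}: it is a Barr--Beck type argument built around the object $E_0$.

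\textbf{Step 1: $E_0$ generates.} First I show that $E_0$ generates $\operatorname{\mathcal{W}-\mathrm{mod}}^{\SO_{2n,\O}\times \Sp_{2n,\O},\mathrm{lc}}$ under the action of the Hecke category $\operatorname{D-mod}_{\SO_{2n,\O}}(\Gr_{\SO_{2n}})^c \otimes \operatorname{D-mod}_{\Sp_{2n,\O}}(\Gr_{\Sp_{2n}})^c$, allowing finite colimits and retracts. To do this, I stratify the loop space $(V\otimes V')_\mathcal{K}$ by $\SO_{2n,\O}\times\Sp_{2n,\O}$-orbits of lattices. I then use the known orbit description: relative positions of lattices for the dual pair are indexed by dominant coweights of $\Sp_{2n}$, or equivalently of $\SO_{2n}$. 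Each orbit's standard object is obtained from $E_0$ by convolving with an appropriate spherical object.

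\textbf{Step 2: the endomorphism algebra.} By the derived Satake isomorphism of \cite{BF}, the action gives a monoidal functor from $D_{\mathrm{perf},\SO_{2n}}^{[]}(\mathfrak{so}^*_{2n})\otimes D_{\mathrm{perf},\SO_{2n+1}}^{[]}(\mathfrak{so}^*_{2n+1})$. Acting on $E_0$ produces the internal endomorphism algebra $A:=\underline{\mathrm{End}}(E_0)$. This is an algebra object in $\mathrm{QCoh}^{[]}((\mathfrak{so}^*_{2n}\times\mathfrak{so}^*_{2n+1})/(\SO_{2n}\times\SO_{2n+1}))$. The target category is modules over $\CC[\SO_{2n+1}]\otimes\Sym(\mathfrak{so}_{2n+1}[-2])$ pushed forward along the moment map; this pushforward is exactly $p_*\mathcal{O}_{T^*\SO_{2n+1}}$ for the sheared structure. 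The key step is therefore to show
\[A\cong p_*\mathcal{O}_{T^*\SO_{2n+1}}\]
as algebras. I would split this into two parts.
\begin{enumerate}
\item Compute the underlying object: the graded Hom spaces $\mathrm{Hom}(E_0, \mathcal{F}\star E_0)$ for spherical $\mathcal{F}$. Via the stratification from Step 1, these reduce to equivariant cohomology of the orbit strata. I then match them with $\mathrm{Hom}(\mathcal{O}, \mathcal{F}\otimes\mathcal{O}_{T^*\SO_{2n+1}})$. The latter is computed by Peter--Weyl for $\SO_{2n+1}$ together with Kostant's description of $\Sym(\mathfrak{g})$ over invariants.
\item Identify the algebra structure. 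Both sides are formal and concentrated in even degrees, so formality reduces this to a classical statement.
\end{enumerate}

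\textbf{Step 3: reduction to the classical statement.} I would reduce to equivariant cohomology with respect to the maximal tori, where a localization argument to the torus-fixed points (the lattice $(V\otimes V')_\O$ and its translates) applies. There, restricting to the Cartans, the computation becomes an abelian rank-one problem: a copy of $\CC^2\otimes\CC^2$ for $\SO_2\times\SL_2$, or the Weyl algebra of $\mathbb{G}_m$ acting on a line. I would verify this case directly, as in \cite{BFT} and \cite{BFKT}. Then I glue over codimension-one walls to obtain the full $A$, using that both sides are flat over $\mathfrak{t}/W$ and torsion free.

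\textbf{Conclusion and main obstacle.} Given Steps 1--3, the Barr--Beck--Lurie theorem gives the equivalence $\operatorname{\mathcal{W}-\mathrm{mod}}^{\SO_{2n,\O}\times \Sp_{2n,\O},\mathrm{lc}}\simeq A\text{-mod}^{\mathrm{perf}}$. This is compatible with the Hecke actions and sends $E_0$ to the structure sheaf, which yields Theorem \ref{thm: main}. I expect the main obstacle to be Step 2, specifically matching the \emph{algebra} structure, including the mismatch of the twist/metaplectic square root. I would handle this by checking compatibility on the regular semisimple locus and extending by normality.
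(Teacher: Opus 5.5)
There is a genuine gap in Step 2: you never produce a \emph{morphism} between $A=\underline{\mathrm{End}}(E_0)$ and $p_*\mathcal{O}_{T^*\SO_{2n+1}}$. Matching graded Hom spaces stratum by stratum, even together with formality and evenness, only gives abstract isomorphisms of graded objects, so it cannot identify algebra structures. (Formality, evenness and commutativity are themselves theorems, obtained in the paper from purity and the localization theorem.) A morphism out of the $\CC[\SO_{2n+1}]$-factor of $p_*\mathcal{O}_{T^*\SO_{2n+1}}$ amounts to the following: for every $W\in\mathrm{Rep}(\SO_{2n+1})$, maps between $\phi_{\SO(V')}(W|_{\SO_{2n}})*E_0$ and $\phi_{\Sp(V)}(W)*E_0$, compatible with tensor products. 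In other words, you need the statement that the theta correspondence matches Satake parameters along $\SO_{2n}\hookrightarrow\SO_{2n+1}$. This is Proposition~\ref{prop: Hecke identification} in the paper, and it is a substantial input. The paper first computes $\phi_{\Sp(V)}(U)*E_0\cong IC_C\oplus E_0\cong\phi_{\SO(V')}(U'\oplus\mathbf{1})*E_0$ from the explicit Weil kernel over the minimal orbits, using semisimplicity via Kashiwara's conjecture and costalks at $0$. It then matches the self-duality relation and invokes Deligne's universal property of $\mathrm{Rep}(\mathrm{O}(2n+1))$. Only then can one define $\psi\colon\Sym(\mathfrak{so}_{2n+1}[-2])\to\mathscr{E}$, deequivariantizing over $\SO_{2n}$ alone; this suffices because $E_0$ already generates under the $\SO_{2n}$-Hecke action. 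One must still show that the $\SO_{2n}$-moment map equals $\psi$ precomposed with $x\mapsto -x$ on $\mathfrak{so}_{2n}$. This is Proposition~\ref{prop: psi psi' identification}, proved via first Chern classes of determinant line bundles on the minimal orbits. Your proposal has neither ingredient. Without them there is no map to compare, and no reason for the answer to be $T^*\SO_{2n+1}$ with this particular $\SO_{2n}\times\SO_{2n+1}$-structure; localization alone only shows that the spectrum is generically a torsor.

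Step 3 is also off, in three ways.
\begin{itemize}
\item \textbf{Where localization lands.} The paper polarizes $V=L\oplus L^*$, so that $\operatorname{\mathcal{W}-\mathrm{mod}}$ becomes D-modules on $(V'\otimes L)_\mathcal{K}$ and $E_0$ the dualizing sheaf of the arcs. For a maximal torus of $\SO_{2n}\times\GL(L)$ acting there, the fixed locus is just the origin, not a family of lattice translates. The localization theorem then gives $\CC[\SO_{2n}]\otimes\CC(\Sigma)$ at the generic point, and no rank-one problems appear.
\item \textbf{Your codimension-one route is not carried out.} Rank-one problems arise only from localizing at codimension-one subtori. You would have to enumerate those walls (root walls, and walls where a weight of $V'\otimes L$ vanishes) and control the Hecke actions of the corresponding Levis. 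Granting $\psi$ and the flatness you invoke (for $\mathscr{E}$ this comes from purity), checking $\psi$ at height-one primes of the base, one isotypic component at a time, would be a conceivable alternative route.
\item \textbf{What the paper does instead.} It uses a Hartogs argument on the spectral side. From $\SO_{2n,\mathcal{O}}\times\GL(L)_\mathcal{O}$-equivariant cohomology it builds an $\SO_{2n}$-torsor over $\Sigma=\Sigma_{2n}\times\Sigma_{\mathfrak{gl}_n}$ and maps it to $\Spec\mathfrak{E}^\bullet$. It then shows that, via $\psi$, a section lands on an explicit pseudo-slice in $\mathfrak{so}_{2n}^*\times U'\cong\mathfrak{so}_{2n+1}^*$ whose $\SO_{2n}$-saturation has complement of codimension at least $2$ (Lemma~\ref{lemma: Sigma saturation}).
\end{itemize}

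A smaller point in Step 1: the relevant orbits are indexed by dominant coweights of $\SO_{2n}$ (for $n=1$ this is $\ZZ$), and these are not equivalent to those of $\Sp_{2n}$.
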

\begin{remark}\label{remark: reduced form of the Spectral category}
    Note that we have $$D_{\mathrm{perf},\SO_{2n}\times \SO_{2n+1}}^{[]}(T^*\SO_{2n+1})\cong D_{\mathrm{perf},\SO_{2n}}^{[]}(\mathfrak{so}^*_{2n+1}).$$
\end{remark}
\begin{remark}
    One may take ind-completions to the equivalence of Theorem \ref{thm: main} to get the ``large version", which is an equivalence of categories
    $$\mathrm{Ind}(\operatorname{\mathcal{W}-\mathrm{mod}}^{\SO_{2n,\O}\times \Sp_{2n,\O},\mathrm{lc}})\cong \mathrm{QCoh}_{\SO_{2n}\times \SO_{2n+1}}^{[]}(T^*\SO_{2n+1}),$$
    preserving compact objects, with similar compatibilities.
\end{remark}
\subsection{Relative Langlands duality}
Let $G$ be a reductive group and $X$ a spherical variety for $G$, and let $M:=T^*X$.
In \cite{BZSV}, it is conjectured that for any such pair, a certain S-dual pair $\check{G}\circlearrowright \check{M}$ is attached, where $\check{G}$ is the Langlands dual group of $G$, and $\check{M}$ is a hyperspherical variety for $\check{G}$, such that the following equivalence of categories holds (with certain compatibilities):
\begin{equation}\label{eq: relative Satake}\operatorname{D-mod}_{G_\O}(X_\mathcal{K})\cong D^{[]}_{\mathrm{perf}, \check{G}}(\check{M}).\end{equation}
See \cite[Conjecture 7.5.1]{BZSV} for full details.

This is expected to generalize beyond the polarizable case, i.e. to the generality where $M$ is a hyperspherical variety, not necessarily polarizable (so that $X$ is not at all defined).

Note that it is a non-trivial matter to express the left hand side of the above equivalence in terms of $M$ without using $X$, and as far as we know it is an open problem to define such a category for a general hyperspherical variety $M$. However, in certain cases, such a category is known to be expected to play this role.
\begin{remark}
While in general this generalization may give rise to an ``anomaly", which requires to include a twist while also replacing the notion of the Langlands dual group $\check{G}$, in our case the anomaly will cancel, so that no such modification is needed.
\end{remark}

The case considered in this paper is such a situation, with $M=\CC_-^{2n}\otimes \CC_+^{2n}, G=\Sp_{2n}\times \SO_{2n}$ and $\check{M}=T^*\SO_{2n+1}$.
In this framework, Theorem \ref{thm: main} can be viewed as a special case of a non-polarizable version of the above conjecture. This has been expected, e.g. by \cite[Table 1, row 6]{FiU}.

In \cite{BDFRT} and \cite{Nak} the authors give a definition of the S-dual for a certain class of varieties $M$. While this definition is incomplete and covers only part of the general conjectural framework, it does cover the case where $M$ is a symplectic representation of $G$, which includes our situation.
By this definition, the S-dual variety $\check{M}$ is given as the spectrum of the inner endomorphism algebra of the basic object in the geometric side, so that in presence of an equivalence of local categories (as in \eqref{eq: relative Satake} or Theorem \ref{thm: main}) the variety appearing in the spectral side must be the S-dual.
In view of this, Theorem \ref{thm: main} implies the following:
\begin{theorem}\label{thm: reformulation}
    The S-dual of $\Sp_{2n}\times \SO_{2n}\circlearrowright\CC_-^{2n}\otimes \CC_+^{2n}$ is $\SO_{2n+1}\times \SO_{2n}\circlearrowright T^*\SO_{2n+1}$.
\end{theorem}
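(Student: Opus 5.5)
Theorem \ref{thm: reformulation} will be deduced from Theorem \ref{thm: main} by unwinding the definition of the S-dual from \cite{BDFRT} and \cite{Nak}. For a symplectic representation $M$ of $G$, that definition forms the inner endomorphism algebra of the basic object $E_0=\mathcal{W}/(V\otimes V')_\O$. This algebra is taken with respect to the action of the derived Satake category $\operatorname{D-mod}_{G_\O}(\Gr_G)^c\cong D^{[]}_{\mathrm{perf},\check G}(\check{\mathfrak g}^*)$. Concretely, it is an object $\underline{\mathrm{End}}(E_0)$ of $\mathrm{Ind}(D^{[]}_{\mathrm{perf},\check G}(\check{\mathfrak g}^*))$ carrying a commutative algebra structure. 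After undoing the shearing, $\check M$ is defined as its spectrum, with Hamiltonian $\check G$-action and moment map coming from the $\Sym(\check{\mathfrak g}[-2])$-module structure. Here $G=\SO_{2n}\times\Sp_{2n}$ and $\check G=\SO_{2n}\times\SO_{2n+1}$.

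The first step is to note that the inner Hom is determined by the module category together with its distinguished object. Since the equivalence of Theorem \ref{thm: main} is compatible with the actions of $D^{[]}_{\mathrm{perf},\SO_{2n}}(\mathfrak{so}_{2n}^*)\otimes D^{[]}_{\mathrm{perf},\SO_{2n+1}}(\mathfrak{so}_{2n+1}^*)$ and sends $E_0$ to $\mathcal{O}_{T^*\SO_{2n+1}}$, we get
$$\underline{\mathrm{End}}(E_0)\cong\underline{\mathrm{End}}(\mathcal{O}_{T^*\SO_{2n+1}})$$
as algebra objects. The action on the spectral side is through the equivariant moment map morphism described in the introduction. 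To make this precise, one passes to the ind-completed version of the equivalence (the large-version remark after Theorem \ref{thm: main}), where inner Homs exist and compact objects are preserved.

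The second step computes the spectral inner endomorphisms. For an action of $\mathrm{QCoh}^{[]}_{\check G}(\check{\mathfrak g}^*)$ on $\mathrm{QCoh}^{[]}_{\check G}(\check M)$ via pullback along the moment map $\mu$, adjunction gives
$$\underline{\mathrm{End}}(\mathcal O_{\check M})=\mu_*\mathcal O_{\check M}.$$
This is the sheared algebra of functions $\CC[\check M]$, viewed as a $\check G$-equivariant $\Sym(\check{\mathfrak g}[-2])$-algebra. For $\check M=T^*\SO_{2n+1}$, the moment map is $(g,\xi)\mapsto(-\xi|_{\mathfrak{so}_{2n}},\mathrm{Ad}_g\xi)$ and the action is by $\SO_{2n}\times\SO_{2n+1}$. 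Taking the spectrum of this algebra after unshearing therefore recovers $T^*\SO_{2n+1}$ with exactly this Hamiltonian action.

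The main obstacle is not analytic; it is verifying that the conventions match. One must check that the inner Hom used in \cite{BDFRT} is the same one produced by the module structure in Theorem \ref{thm: main}. Two points need attention here. First, the untwisting by the square root of the determinant line bundle must agree with the normalization used in \cite{BDFRT}; this is the anomaly-cancellation point, and I would verify it against Appendix \ref{appendix: det line bundle}. Second, the monoidal structure on the algebra must be transported along the equivalence. I would check the latter by observing that the equivalence is one of module categories, so it intertwines the lax monoidal structures on inner Hom functors.
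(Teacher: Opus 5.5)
Your proposal is correct and follows the paper's own short deduction. There, the S-dual in the sense of \cite{BDFRT}, \cite{Nak} is the spectrum of the inner endomorphism algebra of the basic object $E_0$, and the compatibility of Theorem \ref{thm: main} with the Satake actions, together with $E_0\mapsto\mathcal{O}_{T^*\SO_{2n+1}}$, identifies that algebra with $\mu_*\mathcal{O}_{T^*\SO_{2n+1}}$. Your extra checks, on the untwisting/anomaly normalization and on transporting the algebra structure along the module equivalence, only make explicit what the paper leaves implicit.
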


The converse S-duality, along with the equivalence of local categories, was proved in \cite{BFT}. Other similar cases of S-dualities, along with the equivalences of local categories, were proved in \cite{BFT2}, \cite{BFKT}, and \cite{Fu} and \cite{CW}.

The approach taken in this paper works (with appropriate modifications) in the general even orthosymplectic case of $\mathfrak{osp}(2m|2n)$. This will be the subject of our next work. We also plan to investigate the odd orthosymplectic case of $\mathfrak{osp}(2m+1|2n)$. The conjectured S-duals are listed in \cite[Table 1]{FiU}.
\subsection{Theta correspondence as Langlands functoriality}\label{sec: functoriality}
A consequence of Theorem \ref{thm: main} is that the theta correspondence between $\Sp_{2n}$ and $\SO_{2n}$ realizes (local geometric spherical) Langlands functorialities corresponding to the natural inclusion $\SO_{2n}\to \SO_{2n+1}$.
The appearance of the Theta correspondence in realizing Langlands functorialities, and the functorial lifts between symplectic and orthogonal groups in particular, has a long history (for example, see \cite[§6]{Ral} for the classical theory and \cite{Lys} for a geometric result).
However, while these results show that the theta correspondence gives a functorial lift, our result proves that the theta correspondence gives {\bf the} functorial lift in our setting, i.e. the one corresponding under Derived Satake to the obvious map in the spectral side (up to a certain involution).

Let us explain this in more detail (cf. \cite[§12.3]{BZSV} where the global version of the general phenomenon is discussed).
\subsubsection{Local geometric Langlands functoriality}
Let $H,G$ be reductive groups over $\CC$, and consider a morphism $\check{H}\to \check{G}$ between reductive groups.

It defines a functor $$\pi: \mathrm{QCoh}_{\check{G}}^{[]}(\mathfrak{\check{g}}^*)\to \mathrm{QCoh}_{\check{H}}^{[]}(\mathfrak{\check{h}}^*),$$
given by restricting equivariance from $\check{G}$ to $\check{H}$ and then pushing forward along the projection $\mathfrak{\check{g}}^*\to \mathfrak{\check{h}}^*$.
Here the shearing is as in the derived Satake equivalence (see Section \ref{sec: derived Satake}).
We also have a functor in the opposite direction $$\nu: \mathrm{QCoh}_{\check{H}}^{[]}(\mathfrak{\check{h}}^*)\to \mathrm{QCoh}_{\check{G}}^{[]}(\mathfrak{\check{g}}^*),$$
given by pullback along $\check{\mathfrak{g}}^*\to \check{\mathfrak{h}}^*$ followed by inducing the $\check{H}$-action to a representation of $\check{G}$.

We define the ``functorial lift" functors $$\pi^L:\operatorname{D-mod}_{G_\O}(\Gr_G)\to \operatorname{D-mod}_{H_\O}(\Gr_H)$$
and
$$\nu^L:\operatorname{D-mod}_{H_\O}(\Gr_H) \to \operatorname{D-mod}_{G_\O}(\Gr_G)$$
to be the functors corresponding to $\pi$ and $\nu$ respectively under the derived Satake equivalence.
It is a general question of interest to describe $\pi^L$ and $\nu^L$ explicitly, in particular in terms intrinsic to the geometric side, without involving Langlands duality.

Rather than doing that, we will describe $i_H^L\circ \pi^L$ and $\nu^L\circ i_H^L$ for the pair $G=\Sp_{2n}, H=\SO_{2n}$, where $i_H$ is the involution of $\mathrm{QCoh}_{\check{H}}^{[]}(\mathfrak{\check{h}}^*)$ induced by the involution $x\mapsto -x$ of $\check{\mathfrak{h}}^*$, and $i_H^L$ is the involution on $\Dmod_{H_\O}(\Gr_H)$ corresponding to it under derived Satake.
\begin{remark}
    We are currently not sure how to describe the involution $i_H$ in terms intrinsic to the geometric side.
\end{remark}

\subsubsection{Local geometric theta correspondence}
Consider the functor $$\mathrm{act}_{\SO_{2n},E_0}:\operatorname{D-mod}_{\SO_{2n,\O}}(\Gr_{\SO_{2n}})\to \operatorname{Ind}(\operatorname{\mathcal{W}-\mathrm{mod}}^{\SO_{2n,\O}\times \Sp_{2n,\O},\mathrm{lc}})$$ given by acting on the object $E_0$. Similarly, consider $$\mathrm{act}_{\Sp_{2n},E_0}:\operatorname{D-mod}_{\Sp_{2n,\O}}(\Gr_{\Sp_{2n}})\to \operatorname{Ind}(\operatorname{\mathcal{W}-\mathrm{mod}}^{\SO_{2n,\O}\times \Sp_{2n,\O},\mathrm{lc}}).$$
These two functors admit right adjoints, given by inner hom from $E_0$ (see \cite[Remark 8.1.2]{BZSV}). We will denote them by $a_{\SO_{2n}}$ and $a_{\Sp_{2n}}$ respectively.

We define the Theta lift functor $\Theta:\operatorname{D-mod}_{\Sp_{2n,\O}}(\Gr_{\Sp_{2n}}) \to \operatorname{D-mod}_{\SO_{2n,\O}}(\Gr_{\SO_{2n}})$ to be $$\Theta:=a_{\SO_{2n},E_0}\circ \mathrm{act}_{\Sp_{2n}}.$$
We also define a functor in the opposite direction:
$$\Theta':=a_{\Sp_{2n},E_0}\circ \mathrm{act}_{\SO_{2n}}.$$
The following is a consequence of Theorem \ref{thm: main}:
\begin{theorem}
    Let $\pi,\nu$ be as above for the pair $G=\Sp_{2n}, H=\SO_{2n}$ with the natural inclusion $\check{H}=\SO_{2n}\to \SO_{2n+1}=\check{G}$. Then we have isomorphisms of functors $i_{\SO_{2n}}^L\circ \pi^L\cong \Theta$ and $\nu^L\circ i_{\SO_{2n}}^L\cong \Theta'$.
\end{theorem}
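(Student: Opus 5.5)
We deduce the statement formally from Theorem \ref{thm: main} by transporting both functors $\Theta$ and $\Theta'$ to the spectral side and computing them there. Under Theorem \ref{thm: main}, $\operatorname{Ind}(\operatorname{\mathcal{W}-\mathrm{mod}}^{\SO_{2n,\O}\times \Sp_{2n,\O},\mathrm{lc}})$ is identified with $\mathrm{QCoh}^{[]}_{\SO_{2n}\times\SO_{2n+1}}(T^*\SO_{2n+1})$, and $E_0$ goes to $\mathcal{O}_{T^*\SO_{2n+1}}$. The equivalence is compatible with the actions of the two Hecke categories, identified via derived Satake (Section \ref{sec: derived Satake}). Hence $\mathrm{act}_{\Sp_{2n},E_0}$ becomes the functor $\mathcal{F}\mapsto \mathcal{F}\star \mathcal{O}_{T^*\SO_{2n+1}}$, acting through the $\SO_{2n+1}$-factor of the moment map. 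Likewise $\mathrm{act}_{\SO_{2n},E_0}$ acts through the $\SO_{2n}$-factor, i.e.\ through the composite $T^*\SO_{2n+1}\to\mathfrak{so}_{2n+1}^*\to\mathfrak{so}_{2n}^*$. By uniqueness of adjoints, $a_{\SO_{2n}}$ and $a_{\Sp_{2n}}$ become the right adjoints of these spectral functors.

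Next we compute these spectral functors explicitly. Action on the structure sheaf is pullback along the moment map $\mu=(\mu_1,\mu_2):T^*\SO_{2n+1}\cong \SO_{2n+1}\times\mathfrak{so}^*_{2n+1}\to \mathfrak{so}^*_{2n+1}\times\mathfrak{so}^*_{2n}$, with $\mu_1(g,\xi)=-\xi$ and $\mu_2(g,\xi)=\mathrm{Ad}_g(\xi)|_{\mathfrak{so}_{2n}}$ (or the reverse, depending on conventions).

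Using Remark \ref{remark: reduced form of the Spectral category}, we trivialize the $\SO_{2n+1}$-torsor by quotienting out the free $\SO_{2n+1}$ factor. This identifies the category with $\mathrm{QCoh}^{[]}_{\SO_{2n}}(\mathfrak{so}^*_{2n+1})$. Under this identification:
\begin{itemize}
\item pulling back an $\SO_{2n+1}$-equivariant sheaf on $\mathfrak{so}^*_{2n+1}$ along $\mu_1$ becomes restriction of equivariance to $\SO_{2n}$, precomposed with pullback by $x\mapsto -x$;
\item the $\SO_{2n}$-side action becomes pullback along the projection $p:\mathfrak{so}^*_{2n+1}\to\mathfrak{so}^*_{2n}$.
\end{itemize}
The right adjoint of pullback-plus-restriction is pushforward-plus-induction, and vice versa. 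Therefore
\[
\Theta\leftrightarrow p_*\circ\mathrm{Res}\circ(-1)^*,
\]
which is $\pi$ up to the sign. Similarly $\Theta'$ corresponds to $\mathrm{Ind}_{\SO_{2n}}^{\SO_{2n+1}}\circ p^*$, composed with a sign, which is $\nu$ up to the sign.

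Finally, we track where the sign lands. Since $p$ is linear, $p\circ(-1)=(-1)\circ p$, so the $-1$ on $\mathfrak{so}^*_{2n+1}$ can be moved to $\mathfrak{so}^*_{2n}$. This gives $\Theta\cong i_{\SO_{2n}}\circ\pi$ and $\Theta'\cong\nu\circ i_{\SO_{2n}}$ spectrally. Translating back through derived Satake gives the claimed isomorphisms $i^L_{\SO_{2n}}\circ\pi^L\cong\Theta$ and $\nu^L\circ i^L_{\SO_{2n}}\cong\Theta'$. The shearing is harmless here, since all maps involved are $\mathbb{G}_m$-equivariant for the scaling action.

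The main obstacle is bookkeeping in the reduction step. We must check that the identification of Remark \ref{remark: reduced form of the Spectral category}, together with the Hecke compatibilities of Theorem \ref{thm: main}, really converts the $\SO_{2n+1}$-action into restriction along $-\mathrm{id}$ and the $\SO_{2n}$-action into pullback along $p$. This requires fixing the moment-map sign conventions and making sure the sign falls on the $\SO_{2n}$ side rather than cancelling. We also need the adjoints to commute with the ind-completion and the lc-restriction. We would handle this by working with the large categories, where both functors are continuous and the right adjoints exist by \cite[Remark 8.1.2]{BZSV}.
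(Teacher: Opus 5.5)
Your proposal is correct and follows the paper's argument. In both, Theorem \ref{thm: main} and derived Satake turn $\mathrm{act}_{\Sp_{2n}}$ and $\mathrm{act}_{\SO_{2n}}$ into pullbacks along the two moment maps out of $\mathfrak{so}^*_{2n+1}/\SO_{2n}$; one then passes to right adjoints (pushforwards) and composes. The only difference is cosmetic: you put the sign on the $\SO_{2n+1}$-side and move it across the linear projection $p$, whereas the paper takes $\mu_1=-p$ and $\mu_2$ to be the quotient map, and both give $\Theta\cong i_{\SO_{2n}}\circ\pi$ and $\Theta'\cong\nu\circ i_{\SO_{2n}}$.
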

\begin{proof}
    Let $$\mu_1:T^*\SO_{2n+1} / (\SO_{2n}\times \SO_{2n+1})\to \mathfrak{so}^*_{2n} / \SO_{2n}$$ and $$\mu_2:T^*\SO_{2n+1} / (\SO_{2n}\times \SO_{2n+1})\to \mathfrak{so}^*_{2n+1} / \SO_{2n+1}$$ be the usual (equivariant) moment maps for the action of $\SO_{2n}\times \SO_{2n+1}$ on $T^*\SO_{2n+1}$.
    Under the isomorphism $T^*\SO_{2n+1} / (\SO_{2n}\times \SO_{2n+1})\cong \mathfrak{so}^*_{2n+1} / \SO_{2n}$, the functor $\mu_1$ comes from the negative of the natural morphism $\mathfrak{so}^*_{2n+1}\to \mathfrak{so}^*_{2n}$ (the dual to the inclusion of Lie algebras), while $\mu_2$ corresponds to the quotient morphism $\mathfrak{so}^*_{2n+1} / \SO_{2n}\to \mathfrak{so}^*_{2n+1} / \SO_{2n+1}$.
    All the above morphisms are compatible with the natural $\GG_m$-actions, hence with the shearings (see the discussion of shearing in \ref{sec: derived Satake}).

    We the following diagram:
    \[
    \begin{tikzcd}
        &\operatorname{Ind}(\operatorname{\mathcal{W}-\mathrm{mod}}^{\SO_{2n,\O}\times \Sp_{2n,\O},\mathrm{lc}})\ar[dd, "\cong"]\ar[dr, "a_{\SO_{2n}}"', shift right=2pt]\ar[dl, "a_{\Sp_{2n}}", shift left=2pt]\\
        \operatorname{D-mod}_{\Sp_{2n,\O}}(\Gr_{\Sp_{2n}})\ar[dd, "\cong"]\ar[ur, "\mathrm{act}_{\Sp_{2n}}", shift left=2pt]&&\operatorname{D-mod}_{\SO_{2n,\O}}(\Gr_{\SO_{2n}})\ar[dd, "\cong"]\ar[ul, "\mathrm{act}_{\SO_{2n}}"', shift right=2pt]\\
        &\mathrm{QCoh}_{\SO_{2n}\times \SO_{2n+1}}^{[]}(T^*\SO_{2n+1})\ar[dr, "\mu_{1,*}"', shift right=2pt]\ar[dl, "\mu_{2,*}", shift left=2pt]\\
        \mathrm{QCoh}_{\SO_{2n+1}}^{[]}(\mathfrak{so}^*_{2n+1})\ar[ur, "\mu_2^*", shift left=2pt] && \mathrm{QCoh}_{\SO_{2n}}^{[]}(\mathfrak{so}^*_{2n})\ar[ul, "\mu_1^*"', shift right=2pt]
    \end{tikzcd}
    \]
    Here the vertical arrows are the equivalence of Theorem \ref{thm: main} and the derived Satake equivalences. Out of each pair of diagonal maps, the bottom one is right adjoint to the top one.
    The compatibilities described in Theorem \ref{thm: main} give commutations of the left and right half of the diagram each, when using the upper diagonal arrows. Indeed, the upper diagonal arrows are given by acting on the basic objects.
    Passing to right adjoints, we also get commutations of each half of the diagram when using the lower diagonal arrows.

    By the above descriptions of $\mu_1$ and $\mu_2$, the composition $\mu_{1,*}\circ \mu_2^*$ is isomorphic to the functor $i_{\SO_{2n}}\circ \pi$, and the composition $\mu_{2,*}\circ \mu_2^*$ is isomorphic to $\nu \circ i_{\SO_{2n}}$, so the theorem follows from the above commutations in the diagram.
\end{proof}
\subsection{Outline of the Proof}
The first step of the proof, Proposition \ref{prop: Hecke identification}, is to prove an identification of the two Hecke actions of $\mathrm{Rep}(\SO_{2n+1})$ and $\mathrm{Rep}(\SO_{2n})$ on $\operatorname{\mathcal{W}-\mathrm{mod}}^{\SO_{2n,\O}\times \Sp_{2n,\O},\mathrm{lc}}$. That is, that the action of the category $\mathrm{Rep}(\SO_{2n+1})\cong \operatorname{D-mod}_{\Sp(V)}(\Gr_{\Sp(V)})^\heartsuit$ factors as the restriction to $\mathrm{Rep}(\SO_{2n})\cong \operatorname{D-mod}_{\SO(V')}(\Gr_{\SO(V')})^\heartsuit$ and then the Hecke action of this category. The analogous result for $\ell$-adic sheaves was proved in \cite{Lys}.
We do this by a direct computation for the generator of the category $U\in \mathrm{Rep}(\SO_{2n+1})$, and show that it is compatible with the self duality of this object. 
The above identification is manifest on the spectral side, as the map $\SO_{2n}\backslash T^*\SO_{2n+1} / \SO_{2n+1}\to \mathrm{pt}/\SO_{2n+1}$ factors as the map $\SO_{2n}\backslash T^*\SO_{2n+1} / \SO_{2n+1}\to \mathrm{pt}/\SO_{2n}$ composed with the natural quotient map.
Also, taking the point of view of Section \ref{sec: functoriality}, this result can be thought upon as the statement that the Theta correspondence gives the expected mapping of Langlands parameters.

Using the description of the spectral category given in Remark \ref{remark: reduced form of the Spectral category}, we see that it is isomorphic to the category of $\SO_{2n}$-equivariant perfect modules for the (dg) deequivariantized (over $\SO_{2n}$) endomorphism algebra of the structure sheaf (see Definition \ref{def: deeq}). Our strategy will be to describe the geometric category $\operatorname{\mathcal{W}-\mathrm{mod}}^{\SO_{2n,\O}\times \Sp_{2n,\O},\mathrm{lc}}$ in the same manner, and to construct an isomorphism of the deequivariantized endomorphism algebras.

Indeed, such a description follows from rather abstract reasonings given that we know that the object $E_0$ (which should correspond to the structure sheaf under Theorem \ref{thm: main}) generates the category $\operatorname{\mathcal{W}-\mathrm{mod}}^{\SO_{2n,\O}\times \Sp_{2n,\O},\mathrm{lc}}$ under the Hecke action, in the sense of Definition \ref{def: generation}. This is done in Section \ref{sec: generation} by classifying the irreducible objects in $(\operatorname{\mathcal{W}-\mathrm{mod}}^{\SO_{2n,\O}\times \Sp_{2n,\O},\mathrm{lc}})^\heartsuit$, which are given by dominant coweights of $\SO_{2n}$.

Then theorem \ref{thm: main} reduces to a computation of the deequivariantized endomorphism algebra of $E_0$, which we denote by $\mathscr{E}$, along with compatibilities with moment maps. The action of $\operatorname{D-mod}_{\Sp_{2n}}(\Gr_{\Sp_{2n}})$ on $E_0$ gives a map
$$\psi: \Sym(\mathfrak{so}_{2n}[-2])\cong \mathrm{End}_{\operatorname{D-mod}_{\Sp_{2n}}(\Gr_{\Sp_{2n}}),\mathrm{deeq}}(\delta_1, \delta_1)\to \mathscr{E}.$$
By Theorem \ref{thm: main}, this map is expected to be an isomorphism, and this is indeed what occupies the rest of this paper.
The compatibility with the second moment map (which is similarly constructed using the action of $\operatorname{D-mod}_{\SO_{2n}}(\Gr_{\SO_{2n}})$) is proved by reducing it to a computation concerning Chern classes (Proposition \ref{prop: psi psi' identification}).

It follows from purity considerations that $\mathscr{E}$ must be formal (see Theorem \ref{thm: purity}), and using the localization Theorem we show that it must be concentrated in even cohomological degrees and commutative (see Claim \ref{claim: loc thm}). This allows us to think about it geometrically, i.e. to consider the spectrum of its cohomology ring (we call this ring $\mathfrak{E}^\bullet$), which is a scheme carrying a $\GG_m\times \SO_{2n}$-action. $\psi$ gives a map from this scheme to $\mathfrak{so}_{2n}^*$, compatible with the action (the action of $\GG_m$ on $\mathfrak{so}_{2n}^*$ is the linear one of weight 2). We also denote $\mathfrak{G}^\bullet:=\CC[\mathfrak{so}_{2n}^*]$.

Using some invariant theory for the action of $\SO_{2n}$ on $\mathfrak{so}_{2n}^*$, we show in Proposition \ref{prop: psi-inv} that the resulting map between the categorical quotients by $\SO_{2n}$ is an isomorphism. This categorical quotient is isomorphic to $\Sigma_{2n}\times \Sigma_{2n+1}:= \mathfrak{so}_{2n}^* \git \SO_{2n} \times \mathfrak{so}_{2n}^* \git \SO_{2n+1}$.
We give a finite cover $\Sigma\epi\Sigma_{2n}\times \Sigma_{2n+1}$, such that the quotient map $\mathfrak{so}_{2n}^*\to \Sigma_{2n}\times \Sigma_{2n+1}$ admits a ``pseudo-slice" $\Sigma\to \mathfrak{so}_{2n}^*$, and such that the image of the above ``pseudo-slice" becomes dense after saturation by the $\SO_{2n}$ action, with the complement being of codimension $\geq 2$ (see Lemma \ref{lemma: Sigma saturation}).
Moreover, we show that after base change along $\Sigma\epi\Sigma_{2n}\times \Sigma_{2n+1}$ and localization along $\Sigma$, $\mathfrak{so}_{2n}^*$ becomes the trivial $\SO_{2n}$ torsor (see Corollary \ref{cor-item: upsilon gen}).
Using the localization theorem, we show in Claim \ref{claim: loc thm} that $\Spec(\mathfrak{E}^\bullet)$ too becomes the trivial $\SO_{2n}$ torsor after base change along $\Sigma\epi\Sigma_{2n}\times \Sigma_{2n+1}$ and localization along $\Sigma$, and deduce that the map $\psi$ itself becomes an isomorphism after this base change and localization (as any equivariant map between trivial torsors is an isomorphism).

Given the above, it is enough to show that the considered pseudo slice factors through $\mathrm{Spec}(\mathfrak{E}^\bullet)$.
This we do in Section \ref{sec: equivariant cohomology}, where we construct a pseudo slice $\Sigma\to \mathrm{Spec}(\mathfrak{E}^\bullet)$ and show that its image under $\psi$ is the pseudo-slice constructed earlier, which finishes the proof.
\subsection{Structure of this paper}
Let us briefly describe what each section of this paper contains.

In Section \ref{sec: notation} we introduce some notation and include preliminaries on D-modules on infinite dimensional ind-schemes, as well as the Weyl algebra and its category of modules.

Section \ref{sec: Weil rep} discusses the Geometric Weil representation. This is really a preliminary for the paper, but we include it both for completeness and to include some explicit formulas for the Weil action, which will be used in subsequent sections.

Section \ref{sec: Hecke actions} is dedicated to the proof of Proposition \ref{prop: Hecke identification}, identifying the two Hecke actions on $\operatorname{\mathcal{W}-\mathrm{mod}}^{\SO_{2n,\O}\times \Sp_{2n,\O},\mathrm{lc}}$.

Section \ref{sec: generation} is dedicated to the proof of Proposition \ref{prop:Hecke generation}, and does so by classifying the irreducible objects of $(\operatorname{\mathcal{W}-\mathrm{mod}}^{\SO_{2n,\O}\times \Sp_{2n,\O},\mathrm{lc}})^\heartsuit$. This section also includes a useful description of $\operatorname{\mathcal{W}-\mathrm{mod}}^{\SO_{2n,\O}\times \Sp_{2n,\O},\mathrm{lc}}$ as twisted equivariant D-modules on $(V\otimes V')_\mathcal{K} / (V\otimes V')_\O$.

Section \ref{sec: deeq} introduces the deequivariantized endomorphism algebra $\mathscr{E}$, constructs a map $\psi: \Sym(\mathfrak{so}_{2n+1}[-2])\to \mathscr{E}$, which will play the role of the moment map with respect to $\SO_{2n+1}$, and shows that it is enough to prove that $\psi$ is an isomorphism. In this section we also prove the compatibility with the second moment map, which is with respect to $\SO_{2n}$ (see Proposition \ref{prop: psi psi' identification}), and give some consequences of purity considerations for $\mathscr{E}$ (see Theorem \ref{thm: purity}).
As a result, we reduce Theorem \ref{thm: main} to proving that $\psi$ is an isomorphism.

Section \ref{sec: invariant theory} is concerned with the invariant theory of the action of $\SO_{2n}$ on $\mathfrak{so}_{2n+1}^*$. In particular, we construct there a ``pseudo-slice" for this action. For some of the results in this section we assume that $n\geq 2$.

In Section \ref{sec: localization} we use the Loclalization Theorem to deduce several consequences on the localization over $\Sigma_{2n}\times \Sigma_{2n+1}$ of $\mathfrak{E}^\bullet$.

In Section \ref{sec: equivariant cohomology}, we show that in the case $n\geq 2$ the pseudo-slice of Section \ref{sec: invariant theory} factorizes through $\Spec(\mathfrak{E}^\bullet)$ using $\psi$. We do this by constructing a pseudo slice $\Sigma\to \Spec(\mathfrak{E}^\bullet)$ by means of equivariant cohomology, and then show that it gives such a factorization.

Section \ref{sec: conclusion} wraps up the proof that $\psi$ is an isomorphism for $n\geq 2$.

Appendix \ref{appendix: n=1} completes the proof with the case $n=1$, which is done by direct computation of $\mathfrak{E}^\bullet$.

Appendix \ref{appendix: line bundles and quadratic forms} discusses multiplicative line bundles on the affine Grassmannian. This appendix contains a result relating certain ext-elements in the Derived Satake category given by invariant bilinear forms on $\check{\mathfrak{g}}$ to ext elements given by equivariant Chern classes of multiplicative line bundles on $\Gr_G$, via derived Satake. Moreover, the second part of this appendix discussion of the determinant line bundle.
The results of this section are used mainly in the proof of Proposition \ref{prop: psi psi' identification}.

\subsection{Acknowledgements}
The author is grateful to his advisor M. Finkelberg for his guidance and support, as well as for many interesting discussions.
We also thank Y. Zhao, S. Lysenko, and R. Travkin for useful discussions.

Many of the ideas in this paper are inspired by the work of \cite{BFKT}.

The author is partially supported by ISF grant 994/24.
\section{Notations and preliminaries}\label{sec: notation}
Throughout this paper we set $\mathcal{K}=\CC((t))$ and $\O=\CC[[t]]$. We will denote the loop group of an algebraic group $G$ over $\CC$ by $G_\mathcal{K}$, and the positive loop group by $G_\O$.
We will denote by $\Gr_G=G_\mathcal{K}/G_\O$ the affine Grassmannian of $G$.
Similarly, for any $\CC$-scheme $X$ we will denote by $X_\mathcal{K}$ and $X_\O$ the corresponding loop scheme and arc scheme.

We let $V$ be a $2n$-dimensional symplectic vector space, and $V'$ a $2n$-dimensional vector space equipped with a non-degenerate symmetric bilinear form. Let also $U,U'$ be $2n+1,2n$-dimensional vector spaces equipped with a non-degenerate symmetric bilinear form, with $U'\subseteq U$. We identify the Langlands dual groups of $\Sp(V), \SO(V')$ with $\SO(U), \SO(U')$ respectively.

Given an algebraic group $G$ we will denote by $\mathrm{Rep}(G)$ the category of algebraic representations of $G$.

\subsection{Conventions on the usage of categorical language}
Usually in this paper, we use the word category to mean a dg-category over $\CC$, and use the term abelian category to mean an abelian 1-category (that is, a ``usual" non-dg category). We use $\mathrm{Vect}$ for the dg-category of $\CC$-vector spaces. In a dg-category, we use $\Hom$ to denote the derived Hom, which gives an object of $\mathrm{Vect}$, and we use $\Ext^i$ to denote its $i$-th cohomology. In particular $\Ext^0$ is the non-derived Hom.

We refer the reader to \cite{Lu} and \cite{GaR} for the theory of dg-categories and higher category theory in general.
Let us record the following useful lemma, applicable also for when $\mathrm{Rep}(G)$ is replaced by an arbitrary rigid monoidal category:
\begin{lemma}\label{lemma: rigidity}
    Let $G$ be an algebraic group, and let $M$ be a module category for $\mathrm{Rep}(G)$. Then for any $X,Y\in M$ and $V\in \mathrm{Rep}(G)$, we have a natural isomorphism
    $$\Hom(V\otimes X, Y)\cong \Hom(X, V^*\otimes Y).$$
\end{lemma}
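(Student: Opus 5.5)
The plan is to use rigidity of $\mathrm{Rep}(G)$: the dual $V^*$ of $V$ comes with evaluation and coevaluation maps
$$\mathrm{ev}: V^*\otimes V\to \mathbf{1},\qquad \mathrm{coev}:\mathbf{1}\to V\otimes V^*,$$
which satisfy the two zig-zag (triangle) identities. Since $M$ is a module category, every morphism in $\mathrm{Rep}(G)$ acts on objects of $M$ through the action functor, and the associativity and unit constraints of the action are part of the module structure. I will build the isomorphism explicitly, define an inverse, and check the two composites using the zig-zag identities.

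First I define the forward map. Given $f: V\otimes X\to Y$, send it to the composite
$$X\cong \mathbf{1}\otimes X\xrightarrow{\mathrm{coev}\otimes \mathrm{id}} (V^*\otimes V)\otimes X\cong V^*\otimes (V\otimes X)\xrightarrow{\mathrm{id}\otimes f} V^*\otimes Y.$$
Here I use the coevaluation $\mathbf{1}\to V^*\otimes V$, which exists because $V^*$ is also dualizable, with dual $V$. In the other direction, given $g: X\to V^*\otimes Y$, send it to
$$V\otimes X\xrightarrow{\mathrm{id}\otimes g} V\otimes V^*\otimes Y\xrightarrow{\mathrm{ev}'\otimes \mathrm{id}} Y,$$
where $\mathrm{ev}':V\otimes V^*\to\mathbf{1}$ is the evaluation paired with that coevaluation.

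Both assignments are given by composing with fixed morphisms and applying the action functor. So they are defined on the derived mapping complexes as maps in $\mathrm{Vect}$, not just on $\Ext^0$, and they are natural in $X$ and $Y$.

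Next I check that the composites are homotopic to the identity. Going $f\mapsto g\mapsto f$ gives $f$ precomposed with
$$(\mathrm{ev}'\otimes \mathrm{id}_V)\circ(\mathrm{id}_V\otimes \mathrm{coev}),$$
acting on $V\otimes X$. Naturality of the action lets $f$ slide past the rigidity maps. This composite is the identity by the zig-zag identity, and the other composite is handled the same way.

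In the $\infty$-categorical setting the cleanest way to organize this is to note that the functor $V\otimes -: M\to M$ has right adjoint $V^*\otimes -$. Indeed, any monoidal functor, here the action $\mathrm{Rep}(G)\to \mathrm{End}(M)$, sends dualizable objects to objects with adjoints, and the unit and counit of the adjunction are the images of $\mathrm{coev}$ and $\mathrm{ev}'$. The claimed isomorphism is then just the adjunction isomorphism. This also shows the statement holds for any rigid monoidal category.

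The only point needing care is the coherence bookkeeping: the associators must make the triangle identities hold up to coherent homotopy. Passing through the monoidal functor to $\mathrm{End}(M)$ removes this issue, since dualizability is preserved by monoidal functors of $(\infty,1)$-categories.
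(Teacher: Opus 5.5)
Your proof is correct, but it follows a different route from the paper's. The paper passes to the ind-completion $\mathrm{Ind}(\mathrm{Rep}(G))$, which is rigid in the sense of Gaitsgory--Rozenblyum, and factors $V\otimes-$ as
$$M\xrightarrow{\,V\boxtimes-\,}\mathrm{Ind}(\mathrm{Rep}(G))\otimes M\xrightarrow{\,\mathrm{act}\,}M.$$
It then composes the adjunction for the action functor from \cite[Lemma 9.3.2]{GaR} with the adjunction $V\boxtimes-\dashv \mathrm{Hom}_G(V,-)$. It leaves implicit that the resulting composite right adjoint is $V^*\otimes-$, which amounts to the Frobenius reciprocity identity $\mathrm{Hom}_G(V,\mathcal{O}(G))\cong V^*$.

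You instead use only the duality data of the single object $V$. The monoidal action functor $\mathrm{Rep}(G)\to\mathrm{End}(M)$ turns this data into the unit and counit of an adjunction $V\otimes-\dashv V^*\otimes-$.

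Your argument is more elementary and more general. It needs neither presentability of $M$, nor the Lurie tensor product, nor rigidity of the whole category; only that $V$ have a dual on the appropriate side. It also identifies the right adjoint as $V^*\otimes-$ directly. As you note, coherence is not an issue, because duality and adjunction data can be checked up to homotopy. The paper's route fits the presentable setting where the lemma is actually applied (e.g.\ inside $\mathrm{Ind}(\mathcal{C})$) and delegates all coherence to a general result.

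There is one bookkeeping slip. The maps you actually use, $\mathbf{1}\to V^*\otimes V$ and $V\otimes V^*\to\mathbf{1}$, exhibit $V^*$ as the dual of $V$ on the \emph{other} side from your initial $\mathrm{ev},\mathrm{coev}$. Applying ``$V^*$ is dualizable with dual $V$'' to $\mathrm{ev},\mathrm{coev}$ just returns those same two maps. In $\mathrm{Rep}(G)$ the needed pair exists anyway, by composing $\mathrm{ev},\mathrm{coev}$ with the symmetry, or explicitly via $1\mapsto\mathrm{id}_V$ and $v\otimes\phi\mapsto\phi(v)$. So nothing breaks. Correspondingly, $\mathrm{coev}$ in your zig-zag check should read $\mathrm{coev}'$.
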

\begin{proof}
    The result follows from the adjunction of functors in \cite[Lemma 9.3.2]{GaR} (applied to the ind completion $\mathrm{Ind}(\mathrm{Rep}(G))$) composed with the adjunction of the functor $M\to \mathrm{Ind}(\mathrm{Rep}(G))\otimes M$ of tensoring with $V$ and its right adjoint, given by $\Hom_G(V,-)$.
\end{proof}
\subsection{D-modules on infinite dimensional ind-schemes}\label{sec: D-modules}
We will use the theory of (possibly equivariant) D-modules on ind-schemes (not necessarily of finite type) as appears in \cite{Ras}. Namely, all of our considered schemes and ind-schemes are placid, so that we have $\operatorname{D-mod}^*\cong \operatorname{D-mod}^!$.
In particular, $\operatorname{D-mod}^*$ has functoriality by $*$-pushforwards and $(!,ren)$-pullbacks, while $\operatorname{D-mod}^!$ has functoriality by $!$-pushforwards and $(*,ren)$-pullbacks. For any placid ind-scheme $S$ we have the dualizing D-module $\omega_S\in\operatorname{D-mod}^!(S)$, and the renormalized dualizing D-module $\omega_S^{ren}\in \operatorname{D-mod}^*(S)$.

For an ind-scheme of ind-finite type, both of these theories coincide with the usual theory of D-modules (however the above isomorphism for placid ind-schemes $\operatorname{D-mod}^*\cong \operatorname{D-mod}^!$ does not correspond to the identity functor in this case, but rather to a certain cohomological shift). In this case we will not distinguish between them and simply write $\operatorname{D-mod}(S)$.

Given an action of an algebraic group (or more commonly for us, an arc group) $G$ on $S$, we will write $\Dmod^*_G(S)$ (or $\Dmod_G(S)$ for an ind-scheme of ind-finite type) for the {\it renormalized} category of equivariant D-modules. By definition, this is the ind completion of the category of locally compact equivariant D-modules. By locally compact equivariant D-modules we mean those equivariant D-modules that become compact after forgetting the equivariance. The compact objects in $\Dmod^*_G(S)$ are then by construction locally compact equivariant D-modules, in the usual sense.

For in-depth discussion of this renormalization, also called {\it canonical renormalization}, see \cite{Ras2}, (see also \cite[Appendix B]{BZSV}, \cite[§12]{AD}).

\subsection{Derived Satake}\label{sec: derived Satake}
We will use both $\mathrm{QCoh}^{[]}_{\check{G}}(\check{\mathfrak{g}}^*)$ and $\mathrm{QCoh}_{\check{G}}(\check{\mathfrak{g}}^*[2])$ to denote the category $(\operatorname{\check{\mathfrak{g}}[-2]-mod})^{\check{G}}$ of $\check{G}$-equivariant modules of $\check{\mathfrak{g}}[-2]$ (here in the notation $\check{\mathfrak{g}}^*[2]:=\Spec(\check{\mathfrak{g}}[-2])$ is considered as an affine dg-scheme). The category of compact objects will be denoted by $D^{[]}_{\mathrm{perf},\check{G}}(\check{\mathfrak{g}}^*)$. This is the category of perfect equivariant modules.
In the above categories we have the structure sheaf, i.e. the free module of rank 1, which we denote by $\mathcal{O}_{\check{\mathfrak{g}}^*[2]}$ as well as $\mathcal{O}^{[]}_{\check{\mathfrak{g}}^*}$.

In general if $X$ is a $\CC$-scheme with a $\GG_m$-action we use $\mathrm{QCoh}^{[]}(X)$ to mean the category of quasi-coherent sheaves on $X$, sheared by the grading defined by the $\GG_m$-action, and $D_{\mathrm{perf}}^{[]}(X)$ for the compact objects. We also denote the structure sheaf by $\O^{[]}_{X}$. For example, in the above we consider $\check{\mathfrak{g}}^*$ with the contracting $\GG_m$ action of weight 2. See \cite[§6]{BZSV} for a general discussion of Shearing.

Recall the derived Satake equivalence of \cite{BF}, of which we will use both the small and large versions:
\begin{theorem}
    For a reductive group $G$, there are equivalences of monoidal Categories
    $$\Dmod_{G_\O}(\Gr_G)^c\cong D^{[]}_{\mathrm{perf},\check{G}}(\check{\mathfrak{g}}^*),$$
    $$\Dmod_{G_\O}(\Gr_G)\cong \mathrm{QCoh}^{[]}_{\check{G}}(\check{\mathfrak{g}}^*).$$
    The second equivalence is achieved from the first one by ind-completion, and the first one may be recovered from the second by restricting to compact objects.
\end{theorem}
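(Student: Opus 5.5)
This is the derived Satake theorem of \cite{BF}; the paper uses it as input and does not reprove it. If one did prove it, I would follow the strategy of Bezrukavnikov--Finkelberg: identify both sides as categories of modules over the same formal, graded, $\check G$-equivariant algebra.

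\textbf{Reduction to one algebra.} The small category $\Dmod_{G_\O}(\Gr_G)^c$ is generated, under cones and shifts, by the intersection cohomology sheaves $\mathrm{IC}_\lambda$, for $\lambda$ a dominant coweight. By geometric Satake, these form the abelian Satake category $\mathrm{Rep}(\check G)$. Let $\mathcal R=\bigoplus_\lambda V_\lambda^*\otimes \mathrm{IC}_\lambda$ be the ind-object corresponding to the regular representation $\CC[\check G]$. Then the category is equivalent to $\check G$-equivariant perfect modules over the deequivariantized endomorphism dg-algebra
$$A:=\Hom_{\Dmod_{G_\O}(\Gr_G)}(\delta_1,\mathcal R).$$
This is a Barr--Beck/deequivariantization argument, of the same kind the paper applies to $E_0$ in Section \ref{sec: deeq}. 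The spectral side has the same shape, with the algebra $\Sym(\check{\mathfrak g}[-2])$ and the free module of rank one. So the theorem reduces to a $\check G$-equivariant isomorphism of dg-algebras $A\cong \Sym(\check{\mathfrak g}[-2])$, compatible with the monoidal structures. Monoidality comes from the fusion (commutativity) constraint making $A$ a commutative algebra in $\mathrm{Ind}(\mathrm{Rep}\,\check G)$.

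\textbf{Formality and cohomology.} The $\mathrm{IC}_\lambda$ are pure, and $G_\O$-equivariant cohomology of $\Gr_G$ is pure and concentrated in even degrees. Hence each $\Ext^\bullet(\delta_1,\mathrm{IC}_\lambda)$ is pure of weight equal to degree, and a standard weight argument makes $A$ formal. We are therefore reduced to computing the graded commutative ring $H^\bullet(A)=\bigoplus_\lambda V_\lambda^*\otimes H^\bullet_{G_\O}(\mathrm{IC}_\lambda)$ with its $\check G$-action. Its $\check G$-invariants are $H^\bullet_G(\mathrm{pt})=\CC[\mathfrak t]^W=\CC[\check{\mathfrak g}^*]^{\check G}$, with generators in degree twice the exponents. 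This matches the shearing of $\check{\mathfrak g}^*$ by weight $2$.

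\textbf{Producing $\Sym(\check{\mathfrak g}[-2])\to H^\bullet(A)$ and proving it is an isomorphism.} The degree-$2$ part is $\check{\mathfrak g}\otimes H^0_{G_\O}(\mathrm{IC}_{\mathrm{adj}})$, which gives a map from $\check{\mathfrak g}$; commutativity extends it to the symmetric algebra. Equivalently, one uses Ginzburg's theorem: the Chern class of the determinant line bundle acts on $H^\bullet(\mathrm{IC}_\lambda)=V_\lambda$ by a principal nilpotent, and its equivariant version identifies the action of $H^\bullet_{G_\O}(\Gr)$ with functions on the universal centralizer over the Kostant slice. Both sides are then flat over $\CC[\check{\mathfrak g}^*]^{\check G}$. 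Over the regular locus, both become functions on $\check G\times(\text{Kostant slice})$, so the map is an isomorphism there. The complement of the regular locus in $\check{\mathfrak g}^*$ has codimension $\ge 3$, so normality extends the isomorphism.

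\textbf{Main obstacle.} I expect the hardest step to be the equivariant computation of $H^\bullet_{G_\O}(\mathrm{IC}_\lambda)$, namely proving that it is free over $H^\bullet_G(\mathrm{pt})$ and matching it, together with its $\check G$-module structure, with the regular-locus picture. I would attack it by localization to $T$-fixed points $\Gr^T=X_*(T)$ (Kirwan/GKM style). This reduces the question to weight spaces and the dynamical Weyl group, and is the same kind of localization and pseudo-slice argument the paper later uses for $\mathscr E$.
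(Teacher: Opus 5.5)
Your first sentence matches the paper: the theorem is imported from \cite{BF} and not reproved, so as a citation the proposal is fine. The sketch you add, however, fails at the step claiming that over the regular locus both sides ``become functions on $\check G\times S$'', with $S$ the Kostant slice. The action map $\check G\times S\to\check{\mathfrak g}^*_{\mathrm{reg}}$ is smooth and surjective, but its fibres are the centralizers, of dimension $r=\operatorname{rk}\check G$; note $\dim(\check G\times S)=\dim\check{\mathfrak g}^*+r$. So $\CC[\check{\mathfrak g}^*_{\mathrm{reg}}]$ is a proper subring of $\CC[\check G\times S]$, and the claimed isomorphism over the regular locus does not follow.

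The slip comes from computing $H^\bullet(A)$ with global equivariant cohomology. The sum $\bigoplus_\lambda V_\lambda^*\otimes H^\bullet_{G_\O}(\Gr_G,\mathrm{IC}_\lambda)$, with each summand free of rank $\dim V_\lambda$, is indeed $\CC[\check G\times S]$. This is the Kostant--Whittaker reduction of the regular object, the analogue of the paper's torsor $\mathcal T$. But $\Ext^\bullet(\delta_1,\mathrm{IC}_\lambda)$ is the equivariant costalk at the base point, free of rank $\dim V_\lambda(0)$. So the generic fibre of $H^\bullet(A)$ over $\mathfrak t/W$ has the size of $\CC[\check G/\check T]$, not of $\CC[\check G]$. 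The paper's device of comparing two trivial torsors (Corollary \ref{cor-item: upsilon gen}) rests on generic stabilizers being trivial (Lemma \ref{lemma: Sigma saturation}). For the adjoint action the generic stabilizer is a maximal torus, so the device does not transfer verbatim.

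In \cite{BF} this step goes through the universal centralizer $\mathfrak Z\to S$, in three parts:
\begin{itemize}
\item $H^{G_\O}_\bullet(\Gr_G)\cong\CC[\mathfrak Z]$ (Bezrukavnikov--Finkelberg--Mirkovi\'c).
\item $H^\bullet_{G_\O}(\Gr_G,-)$ is fully faithful on IC sheaves into graded representations of $\mathfrak Z$ over $S$.
\item Kostant--Whittaker reduction $V\otimes\mathcal{O}_{\check{\mathfrak g}^*}\mapsto V\otimes\mathcal{O}_S$ is fully faithful on free $\check G$-equivariant modules into the same category. This holds because $\check{\mathfrak g}^*_{\mathrm{reg}}/\check G\cong S/\mathfrak Z$ as stacks and the non-regular locus has codimension $3$; this is where your Hartogs argument belongs.
\end{itemize}
If you prefer the paper's localization-plus-pseudo-slice pattern, the generic comparison must be between homogeneous spaces $\check G/\check T$ (e.g.\ after base change along $\mathfrak t\to\mathfrak t/W$). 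The map $\check G\times S\to\check{\mathfrak g}^*_{\mathrm{reg}}$ should then be used only, through its faithful flatness, to check regularity of rational functions, as in Section \ref{sec: conclusion}.

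A smaller point: purity of $\Ext^\bullet(\delta_1,\mathrm{IC}_\lambda)$ needs pointwise purity of IC costalks on $\Gr_G$, or a hyperbolic-restriction argument as in the proof of Theorem \ref{thm: purity}. Purity of $H^\bullet_{G_\O}(\Gr_G)$ is not enough.
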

\begin{remark}
    Note that we are using the notation of the previous subsection for the renormalization of the categories of equivariant D-modules on ind-schemes. That is, the categories appearing in the left-hand side are that of locally compact objects (in the usual sense) and its ind-completion.
\end{remark}
\subsection{The category of Weyl-algebra modules}
As already appearing in the formulation of Theorem \ref{thm: main}, we let $\mathcal{W}$ be the completed Weyl algebra of the symplectic vector space $V\otimes V'$, and $\operatorname{\mathcal{W}-\mathrm{mod}}$ the category of renormalized discrete modules over it, along with its natural t-structure (see e.g. \cite[§2.1]{BFKT}).
Given a Lagrangian lattice $\Lambda\subset (V\otimes V')_\mathcal{K}$ (which will typically be $(V\otimes V')_\O$), we have a description of the category $\operatorname{\mathcal{W}-\mathrm{mod}}$ as twisted D-modules on $(V\otimes V')_\mathcal{K}/\Lambda$, given in Remark \ref{remark: twisted D-modules}.
If we fix a (discrete) Lagrangian colattice $\Lambda^*$ transversal to $\Lambda$ (the typical example of which being $\Lambda^* = t^{-1}(V\otimes V')[t^{-1}]$), we get an equivalence of dg-categories between $\operatorname{\mathcal{W}-\mathrm{mod}}$ and $\operatorname{D-mod}((V\otimes V')_\mathcal{K}/\Lambda)$, compatible with the t-structures.
Another option is to fix two transversal Lagrangian subspaces $L,L^*\subset V\otimes V'$, and this will give an equivalence of $\operatorname{\mathcal{W}-\mathrm{mod}}$ with $\operatorname{D-mod}^*(L_\mathcal{K}/L_\O)$, again compatible with the t-structures. We will use both of these descriptions in this paper.

We let $\mathcal{C}:=\operatorname{\mathcal{W}-\mathrm{mod}}^{\SO(V')_\O\times \Sp(V)_\O,\mathrm{lc}}$, where the action of $\SO(V')_\O\times \Sp(V)_\O$ on $\operatorname{\mathcal{W}-\mathrm{mod}}$, preserving compact objects, is induced from Lemma \ref{lemma: Weil rep} below. Here lc stands for locally compact, which by definition is the full subcategory consisting of objects which map to compact objects under the forgetful functor to (non-equivariant) $\operatorname{\mathcal{W}-\mathrm{mod}}$.
The category $\mathcal{C}$ is the geometric category appearing in the statement of Theorem \ref{thm: main}.

We let $\mathrm{oblv}:\mathcal{C}\to \operatorname{\mathcal{W}-\mathrm{mod}}^c$ be the forgetful functor, forgetting the $\Sp(V)_\O\times \SO(V')_\O$-equivariant structure.
The following proposition follows from the description of $\mathcal{C}$ given as twisted D-modules on $(V\otimes V')_\mathcal{K}/(V\otimes V')_\O$:
\begin{proposition}\label{prop:oblv}
    \begin{enumerate}
        \item The forgetful functor $$\mathrm{oblv}:\mathcal{C}\to \operatorname{\mathcal{W}-\mathrm{mod}}^c$$ is conservative.
        \item There is a unique t-structure on $\mathcal{C}$ such that $\mathrm{oblv}:\mathcal{C}\to \operatorname{\mathcal{W}-\mathrm{mod}}^c$ is t-exact.
        \item The functor $\mathrm{oblv}$ restricted to the heart $\mathcal{C}^\heartsuit$ is fully faithful.
        \item An object of $\mathcal{C}$ is in the heart if and only if its image under the forgetful functor is.
    \end{enumerate}
\end{proposition}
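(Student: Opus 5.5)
We reduce everything to the standard behaviour of equivariant D-modules under a prounipotent-by-reductive group action. The first step is to identify $\mathcal{C}$ with a category of twisted equivariant D-modules. Use the description of $\operatorname{\mathcal{W}-\mathrm{mod}}$ as twisted D-modules on $(V\otimes V')_\mathcal{K}/(V\otimes V')_\O$ from Remark \ref{remark: twisted D-modules}. Choosing the colattice $\Lambda^*=t^{-1}(V\otimes V')[t^{-1}]$, this becomes an equivalence with $\operatorname{D-mod}((V\otimes V')_\mathcal{K}/(V\otimes V')_\O)$ that is compatible with t-structures. The group $K:=\SO(V')_\O\times \Sp(V)_\O$ preserves the lattice $(V\otimes V')_\O$, so it acts on the ind-scheme $Y:=(V\otimes V')_\mathcal{K}/(V\otimes V')_\O$. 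Moreover, the $K$-action on $\operatorname{\mathcal{W}-\mathrm{mod}}$ from Lemma \ref{lemma: Weil rep} corresponds to the geometric action on (twisted) D-modules on $Y$, with the twisting trivialized $K$-equivariantly because the determinant twist is trivial on the stabilizer of the lattice. Hence $\mathcal{C}\cong \operatorname{D-mod}_K(Y)^{\mathrm{lc}}$, with a possible twist by a character of $K$, which does not affect the argument.

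Next, write $Y=\operatorname{colim}_N Y_N$ with $Y_N=t^{-N}(V\otimes V')_\O/(V\otimes V')_\O$. Each $Y_N$ is a finite-dimensional $K$-stable closed subscheme on which $K$ acts through a finite-type quotient $K_N$. The kernel $K^{(N)}$ of $K\to K_N$ is prounipotent. Compact objects are supported on some $Y_N$, so all four statements reduce to $\operatorname{D-mod}_{K_N}(Y_N)$ versus $\operatorname{D-mod}(Y_N)$. Equivariance for the prounipotent part is a property rather than extra structure, so we may further pass to $K_N$ as a linear algebraic group $R\ltimes U$ with $R=\SO(V')\times\Sp(V)$ connected reductive and $U$ unipotent.

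The four claims then follow from standard facts about equivariant D-modules for a connected group $H$ acting on a finite-type scheme $Z$.
\begin{enumerate}
\item Conservativity holds because $\mathrm{oblv}$ is the pullback along the smooth surjection $Z\to Z/H$.
\item The t-structure exists and is unique because, in the renormalized category of locally compact objects, $\mathrm{oblv}$ is conservative and commutes with truncations (smooth pullback, suitably shifted, is t-exact). The t-structure on $\mathcal{C}$ is defined by requiring $\mathrm{oblv}$ to be t-exact, and uniqueness is forced by conservativity together with t-exactness.
\item Full faithfulness on the heart holds because, for a connected group, equivariance of a D-module in the heart is a property. More precisely, $\Hom$ in the heart of $\operatorname{D-mod}_H(Z)$ is computed as $H^0$ of the $H$-invariants of $\Hom$. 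Connectedness of $\SO(V')_\O\times\Sp(V)_\O$ makes $H$ act trivially on $\Ext^0$.
\item The statement about the heart is immediate from the definition of the t-structure in (2).
\end{enumerate}

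The main obstacle is the first step. We must verify carefully that the Weil-representation action of $K$ on $\operatorname{\mathcal{W}-\mathrm{mod}}$, including the square-root determinant twist and the untwisting described in the introduction, really matches the geometric action on D-modules on $Y$, so that no nontrivial twist of $K$-equivariance survives. We would check this on the Lie algebra level using the explicit Weil action formulas, and use connectedness of $K$ to conclude.
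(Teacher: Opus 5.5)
Your route is the one the paper intends: its whole proof is the remark that the statement follows from the twisted D-module description of $\mathcal{C}$. Your arguments for (1)--(4) are the standard ones. However, your first step asserts something false, and the verification you single out as the main obstacle would fail.

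The twist in Remark \ref{remark: twisted D-modules} is not the determinant twist. That one is indeed trivial on $\Sp(M)_\O$, which is why $K=\SO(V')_\O\times\Sp(V)_\O$ acts honestly on $\operatorname{\mathcal{W}-\mathrm{mod}}$. The relevant twist is the Heisenberg one: the $\GG_a$-torsor $\mathrm{Heis}/(V\otimes V')_\O\to Y$ together with the exponential character sheaf $\chi$. The colattice $t^{-1}(V\otimes V')[t^{-1}]$ trivializes this torsor, but not $K$-equivariantly. Transported to $\operatorname{D-mod}(Y)$, the action of $g\in K$ is the geometric one tensored with the exponential of the point-dependent function $\tilde q\mapsto\langle\tilde q, g\tilde q-\tilde q\rangle$. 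This cocycle is not a coboundary, let alone a character of $K$.

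For instance, take $q=t^{-1}v\otimes v'$ with $v\neq 0$ and $(v',v')\neq 0$. Every $\exp(t\xi)$ with $\xi\in\mathfrak{sp}(V)$ fixes $q$ in $Y$. On these elements the cocycle restricts to $\xi\mapsto\langle v,\xi v\rangle(v',v')$ (up to normalization), a nonzero character of the stabilizer. So the orbit of $q$ carries the constant D-module as an untwisted $K$-equivariant object, but carries no $\chi$-twisted $K$-equivariant D-module at all. This is exactly the relevance condition of Section \ref{sec: generation}. The same example shows that the congruence subgroup $K^{(1)}$ acts trivially on $Y_1$ but nontrivially on twisted D-modules on $Y_1$. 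Hence your reduction to $K_N$ acting on $\operatorname{D-mod}(Y_N)$ is also incorrect as stated.

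None of this damages (1)--(4), because none of your arguments actually uses untwistedness. The fix is to drop the claim that the twist can be removed, not to prove it. Work instead with the equivalence $\operatorname{\mathcal{W}-\mathrm{mod}}^{K}\cong\operatorname{D-mod}_{K\ltimes(V\otimes V')_\O\times\GG_a,\chi}(\mathrm{Heis})$ stated just before Remark \ref{remark: twisted D-modules}. It is compatible with the t-structures and with forgetting $K$-equivariance. This is equivariance for a connected pro-algebraic group against a character sheaf, so the following all still hold:
\begin{itemize}
\item $\mathrm{oblv}$ is a shifted pullback along a smooth surjection, hence conservative and t-exact.
\item Compact objects still live over some $Y_N$.
\item Twisted equivariance for the prounipotent $K^{(N)}$ is still a property, now a nontrivial support condition rather than an automatic one.
\item For a connected group, a twisted strong equivariant structure on an object of the heart is unique, and every morphism in the heart respects it. This is your argument for (3) verbatim.
\end{itemize}
With that substitution, your proof of all four parts goes through.
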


Let $E_0\in \operatorname{\mathcal{W}-\mathrm{mod}}$ be the module $\mathcal{W} / (V\otimes V')_\O$.
Identifying $\operatorname{\mathcal{W}-\mathrm{mod}}$ with D-modules on $(V\otimes V')_\mathcal{K}/ (V\otimes V')_\mathcal{O}$ as above, $E_0$ corresponds to the delta D-module at $0$. In particular, $E_0$ is a compact object of $\operatorname{\mathcal{W}-\mathrm{mod}}$.
It follows either from the discussion in Section \ref{sec: Weil rep} or from the description of $\mathcal{C}$ via twisted D-modules that $E_0$ has a natural $\SO(V')_\O\times \Sp(V)_\O$-equivariant structure. Such a structure is unique, by Proposition \ref{prop:oblv}.
Hence, we have the object which we denote by the same name $E_0\in \mathcal{C}$.

\section{The geometric Weil representation}\label{sec: Weil rep}
In this section we recall the geometric (D-module) Weil representation, and the corresponding Hecke action on the invariants with respect to the positive loop group.
We end this section by giving a somewhat explicit formula for the Hecke action, which will be used in the rest of this paper (see Corollary \ref{cor: Hecke calculation recipe} and Remark \ref{remark: equivariant Hecke calculation recipe}).

We do not aim to develop this theory from scratch, but rather to recall the necessary results and constructions. For references, see \cite[§10]{Ras2} and \cite[Lemma 2.4.1]{BDFRT}, and also \cite{Laf}.
The parallel theory for $\ell$-adic sheaves is developed in \cite{LL}.

Let $M$ be a symplectic finite dimensional vector space over $\CC$.
\subsection{The finite case}
We start from the finite case, and let $Weyl$ be the Weyl algebra of $M$.
We shall construct an action of $\operatorname{D-mod}(\Sp(M))$ on $Weyl\mathrm{-mod}$.

Let $\alpha: \mathfrak{sp}(M)\to Weyl$ be the map given by
$$\mathfrak{sp}(M)\cong \mathrm{Sym}^2(M)\xrightarrow{xy\mapsto \frac{1}{2}(xy+yx)} Weyl,$$
It satisfies the following properties:
\begin{enumerate}
    \item $\alpha(a)x - x\alpha(a) = a(x)$ for any $a\in \mathfrak{sp}(M)$ and $x\in M$. That is, the infinitesimal action of $\mathfrak{sp}(M)$ on $Weyl$ is given by commutation with $\alpha(-)$.
    \item $\alpha(a)\alpha(b) - \alpha(b)\alpha(a) = \alpha([a,b])$. That is, $\alpha$ is a map of Lie algebras.
    \item $\alpha$ is $\Sp(M)$-equivariant.
\end{enumerate}

Given $s\in Weyl$, there is a natural element $\tilde{s}\in \Gamma(\Sp(M), \O_{\Sp(M)}\otimes Weyl)$, whose value over $g\in \Sp(M)$ is $g(s)$.
\begin{definition}
    Let $K\in \operatorname{D-mod}(\Sp(M))\otimes Weyl-\mathrm{mod}\otimes Weyl^{\mathrm{op}}-\mathrm{mod}$ be the cohomological shift by $-\dim \Sp(M)$ of the object defined as follows:
    \begin{enumerate}
        \item As a quasi-coherent sheaf on $\Sp(M)$, $K$ is the vector bundle $\mathcal{O}_{\Sp(M)}\otimes Weyl$.
        \item The connection on $K$ is given by the formula $$\nabla_a(f\otimes s) = -\mathrm{Lie}_a(f)\otimes s - f\otimes s\alpha(a).$$
        \item The left $Weyl$ action is given by multiplication from the left on the $Weyl$ factor.
        \item The $Weyl^{\mathrm{op}}$ action by $s\in Weyl$ is given by multiplication from the right by $\tilde{s}$.
    \end{enumerate}
    One easily verifies that the above connection is flat, and commutes with the left and right $Weyl$ actions, so that $K$ is indeed an object of $\operatorname{D-mod}(\Sp(M))\otimes Weyl\mathrm{-mod}\otimes Weyl^{\mathrm{op}}{-mod}$.
\end{definition}
\begin{proposition}
    \begin{enumerate}
        \item $K$ is holonomic. That is, $K\in \operatorname{D-mod}_{\mathrm{hol}}(\Sp(M))\otimes Weyl\mathrm{-mod}\otimes Weyl^{\mathrm{op}}{-mod}$
        \item $K$ is weakly $\Sp(M)$-equivariant.
        \item $K$ as a kernel object defines an action of $\operatorname{D-mod}(\Sp(M))$ on $Weyl\mathrm{-mod}$.
    \end{enumerate}
\end{proposition}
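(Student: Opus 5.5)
The plan is to translate everything into the language of D-modules on finite-dimensional varieties and to reduce part (3) to the standard dictionary between strong group actions on module categories and quantum moment maps. Fix a Lagrangian decomposition $M=L\oplus L^*$, so that $Weyl\cong D(L)$, $Weyl\mathrm{-mod}\cong \operatorname{D-mod}(L)$ and $Weyl^{\mathrm{op}}\mathrm{-mod}\cong\operatorname{D-mod}(L)$ (via the transpose anti-involution). Under these identifications $K$ becomes an object of $\operatorname{D-mod}(\Sp(M)\times L\times L)$. Its fiber over $g$ is the bimodule $Weyl$ twisted on one side by $g$, i.e. the Schwartz kernel of the metaplectic operator attached to $g$.

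For (1), I would show that $K$ is generated by the single section $1\otimes 1$ and bound its characteristic variety. The defining relations are:
\begin{itemize}
\item left multiplication by $x\in M$ agrees with right multiplication by $\tilde x$ (in suitable normalization);
\item $\nabla_a(1\otimes 1)=-1\otimes\alpha(a)$ for $a\in\mathfrak{sp}(M)$.
\end{itemize}
Passing to principal symbols, these relations cut the characteristic variety down to the set of points $(g,\xi,m,m')$ with $m'$ determined by $g$ and $m$, and with $\xi$ equal to the (translated) quadratic moment map $\mu(m)$. This set has dimension $\dim\Sp(M)+\dim M$, which is exactly half of $\dim T^*(\Sp(M)\times L\times L)$. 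Hence $K$ is holonomic.

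For (2), using property 3 of $\alpha$, I would write down the isomorphism $\mathrm{act}^*K\cong \mathrm{pr}^*K$ on $\Sp(M)\times\Sp(M)$ by $f\otimes s\mapsto f(h\,\cdot)\otimes h(s)$. One then checks three things:
\begin{itemize}
\item the isomorphism intertwines the connections, which follows from $\Sp$-equivariance of $\alpha$;
\item it intertwines the two $Weyl$-actions, which follows from $\widetilde{h(s)}(g)=gh(s)$;
\item it satisfies the cocycle condition, which is immediate since it is induced by the group action on $Weyl$.
\end{itemize}

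For (3), rather than verifying convolution coherences by hand, I would invoke the general principle (Beraldo, Raskin) for a module category over an algebra $A$. A strong action of $\operatorname{D-mod}(G)$ on $A\mathrm{-mod}$ is the same datum as an action of $G$ on $A$ by algebra automorphisms together with a $G$-equivariant Lie algebra map $\mathfrak{g}\to A$ whose commutator recovers the derivative of the $G$-action. Properties 1–3 of $\alpha$ are exactly this datum for $A=Weyl$. The remaining task is to identify the kernel of the resulting action with $K$. Concretely, the kernel is $\mathrm{av}_*$ applied to the diagonal bimodule, with the Harish-Chandra connection twisted by $\alpha$; the cohomological shift by $-\dim\Sp(M)$ is the standard normalization so that $\delta_1\in\operatorname{D-mod}(\Sp(M))$ acts by the identity. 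A sanity check is that the $!$-fiber of $K$ at $1$ is the diagonal bimodule $Weyl$ placed in degree $0$. Equivalently, one can check $K\star K\cong m_*(K\boxtimes K)$ directly, since both sides lie in the heart and the fibers compose by $g(h(s))=(gh)(s)$.

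I expect the main obstacle to be the sign and shift bookkeeping in this identification: matching the sign in $-s\alpha(a)$ and the left/right conventions with the convention of the cited dictionary, so that one gets an action rather than an anti-action, and tracking the $\dim\Sp(M)$ shift through $*$- versus $!$-pushforward. The holonomicity symbol computation is routine by comparison.
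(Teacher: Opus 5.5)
\textbf{Comparison with the paper.} The paper gives no proof of this proposition. It recalls the statement from \cite[\S10]{Ras2}, \cite[Lemma 2.4.1]{BDFRT} and \cite{Laf}. Your route to (3), through the Harish-Chandra datum consisting of the $\Sp(M)$-action on $Weyl$ together with $\alpha$, is the mechanism behind those references, and your (2) is correct in substance. Your holonomicity argument, however, fails as written.

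\textbf{Holonomicity.} Characteristic varieties are defined using the order filtration of $D(\Sp(M)\times L\times L)$. Under $Weyl\cong D(L)$, elements of $L^*\subset M$ have order $0$ and elements of $L\subset M$ have order $1$, so your relations are inhomogeneous. Write $p,p'$ for the fibre coordinates of the two copies of $T^*L$.
\begin{itemize}
\item For $a\in\Sym^2L$, the symbol of $\nabla_a+\alpha(a)$ is a quadratic form in $p$ alone, so $\xi$ drops out.
\item For $y\in L^*$, the symbol of the right-action relation retains only the $L$-component of $g(y)$.
\end{itemize}
The common zero locus of these symbols is $\{p=p'=0,\ \xi|_{\mathfrak{gl}(L)\oplus\Sym^2L^*}=0\}$, of dimension $\dim\Sp(M)+\dim M+\tfrac12 r(r+1)$ with $r=\dim L$. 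So nothing follows. In fact the set you describe is not conic, so it cannot be a characteristic variety at all; it is the support of an associated graded for a Bernstein-type filtration.

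To make your computation a proof, set $\mathcal{A}:=D(\Sp(M))\otimes Weyl\otimes Weyl^{\mathrm{op}}$ and filter it with $\mathcal{O}(\Sp(M))$ in degree $0$, vector fields in degree $2$, and $M$ in degree $1$ in both Weyl factors. Then $\mathrm{gr}\,\mathcal{A}\cong\mathcal{O}(T^*\Sp(M)\times M\times M)$, all your relations are homogeneous, and their symbols cut out exactly your set of dimension $\dim\Sp(M)+\dim M$. You still need that this filtration detects holonomicity. One way: it and the order filtration are both positive with commutative regular associated graded of the same dimension. Hence in both cases $\dim\mathrm{Supp}(\mathrm{gr}\,K)$ equals that dimension minus the grade $\min\{j:\Ext^j_{\mathcal{A}}(K,\mathcal{A})\neq 0\}$, which does not depend on the filtration. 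Alternatively, argue on the Bruhat strata of $\Sp(M)$ for the Siegel parabolic, where the $!$-restrictions of $K$ are explicit families of exponential D-modules, as in Section~\ref{sec: finite case Hecke action theta calculation}.

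\textbf{The direct check in (3).} Your ``equivalent'' check is misstated. With $A*N=\Gamma_{dR}(\Sp(M),A\stackrel{!}{\otimes}K\otimes_{Weyl}N)$ and $A_1*A_2=m_*(A_1\boxtimes A_2)$, the projection formula shows that associativity requires the following:
\begin{itemize}
\item an isomorphism on $\Sp(M)\times\Sp(M)$ between $K\boxtimes K$ contracted over the middle $Weyl$ and $m^!K$, with shifts matched;
\item the cocycle condition for this isomorphism on $\Sp(M)^3$;
\item $i_1^!K\cong Weyl$ for the unit.
\end{itemize}
The fibrewise maps $Weyl_g\otimes_{Weyl}Weyl_h\to Weyl_{gh}$, $s\otimes s'\mapsto s\,g(s')$, provide the isomorphism. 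Since everything lies in the heart up to shift, the $1$-categorical cocycle condition suffices.

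Any version pushing forward along $m$ is wrong. The contraction is $m^!K$, and $m_*m^!K$ is $K$ tensored with the Borel--Moore homology of $\Sp(M)$, which is not $K$. Likewise, the kernel attached to a Harish-Chandra datum is the image of the diagonal bimodule under the coaction (pullback along the action), not under $\mathrm{av}_*$. Only the direction from Harish-Chandra data to strong actions is needed; the converse identification you state does not hold in general.

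\textbf{A minor point in (2).} Your formula mixes two conventions. One is left translation with $h$ twisting the left $Weyl$-structure; this is where $\mathrm{Ad}$-equivariance of $\alpha$ is used. The other is right translation with $h$ twisting the $Weyl^{\mathrm{op}}$-structure; this is where $\widetilde{h(s)}(g)=gh(s)$ is used. Either works, but you must pick one.
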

Let us spell out (3) explicitly. Given $A\in \operatorname{D-mod}(\Sp(M))$ and $N\in Weyl\mathrm{-mod}$, we have
$$A * N := \Gamma_{dR}(\Sp(M), A\stackrel{!}{\otimes} K \otimes_{Weyl} N).$$

\subsubsection{Invertibility of the kernel}\label{sec: kernel invertibility}
Let $$K^{-1}\in\operatorname{D-mod}(\Sp(M))\otimes Weyl\mathrm{-mod}\otimes Weyl^{\mathrm{op}}{-mod}$$ be defined as the pullback of $K$ through the inverse automorphism of $\Sp(M)$.

Let $$K^{-1}\star K\in \operatorname{D-mod}(\Sp(M))\otimes Weyl\mathrm{-mod}\otimes Weyl^{\mathrm{op}}{-mod}$$ be the image of $$K^{-1}\stackrel{!}{\otimes} K\in \operatorname{D-mod}(\Sp(M))\otimes Weyl\mathrm{-mod}\otimes Weyl^{\mathrm{op}}{-mod}\otimes Weyl\mathrm{-mod}\otimes Weyl^{\mathrm{op}}{-mod}$$ under the map $1_{\Sp(M)}\otimes 1_{Weyl\mathrm{-mod}}\otimes \epsilon_{Weyl} \otimes 1_{Weyl^{op}\mathrm{-mod}}$, where $$\epsilon_{Weyl}:Weyl^{\mathrm{op}}{-mod}\otimes Weyl\mathrm{-mod}\to \mathrm{Vect}$$ is the co-unit map for the algebra $Weyl$ (that is, $N_1\boxtimes N_2\mapsto N_1\otimes_{Weyl} N_2$).
Then by definition of $K$, one verifies that $K^{-1}\star K$ is the external tensor product of the dualizing D-module on $\Sp(V)$ and the unit $Weyl \in Weyl{-mod}\otimes Weyl^{\mathrm{op}}\mathrm{-mod}$.

\begin{corollary}\label{cor: kernel invertability}
    Assume that $A\in \operatorname{D-mod}(\Sp(M))^\heartsuit$ and $N\in (Weyl\mathrm{-mod})^\heartsuit$ are simple objects. Then $A\stackrel{!}{\otimes}K \otimes_{Weyl} N\in (\operatorname{D-mod}(\Sp(M))\otimes Weyl\mathrm{-mod})^\heartsuit$ is a simple object too.
\end{corollary}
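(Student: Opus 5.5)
The plan is to show that the functor
$$\Phi: \operatorname{D-mod}(\Sp(M))\otimes Weyl\mathrm{-mod}\to \operatorname{D-mod}(\Sp(M))\otimes Weyl\mathrm{-mod},\qquad A\boxtimes N\mapsto A\stackrel{!}{\otimes}K\otimes_{Weyl}N$$
(extended linearly over the tensor product) is a t-exact autoequivalence. A t-exact equivalence carries simple objects of the heart to simple objects of the heart, and $A\boxtimes N$ is simple in the heart of the product. The latter holds because $Weyl\mathrm{-mod}$ is the category of modules over a simple algebra with $\mathrm{End}=\CC$ on simples, so the external product of simples is simple; alternatively, identify $Weyl\mathrm{-mod}\cong\operatorname{D-mod}(L)$ for a Lagrangian $L$ and use that the exterior product of simple D-modules is simple.

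\textbf{Step 1 (invertibility).} I will use $K^{-1}$ from \S\ref{sec: kernel invertibility} to write an explicit inverse functor $\Phi'$, given by $B\mapsto B\stackrel{!}{\otimes}K^{-1}\otimes_{Weyl}(-)$ relative over $\Sp(M)$. Fibrewise over $g\in\Sp(M)$, $K$ is the bimodule $Weyl$ with its right action twisted by $g$. Hence $\Phi$ is the relative version over $\Sp(M)$ of twisting a $Weyl$-module by the automorphism $g$, and the identity $K^{-1}\star K\cong \omega\boxtimes Weyl$ gives $\Phi'\circ\Phi\cong\mathrm{id}$. The symmetric identity $K\star K^{-1}\cong\omega\boxtimes Weyl$ holds by the same verification, giving $\Phi\circ\Phi'\cong\mathrm{id}$. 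The $!$-tensor against $K^{-1}$ on the diagonal Sp-factor is what the operation $\star$ encodes.

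\textbf{Step 2 (t-exactness).} As a quasi-coherent sheaf, $K$ is the vector bundle $\mathcal{O}\otimes Weyl$, free over $Weyl$ on both sides. Therefore $K\otimes_{Weyl}N$ is $\mathcal{O}_{\Sp(M)}\otimes N$ with a connection, which is a flat family; in particular it is exact in $N$. The $!$-tensor with the smooth lisse-type object $K[\dim\Sp(M)]$ is exact up to the shift $-\dim \Sp(M)$ built into $K$. So, with the standard normalization, $A\stackrel{!}{\otimes}K\otimes_{Weyl}N$ lies in the heart whenever $A$ and $N$ do. The same argument applies to $\Phi'$, so both are t-exact.

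The main obstacle will be making Step 1 rigorous in the relative setting, i.e.\ checking that $\star$ really composes the relative functors (a base-change/projection formula along the diagonal of $\Sp(M)$) rather than only the convolution actions. Step 2 also needs care with the cohomological shift conventions for $!$-tensor product of a flat connection on a smooth variety. If the relative inverse is awkward, I would fall back on a direct argument. Any nonzero proper subobject $S\subset\Phi(A\boxtimes N)$ in the heart yields, via the exact functor $\Phi'$, a nonzero proper subobject of $A\boxtimes N$ (using $\Phi'\Phi\cong\mathrm{id}$ and exactness), which contradicts simplicity.
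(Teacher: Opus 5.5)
Your proposal is correct and follows essentially the paper's route. Both $\Phi$ and the functor built from $K^{-1}$ are t-exact because $K$ and $K^{-1}$ are shifted lisse objects lying in the heart, and the identity $K^{-1}\star K\cong\omega_{\Sp(M)}\boxtimes Weyl$ recovers $A\boxtimes N$. Adding the symmetric identity $K\star K^{-1}\cong\omega_{\Sp(M)}\boxtimes Weyl$ is a real improvement: it gives the conservativity of the $K^{-1}$-functor on the heart, which the paper's one-sided argument (and your fallback) tacitly needs to ensure that a nonzero proper subobject is sent to a nonzero proper subobject.
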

\begin{proof}
    First note that $K[\dim \Sp(M)], K^{-1}[\dim \Sp(M)]$ are in the hearts of the categories in which they are defined. This already implies that $A\stackrel{!}{\otimes}K \otimes_{Weyl} N$ is in the heart, and also that the functor $$(1_{\Sp(M)}\otimes 1_{Weyl\mathrm{-mod}}\otimes \epsilon_{Weyl})(K^{-1}\stackrel{!}{\otimes} -)$$ is t-exact. Applying it to $A\stackrel{!}{\otimes}K \otimes_{Weyl} N$ we get $$A\stackrel{!}{\otimes}(\omega_{\Sp(V)}\boxtimes Weyl)\otimes_{Weyl} N\cong A\boxtimes N\in (\operatorname{D-mod}(\Sp(M))\otimes Weyl\mathrm{-mod})^\heartsuit,$$ which is a simple object. It follows that $A\stackrel{!}{\otimes}K \otimes_{Weyl} N$ is a simple object too.
\end{proof}

\subsubsection{Calculation of the Hecke action}\label{sec: finite case Hecke action theta calculation}
We now give the finite dimensional analogue of the calculation of the Hecke action on the basic object $E_0$.

Let $G$ be a reductive group acting on $M$ by automorphisms preserving the symplectic form (i.e. we have a map $G\to \Sp(M)$).
Let $L_0$ be a Lagrangian subspace of $M$, and let $P\subset G$ and $P_0\subset \Sp(M)$ be the parabolic subgroups stabilizing it.
Let $\mathcal{D}$ be the determinant line bundle on $\Sp(M)/P_0$, defined as the inverse of the determinant of the tautological vector bundle on $\Sp(M)/P_0$ whose fiber over a point $g$ is $gL_0$. It is an ample line bundle, and its pullback to $\Sp(M)$ is trivial. We will denote its restriction to $G/P$ by the same letter
We let $\operatorname{D-mod}_\kappa$ be the category of twisted D-modules on an algebraic variety equipped with a morphism to $\Sp(M)/P_0$, where the twisting is by $\mathcal{D}^{\otimes\kappa}$ ($\kappa\in \RR$).
Since the pullback of $\mathcal{D}$ to $\Sp(M)$ is trivial, we may view the previous action of $\operatorname{D-mod}(\Sp(M))$ as an action of $\operatorname{D-mod}_{-1/2}(\Sp(M))$. In particular, we view $K$ as an object of $$\operatorname{D-mod}_{1/2}(\Sp(M))\otimes Weyl-\mathrm{mod}\otimes Weyl^{\mathrm{op}}-\mathrm{mod}.$$

Restricting the action constructed above to $G$ and passing to $P$-invariants, we get a functor $$\operatorname{D-mod}_{-1/2}(G/P)\otimes (Weyl\mathrm{-mod})^P\to Weyl\mathrm{-mod},$$ where the superscript $P$ denotes the category of strongly $P$-equivariant objects (with respect to the twisted by $-1/2$ action). This functor also upgrades to give an action of the Hecke algebra $\operatorname{D-mod}_{-1/2,P}(G/P)$ on the category of strongly $P$-equivariant objects.

We consider $E:=Weyl / L_0\in Weyl\mathrm{-mod}$.
This object is strongly $P$-equivariant (this would fail if we considered the action without the twist). This is equivalent to saying that $K\otimes _{Weyl} Weyl / L_0$ descends (with respect to the t-exact pullback) to a twisted D-module on $G/P$. This is an object of $\operatorname{D-mod}_{1/2}(G/P)\otimes Weyl\mathrm{-mod}$ which we denote by $E'$. Its image under the forgetful functor to $\operatorname{D-mod}_{1/2}(G/P)$ is described in \cite[§1.3]{Laf}, up to a cohomological shift. In order to recover $E'$ itself from this description, one should take into the account the $Weyl$-action given by left multiplication, which commutes with all other structures given. Again, the twist here is essential.

As a result, given $A\in \operatorname{D-mod}_{-1/2}(G/P)$, whose !-pullback to $G$ is $\tilde{A}$, we have $$A*E:=\tilde{A}*E \cong \Gamma_{dR}(G/P, A\stackrel{!}{\otimes} E').$$

Now let us fix a Lagrangian $L_0^*\subset M$ transversal to $L_0$, which gives an equivalence $Weyl\mathrm{-mod}\cong \operatorname{D-mod}(M/L_0)$. Under this equivalence, $E$ corresponds to the delta D-module at $0\in M/L_0$.
This way, $E'$ corresponds to an element of $$\operatorname{D-mod}_{1/2}(G/P)\otimes \operatorname{D-mod}(M/L_0)\cong \operatorname{D-mod}_{1/2}(G/P\times M/L_0),$$ which we also denote by $E'$ (in the last expression the twisting is only on the first factor).

Given $g\in G / P$, it defines a Lagrangian subspace $L_g$.
The !-restriction of $E'$ to the preimage of $g$ is isomorphic to $Weyl / L_g$, which under the above equivalence corresponds to a D-module supported on the image of $L_g$ in $M/L_0$. Let us now describe what it is as a D-module on $(L_g + L_0) / L_0$.
The restriction of the symplectic form to $(L_g + L_0) / L_g\cap L_0$ is non-degenerate, and inside the resulting symplectic vector space, we have the following three Lagrangian subspaces: $L_0 / L_g\cap L_0$, $L_g / L_g\cap L_0$, and $(L_0^*\cap (L_g + L_0) + L_g\cap L_0) / L_g\cap L_0$. The second and the third are transversal to the first, and so we can consider the third as the graph of a linear map from $L_g/ L_g\cap L_0$ to $L_0/ L_g\cap L_0$.
This gives a symmetric bilinear form on $L_g/ L_g\cap L_0\cong (L_g + L_0) / L_0$, and the D-module above is the exponential of the associated quadratic form.
The reason for this is simple - this exponential D-module is defined by the differential equations given by $\partial_\ell - B(\ell,-)$ for $\ell\in L_g$, where $B$ is the bilinear form described above. Here $\partial_\ell$ is a linear differential and $B(\ell,-)$ is a linear function.

This isomorphism is consistent along smooth families in which the dimension of $L_g\cap L_0$ is constant, and the twisting is integral. Namely, given such a family $U$, and $\sqrt{\mathcal{D}}$ a square root for the determinant line bundle on $U$, we have a polynomial function $f$ which over a point $g$ gives the above quadratic form. Then the !-restriction of $E'$ to the preimage of $U$ is isomorphic to the cohomological shift by $-\dim U$ of $\sqrt{\mathcal{D}}\otimes \exp(f)$. Here $\exp(f)$ is a D-module on the total space of the vector bundle on $U$ whose fibers are $(L_g + L_0) / L_0$. It is defined by the differential equations $\partial_\ell - B_g(\ell, -)$ as above, where $\ell\in L_g$ and $B_g$ is dependent on $g$ in a polynomial manner.

Given an element $A\in \operatorname{D-mod}_{-1/2}(G/P)$, the object $A*E$ is given by the pushforward of $A\stackrel{!}{\otimes} E'$ along the projection $G/P\times M/L_0\to M/L_0$.

In particular, let $Z$ be a closed subscheme of $G/P$, and $U$ a smooth open and dense subscheme of $Z$ such that the Lagrangian determinant line bundle has a square root $\sqrt{\mathcal{D}}$ over $U$, and the dimension of $L_g\cap L_0$ is constant for $g\in U$. Let $A$ be the twisted delta (IC) D-module on $Z$, that is, the intermediate extension (in the category of twisted D-modules) from $U$ of $\sqrt{\mathcal{D}}\otimes \omega_U[\dim U]$.
Then $A\stackrel{!}{\otimes} E'$ is supported on $Z\times M/L_0$. It is holonomic (as both $A$ and $E'$ are). Its restriction to $U\times M/L_0$ is (the pushforward of) $\sqrt{\mathcal{D}}\otimes \exp(f)$, and the !-pullback to the preimage of any $g\in Z$ is the exponential D-module on $(L_g + L_0) / L_0$ described above. By corollary \ref{cor: kernel invertability}, the pullback of $A\stackrel{!}{\otimes}E'$ to $G\times M/L_0$ is a simple object of the heart (here by pullback we mean the t-exact pullback). It follows that $A\stackrel{!}{\otimes}E'$ is a simple object of the heart too.

\subsection{The Loop group case}
We now consider the loop Weyl algebra and its category of modules, as recalled in the introduction to this paper. As there is no risk of confusion, we will denote it by $\mathcal{W}$.
We do not aim to treat the technicalities of the situation, only recall the main results and bottom lines.

Let $\mathcal{D}$ be the determinant line bundle on $\Gr(\Sp(M))$, (see Appendix \ref{appendix: det line bundle}). Among other properties, it is an ample $\Sp(M)_\O$-equivariant line bundle.
We write our twistings with respect to (pullbacks) of this line bundle.

The following lemma is given in \cite[Lemma 2.4.1]{BDFRT}.
\begin{lemma}\label{lemma: Weil rep}
    There is a categorical action of $\operatorname{D-mod}_{-1/2}^*(\Sp(M)_\mathcal{K})$ on $\operatorname{\mathcal{W}-\mathrm{mod}}$.
\end{lemma}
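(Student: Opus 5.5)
The plan is to obtain the loop action by \emph{integrating the infinitesimal action} of a central extension of $\mathfrak{sp}(M)_\mathcal{K}$, mirroring the finite-dimensional kernel construction of the previous subsection.

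\textbf{Step 1: the Lie algebra map.} Following the finite case, we first construct the Lie algebra map
$$\alpha:\widehat{\mathfrak{sp}(M)_\mathcal{K}}\to \mathcal{W},$$
where the source is the central extension. We take the normally ordered quadratic expressions (Sugawara-type) $\alpha(a)=\frac12{:}xy{:}$ for $a\leftrightarrow xy\in \Sym^2(M_\mathcal{K})$. Normal ordering is taken with respect to the lattice $M_\O$ and a complementary colattice. The failure of $\alpha$ to be a Lie map on $\mathfrak{sp}(M)_\mathcal{K}$ is a 2-cocycle. We will check that this cocycle is $-\tfrac12$ of the one corresponding to the determinant (Tate) extension, i.e. the residue cocycle $\mathrm{Res}\,\mathrm{tr}(a\,db)$ suitably normalised. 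This check is the familiar computation that the Weyl (bosonic) Fock representation has level $-1/2$, dual to the Clifford $+1/2$. We will also check the analogue of property (1): commutation with $\alpha(a)$ realises the action of $a$ on $M_\mathcal{K}\subset\mathcal{W}$.

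\textbf{Step 2: integration to $\Sp(M)_\mathcal{K}$.} The $\Sp(M)_\O$-part integrates directly. On $\mathfrak{sp}(M)_\O$ the extension splits, and the positive loop group acts on $\mathcal{W}$ by continuous automorphisms preserving the discrete-module condition. This yields a strong action of $\Sp(M)_\O$ on $\operatorname{\mathcal{W}-\mathrm{mod}}$, since the $\O$-part acts locally finitely on discrete modules.

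For the whole loop group we use the fact that a strong categorical action of $\operatorname{D-mod}^*_{\kappa}(G_\mathcal{K})$ on a category is equivalent to the data below, by Raskin's formalism in \cite{Ras}/\cite{Ras2}.
\begin{itemize}
\item A weak (Harish-Chandra) action of $G_\mathcal{K}$.
\item A compatible action of the twisted Harish-Chandra datum $(\widehat{\mathfrak g}_\kappa, G_\O)$.
\end{itemize}
The weak action of $\Sp(M)_\mathcal{K}$ on $\operatorname{\mathcal{W}-\mathrm{mod}}$ is transport of structure. An element $g$ sends $\mathcal{W}$ to itself and the lattice $M_\O$ to $gM_\O$. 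Modules discrete with respect to $M_\O$ remain discrete (the renormalised discrete category depends only on the commensurability class). The infinitesimal part is supplied by $\alpha$ from Step 1. The twist $\kappa=-1/2$ appears precisely because the commensurable-lattice identifications $\operatorname{\mathcal{W}-\mathrm{mod}}$ twist by $\det(gM_\O|M_\O)^{-1/2}$.

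Equivalently, and more concretely, we can form a kernel $K$ on $\Sp(M)_\mathcal{K}$ as in the finite case. We do this by writing $\Sp(M)_\mathcal{K}$ as a colimit over the placid pieces $\Sp(M)_\mathcal{K}\times^{\Sp(M)_\O}$(congruence quotients). On each piece we reduce to a finite-dimensional Weyl algebra of $t^{-N}M_\O/t^{N}M_\O$, and apply the finite construction together with Corollary \ref{cor: kernel invertability}.

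\textbf{Step 3: compatibility.} We verify associativity and unit by checking the cocycle identity for kernels, $K\star K\cong m^!K$, twisted by the multiplicative line bundle $\mathcal{D}^{-1/2}$. Since $\mathcal{D}$ is multiplicative (Appendix \ref{appendix: det line bundle}), this reduces to the finite-dimensional statement on each truncation, plus compatibility of these statements under the transition maps $N\to N+1$.

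\textbf{Main obstacle.} I expect the hardest point to be the compatibility of the truncations with the twisting. Enlarging $N$ changes the finite Weyl algebra by a Heisenberg tensor factor, and the identification requires a choice of square root of a determinant line. We must show these choices are coherent, i.e. that the resulting gerbe is exactly $\mathcal{D}^{-1/2}$ and not something differing by a sign torsor. My first attempt would be to pin this down using the level computation of Step 1 together with the multiplicativity of $\mathcal{D}$.
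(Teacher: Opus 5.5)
Your proposal follows essentially the route the paper indicates: the paper gives no proof of its own but cites \cite[Lemma 2.4.1]{BDFRT}, remarking that the finite-dimensional construction carries over via Raskin's Harish-Chandra data \cite[\S 10]{Ras2}, with the analogue of $\alpha$ existing only on the level $-1/2$ central extension of $\mathfrak{sp}(M)_\mathcal{K}$ \cite{FF}, which is exactly your Steps 1--2. Two cautions: first, your ``concrete'' truncation variant does not work literally, since elements of a bounded piece of $\Sp(M)_\mathcal{K}$ outside $\Sp(M)_\O$ do not preserve $t^{\pm N}M_\O$ and so do not act on $t^{-N}M_\O/t^{N}M_\O$; second, the ``sign torsor'' obstacle you anticipate does not arise, because $\mathcal{D}^{-1/2}$-twisted D-modules are defined canonically as monodromic objects, with no square root chosen, and inherit the multiplicative structure from $\mathcal{D}$ --- in the Harish-Chandra route your Step 3 comes for free, and the substantive check is instead the $\Sp(M)_\mathcal{K}$-equivariance of $\alpha$ up to the central cocycle.
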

The underlying idea of the construction is the same as in the finite case, but there are technical difficulties arising from infinite dimensionality. They are treated in \cite{Ras2} (see in particular Section 10 there, where the notion of a Harish-Chandra data is introduced).
One important feature of the passage to the loop group case is that the analogue of the map $\alpha$ defined in the finite case exists only as a map from the affine Lie algebra (see \cite{FF}), and does not factor through the loop algebra. This makes the appearance of the twist by $-1/2$ essential (in particular $\mathcal{D}$ is not trivialized on $\Sp(M)_\mathcal{K}$).

The contents of Sections \ref{sec: kernel invertibility} and \ref{sec: finite case Hecke action theta calculation} translate almost verbatim to the loop group case, with $L_0$ replaced by $M_\O$, $G$ replaced by $G_\mathcal{K}$, and $P$ replaced by $G_\O$. Let us call the analogue of the object $E$ that we had by the name $E_0$, matching the notation of Section \ref{sec: notation}.
We fix $t^{-1}M[t^{-1}]$ as a Lagrangian subspace of $M_\mathcal{K}$ transversal to $M_\O$, using which we identify $\operatorname{\mathcal{W}-\mathrm{mod}}$ with $\operatorname{D-mod}(M_\mathcal{K} / M_\O)$.
Note that for any $g\in \mathrm{Gr}_G$ the space $L_g + M_\O / L_g\cap M_\O$ is finite dimensional, so that the definition of the exponential D-module given there still makes sense.

In conclusion, we have the following corollary, summarizing the results needed for the rest of the paper.
\begin{corollary}\label{cor: Hecke calculation recipe}
    Let $Z$ be a closed finite dimensional subscheme of $\mathrm{Gr}_G$, and $U$ a smooth open and dense subscheme of $Z$ such that the Lagrangian determinant line bundle has a square root $\sqrt{\mathcal{D}}$ over $U$, and the dimension of $L_g\cap L_0$ is constant for $g\in U$. Let $A$ be the twisted delta (IC) D-module on $Z$, that is, the intermediate extension (in the category of twisted D-modules) from $U$ of the cohomologically shifted dualizing D-module (so that it will be in the heart) twisted by $\sqrt{\mathcal{D}}$.

    Then, there exists $\F\in \operatorname{D-mod}_{\mathrm{hol}}(\mathrm{Gr}_G\times M_\mathcal{K} / M_\O)^\heartsuit$ such that
    \begin{enumerate}
        \item $\F$ is supported on $Z\times M_\mathcal{K} / M_\O$.
        \item The restriction of $\F$ to $U\times M_\mathcal{K} / M_\O$ is isomorphic to (the pushforward along closed embedding of) $\sqrt{\mathcal{D}}\otimes \exp(f)$. Here $\exp(f)$, described in Section \ref{sec: finite case Hecke action theta calculation}, is a D-module on the total space of the vector bundle on $U$ whose fiber over $g\in U$ is $(L_g + M_\O) / M_\O$.
        \item The !-pullback of $\F$ to the preimage of any $g\in Z$ is the exponential D-module on $L_g + M_\O / M_\O$ described in Section \ref{sec: finite case Hecke action theta calculation}, tensored with the costalk of $A$ at $g$.
        \item $\F$ is a simple object in $\operatorname{D-mod}_{\mathrm{hol}}(\mathrm{Gr}_G\times M_\mathcal{K} / M_\O)^\heartsuit$.
        \item $A*E_0\in \operatorname{D-mod}(M_\mathcal{K} / M_\O)^c$ is given by the pushforward of $\F$ along the projection $\mathrm{Gr}_G\times M_\mathcal{K} / M_\O\to M_\mathcal{K} / M_\O$.
    \end{enumerate}
\end{corollary}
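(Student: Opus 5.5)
The plan is to transport the finite-dimensional analysis of Sections \ref{sec: kernel invertibility} and \ref{sec: finite case Hecke action theta calculation} to the loop setting by a finite-dimensional reduction. Since $Z$ is a closed finite-dimensional subscheme of $\mathrm{Gr}_G$, there is $N\gg 0$ such that $t^N M_\O\subset L_g\subset t^{-N}M_\O$ for all $g\in Z$. Then everything in sight happens inside the finite-dimensional symplectic space $\overline{M}:=t^{-N}M_\O/t^NM_\O$. The Lagrangian $L_g$ becomes a Lagrangian $\overline{L}_g\subset\overline{M}$, and $M_\O$ becomes $\overline{L}_0:=M_\O/t^NM_\O$. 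The colattice $t^{-1}M[t^{-1}]$ cuts out a transversal Lagrangian $\overline{L}_0^*=t^{-N}M_\O\cap t^{-1}M[t^{-1}]$. Moreover, the relevant part of $M_\mathcal{K}/M_\O$ is the finite-dimensional subspace $t^{-N}M_\O/M_\O\cong \overline{M}/\overline{L}_0$, and $\operatorname{D-mod}(M_\mathcal{K}/M_\O)$ is the colimit of D-modules on these subspaces along closed embeddings.

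With this reduction in hand, I would construct $\F$ as follows. The action of the kernel on $E_0$, restricted to $G_{\mathcal{K}}$ and then to $G_\O$-invariants, gives the loop analogue $E'$ of the object of Section \ref{sec: finite case Hecke action theta calculation}. This is a $(1/2)$-twisted D-module on $\mathrm{Gr}_G\times M_\mathcal{K}/M_\O$ whose fiber over $g$ is $\mathcal{W}/L_g$. Over the finite-dimensional piece $Z$ it is identified with the finite-dimensional $E'$ for $\overline{M}$, pulled back along the map $Z\to \Sp(\overline{M})/P_0$, $g\mapsto \overline{L}_g$. Under this identification the loop determinant line bundle restricts to the finite one, up to a constant line. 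I then set $\F:=A\stackrel{!}{\otimes}E'$. Property (5) is the defining formula $A*E_0\cong\Gamma_{dR}(\mathrm{Gr}_G, A\stackrel{!}{\otimes}E')$, exactly as in the finite case, with $\Gamma_{dR}$ read as pushforward to $M_\mathcal{K}/M_\O$.

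The remaining properties follow the finite case. Property (1) holds because $A$ is supported on $Z$. Property (2) is the computation that the fiber $\mathcal{W}/L_g$, transported through the transversal colattice, is the exponential of the quadratic form $B_g$ on $(L_g+M_\O)/M_\O$. Here $B_g$ is obtained from the graph of $\overline{L}_0^*\cap(\overline{L}_g+\overline{L}_0)$; it varies polynomially along $U$ because $\dim L_g\cap M_\O$ is constant there, and the twist by $\sqrt{\mathcal{D}}$ untwists the family. Property (3) comes from base change for $!$-restriction applied to $A\stackrel{!}{\otimes}E'$ at a point $g$. Holonomicity and compactness of $A*E_0$ follow since both factors are holonomic and the pushforward is along the proper map from $Z$.

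The main obstacle is property (4), simplicity. I would prove it by pulling back to a smooth finite-dimensional cover of $Z$ on which the group acts: a quotient $G_\O/G^{(k)}$, or more precisely the image in $\Sp(\overline{M})$ of a suitable Schubert-type cover. There I would apply the invertibility of the kernel (Corollary \ref{cor: kernel invertability}) for $\Sp(\overline{M})$: the $!$-tensor of a simple twisted D-module with the shifted kernel and $E$ stays simple, since convolving with $K^{-1}$ is t-exact and returns $A\boxtimes E$. Simplicity on the cover then descends along the smooth, t-exact pullback, as in the last paragraph of Section \ref{sec: finite case Hecke action theta calculation}. The delicate point is making sure that the loop action of Lemma \ref{lemma: Weil rep}, restricted to this finite piece, really agrees with the finite Weil action on $\overline{M}$, including the twist compatibility. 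For this I would appeal to the Harish-Chandra data construction of \cite{Ras2} and \cite[Lemma 2.4.1]{BDFRT}.
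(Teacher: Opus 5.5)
Your proof is correct at the level of rigor of the paper, but it brings the finite-dimensional results of Section \ref{sec: finite case Hecke action theta calculation} into the loop setting by a different route.

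The paper simply declares that Sections \ref{sec: kernel invertibility} and \ref{sec: finite case Hecke action theta calculation} translate almost verbatim once $G$, $P$, $L_0$ are replaced by $G_\mathcal{K}$, $G_\O$, $M_\O$. In that reading, $\F$ is $A\stackrel{!}{\otimes}E'$, where $E'$ is the descent to $\Gr_G$ of the loop kernel applied to $E_0$. Simplicity then comes from pulling back to $G_\mathcal{K}\times M_\mathcal{K}/M_\O$ and using a loop analogue of Corollary \ref{cor: kernel invertability}.

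You instead truncate to $\overline{M}=t^{-N}M_\O/t^NM_\O$ and map $Z$ into the honest finite Lagrangian Grassmannian $\mathrm{LGr}(\overline{M})=\Sp(\overline{M})/P_0$. The group doing the work is then $\Sp(\overline{M})$ rather than $G_\mathcal{K}$, which does not even act on $\overline{M}$.

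The trade-off is as follows.
\begin{itemize}
\item The paper's route keeps everything phrased through the action of Lemma \ref{lemma: Weil rep}, so property (5) and the $G_\O$-equivariant upgrade of Remark \ref{remark: equivariant Hecke calculation recipe} are immediate. The price is that it tacitly relies on infinite-dimensional analogues that are only cited: descent from the placid ind-scheme $G_\mathcal{K}$, t-exactness of renormalized pullbacks, and invertibility of the loop kernel.
\item Your route turns holonomicity, base change, Kashiwara's lemma and simplicity into genuinely finite-dimensional statements. The price is the one compatibility you correctly isolate: over $Z$, the loop family must agree with the pullback of the finite universal family on $\mathrm{LGr}(\overline{M})$. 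In that comparison the determinant lines differ by the constant line $\det(M_\O/t^NM_\O)$, and the modules acquire a $\delta$-factor in the directions of $M_\mathcal{K}/M_\O$ transverse to $t^{-N}M_\O/M_\O$. Neither affects any of the five properties, simplicity included.
\end{itemize}

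One correction in (4): the cover should not be $G_\O/G^{(k)}$. That group only surjects onto individual orbits, so it could at best give simplicity over $U$ and would leave possible subquotients supported over $(Z\setminus U)\times M_\mathcal{K}/M_\O$ unaccounted for. Use instead the $P_0$-torsor $Z\times_{\mathrm{LGr}(\overline{M})}\Sp(\overline{M})\to Z$. Since $P_0$ is connected, pullback along it both preserves and reflects simplicity of objects in the heart. On this torsor the argument of Corollary \ref{cor: kernel invertability} for $\Sp(\overline{M})$ runs unchanged, exactly as in the last paragraph of Section \ref{sec: finite case Hecke action theta calculation}.
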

\begin{remark}\label{remark: equivariant Hecke calculation recipe}
    The above can be upgraded to the $G_\O$-equivariant setting in the following sense:
    If in the above we assume $U$ to be a $G_\O$-orbit, and the square root of the determinant line bundle on $U$ to be $G_\O$-equivariant, then there is a $G_\O$ equivariant structure on $A$ which we denote $\tilde{A}$.
    Then there exists $\tilde{\F}\in \operatorname{D-mod}_{G_\O}^{\mathrm{hol}, \chi}(\mathrm{Gr}_G\times M_\mathcal{K} / M_\O)^\heartsuit$ (in the notation of Remark \ref{remark: twisted D-modules} below) such that forgetting the $G_\O$-equivariant structure we get $\F$ (note that the choice of Lagrangian $t^{-1}M[t^{-1}]$ that we made above gives an isomorphism $\operatorname{D-mod}_{\chi}(\mathrm{Gr}_G\times M_\mathcal{K} / M_\O)\cong \operatorname{D-mod}(\mathrm{Gr}_G\times M_\mathcal{K} / M_\O)$).
    $\tilde{A}* E_0\in \operatorname{D-mod}_{G_\O}^{\mathrm{hol}, \chi}(M_\mathcal{K} / M_\O)^\mathrm{lc}\cong \operatorname{\mathcal{W}-\mathrm{mod}}^{G_\O,\mathrm{lc}}$ is then given by the pushforward of $\tilde{\F}$ along the projection $\mathrm{Gr}_G\times M_\mathcal{K} / M_\O\to M_\mathcal{K} / M_\O$.
\end{remark}

\section{Identification of the two Hecke actions}\label{sec: Hecke actions}
In this section we show an identification of the two Hecke actions on $\mathcal{C}$ (Proposition \ref{prop: Hecke identification}). The parallel result for $\ell$-adic sheaves is proved in \cite{Lys}.

\subsection{Matching the basic representation}\label{sec: matching basic reps}
Consider the standard representation $U\in\mathrm{Rep}(SO(U))$. Our first goal is to prove the following
\begin{proposition}\label{prop: U Hecke Identification}
    There is an isomorphism:
    $$\phi_{\Sp(V)}(U)*E_0\cong \phi_{\SO(V')}(U|_{\SO(U')})*E_0.$$
\end{proposition}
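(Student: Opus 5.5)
The plan is to compute both sides explicitly using Corollary \ref{cor: Hecke calculation recipe} and Remark \ref{remark: equivariant Hecke calculation recipe}, and then compare the resulting objects of $\mathcal{C}$, viewed as twisted equivariant D-modules on $(V\otimes V')_\mathcal{K}/(V\otimes V')_\O$.

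\textbf{Right-hand side.} Since $U|_{\SO(U')}\cong U'\oplus \mathbf{1}$, the right-hand side splits as $\phi_{\SO(V')}(U')*E_0\oplus E_0$. The representation $U'$ corresponds under geometric Satake to the IC sheaf of the minuscule orbit $\Gr^{\omega_1}_{\SO(V')}$. This orbit is closed and smooth, and is the quadric of isotropic lines $\ell\subset V'$, realized by the lattices $L'=V'_\O+t^{-1}\ell_\O$ (more precisely, the corresponding $\SO$-lattices). For such $g$ the space $(L_g+(V\otimes V')_\O)/(V\otimes V')_\O$ is $t^{-1}V\otimes\ell$. The quadratic form of Section \ref{sec: finite case Hecke action theta calculation} on it will be computed directly from the transversal Lagrangian $t^{-1}(V\otimes V')[t^{-1}]$. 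I expect it to vanish because $\ell$ is isotropic, so that $\F$ is the (twisted) constant sheaf on the tautological bundle. Pushing forward to $t^{-1}(V\otimes V')\cong \Hom(V',V)$ then gives a D-module supported on the locus $\{x:\operatorname{im}(x^*)\text{ isotropic of dim}\le 1\}$, together with the contribution from the zero section.

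\textbf{Left-hand side.} The representation $U$ of $\SO(U)$ corresponds to the quasi-minuscule orbit closure $\overline{\Gr^{\alpha}_{\Sp(V)}}=\Gr^\alpha\sqcup\{1\}$. Here $\Gr^\alpha$ has dimension $2n$ and consists of the lattices $L=V_\O+t^{-1}\O v+\ldots$, determined by a line $v\in V$ together with a $t^{-2}$-correction parameter. I will take $U=\Gr^\alpha$, check that the square root of $\mathcal{D}$ exists $\Sp(V)_\O$-equivariantly there, and compute the quadratic form $f$ on $(L_g+(V\otimes V')_\O)/(V\otimes V')_\O\cong t^{-1}(v\otimes V')$. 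I expect $f$ to be (a scalar multiple of) the pairing of the correction parameter with $q_{V'}$. In that case, integrating out the correction parameter (a Fourier-type integration of the exponential) localizes the pushforward onto the vectors $v\otimes w$ with $w$ isotropic. This should reproduce the same rank-one isotropic-locus sheaf as on the $\SO$ side, while the point stratum $\{1\}$ and the fibre over $0$ produce a copy of $E_0$.

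\textbf{Comparison.} To conclude, I would argue that both sides are semisimple objects of $\mathcal{C}^\heartsuit$, using purity or the decomposition theorem for the pushforward of the simple holonomic $\F$ (Corollary \ref{cor: Hecke calculation recipe}(4)) along a proper map. I would then identify their simple constituents by restricting to the $\Sp(V)\times\SO(V')$-orbits on $\Hom(V',V)$ (orbits of rank $\le1$ and isotropy type). By Proposition \ref{prop:oblv}, it suffices to compare the underlying non-equivariant objects, provided the equivariant structures are unique on simples.

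I expect the main obstacle to be the left-hand computation. One must compute the quadratic form on the quasi-minuscule orbit precisely, including the twist by $\sqrt{\mathcal{D}}$, and then evaluate the pushforward near the closed point. There, $\dim(L_g\cap(V\otimes V')_\O)$ jumps, so one must verify that the stalk contributions give exactly one extra summand $E_0$ in degree $0$ rather than a nontrivial extension. To handle this I would compute the costalk at $0$ of both sides (Euler characteristics and cohomological degrees) and use semisimplicity to split off the summand.
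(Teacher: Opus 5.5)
Your plan is essentially the paper's proof. The paper likewise shows that the quadratic form vanishes on the minuscule orbit $Q\subset\Gr_{\SO(V')}$, so the right side is $\pi_{1*}$ of a constant sheaf, equal to $\mathrm{IC}_C$ because $\pi_1$ is small; it finds $f=(v',v')\langle v,hv\rangle$ on $\Gr^{\mathrm{min}}_{\Sp(V)}$ and integrates out $h$ to localize onto the isotropic cone $C$; and it splits off the extra $E_0$ by semisimplicity of the pushforward together with a comparison of costalks at $0$, after reducing via Proposition \ref{prop:oblv} to the non-equivariant statement in the heart.

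A few points to sharpen:
\begin{itemize}
\item Since $\F_{\mathrm{min}}$ contains $\exp(f)$, it is irregular. The semisimplicity you need is therefore Mochizuki's decomposition theorem (Kashiwara's conjecture), which is what the paper cites, rather than classical purity.
\item The points of $\Gr^{\mathrm{min}}_{\Sp(V)}$ are lattices between $tV_\O$ and $t^{-1}V_\O$, given by a line $\ell\subset V$ and $h\in\Hom(\ell,V/\ell^\perp)$. So the correction parameter sits in order $t^0$, not $t^{-2}$.
\item The costalk of the IC sheaf at $1$ is $\CC[-2n]$ by rational smoothness of $\overline{\Gr^{\mathrm{min}}_{\Sp(V)}}$.
\end{itemize}
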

We have $U|_{\SO(U')} = U'\oplus \mathbf{1}$.
Thus, we need to prove that $\phi_{\Sp(V)}(U)*E_0\cong \phi_{\SO(V')}(U')*E_0\oplus E_0$.

It follows from Proposition \ref{prop:oblv} that in order to prove Proposition \ref{prop: U Hecke Identification} it is enough to prove that an isomorphism exists after applying the functor $\mathrm{oblv}$, forgetting the $\Sp(V)_\O\times \SO(V')_\O$-equivariance, to both sides, and that the resulting (isomorphic) objects are in the heart of $\operatorname{\mathcal{W}-\mathrm{mod}}^c$.
\begin{definition}
    Let $\mathrm{oblv}_{\SO(V')_\O}$ and $\mathrm{oblv}_{\Sp(V)_\O}$ be the forgetful functors forgetting the $\SO(V')_\O$ and $\Sp(V)_\O$ equivariance respectively.
    Let $\mathring{\phi}_{\Sp(V)}(-)$ be the forgetful functor forgetting the left $\Sp(V)_\O$ equivariance applied to $\phi_{\Sp(V)}(-)$, so that for example $\mathring{\phi}_{\Sp(V)}(-)*E_0 = \mathrm{oblv}(\phi_{\Sp(V)}(-)*E_0)$. Similarly, define $\mathring{\phi}_{\SO(V')}(-)$.
\end{definition}

As in \cite[§3.1]{BFKT}, we identify $\Gr_{\Sp(V)}^\mathrm{min}$ with the moduli space of pairs $(h,\ell)$, where $\ell$ is a line in $V$ and $h\in \Hom(\ell, V / \ell^\perp)$, and define $$X:=\Gr_{\Sp(V)}^\mathrm{min}\times_{\PP(V)}(\mathcal{O}_{\PP(V)}(-1)\otimes V')\subset \Gr_{\Sp(V)}\times t^{-1}(V\otimes V')_\O / (V\otimes V')_\O.$$
Let $f$ be the function defined on $X$ by $f(h,\ell, v\otimes v') = (v',v')\langle v, hv\rangle$. Note that here $v\in \ell$.
By Corollary \ref{cor: Hecke calculation recipe}, There is a D-module $\mathcal{F}_\mathrm{min}$ on $\overline{\Gr_{\Sp(V)}^\mathrm{min}}\times t^{-1}(V\otimes V')_\O / (V\otimes V')_\O$, which is supported on the closure of $X$, whose restriction to $X$ is $\exp(f)$, and whose !-restriction to the preimage of $\{1\}\in \Gr_{\Sp(V)}$ is isomorphic to $E_0[-2n]$, where we consider here $E_0$ as the delta D-module at the origin $0\in t^{-1}(V\otimes V')_\O / (V\otimes V')_\O$ (here we used the fact that $\overline{\Gr}_{\Sp(V)}^\mathrm{min}$ is rationally smooth, so that the costalk at $1$ of the IC sheaf is indeed $E_0[-2n]$). Note that the preimage of $\Gr_{\Sp(V)}^\mathrm{min}$ in the closure of $X$ is $X$ itself.

Still by Corollary \ref{cor: Hecke calculation recipe}, $\mathring{\phi}_{\Sp(V)}(U)*E_0$ considered as a D-module on $t^{-1}(V\otimes V')_\O / (V\otimes V')_\O$ is isomorphic to the pushforward of $\mathcal{F}_\mathrm{min}$ along the projection $\overline{\Gr_{\Sp(V)}^\mathrm{min}}\times t^{-1}(V\otimes V')_\O / (V\otimes V')_\O \to t^{-1}(V\otimes V')_\O / (V\otimes V')_\O$.

Similarly, let $Q$ be the minuscule orbit of $\Gr_{\SO(V')}$, isomorphic to the quadratic $V'_0 / \CC^\times$, where $V'_0$ is the space of isotropic vectors in $V'$. Let $X':=\O_Q(-1)\otimes V\subset \Gr_{\SO(V')}\times t^{-1}(V\otimes V')_\O / (V\otimes V')_\O$ as in \cite[§3.1]{BFKT}, and let $\F_Q:=\omega_{X'}[-4n+2]\cong \IC_{X'}$ (the constant D-module on $X'$ lying in the heart of the t-structure).

Let us perform a short computation, showing that the quadratic form inside the exponential D-module of Corollary \ref{cor: Hecke calculation recipe} vanishes:
At a point $\ell'\in Q$, where $\ell'$ is considered an isotropic line in $V'$, the corresponding Lagrangian in $(V\otimes V')_\mathcal{K}$ is $L':=V\otimes (tV'_\O\oplus \ell'^\perp \oplus t^{-1}\ell')$. Its intersection with $V\otimes V'_\O$ is $V\otimes (tV'_\O\oplus \ell'^\perp)$, and the sum of the two Lagrangians as vector spaces is $V\otimes (V'_\O\oplus t^{-1}\ell')$. The quotient is then $V\otimes (t^{-1}\ell'\oplus V') / \ell'^\perp$. In this quotient, the image of $L'$ is $V\otimes t^{-1}\ell'$, the image of $(V\otimes V')_\O$ is $V\otimes (V' / \ell'^\perp)$, and the image of $t^{-1}V'[t^{-1}]\cap V\otimes (V'_\O\oplus t^{-1}\ell')$ is also $V\otimes t^{-1}\ell'$. Since the first and the third Lagrangians coincide, the quadratic form vanishes.

Thus, by Corollary \ref{cor: Hecke calculation recipe}, $\mathring{\phi}_{\SO(V')}(U')*E_0$ is isomorphic to the pushforward of $\F_Q$ along the projection $$X'\to t^{-1}(V\otimes V')_\O / (V\otimes V')_\O.$$

As in \cite[§3.1]{BFKT}, let $C\subset V'\otimes V\cong t^{-1}(V'\otimes V)_\O/(V'\otimes V)_\O$ be the cone formed by all pure tensors $v'\otimes v$, with $v\in V$ and $v'\in V'_0$.
Consider also $$\tilde{C}_1:=\left(V \times (V'_0\setminus \{0\})\right)/\CC^\times_{\mathrm{hyperb}}\cong X',$$
$$\tilde{C}_2:=\left((V\setminus \{0\}) \times V'_0\right)/\CC^\times_{\mathrm{hyperb}}.$$
Note that $\tilde{C}_1$ is smooth while $\tilde{C}_2$ is not.
They are equipped with natural (proper) maps to $C$, denoted $\pi_1,\pi_2$ respectively.

\begin{claim}\label{claim: Hecke computation}
    Both sides of the equation of Proposition \ref{prop: U Hecke Identification} are supported on $C$. We have
    $$\mathring{\phi}_{\SO(V')}(U')*E_0\cong \pi_{1*}\omega_{\tilde{C}_1}[-4n+2],$$
    and the push forward from $X$ to $t^{-1}(V\otimes V')_\O / (V\otimes V')_\O$ of $\exp(f)$ is isomorphic to $\pi_{2*}\omega_{\tilde{C}_2}[-4n+2].$
\end{claim}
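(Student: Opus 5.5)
The claim has two halves, and I will treat them separately.

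\textbf{The $\SO(V')$ side.} This half is essentially already done in the text preceding the claim. The pushforward of $\F_Q=\omega_{X'}[-4n+2]$ along the projection $X'\to t^{-1}(V\otimes V')_\O/(V\otimes V')_\O$ computes $\mathring{\phi}_{\SO(V')}(U')*E_0$. Here we use that the quadratic form vanishes, so by Corollary~\ref{cor: Hecke calculation recipe} the relevant D-module on $Q\times(\ldots)$ is the constant one on $X'$.

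It then suffices to check that, under $\tilde C_1\cong X'$, this projection is exactly $\pi_1$. A point of $X'$ is an isotropic line $\ell'=[v']$ together with an element of $\ell'\otimes V$, that is, the class of $(v,v')$ modulo the hyperbolic $\CC^\times$. Its image in $t^{-1}(V\otimes V')_\O/(V\otimes V')_\O\cong V\otimes V'$ is the pure tensor $v\otimes v'$, which lies in $C$. Hence the support lies in $C$ and the first isomorphism follows.

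\textbf{The $\Sp(V)$ side.} First I identify $X$ explicitly. A point is $(\ell,h,v\otimes v')$ with $\ell=[v]\subset V$, $h\in\Hom(\ell,V/\ell^\perp)$ a scalar $a$ (via $v\mapsto a\cdot(\text{dual class})$), and $v\otimes v'\in\ell\otimes V'$. The function is then $f=a\,(v',v')\,\omega(v,\tilde v)$, linear in the fibre coordinate $a$ of the $\mathbb A^1$-bundle $\Gr^{\mathrm{min}}_{\Sp(V)}\to\PP(V)$.

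The pushforward along $X\to V\otimes V'$ factors through integration over the $a$-line. Integrating $\exp(a\cdot q)$ over $a\in\mathbb A^1$, where $q=(v',v')\langle v,hv\rangle$-type coefficient, gives a delta D-module on $\{q=0\}$, shifted. Up to scaling, $q$ is $(v',v')$ times a nonvanishing function of the line. So the result is supported on the locus $(v',v')=0$, i.e.\ $v'\in V'_0$, with $v\neq 0$, and the pushforward to $\PP(V)\times(\ldots)$ becomes the (shifted) constant sheaf on $\{([v],v\otimes v'):v'\in V'_0\}$. This locus is exactly $\tilde C_2$, viewed as $\big((V\setminus 0)\times V'_0\big)/\CC^\times$ fibred over $\PP(V)$.

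Since $\tilde C_2$ is singular, some care is needed. I would phrase the computation as base change: the Fourier-type integral along the $a$-fibre of $\exp(a\cdot g)$, where $g$ is a function on the smooth space $\mathcal O_{\PP(V)}(-1)\otimes V'$, equals $i_*i^!$ of the constant sheaf on the hypersurface $\{g=0\}$, up to shift. This yields the dualizing sheaf $\omega_{\tilde C_2}$, and composing with the proper map $\pi_2$ gives the second isomorphism. Support in $C$ is then clear, since the image consists of pure tensors $v\otimes v'$ with $v'$ isotropic.

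\textbf{Main obstacle.} I expect the hardest step to be getting the shift exactly $[-4n+2]$ and making sure the result is $\omega$ rather than the constant sheaf (these differ on the singular $\tilde C_2$). I would track dimensions as follows:
\begin{itemize}
\item $\dim X=2n+\dim V'=4n$, and $\exp(f)$ on $X$ is placed in the heart up to the shift convention of Corollary~\ref{cor: Hecke calculation recipe};
\item fibre integration over $\mathbb A^1$ of an exponential contributes a shift by $1$ together with the $i^!$ to the divisor;
\item $\dim\tilde C_2=2n-1+2n-1=4n-2$.
\end{itemize}
I would check the resulting shift against the stalk at $0\in C$, which should be the $E_0[-2n]$ contribution noted above.
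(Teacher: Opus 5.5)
Your proposal is correct and follows essentially the paper's argument. The $\SO(V')$ half is read off from the vanishing of the quadratic form. For the $\Sp(V)$ half, the paper likewise integrates $\exp(f)$ along the $\AA^1$-fibres of $\Gr^{\mathrm{min}}_{\Sp(V)}\to\PP(V)$ by base change: the result vanishes where the linear coefficient is non-zero, and over $\tilde C_2$ one gets $\omega$ of the line-bundle preimage pushed down, i.e.\ $i_*\omega_{\tilde C_2}[-4n+2]$, before applying $\pi_2$. Your base change from the universal kernel $\exp(ab)$ is just a packaged form of this.

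One correction: your proposed sanity check is misaimed. The $E_0[-2n]$ term comes from the point $1\in\overline{\Gr^{\mathrm{min}}_{\Sp(V)}}$, which lies outside $X$. The costalk at $0$ of $\pi_{2*}\omega_{\tilde C_2}[-4n+2]$ is instead $\bigoplus_{i=0}^{2n-1}\CC[-2i]$, the cohomology of the fibre $\PP(V)$.
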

\begin{proof}
    The claims about $\mathring{\phi}_{\SO(V')}(U')*E_0$ were proved in the last few paragraphs.

Let $$p: \Gr_{\Sp(V)}^\mathrm{min}\times (t^{-1}(V\otimes V')_\O / (V\otimes V')_\O)\to \PP(V)\times (t^{-1}(V\otimes V')_\O / (V\otimes V')_\O)$$ be the projection in the first factor, sending $(h,\ell)$ to $\ell$. We also consider $\tilde{C}_2$ as a closed subvariety of $\PP(V)\times (t^{-1}(V\otimes V')_\O / (V\otimes V')_\O)$ via the embedding $i$ sending $(v, v')$ to $(\CC v, t^{-1}v\otimes v')$.
Then $p_* \exp(f)$ is supported on $\tilde{C}_2$. Indeed, for any point $(\ell, v\otimes v')\notin \tilde{C}_2$, that is $v'\notin V'_0$, we compute the costalk of $p_* \exp(f)$ at $(\ell, v\otimes v')$ by base change to be the push-forward from $\AA^1$ of the exponential D-module of a non-constant linear function, and this push-forward vanishes.
Next we compute $i^! p_* \exp(f)$, again using base change. This is the push-forward from a line bundle of $\omega[-4n]$ (as $f$ vanishes on the preimage of $\tilde{C}_2$), that is, $\omega_{\tilde{C}_2}[-4n+2]$. We get that the $p_* \exp(f)\cong i_* \omega_{\tilde{C}_2}[-4n+2]$. Pushing this forward to $t^{-1}(V\otimes V')_\O / (V\otimes V')_\O$ gives the desired result.
\end{proof}

\begin{corollary}\label{cor: Hecke exact triangle}
    There is an exact triangle
    $$\mathrm{oblv}(E_0)[-2n]\to \mathring{\phi}_{\Sp(V)}(U)*E_0\to \pi_{2*}\omega_{\tilde{C}_2}[-4n+2]\to.$$
    In particular we see that $\mathring{\phi}_{\Sp(V)}(U)*E_0$ is regular, and so by the Riemann-Hilbert correspondence, corresponds to a constructible sheaf.
\end{corollary}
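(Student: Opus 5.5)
We prove the corollary by applying the open--closed (recollement) triangle to the sheaf $\F_\mathrm{min}$ on $\overline{\Gr^\mathrm{min}_{\Sp(V)}}\times t^{-1}(V\otimes V')_\O/(V\otimes V')_\O$ and then pushing forward along the projection to the second factor. The closure $\overline{\Gr^\mathrm{min}_{\Sp(V)}}$ is the disjoint union of the open orbit $\Gr^\mathrm{min}_{\Sp(V)}$ and the point $\{1\}$. Write $j$ and $i$ for the corresponding open and closed embeddings of their preimages in the product. For the holonomic D-module $\F_\mathrm{min}$ we have the standard triangle
$$i_*i^!\F_\mathrm{min}\to \F_\mathrm{min}\to j_*j^*\F_\mathrm{min}\to.$$
Since $i$ is a closed embedding, $i_*=i_!$.

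Next we identify the three terms after pushforward along the projection $q$ to $t^{-1}(V\otimes V')_\O/(V\otimes V')_\O$.
\begin{enumerate}
\item By Corollary \ref{cor: Hecke calculation recipe}(5), the pushforward of the middle term is $\mathring{\phi}_{\Sp(V)}(U)*E_0$.
\item By the discussion preceding Claim \ref{claim: Hecke computation}, which relies on the rational smoothness of $\overline{\Gr}_{\Sp(V)}^\mathrm{min}$, we have $i^!\F_\mathrm{min}\cong E_0[-2n]$, viewed as the delta module at $0$. Since $q\circ i$ is an isomorphism onto its image, the first term pushes forward to $\mathrm{oblv}(E_0)[-2n]$.
\item The restriction $j^*\F_\mathrm{min}$ is $\exp(f)$ extended by zero off $X$. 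Indeed, the preimage of $\Gr^\mathrm{min}_{\Sp(V)}$ in the closure of $X$ is $X$ itself, and $\F_\mathrm{min}$ restricts there to $\exp(f)$. Thus $q_*j_*j^*\F_\mathrm{min}$ is the pushforward of $\exp(f)$ from $X$ to $t^{-1}(V\otimes V')_\O/(V\otimes V')_\O$, which Claim \ref{claim: Hecke computation} identifies with $\pi_{2*}\omega_{\tilde{C}_2}[-4n+2]$.
\end{enumerate}
Because $q_*$ is exact, it preserves exact triangles, and these three identifications give the stated triangle.

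The main point requiring care is the third term. Here $\exp(f)$ lives on the preimage of the \emph{noncompact} stratum, so $j_*$ must be the $*$-pushforward, and this matches the $*$-pushforward $\pi_{2*}$ appearing in Claim \ref{claim: Hecke computation}. Once this is checked, no further computation is needed.

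For regularity, both outer terms of the triangle are regular holonomic. The term $E_0$ is a delta module. The term $\pi_{2*}\omega_{\tilde{C}_2}$ is a $*$-pushforward of a dualizing complex along a proper algebraic map, so it is regular. Since regular holonomic modules are closed under extensions, the middle term $\mathring{\phi}_{\Sp(V)}(U)*E_0$ is regular holonomic as well. Riemann--Hilbert then identifies it with a constructible complex. Note that although $\exp(f)$ itself is irregular, its pushforward is regular by the claim.
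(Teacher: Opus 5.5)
Your proof is correct and follows essentially the paper's argument. The paper likewise pushes forward the triangle $i_*i^!\F_\mathrm{min}\to\F_\mathrm{min}\to j_*j^*\F_\mathrm{min}$ along the projection, using $i^!\F_\mathrm{min}\cong E_0[-2n]$ and Claim~\ref{claim: Hecke computation}; the only difference is that it takes $j$ to be the open embedding of $X$ into $\overline{X}$, which is harmless since the preimage of $\Gr^\mathrm{min}_{\Sp(V)}$ in $\overline{X}$ is $X$. Your explicit justification of regularity (both outer terms are regular holonomic, and regularity is stable under extensions) spells out what the paper leaves implicit.
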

\begin{proof}
    Consider the open embedding $j$ of $X$ into its closure, and the closed embedding $i$ of its complement. Applying the push-forward to $t^{-1}(V\otimes V')_\O / (V\otimes V')_\O$ to the exact triangle
    $$i_* E_0[-2n]\cong i_* i^!\F_\mathrm{min}\to \F_\mathrm{min}\to j_*j^*\F_\mathrm{min}\cong j_* \exp(f)\to$$
    we get the desired result.
\end{proof}
As in the situation of \cite[§3.1]{BFKT}, $\pi_1$ is small. However, in our situation, the same is not true for $\pi_2$. Indeed, the fiber over $\{0\}$ is
$\left(V'_0\setminus \{0\}\right)/\CC^\times$.
It follows that $$\mathring{\phi}_{\SO(V')}(U')*E_0\cong IC_C.$$
Let $\iota_0:\{0\}\hookrightarrow C$. By base change, we have
\begin{equation}\label{eq: costalk of IC at 0}
\iota_0^!IC_C = \iota_0^! \pi_{1*} \omega_{\tilde{C}_1}[-4n+2]=H^*((V'_0\setminus\{0\}) / \CC^\times, \omega[-2]) = \bigoplus_{i=1}^{2n-1} \CC[-2i]\oplus \CC[-2n].
\end{equation}
Similarly, we have
$$\iota_0^! \pi_{2*}\omega_{\tilde{C}_2}[-4n+2]=H^*(\PP(V), \CC)[-4n+2] = \bigoplus_{i=0}^{2n-1} \CC[-2i].$$
It follows by Corollary \ref{cor: Hecke exact triangle} (and the fact the the costalks at $0$ of the left and right sides of the exact triangle are supported in even cohomological degrees) that
\begin{equation}\label{eq: costalk of push at 0}
\iota_0^! (\mathring{\phi}_{\Sp(V)}(U)*E_0) = \bigoplus_{i=0}^{2n-1} \CC[-2i] \oplus \CC[-2n].
\end{equation}
By \cite[Theorem 19.4.2]{Moch1} (Kashiwara's conjecture), since $\F_\mathrm{min}$ is simple (by Corollary \ref{cor: Hecke calculation recipe}), we know that $\mathring{\phi}_{\Sp(V)}(U)*E_0$ is semisimple, i.e. a direct sum of copies of $IC$ sheaves. Since the restriction of $\mathring{\phi}_{\Sp(V)}(U)*E_0$ to $C\setminus \{0\}$ is $\omega[-4n+2]$, we deduce that $\mathring{\phi}_{\Sp(V)}(U)*E_0$ is a direct sum of a single copy of $IC_C$ and a sheaf supported at $0$. By \eqref{eq: costalk of IC at 0} and \eqref{eq: costalk of push at 0}, we deduce that $$\mathring{\phi}_{\Sp(V)}(U)*E_0\cong IC_C\oplus E_0,$$ as desired. Using Proposition \ref{prop:oblv}, this proves Proposition \ref{prop: U Hecke Identification}.

\subsection{Matching the basic relation}
Let $\nu:\phi_{\Sp(V)}(U)*E_0\to \phi_{\SO(V')}(U'\oplus \CC)*E_0$ be an isomorphism (which exists, by Proposition \ref{prop: U Hecke Identification}).
Let $\sigma=\sigma_\nu$ be the following composition of isomorphisms:
\begin{align*}
    &\phi_{\Sp(V)}(U)*\phi_{\Sp(V)}(U)*E_0 \xrightarrow{\phi_{\Sp(V)}(U)*\nu}
    \phi_{\Sp(V)}(U)*\phi_{\SO(V')}(U'\oplus \CC)*E_0\xrightarrow{\tau}\\\to&
    \phi_{\SO(V')}(U'\oplus \CC)*\phi_{\Sp(V)}(U)*E_0\xrightarrow{\phi_{\SO(V')}(U'\oplus \CC)*\nu}\\\to&
    \phi_{\SO(V')}(U'\oplus \CC)*\phi_{\SO(V')}(U'\oplus \CC)*E_0
\end{align*}
Here $\tau$ is the natural commutation relation of the two Hecke actions.
\begin{proposition}\label{prop: Hecke relation identification}
    There is a choice of an isomorphism $\nu$ making the following diagram commute:
    \[
    \begin{tikzcd}
    E_0\ar[r, "\mathrm{id}"]\ar[d] & E_0\ar[d]\\
    \phi_{\Sp(V)}(U)*\phi_{\Sp(V)}(U)*E_0 \ar[r, "\sigma"] & \phi_{\SO(V)}(U'\oplus \CC)*\phi_{\SO(V)}(U'\oplus \CC)*E_0
    \end{tikzcd}
    \]
    Where the vertical maps are those coming from the embedding $\CC\to U\otimes U^*\cong U\otimes U$ and its restriction to $\mathrm{Rep}(\SO(U'))$. Here we have used the quadratic form on $U$ to identify it with its dual.
\end{proposition}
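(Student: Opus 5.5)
The proof will exploit the freedom in choosing $\nu$. First I will compute the automorphism group of $\phi_{\Sp(V)}(U)*E_0\cong IC_C\oplus E_0$, using Proposition \ref{prop:oblv} to pass to the heart of $\operatorname{\mathcal{W}-\mathrm{mod}}^c$, where $\mathrm{oblv}$ is fully faithful. Since $IC_C$ and $E_0$ are non-isomorphic simple objects (the first has support $C$, the second is the delta module at $0$), the degree-zero automorphisms form $\CC^\times\times\CC^\times$, acting by scalars $(a,b)$ on the two summands. Replacing $\nu$ by $\nu\circ(a,b)$ rescales $\sigma$ in a controlled way. The two vertical maps of the diagram are nonzero elements of
$$\Hom(E_0,\phi(U)*\phi(U)*E_0)\cong \Hom(\phi(U)*E_0,\phi(U)*E_0),$$
by Lemma \ref{lemma: rigidity} and self-duality of $U$ (respectively of $U'\oplus\CC$). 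This space is $2$-dimensional, spanned by the two idempotents of the decomposition $IC_C\oplus E_0$. Under the adjunction, the unit map $\CC\to U\otimes U$ corresponds to the identity of $\phi(U)*E_0$.

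The plan is then to translate the diagram, via the adjunction of Lemma \ref{lemma: rigidity}, into a statement about endomorphisms. Let $\nu^\vee$ be the map obtained from $\nu$ by adjunction and duality, namely the transpose of $\nu$ relative to the self-duality pairings on $U$ and $U'\oplus\CC$. Going around the diagram, the composite $\sigma\circ\mathrm{coev}_U$ corresponds, under the adjunction, to $\nu^\vee\circ\nu$ up to the commutation isomorphism $\tau$. Commutativity is therefore equivalent to $\nu$ being compatible with the two self-duality structures, that is, $\nu^\vee\nu=\mathrm{id}$. Both $\nu^\vee\nu$ and $\mathrm{id}$ lie in the automorphism torus, so $\nu^\vee\nu=(c_1,c_2)$ for some nonzero scalars. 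Replacing $\nu$ by $\nu\circ(a,b)$ changes this to $(a^2c_1,b^2c_2)$, because the transpose of a diagonal scalar automorphism is the same scalar. Since we work over $\CC$, square roots exist and we may arrange $(c_1,c_2)=(1,1)$.

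The main obstacle is justifying the middle step: that $\tau$ is compatible with the duality data, so that $\sigma\circ\mathrm{coev}$ really is identified with $\nu^\vee\nu$ applied to the $\SO(V')$-coevaluation. This requires the commutation $\tau$ of the two commuting Hecke actions to be a map of $\mathrm{Rep}(\SO(U))\otimes\mathrm{Rep}(\SO(U'))$-module structures, and hence to intertwine the evaluation and coevaluation maps. I would handle this by working at the level of the commuting $\operatorname{D-mod}(\Sp(V)_\mathcal{K})\otimes\operatorname{D-mod}(\SO(V')_\mathcal{K})$ action from Lemma \ref{lemma: Weil rep}, where the required naturality is formal. I would also check that $\Hom(E_0,E_0)$ in degree $0$ is $\CC$, so that the scalar bookkeeping above is complete.
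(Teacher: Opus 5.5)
Your reformulation is sound. Write $F=\phi_{\Sp(V)}(U)*-$ and $G=\phi_{\SO(V')}(U'\oplus\CC)*-$, and define $\nu^\vee\colon GE_0\to FE_0$ as the adjunct of $\tau\circ F(\nu)\circ\mathrm{coev}_F$. Naturality of the adjunctions then shows that $\sigma\circ\mathrm{coev}_F$ is adjunct to $\nu\circ\nu^\vee\in\mathrm{End}(GE_0)\cong\CC\times\CC$. So the diagram commutes iff $\nu\nu^\vee=\mathrm{id}$.

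The rescaling also works, provided you do it on the $G$-side, as the paper does. There the automorphism is $j*E_0$ with $j$ a diagonal automorphism of $U'\oplus\CC$, which is self-transpose. On the $F$-side the diagonal automorphism does not come from $\mathrm{End}(U)=\CC$, so "its transpose is the same scalar" is not formal there. The obstacle you flag, compatibility of $\tau$ with the duality data, is essentially just naturality of $\tau$ in the representation variable.

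The genuine gap is the sentence asserting that $\nu^\vee\nu$ lies in the automorphism torus with $c_1,c_2\neq 0$. Nothing you say shows that $\nu^\vee$ is invertible, and this is not formal. Transposition is only a linear bijection $\Hom(FE_0,GE_0)\to\Hom(GE_0,FE_0)$. It is not a contravariant functor on morphisms that do not come from $\mathrm{Rep}$, and in a general module category the transpose of an isomorphism need not be one. If, say, $\nu\nu^\vee=(c_1,0)$, no rescaling could make the diagram commute.

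This is precisely the substantive part of the paper's proof. Using $\alpha,\beta$ and the one-dimensionality of maps $E_0\to GFE_0$ that factor through $\delta_1*FE_0$, it shows that $\sigma$ carries $\CC a'\oplus\Ext^0(E_0,\ker b')$ onto $\CC a\oplus\Ext^0(E_0,\ker b)$ summand by summand. It then shows that each coevaluation has nonzero components in both summands, and only after that does rescaling by squares finish the proof.

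You can close the gap inside your framework. Transposition is natural with respect to morphisms coming from $\mathrm{Rep}(\SO(U'))$. Hence $\nu^\vee\circ(\iota_{\CC}*E_0)=(\beta\circ\nu)^\vee$ and $\nu^\vee\circ(\iota_{U'}*E_0)=((\pi_{U'}*E_0)\circ\nu)^\vee$, where $\iota,\pi$ are the inclusions and projections of the orthogonal summands of $U'\oplus\CC$.

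The maps $\beta\circ\nu\colon FE_0\to E_0$ and $(\pi_{U'}*E_0)\circ\nu\colon FE_0\to IC_C$ are nonzero. Transposition is a bijection between the corresponding one-dimensional $\Ext^0$ spaces, so $\nu^\vee$ is nonzero on each summand. There are no degree-zero maps between the non-isomorphic simples $IC_C$ and $E_0$, so $\nu^\vee$ is diagonal with nonzero entries. It is therefore an isomorphism, which gives $c_1,c_2\neq 0$ and lets your square-root rescaling conclude.
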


The rest of this section will be occupied by the proof of Proposition \ref{prop: Hecke relation identification}, which will go as follows:
\begin{enumerate}
    \item We know that
    \begin{equation}
        \begin{aligned}\label{eq: decomp of hom space}
            \Ext^0(E_0, \phi_{\Sp(V)}(U)*\phi_{\Sp(V)}(U)*E_0) &= \Ext^0(\phi_{\Sp(V)}(U)*E_0, \phi_{\Sp(V)}(U)*E_0) \\&\stackrel{\ref{prop: U Hecke Identification}, \ref{prop:oblv}}{=} \Ext^0(IC_C\oplus E_0, IC_C\oplus E_0)\cong \CC\oplus \CC. 
        \end{aligned}
    \end{equation}
    Here by $IC_C$ we mean the unique element of $\mathcal{C}^\heartsuit$ which maps to $IC_C$ after forgetting equivariance (recall that by Proposition \ref{prop:oblv}, this forgetful functor is fully faithful when restricted to $\mathcal{C}^\heartsuit$).
    \item We show that the decomposition of the above two-dimensional space into the direct sum of two one-dimensional spaces (corresponding to endomorphisms of $IC_C$ and of $E_0$ respectively) is compatible with the isomorphism $\sigma$ (note that indeed one can write the parallel of \eqref{eq: decomp of hom space} for the $\SO(V')$ action, yielding again a decomposition into one dimensional summands. Compatibility under $\sigma$ is not immediate).
    \item We then show that both vertical maps in the diagram of Proposition \ref{prop: Hecke relation identification} have non-zero contribution in each of the direct summands.
    \item We utilize the automorphism group of $IC_C\oplus E_0$ to rescale each summand for the diagram to commute.
\end{enumerate}
Let $$\alpha:E_0\to (\phi_{\SO(V')}(U')\oplus \delta_1)*E_0$$ be the embedding of the second summand, and $$\beta:(\phi_{\SO(V')}(U')\oplus \delta_1)*E_0\to E_0$$ the projection on it.
We choose an isomorphism $\nu:\phi_{\Sp(V)}(U)*E_0\xrightarrow{\sim} \phi_{\SO(V')}(U'\oplus \CC)*E_0$, and set $$\alpha' = \nu^{-1}\circ \alpha:\ E_0\to \phi_{\Sp(V)}(U)*E_0$$ and $$\beta' = \beta\circ \nu: \phi_{\Sp(V)}(U)*E_0\to E_0.$$
By construction, we have $\beta\circ\alpha=\beta'\circ\alpha'=\mathrm{id}_{E_0}$
Consider the compositions
$$a:E_0\xrightarrow{\alpha} (\phi_{\SO(V')}(U')\oplus \delta_1)*E_0\xrightarrow{(\phi_{\SO(V')}(U')\oplus \delta_1)*\alpha} (\phi_{\SO(V')}(U')\oplus \delta_1)*(\phi_{\SO(V')}(U')\oplus \delta_1)*E_0$$
and
$$b: (\phi_{\SO(V')}(U')\oplus \delta_1)*(\phi_{\SO(V')}(U')\oplus \delta_1)*E_0\xrightarrow{(\phi_{\SO(V')}(U')\oplus \delta_1)*\beta} (\phi_{\SO(V')}(U')\oplus \delta_1)*E_0\xrightarrow{\beta} E_0.$$
We have $b\circ a = \mathrm{id}_{E_0}$.
We define $a', b'$ similarly using $\alpha', \beta'$ instead of $\alpha, \beta$, and again we have $b'\circ a' = \mathrm{id}_{E_0}$.
We then have the following diagram:
\[
\begin{tikzcd}[row sep=7em, column sep=5em]
&&\phi_{\Sp(V)}(U)*\phi_{\Sp(V)}(U)*E_0 \ar[d, "\phi_{\Sp(V)}(U)*\nu"]\\
E_0\ar[r, "\alpha'"]\ar[dr, "\alpha"]\ar[urr, "a'", bend left]\ar[ddrr, "a", bend right]&\phi_{\Sp(V)}(U)*E_0\ar[ur, "\phi_{\Sp(V)}(U)*\alpha'"]\ar[r, "\phi_{\Sp(V)}(U)*\alpha"]\ar[d, "\nu"]& \phi_{\Sp(V)}(U)*(\phi_{\SO(V')}(U')\oplus\delta_1)*E_0\ar[d, "\tau"]\\
&(\phi_{\SO(V')}(U')\oplus \delta_1)*E_0\ar[r, "(\phi_{\SO(V')}(U')\oplus \delta_1)*\alpha'"]\ar[dr, "(\phi_{\SO(V')}(U')\oplus \delta_1)*\alpha"]& (\phi_{\SO(V')}(U')\oplus \delta_1)*\phi_{\Sp(V)}(U)*E_0\ar[d, "(\phi_{\SO(V')}(U')\oplus \delta_1)*\nu"]\\
&&(\phi_{\SO(V')}(U')\oplus \delta_1)*(\phi_{\SO(V')}(U')\oplus \delta_1)*E_0
\end{tikzcd}
\]
Each of the five triangles in the above diagram commutes by construction, and the vertical arrows in the right column are isomorphisms. We claim that the square commutes up to a non-zero scalar when precomposed with $\alpha'$ (that is, the upper route and the lower one along the diagram give the same result up to a non-zero scalar).
Indeed, the space of maps from $E_0$ to $(\phi_{\SO(V')}(U')\oplus \delta_1)*\phi_{\Sp(V)}(U)*E_0$ that factor through the $\delta_1*\phi_{\Sp(V)}(U)*E_0$ direct summand in the target is one dimensional. Both routes factor through this summand, and both are non-zero.

Letting as before $$\sigma=((\phi_{\SO(V')}(U')\oplus \delta_1)*\nu)\circ (\tau\circ\phi_{\Sp(V)}(U)*\nu),$$ we get that $\sigma\circ a = s a'$ for some $s\in \CC^\times$.
The same type of argument shows that $b\circ \sigma^{-1}$ is equal to a scalar multiple of $b'$, and by compatibility with $a,a'$ we must have $b\circ \sigma^{-1} = s^{-1} b'$.
This shows that the decomposition $$\Ext^0(E_0, (\phi_{\SO(V')}(U')\oplus \delta_1)*(\phi_{\SO(V')}(U')\oplus \delta_1)*E_0)\cong \CC a \oplus \Ext^0(E_0, \ker(b))$$ is compatible under $\sigma$ with the respective decomposition $$\Ext^0(E_0, \phi_{\Sp(V)}(U)*\phi_{\Sp(V)}(U)*E_0)\cong \CC a' \oplus \Ext^0(E_0, \ker(b')).$$

Let us proceed to step 2 of the above plan. Write
$$\Ext^0(E_0, (\phi_{\SO(V')}(U')\oplus \delta_1)*(\phi_{\SO(V')}(U')\oplus \delta_1)*E_0)= \Ext^0((\phi_{\SO(V')}(U')\oplus \delta_1)*E_0, (\phi_{\SO(V')}(U')\oplus \delta_1)*E_0),$$
$$\Ext^0(E_0, \phi_{\Sp(V)}(U)*\phi_{\Sp(V)}(U)*E_0) = \Ext^0(\phi_{\Sp(V)}(U)*E_0, \phi_{\Sp(V)}(U)*E_0).$$
\begin{claim}
The identity map in the right-hand side of either of the above equations corresponds to an element of the left-hand side whose image is contained neither in the image of $a$ (resp. $a'$) nor in the kernel of $b$ (resp. $b'$).
\end{claim}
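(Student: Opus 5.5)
The plan is to translate everything through the rigidity adjunction of Lemma \ref{lemma: rigidity} and then read off the two components of the identity inside $\Ext^0(\phi_{\Sp(V)}(U)*E_0,\phi_{\Sp(V)}(U)*E_0)$. We use the self-duality $U\cong U^*$ given by the quadratic form. The adjunction $\Ext^0(E_0, W*W*E_0)\cong \Ext^0(W*E_0,W*E_0)$ holds for $W=\phi_{\Sp(V)}(U)$ or $W=\phi_{\SO(V')}(U'\oplus\CC)$. Under it, an endomorphism $g$ of $W*E_0$ corresponds to $(W*g)\circ \mathrm{coev}$, where $\mathrm{coev}:E_0\to W*W*E_0$ comes from $\CC\to U\otimes U$. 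In particular the identity corresponds to the vertical map of Proposition \ref{prop: Hecke relation identification}.

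By Proposition \ref{prop: U Hecke Identification} and Proposition \ref{prop:oblv}, the target space is $\Ext^0(IC_C\oplus E_0, IC_C\oplus E_0)$. Here $IC_C$ and $E_0$ are simple objects of $\mathcal{C}^\heartsuit$ which are non-isomorphic, since they have different supports. Hence all cross terms vanish, and the space is $\CC p_{IC}\oplus \CC p_{E_0}$, spanned by the two idempotent projectors. The identity is $p_{IC}+p_{E_0}$. The claim therefore reduces to identifying the line $\CC a$ with $\CC p_{E_0}$, and the subspace corresponding to $\Ext^0(E_0,\ker b)$ with $\CC p_{IC}$.

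For $a=(W*\alpha)\circ\alpha$, note that $\alpha$ itself has an adjoint $\alpha^\flat: W*E_0\to E_0$, with $\alpha=(W*\alpha^\flat)\circ\mathrm{coev}$. Naturality of the adjunction shows that $a$ corresponds to $\alpha\circ\alpha^\flat\in \Ext^0(W*E_0,W*E_0)$. Since $\alpha^\flat\neq 0$ (the adjunction is bijective) and $\Ext^0(IC_C,E_0)=0$, the map $\alpha^\flat$ factors through the projection $\beta$ onto the $E_0$ summand: $\alpha^\flat=c\beta$ with $c\neq 0$. Thus $a$ corresponds to $c\,\alpha\beta=c\,p_{E_0}$.

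Dually, $b$ is adjoint (via evaluation) to the pairing $x\mapsto \beta\circ x\circ\beta^\sharp$, where $\beta^\sharp=c'\alpha$. Hence an element $g$ lies in $\ker b$ (postcomposition) if and only if its $p_{E_0}$-coefficient vanishes. In other words, the subspace corresponding to $\Ext^0(E_0,\ker b)$ is $\CC p_{IC}$. The identity $p_{IC}+p_{E_0}$ has non-zero coefficients on both lines, so it lies neither in $\CC a$ nor in $\ker b$.

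The same argument applies verbatim on the $\Sp(V)$ side with $a',b',\alpha',\beta'$. This works because $\alpha',\beta'$ are again the inclusion of and projection onto the $E_0$ summand, transported along $\nu$.

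The main obstacle is the bookkeeping in the second and third steps. One must check that composing with $\alpha$ (resp. $\beta$) on the "inner" copy of $W$ matches, under the adjunction of Lemma \ref{lemma: rigidity}, post- (resp. pre-) composition on the endomorphism side. One must also check that the constants $c,c'$ are non-zero. I would verify this by writing the adjunction explicitly via $\mathrm{coev}$ and $\mathrm{ev}$ for the rigid object $U$, and using the zig-zag identities. Non-vanishing then follows from bijectivity of the adjunction and the vanishing of $\Hom$ between $IC_C$ and $E_0$.
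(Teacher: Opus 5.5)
Your proposal is correct and follows the paper's route. Both arguments transport the question through the rigidity adjunction of Lemma \ref{lemma: rigidity}, under which postcomposition with $W*\alpha'$ (resp.\ $W*\beta'$) on $\Ext^0(E_0,W*W*E_0)$ becomes postcomposition with $\alpha'$ (resp.\ $\beta'$) on $\Ext^0(W*E_0,-)$. Both then use $W*E_0\cong IC_C\oplus E_0$ to see that $\mathrm{id}$ neither factors through $\alpha'$ nor is killed by $\beta'$.

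Your explicit identification of $\CC a$ and $\{x: b\circ x=0\}$ with the lines $\CC p_{E_0}$ and $\CC p_{IC}$ is somewhat more careful than the paper, because it also accounts for the outer $\beta$ in $b=\beta\circ(W*\beta)$. The paper only checks $(W*\beta')\circ x\neq 0$, and implicitly uses $\Ext^0(E_0,W*E_0)=\CC\alpha'$ to conclude.
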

\begin{proof}
    For the first line above the claim is clear. Let us prove it for the second line. If the image of the aforementioned map were contained in the image of $a$, then it would mean that it would be contained in the image of the first row in the second row of the following diagram:
    \[
    \begin{tikzcd}
        \Ext^0(E_0, \phi_{\Sp(V)}(U)*E_0) \ar[r, equal]\ar[d, "(\phi_{\Sp(V)}(U)*\alpha')\circ-"] &\Ext^0(\phi_{\Sp(V)}(U)*E_0, E_0)\ar[d, "\alpha'\circ-"]\\
        \Ext^0(E_0, \phi_{\Sp(V)}(U)*\phi_{\Sp(V)}(U)*E_0) \ar[r, equal] &\Ext^0(\phi_{\Sp(V)}(U)*E_0, \phi_{\Sp(V)}(U)*E_0)
    \end{tikzcd}
    \]
    which is clearly not the case.
    Similarly, if the image were contained in the kernel of $b$, then it would mean that our elements vanishes under the map
    \[
    \begin{tikzcd}
        \Ext^0(E_0, \phi_{\Sp(V)}(U)*\phi_{\Sp(V)}(U)*E_0) \ar[r, equal]\ar[d, "(\phi_{\Sp(V)}(U)*\beta')\circ-"] &\Ext^0(\phi_{\Sp(V)}(U)*E_0, \phi_{\Sp(V)}(U)*E_0)\ar[d, "\beta'\circ-"]\\
        \Ext^0(E_0, \phi_{\Sp(V)}(U)*E_0) \ar[r, equal] &\Ext^0(\phi_{\Sp(V)}(U)*E_0, E_0)
    \end{tikzcd}
    \]
    but again this is not the case.
\end{proof}
\begin{proof}[Proof of Proposition \ref{prop: Hecke relation identification}]
    Consider the automorphism of $\phi_{\SO(V')}(U')\oplus \delta_1$ which rescales the two summands by $s,s'$ respectively, and let $j$ be the resulting automorphism of $(\phi_{\SO(V')}(U')\oplus \delta_1)*E_0$.
    The resulting automorphism of $\Ext^0(E_0, (\phi_{\SO(V')}(U')\oplus \delta_1)*(\phi_{\SO(V')}(U')\oplus \delta_1)*E_0)$ preserves the constructed decomposition into direct summands, and rescales them by $s^2,s'^2$ respectively.
    This automorphism can also be written as $((\phi_{\SO(V')}(U')\oplus \delta_1)*j)\circ \tau \circ ((\phi_{\SO(V')}(U')\oplus \delta_1)*j)$.
    Moreover, $\sigma_{j\circ \nu}$ is equal to the composition of $\sigma_\nu$ with the above automorphism.

    Hence, if we choose suitable scalars $s,s'$ as above and replace $\nu$ by $j\circ \nu$, we can make the diagram of Proposition \ref{prop: Hecke relation identification} commute.

\end{proof}
\begin{lemma}\label{lemma: Rep(SO) universal property}
    Let $D$ be an abelian $\CC$-linear symmetric monoidal (with unit) category, such that the endomorphism algebra of the unit is $\CC$.
    Let $\Phi_1, \Phi_2$ be two $\CC$-linear symmetric monoidal functors from $\mathrm{Rep}(\SO(U))$ to $D$, and let $\nu_U:\Phi_1(U)\xrightarrow{\cong} \Phi_2(U)$ be an isomorphism compatible with the self duality of $U$, then there is a unique isomorphism of symmetric monoidal functors $\nu:\Phi_1\xrightarrow{\cong} \Phi_2$ whose value at $U$ is $\nu_U$.
\end{lemma}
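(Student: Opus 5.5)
We would prove this through the Deligne–Tannakian universal property of $\mathrm{Rep}(\SO(U))$. It is the universal symmetric monoidal $\CC$-linear (Karoubian) category generated by a self-dual object of categorical dimension $2n+1$, modulo one extra relation encoding the determinant. Concretely, $\mathrm{Rep}(\SO(U))$ is the idempotent completion of the image of the Brauer category $\mathcal{B}(2n+1)$ (objects $U^{\otimes k}$, morphisms generated by the symmetry, the pairing $U\otimes U\to \mathbf{1}$ and the copairing $\mathbf{1}\to U\otimes U$), with additional morphisms $\det: U^{\otimes (2n+1)}\to \mathbf{1}$ generating the $\SO$-invariants that are not $\mathrm{O}$-invariants.

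\textbf{Step 1: define $\nu$ on tensor powers and show naturality.} We set $\nu_{U^{\otimes k}} := \nu_U^{\otimes k}$, composed with the monoidal structure isomorphisms of $\Phi_1,\Phi_2$. Naturality with respect to the symmetry holds because both functors are symmetric monoidal. Naturality with respect to the pairing and copairing is exactly the hypothesis that $\nu_U$ is compatible with the self-duality. By the first fundamental theorem of invariant theory for $\mathrm{O}(U)$, $\Hom(U^{\otimes k},U^{\otimes m})^{\mathrm{O}(U)}$ is spanned by Brauer diagrams, so $\nu$ is natural with respect to all $\mathrm{O}(U)$-equivariant maps.

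\textbf{Step 2: the determinant, which we expect to be the main obstacle.} We still need $\Phi_2(\det)\circ \nu_U^{\otimes(2n+1)} = \Phi_1(\det)$. Since $\det\circ\det^\vee$ is a Brauer element (Gram determinant), we get
$$\Phi_2(\det)\circ\nu_U^{\otimes(2n+1)} = c\,\Phi_1(\det)$$
with $c^2=1$, using that $\mathrm{End}(\mathbf{1})=\CC$ and that $\Phi_1(\det)\neq 0$. To see this, consider the $\Phi_1$-image of the antisymmetrizer on $U^{\otimes (2n+1)}$. It is a rank-one idempotent factoring through $\mathbf{1}$, and it transports the scalar comparison to $\Lambda^{2n+1}\nu_U$. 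We would try to argue that $c=1$ by noting that $\Lambda^{2n+1}\Phi_i(U)\cong\Phi_i(\mathbf{1})$, so that $c$ is the "determinant" of $\nu_U$ relative to the two trivializations. It may happen that $c=-1$ cannot be excluded abstractly. In that case the statement implicitly requires the isomorphism to respect orientations, and one replaces $\nu_U$ by $-\nu_U$; this is allowed because $2n+1$ is odd, so $(-1)^{2n+1}=-1$ fixes the sign while $-\nu_U$ remains compatible with the self-duality. This is the point we expect to need care, and in the application one would check it, or absorb it via the sign freedom.

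\textbf{Step 3: extend and prove uniqueness.} Every object of $\mathrm{Rep}(\SO(U))$ is a direct summand of a sum of $U^{\otimes k}$, cut out by an idempotent in the invariants. By naturality from Steps 1–2, $\nu$ extends uniquely to the Karoubi envelope; additivity handles the sums. This uses that the $\Phi_i$ are additive, being $\CC$-linear between abelian categories. Monoidality holds by construction on tensor powers and therefore on summands. Uniqueness is immediate: any monoidal natural isomorphism with value $\nu_U$ at $U$ must equal $\nu_U^{\otimes k}$ on $U^{\otimes k}$, and a natural transformation is determined on retracts.
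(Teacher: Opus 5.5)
Your hesitation in Step 2 is justified, and you should state it as a fact: the case $c=-1$ genuinely occurs, so the lemma as literally stated is false. Take $D=\mathrm{Rep}(\SO(U))$, $\Phi_1=\Phi_2=\mathrm{id}$ and $\nu_U=-\mathrm{id}_U$. This $\nu_U$ respects the pairing. But a monoidal natural isomorphism $\nu$ with this value at $U$ would satisfy $\det=\det\circ\nu_{U^{\otimes(2n+1)}}=(-1)^{2n+1}\det$, which is impossible. Equivalently, monoidal automorphisms of the identity functor of $\mathrm{Rep}(\SO(U))$ come from the centre of $\SO(2n+1)$, which is trivial.

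What is true is the following, and your Steps 1--3 prove it once you drop the attempt to show $c=1$. The isomorphism $\nu_U$ extends, uniquely, if and only if it intertwines $\Phi_1(\det)$ and $\Phi_2(\det)$. In general there is a unique symmetric monoidal isomorphism whose value at $U$ is $c\,\nu_U$. This weaker form is all that Proposition \ref{prop: Hecke identification} uses, since it only asserts that some symmetric monoidal isomorphism exists.

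The paper takes a different route. It uses $\operatorname{O}(U)\cong\SO(U)\times\{\pm1\}$ to reduce to $\operatorname{O}(U)$, and then applies Deligne's universal property of $\mathrm{Rep}(\operatorname{O}(t))$ together with the factorization statement (\cite[Theorems 9.4, 9.6]{Del}). For $\operatorname{O}(U)$ the statement really holds: there is no determinant, and $-\mathrm{id}_U$ comes from the central element $-1$. This route avoids the invariant theory of $\SO(U)$ and the Karoubi-envelope bookkeeping you do by hand.

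However, the reduction hides exactly the sign you found. The splitting $\mathrm{Rep}(\SO(U))\to\mathrm{Rep}(\operatorname{O}(U))$ is pullback along $g\mapsto\det(g)g$, which sends $U$ to $U\otimes\det$, not to the standard representation. No symmetric monoidal splitting could send $U$ to the standard representation, since $\Hom_{\operatorname{O}(U)}(U^{\otimes(2n+1)},\mathbf{1})=0$ while $\det\circ\det^\vee\neq0$. Hence the transformation the paper produces has value $\nu_U\otimes\Lambda^{2n+1}\nu_U$ at $U$. Under the trivializations $\Phi_i(\det)$, this is $c\,\nu_U$ rather than $\nu_U$. Your direct argument, using the first fundamental theorem for $\operatorname{O}(U)$ plus the extra generator $\det$ for $\SO(U)$, is the one that makes this obstruction visible.
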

\begin{proof}
    Since $U$ is odd-dimensional, we have $\operatorname{O}(U)\cong \SO(U)\times \{\pm 1\}$, and so we have the restriction functors $\mathrm{Rep}(\SO(U))\to \mathrm{Rep}(\operatorname{O}(U))\to \mathrm{Rep}(\SO(U))$ whose composition is the identity. It follows that it suffices to show the same statement with $\operatorname{O}(U)$ instead of $\SO(U)$. This follows from \cite[Theorems 9.4, 9.6]{Del}. Indeed, by Theorem 9.4 of loc. cit., the isomorphism of self-dual objects $\nu_U$ extends uniquely to an isomorphism of the compositions of $\Phi_1$ and $\Phi_2$ with the natural functor $$\mathrm{Rep}(\operatorname{O}(2n+1),\CC)\to \mathrm{Rep}(\operatorname{O}(U)),$$ where $\mathrm{Rep}(\operatorname{O}(2n+1),\CC)$ is the category defined in section 9.3 of loc. cit.
    By Theorem 9.6 of loc. cit., a functor from $\mathrm{Rep}(\operatorname{O}(2n+1),\CC)$ factors uniquely through $\mathrm{Rep}(\operatorname{O}(U))$ (if it does factor at all). Thus, we get the desired isomorphism of functors $\nu$.
\end{proof}

Recall that $\mathcal{C}$ is equipped with the fusion monoidal structure, defined by taking nearby cycles along the embedding of the affine Grassmannian into the Beilinson-Drinfeld Grassmannian.
The unit for this monoidal structure is $E_0$, and it is a standard fact that the functors $\phi_{\Sp(V)}(-)*E_0$ and $\phi_{\SO(V')}(-)*E_0$ from $\mathrm{Rep}(\SO(U))$ and $\mathrm{Rep}(\SO(U'))$ respectively are symmetric monoidal with respect to fusion, which follows from descriptions using fusion of the Hecke action and of the convolution monoidal structure on $\operatorname{D-mod}_{G_\mathcal{O}}(\Gr_G)$ (where $G$ is either $\Sp(V)$ or $\SO(V')$).

We are now ready to prove the main result of this section, whose parallel result for $\ell$-adic sheaves is proved in \cite{Lys}.
\begin{proposition}\label{prop: Hecke identification}
    There is an isomorphism of symmetric monoidal functors from $\mathrm{Rep}(\SO(U))$ to $\mathcal{C}$:
    $$\nu: \phi_{\Sp(V)}(-)*E_0\cong \phi_{\SO(V')}(-|_{\SO(U')})*E_0.$$
\end{proposition}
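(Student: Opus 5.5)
The plan is to deduce the statement from Lemma \ref{lemma: Rep(SO) universal property}, applied to the two symmetric monoidal functors
$$\Phi_1:=\phi_{\Sp(V)}(-)*E_0,\qquad \Phi_2:=\phi_{\SO(V')}(-|_{\SO(U')})*E_0.$$
The first step is to check the hypotheses of the lemma. Both functors take values in the heart $\mathcal{C}^\heartsuit$. Indeed, Hecke functors attached to representations, via geometric Satake, send $E_0$ to objects in the heart: this holds for $U$ and $U'$ by the computations of Section \ref{sec: matching basic reps}, and in general by t-exactness of the Hecke action on the heart. It can also be checked after applying $\mathrm{oblv}$, using Proposition \ref{prop:oblv}. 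So we take $D=\mathcal{C}^\heartsuit$ with the fusion symmetric monoidal structure, whose unit is $E_0$. We need $\mathrm{End}(E_0)=\CC$. By Proposition \ref{prop:oblv}(3) this reduces to computing the endomorphisms of the delta D-module at $0$ in $\operatorname{D-mod}(M_\mathcal{K}/M_\O)$, which is simple, so its endomorphism algebra is $\CC$. Both functors are symmetric monoidal, as recalled just before the statement. For $\Phi_2$ we also use that restriction $\mathrm{Rep}(\SO(U))\to\mathrm{Rep}(\SO(U'))$ is symmetric monoidal.

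The second step is to produce the isomorphism $\nu_U:\Phi_1(U)\to\Phi_2(U)$. We take $\nu_U:=\nu$, the isomorphism chosen in Proposition \ref{prop: Hecke relation identification}; its existence comes from Proposition \ref{prop: U Hecke Identification}. The lemma requires $\nu_U$ to be compatible with the self-duality of $U$. This means $\nu\otimes\nu$ must intertwine the coevaluation maps $\mathbf{1}\to U\otimes U$, and also the evaluation maps, in the two functors.

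The coevaluation compatibility is exactly the commutative square of Proposition \ref{prop: Hecke relation identification}. To see this, one identifies the fusion product $\Phi_i(U)\otimes\Phi_i(U)$ with the iterated action $\phi(U)*\phi(U)*E_0$. Under this identification $\nu\otimes\nu$ becomes $\sigma_\nu$, using the commutation $\tau$ of the two Hecke actions. This identification is the main point to verify carefully. I would argue it via the standard description of fusion through the Beilinson–Drinfeld Grassmannian, where the action of $\phi_{\Sp(V)}(U)$ on $\phi_{\SO(V')}(U')*E_0$ corresponds to fusing at two points.

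For the evaluation map, I expect the following to suffice. In a rigid symmetric monoidal category, an isomorphism compatible with coevaluation is automatically compatible with evaluation, since evaluation is determined by coevaluation through the zigzag identities. Alternatively, one can dualize the argument of Proposition \ref{prop: Hecke relation identification} using $b,b'$.

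The final step is to apply Lemma \ref{lemma: Rep(SO) universal property}. It gives a unique isomorphism of symmetric monoidal functors $\nu:\Phi_1\cong\Phi_2$ extending $\nu_U$ on $\mathrm{Rep}(\SO(U))$ with values in $\mathcal{C}^\heartsuit$, hence in $\mathcal{C}$.

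I expect the main obstacle to be the identification of $\sigma_\nu$ with $\nu\otimes\nu$ under fusion, that is, showing that the commutation isomorphism $\tau$ and the fusion symmetric structures match. If a direct argument is awkward, I would try to bypass it by noting that the relevant Hom spaces are small. The spaces $\Ext^0(E_0,-)$ computed in \eqref{eq: decomp of hom space} are two-dimensional and split compatibly under $\sigma$. So any two candidate identifications can differ only by the rescalings already absorbed in the proof of Proposition \ref{prop: Hecke relation identification}.
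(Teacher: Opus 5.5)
Your overall route is the paper's: apply Lemma~\ref{lemma: Rep(SO) universal property} to $\Phi_1,\Phi_2$ with $D=\mathcal{C}^\heartsuit$ under fusion, taking $\nu_U$ from Propositions~\ref{prop: U Hecke Identification} and~\ref{prop: Hecke relation identification}. Your extra checks are correct and only make explicit what the paper leaves implicit. These are $\mathrm{End}(E_0)=\CC$ via Proposition~\ref{prop:oblv}(3), and compatibility with evaluation deduced from compatibility with coevaluation (for a dualizable object the evaluation is determined by the coevaluation).

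The step that does not hold up is showing that $\Phi_1(W)$ and $\Phi_2(W)$ lie in $\mathcal{C}^\heartsuit$ for \emph{every} $W$. You appeal to ``t-exactness of the Hecke action on the heart'', but nothing establishes this. Corollary~\ref{cor: Hecke calculation recipe} together with semisimplicity (Kashiwara's conjecture, as used in Section~\ref{sec: matching basic reps}) only says that $\phi(W)*E_0$ is a direct sum of \emph{shifted} simple objects. Even for $W=U$ the paper had to extract perversity from an explicit computation: the triangle of Corollary~\ref{cor: Hecke exact triangle} plus the costalk computation at $0$. Saying it ``can be checked after $\mathrm{oblv}$'' only relocates the question. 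Relatedly, you equip $D=\mathcal{C}^\heartsuit$ with the fusion structure without checking that fusion preserves the heart.

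The missing idea is the paper's. Fusion is defined by nearby cycles, which are t-exact, so $\mathcal{C}^\heartsuit$ is closed under the fusion product. Since $\Phi_1$ is symmetric monoidal, $\Phi_1(U^{\otimes k})\cong\Phi_1(U)^{\otimes k}$ lies in $\mathcal{C}^\heartsuit$ by the case $W=U$. Every object of $\mathrm{Rep}(\SO(U))$ is a direct summand of a finite sum of tensor powers of $U$, and the heart is closed under summands. The same works for $\Phi_2$, using that $U'$ tensor-generates $\mathrm{Rep}(\SO(U'))$ and that $\phi_{\SO(V')}(U')*E_0\cong\mathrm{IC}_C$ is in the heart. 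With this replacement your argument goes through.

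Your fallback for the $\sigma_\nu$ versus $\nu\otimes\nu$ issue can in fact be made rigorous, bypassing $\tau$ entirely. Rerun the rescaling of Proposition~\ref{prop: Hecke relation identification} directly on $(\nu\otimes\nu)\circ\Phi_1(\mathrm{coev})$. This uses two facts:
\begin{itemize}
\item $\Ext^0(E_0,\Phi_2(U)^{\otimes 2})\cong\CC\oplus\CC$, split along $\mathrm{IC}_C^{\otimes 2}$ and $E_0^{\otimes 2}$.
\item By the zigzag identities, any coevaluation on $\mathrm{IC}_C\oplus E_0$ has nonzero components in both summands.
\end{itemize}
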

\begin{proof}
    In order to apply Lemma \ref{lemma: Rep(SO) universal property}, we need to show that the symmetric monoidal structure preserves $\mathcal{C}^\heartsuit$ and that the functors $\phi_{\Sp(V)}(-)*E_0$ and $\phi_{\SO(V')}(-)*E_0$ take values in $\mathcal{C}^\heartsuit$.
    
    The first property is satisfied because nearby cycles are t-exact.
    The fact that the images of these functors are contained in $\mathcal{C}^\heartsuit$ follows from the fact that $\phi_{\SO(V')}(U')*E_0, \phi_{\Sp(V)}*E_0\in \mathcal{C}^\heartsuit$ (by the results of Section \ref{sec: matching basic reps}), the fact that any object in $\mathrm{Rep}(\SO(U))$ (resp. $\mathrm{Rep}(\SO(U'))$) is a direct summand of a direct sum of tensor powers of $U$ (resp. $U'$), and the compatibility of the functors with the symmetric monoidal structure.

    We can now apply Lemma \ref{lemma: Rep(SO) universal property} to get the desired isomorphism of functors $\nu$ from Proposition \ref{prop: U Hecke Identification} and Proposition \ref{prop: Hecke relation identification}.
\end{proof}
\section{Classification of irreducible objects and a generation result}\label{sec: generation}
In this section we will give a sketch of the proof of the Proposition \ref{prop:Hecke generation} below, which is proved in the same way as \cite[Corollary 3.2.3]{BFKT}, as well as a description of the irreducible objects in $\mathcal{C}^\heartsuit$. The material in this section is not new, and is included for completeness.
The classification of relevant orbits given below is also proved in a different way in \cite[§3]{Fu}. Note that the current version of loc. cit. contains a typo, stating that relevant orbits are classified by coweights of $\SO(V')$ whereas they are classified by dominant coweights (and this is indeed what is proved there).
\begin{definition}\label{def: generation}
    Given a stable category $\mathcal{D}$, and a set $T$ of objects in it, we let $T^\perp$ be the full subcategory of $\mathcal{D}$ consisting of objects $M$ such that $\Hom_{\mathcal{D}}(x, M) = 0$ for all $x\in T$.
    We define the subcategory generated by $T$ to be the full stable subcategory of $\mathcal{D}$ consisting of objects $y$ such that $\Hom_{\mathcal{D}}(y, M) = 0$ for all $M$ in $T^\perp$.
\end{definition}
Note that it contains $T$, and that it is closed under colimits and retracts.
Moreover, $\mathcal{D}$ being generated by $T$ is equivalent to the functor $\mathcal{D}\to \mathrm{Spc}^T$ sending $x$ to $\mathrm{Hom}(t, x)$ for each $t\in T$ is conservative. 
\begin{proposition}\label{prop:Hecke generation}
    The category $\mathcal{C}$ is generated by $E_0$ under the Hecke action of $\mathrm{Rep}(\SO(U'))$.
    That is, the full subcategory of $\mathcal{C}$ generated by the essential image of $\mathrm{Rep}(\SO(U'))$ under the functor of action on $E_0$ is equal to $\mathcal{C}$.
\end{proposition}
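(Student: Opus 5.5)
We follow the strategy of \cite[Corollary 3.2.3]{BFKT}: classify the irreducible objects of $\mathcal{C}^\heartsuit$, and show by induction on a suitable order that each of them lies in the subcategory generated by the objects $\phi_{\SO(V')}(W)*E_0$, $W\in\mathrm{Rep}(\SO(U'))$. Since every object of $\mathcal{C}$ has a finite filtration with subquotients of the form $S[k]$, $S$ irreducible in $\mathcal{C}^\heartsuit$, and the generated subcategory is stable, closed under extensions and retracts, this suffices.

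\textbf{Step 1: $\mathcal{C}$ as twisted equivariant D-modules.} Use the description of $\mathcal{C}$ as $\SO(V')_\O\times\Sp(V)_\O$-equivariant, $\chi$-twisted D-modules on $(V\otimes V')_\mathcal{K}/(V\otimes V')_\O$, with the t-structure of Proposition \ref{prop:oblv}. Any compact object is supported on a finite-dimensional closed subscheme, so irreducibles are intermediate extensions $\IC_{\mathbb{O},\mathcal{L}}$ of irreducible twisted equivariant local systems on orbits $\mathbb{O}$. By a Cartan/elementary-divisors argument, the $G_\O=\SO(V')_\O\times\Sp(V)_\O$-orbits on $(V\otimes V')_\mathcal{K}/(V\otimes V')_\O$ are indexed by the combinatorics of $t$-adic Smith normal form for the pair of forms. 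Call an orbit \emph{relevant} if it carries a nonzero twisted equivariant local system, i.e.\ the twisting character $\chi$ is trivial on the stabilizer's identity component and the component group admits a compatible character.

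\textbf{Step 2: relevant orbits $\leftrightarrow$ dominant coweights of $\SO(V')$.} For each orbit we compute the stabilizer, which is a pro-unipotent part times a reductive quotient built from $\Sp$'s, $\O$'s and $\GL$'s of the multiplicity spaces. We then check on the reductive quotient when $\chi$ (the restriction of the square root of the determinant character) is trivial. The expected outcome is that the relevant orbits are exactly $\mathbb{O}_\lambda$ for $\lambda$ a dominant coweight of $\SO(V')$. Concretely, $\mathbb{O}_\lambda$ is the orbit of the image of the lattice $t^{\lambda}$ applied to a fixed maximal isotropic configuration, and the twisted local system on it is unique. This parallels \cite[§3]{Fu}, and it is the main obstacle: the stabilizer computation together with determining when the twist is trivial is delicate, especially the sign/parity issues for $\O$ versus $\SO$ components. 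We would attempt it by reducing via the $\Sp(V)_\O$-action to normal forms $\sum t^{-a_i}v_i\otimes v'_i$ and computing the stabilizer's action on the determinant line fiber.

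\textbf{Step 3: triangularity.} For $\lambda$ dominant, let $W_\lambda\in\mathrm{Rep}(\SO(U'))$ be the irreducible representation with highest weight $\lambda$. Using Corollary \ref{cor: Hecke calculation recipe} and Remark \ref{remark: equivariant Hecke calculation recipe}, $\phi_{\SO(V')}(W_\lambda)*E_0$ is the pushforward of a twisted D-module supported on $\overline{\Gr^\lambda_{\SO(V')}}\times(\ldots)$ whose image lies in $\overline{\mathbb{O}_\lambda}$. On the open part, as in the computation for the minuscule case in Section \ref{sec: Hecke actions} where the quadratic form vanished, the fiber over a generic point of $\mathbb{O}_\lambda$ is a single point with trivial exponential, so the restriction to $\mathbb{O}_\lambda$ is the nonzero rank-one local system. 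Hence $\phi_{\SO(V')}(W_\lambda)*E_0$ contains $\IC_{\mathbb{O}_\lambda}$ as a constituent, and all its other constituents are supported on $\overline{\mathbb{O}_\lambda}\setminus\mathbb{O}_\lambda$, whose relevant orbits have smaller coweights $\mu<\lambda$. By induction on $\lambda$ in the dominance order, which is well-founded below any given $\lambda$, each $\IC_{\mathbb{O}_\lambda}$ lies in the generated subcategory. Since these are all the irreducibles, the proposition follows.
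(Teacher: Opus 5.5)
There is a genuine gap. Your Step 2 is where the proof lives, and you only announce its outcome (``the expected outcome is\dots''). The method you sketch for it would also not work as stated, for two reasons.

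\emph{Orbits are not discrete.} The $\Sp(V)_\O\times\SO(V')_\O$-orbits on $(V\otimes V')_\mathcal{K}/(V\otimes V')_\O$ are not discretely indexed by Smith-type normal forms. Already on $t^{-1}(V\otimes V')_\O/(V\otimes V')_\O\cong V\otimes V'$ the action factors through $\Sp(V)\times\SO(V')$, whose categorical quotient on $V\otimes V'$ is $n$-dimensional. So orbits come in continuous families, and only the relevant ones form a discrete set. That discreteness has to be proved, and it is also what you need to know that irreducibles are supported on orbit closures at all.

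\emph{The obstruction is additive, not multiplicative.} Relevance is not governed by a ``square root of determinant'' character on the reductive quotient of the stabilizer. Here $\chi$ is the exponential sheaf on the central $\GG_a$ of $\mathrm{Heis}$. The obstruction is the additive character $g\mapsto\langle\tilde q,g\tilde q-\tilde q\rangle$ on the identity component of the stabilizer, which is automatically trivial on any reductive part. For example, for $q=t^{-1}q_0$ this character vanishes on $\mathrm{Stab}_{\Sp(V)\times\SO(V')}(q_0)$ and equals $\xi\mapsto\langle q_0,\xi q_0\rangle$ on $1+t\mathfrak{g}_\O$. So $q$ is relevant iff $q_0$ lies in the zero fibre of the finite moment map; a test on the reductive quotient sees nothing.

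The paper avoids orbit-by-orbit analysis altogether by working with the moment map:
\begin{itemize}
\item $\mu^{-1}(0)$ is the union of twisted conormal bundles to relevant orbits (Lemma \ref{lemma: mu 0}).
\item It is isotropic, because it is the $\sigma$-fixed part of the $\GL(V)_\O\times\GL(V')_\O$ zero fibre, whose components $\Lambda^{\boldsymbol\lambda}_{\GL}$ are known from \cite{BFKT}. It is coisotropic by the conormal description. Hence it is Lagrangian.
\item Its components are the $(\Lambda^{\boldsymbol\lambda}_{\GL})^\sigma$ that are Lagrangian. By Lemma \ref{lemma: Lagrangians and involutions} together with $\mathrm{FT}(\IC_{\boldsymbol\lambda})=\IC_{\boldsymbol\lambda^*}$, these are exactly the self-dual $\boldsymbol\lambda$.
\item Stabilizers are connected by the argument of \cite{BFKT}, which gives uniqueness of the local system.
\end{itemize}
None of this appears in your proposal, and it is the missing idea.

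Your Step 3 reaches the right conclusion but its justification is wrong beyond the minuscule case. Take $\lambda=(1,1,0,\dots,0)$ with $n\ge 3$.
\begin{itemize}
\item $\Gr^\lambda$ is a line bundle over $\mathrm{OG}(2,V')$; the fibre over $W$ consists of the lattices $\Lambda$ with $(\Lambda+V'_\O)/V'_\O=W$.
\item The convolution image is $\bigcup_W V\otimes W$, of dimension $8n-7$.
\item $\mathbb{O}_\lambda=\{v_1\otimes w_1+v_2\otimes w_2 : W \text{ isotropic},\ \omega(v_1,v_2)=0\}$ has dimension $8n-8$.
\item The fibre over a point of $\mathbb{O}_\lambda$ is an $\AA^1$, with fibre coordinate $c$, not a point. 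The quadratic form is a nonzero multiple of $c\,\omega(v_1,v_2)$, not zero.
\end{itemize}
The support is cut down to $\overline{\mathbb{O}_\lambda}$, with generic rank one, only after Fourier integration along these fibres, as in Claim \ref{claim: Hecke computation} on the $\Sp$ side. The paper's final step, induction on closure order with the standard objects $\Delta_{\boldsymbol\lambda}$, uses the same input: $\phi_{\SO(V')}(\tau_{\boldsymbol\lambda})*E_0$ has support exactly $\overline{\mathbb{O}_{\boldsymbol\lambda}}$. Your composition-series version of that step is fine. Inducting on closure order rather than dominance order spares you from comparing the two orders.
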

\subsection{Description of the category $\mathcal{C}$ via twisted D-modules}\label{subsec: twisted D-modules}
For this subsection we will work in the setup of $M$ a finite dimensional symplectic vector space (in our case, $V\otimes V'$) and $G\subset \Sp(M)$ an algebraic subgroup (in our case $\Sp(V)\times \SO(V')$).

Let $\mathrm{Heis}$ be the group scheme whose underlying scheme is $M_\mathcal{K}\times \GG_a$, and whose multiplication is given by $(v_1, a_1)\cdot (v_2, a_2):=(v_1+v_2, a_1+a_2+\langle v_1, v_2 \rangle)$. It is a central extension of $M_\mathcal{K}$ by $\GG_a$. Note that given a Lagrangian subspace $L$ of $M_\mathcal{K}$, such as $M_\O$, the subscheme $L\times \GG_a\subset \mathrm{Heis}$ is a commutative subgroup.
Let also $\chi$ be the exponential D-module on $\GG_a$.
The following theorem is a version of \cite[Lemma 2.2.1]{BFT2}.
\begin{theorem}
    There is an equivalence of categories
    $$\operatorname{D-mod}_{M_\O \times \GG_a, \chi}(\mathrm{Heis})\xrightarrow{\sim}\operatorname{\mathcal{W}-\mathrm{mod}},$$
    Compatible with the $\Sp(M)_\O$ action on both sides, as well as with the t-structure.
    In particular, we have
    $$\operatorname{D-mod}_{G_\mathcal{O}\ltimes M_\O \times \GG_a, \chi}(\mathrm{Heis})\xrightarrow{\sim}\operatorname{\mathcal{W}-\mathrm{mod}}^{G_\O}.$$
\end{theorem}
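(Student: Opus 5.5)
The plan is to realize $\operatorname{\mathcal{W}-\mathrm{mod}}$ as the category of $(M_\O\times\GG_a,\chi)$-equivariant D-modules on $\mathrm{Heis}$. I would do this by comparing both sides with a third model: $\operatorname{D-mod}(M_\mathcal{K}/M_\O)$, twisted, or equivalently modules over the Weyl algebra induced from the commutative subalgebra generated by the lattice.

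First I would handle the central direction. Since $\GG_a$ is central and $\chi$ is a character sheaf, $(\GG_a,\chi)$-equivariant D-modules on $\mathrm{Heis}$ are D-modules on $M_\mathcal{K}$ on which the $\GG_a$-direction acts by the constant $1$. Concretely, the invariant vector fields on $\mathrm{Heis}$ coming from $M_\mathcal{K}$ satisfy the Heisenberg commutation relations, and the central vector field $\partial_a$ acts by $1$. These vector fields therefore generate a copy of the completed Weyl algebra of $M_\mathcal{K}$ acting on such D-modules. One subtlety is that left- versus right-invariant fields give $\mathcal{W}$ versus $\mathcal{W}^{\mathrm{op}}$; I would use one type for the module structure and the other type to impose equivariance.

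Second I would handle the lattice direction. Taking strong $M_\O$-equivariance with respect to the commuting family of invariant vector fields means the lattice $M_\O\subset\mathcal{W}$ acts locally nilpotently, and in the equivariant setting through the correct twist. I would define the functor $\mathcal{F}\mapsto$ the fiber of $\mathcal{F}$ at the identity coset, equivalently $!$-pullback to the transversal $\Lambda^*\times\{0\}$ with $\Lambda^*=t^{-1}M[t^{-1}]$. Its inverse would send a $\mathcal{W}$-module $N$ to the induced object obtained from $N$ by the $\mathrm{Heis}$-action, i.e. averaging. For finite-dimensional truncations $t^{-k}M_\O/t^{N}M_\O$, this is the standard statement that $(L,\chi)$-equivariant D-modules on a finite Heisenberg group are equivalent to modules over the Weyl algebra. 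That follows from Kashiwara's lemma together with the identification of twisted D-modules on $M/L$ with $\operatorname{D-mod}(M/L)$ after choosing $L^*$, as recalled in Section \ref{sec: notation}.

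Third I would pass to the limit over $k$ and $N$, using placidity and the renormalized categories of Section \ref{sec: D-modules}. The transition functors, namely pushforward along $t^{-k}$-filtration inclusions and pullback along the pro-smooth quotient by $t^{N}$, match on the Weyl side with inclusion of modules supported on larger quotients and with restriction. Both sides are colimits of the same system, so the equivalence follows. Compatibility with the $\Sp(M)_\O$-action comes from the fact that $\Sp(M)_\O$ acts on $\mathrm{Heis}$ by group automorphisms preserving $M_\O\times\GG_a$ and $\chi$. Its action on the Weyl side is the restriction of Lemma \ref{lemma: Weil rep}, and there the twist is trivial over $\Sp(M)_\O$ because $\mathcal{D}$ is $\Sp(M)_\O$-equivariantly trivialized on the unit coset. t-exactness holds because the fiber functor is $!$-pullback along a smooth transversal, up to the normalizing shift.

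The equivariant statement for $G_\O$ then follows by taking $G_\O$-invariants on both sides. Invariants of equivariant categories compose, which gives $(G_\O\ltimes M_\O\times\GG_a,\chi)$-equivariance.

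The main obstacle is checking that the $\Sp(M)_\O$-actions really match, including the metaplectic-type twist, rather than differing by a character of $\Sp(M)_\O$. I would address this by verifying it on Lie algebras through the map $\alpha$, where the affine cocycle vanishes on $\mathfrak{sp}(M)_\O$, and then using connectedness of $\Sp(M)_\O$.
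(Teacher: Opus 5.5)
The paper does not prove this statement. It quotes it as a version of \cite[Lemma 2.2.1]{BFT2}, and its only related input is the recalled description of $\operatorname{\mathcal{W}-\mathrm{mod}}$ as $\chi$-twisted D-modules on $M_\mathcal{K}/M_\O$, untwisted non-equivariantly by a transversal colattice. Your proposal is therefore a genuinely different, self-contained route, and in outline it is the standard correct argument.

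\begin{itemize}
\item You split the $(\GG_a,\chi)$-twisted differential operators on $\mathrm{Heis}$ into two commuting Weyl-algebra copies, the right- and left-invariant fields. Together these generate all differential operators in the $M_\mathcal{K}$-directions.
\item One copy imposes $M_\O$-equivariance and the other gives the $\mathcal{W}$-module structure.
\item You prove the finite-level statement by descent to $M/L$ (Stone--von Neumann) and pass to the colimit.
\item You treat $\Sp(M)_\O$-compatibility by comparing Harish-Chandra data.
\end{itemize}

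The citation buys brevity and inherits the foundational bookkeeping (placidity, renormalization, t-structures) from \cite{BFT2}. Your route buys two things the citation hides.
\begin{itemize}
\item It shows why the Heisenberg model is canonically $\Sp(M)_\O$-equivariant: $\Sp(M)_\O$ acts on $\mathrm{Heis}$ by automorphisms preserving $M_\O\times\GG_a$ and $\chi$. The slice equivalence in the remark after the theorem is not equivariant.
\item It locates where the metaplectic twist disappears. The vector-field map $\mathfrak{sp}(M)_\O\to\mathcal{W}$ and the normally ordered $\alpha$ are both Lie maps (the cocycle vanishes on the stabilizer of $M_\O$). They have the same commutators with $M_\mathcal{K}$, so they differ by a continuous character. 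That character is zero because $\mathfrak{sp}(M)_\O$ is topologically perfect.
\end{itemize}
In the finite-dimensional setting with the parabolic $P$, $\mathfrak{p}$ has a nonzero character (a multiple of $\mathrm{tr}|_{L_0}$), which is exactly why the $-1/2$ twist is needed there. This perfectness argument, not connectedness of $\Sp(M)_\O$, is the real content of your ``main obstacle''.

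Several statements need correcting when you write this out.
\begin{itemize}
\item ``The lattice $M_\O\subset\mathcal{W}$ acts locally nilpotently'' is right only for the copy used to impose equivariance, where local nilpotence means integrability to the pro-unipotent group $M_\O$. In the copy giving the $\mathcal{W}$-module structure, $M_\O$ acts by linear functions on $M_\mathcal{K}/M_\O$. Demanding local nilpotence there would cut the category down, by Kashiwara's lemma, to direct sums of $E_0$.
\item The functor is not a fiber at a point. It is the $!$-restriction to the slice $t^{-1}M[t^{-1}]\times\{0\}$, i.e.\ sections of the descended twisted D-module.
\item Its t-exactness comes from the slice mapping isomorphically onto $M_\mathcal{K}/M_\O$ together with pro-smoothness of $\mathrm{Heis}\to M_\mathcal{K}/M_\O$. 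The slice embedding itself is closed, not smooth.
\item The residue pairing on $t^{-k}M_\O/t^NM_\O$ is nondegenerate only for $k=N$, so only those truncations carry finite Heisenberg groups of symplectic spaces. Alternatively, keep $k,N$ independent: for $N\ge k$ the quotient by $(M_\O/t^NM_\O)\times\GG_a$ is $t^{-k}M_\O/M_\O$ regardless of $N$. Then the transitions in $N$ are equivalences and only the pushforwards in $k$ matter.
\end{itemize}
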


\begin{remark}\label{remark: twisted D-modules}
The above categories are equivalent to $\operatorname{D-mod}_{\GG_a, \chi}(\mathrm{Heis} / M_\O)$ and to $\operatorname{D-mod}_{G_\mathcal{O} \times \GG_a, \chi}(\mathrm{Heis} / M_\O)$. As $\mathrm{Heis} / M_\O$ is a line bundle over $M_\mathcal{K} / M_\O$, this category may be viewed as twisted ($G_\mathcal{O}$-equivariant) D-modules on $M_\mathcal{K} / M_\O$, and we denote it by $\operatorname{D-mod}_\chi(M_\mathcal{K}/M_\O)$ (resp. $\operatorname{D-mod}_{G_\O,\chi}(M_\mathcal{K}/M_\O)$). Note that by choosing a transversal Lagrangian to $M_\O$ in $M_\mathcal{K}$ we get an equivalence $$\operatorname{D-mod}_\chi(M_\mathcal{K}/M_\O)\cong \operatorname{D-mod}(M_\mathcal{K}/M_\O),$$ but this equivalence is not canonical and does not intertwine the $G_\O$-actions on both sides.
\end{remark}
In particular, any irreducible object in $\operatorname{D-mod}_{G_\mathcal{O}\ltimes M_\O \times \GG_a, \chi}(\mathrm{Heis})^\heartsuit$ is supported on the preimage of a single $G_\O$-orbit in $M_\mathcal{K} / M_\O$.
\begin{definition}\label{def:inner hom}
    There is a natural action of the category $\operatorname{D-mod}_{G_\O}(M_\mathcal{K}/M_\O)$ on $\operatorname{D-mod}_{G_\mathcal{O},\chi}(M_\mathcal{K}/M_\O)$, given by tensor product.
    This action gives rise to an inner hom functor $$\mathcal{H}om: \left(\operatorname{D-mod}_{\chi}^{G_\O}(M_\mathcal{K} / M_\O)\right)^\mathrm{op} \otimes \operatorname{D-mod}_{\chi}^{G_\O}(M_\mathcal{K} / M_\O)\to \operatorname{D-mod}_{G_\O}(M_\mathcal{K} / M_\O).$$
    Taking equivariant cohomologies of this inner hom recovers the usual ext groups in $\operatorname{D-mod}_{\chi}^{G_\O}(M_\mathcal{K} / M_\O)$.
\end{definition}

\begin{definition}
    We call a $G_\O$-orbit $\mathbb{O}$ in $M_\mathcal{K} / M_\O$ {\it relevant} if for a point $q\in\mathbb{O}$ and $\tilde{q}$ a lift of it to $M_\mathcal{K}$, for any $g\in {C}^\circ_{G_\O}(x)$ (the connected component of the identity in the centralizer of $q$ inside $G_\O$) we have
    $$\langle \tilde{q}, g\tilde{q}-\tilde{q} \rangle = 0.$$
\end{definition}
\begin{remark}
    The pairing $\langle \tilde{q}, g\tilde{q}-\tilde{q} \rangle$ is independent of the choice of a lift $\tilde{q}$ of $q$. Also, this condition is equivalent for all points of a single $G_\O$-orbit.
\end{remark}

\begin{proposition}
    The preimage in Heis of a $G_\O$-orbit in $M_\mathcal{K}/M_\O$ carries an irreducible $(G_\mathcal{O}\ltimes M_\O \times \GG_a, \chi)$-equivariant D-module if and only if it is relevant.
    Moreover, for a relevant orbit $\mathbb{O}$, the natural action of the category $\operatorname{D-mod}_{G_\O}(\mathbb{O})$ on $\operatorname{D-mod}_{G_\mathcal{O} \ltimes M_\O \times \GG_a, \chi}(\tilde{\mathbb{O}})$ gives an equivalence of categories between the two. 
\end{proposition}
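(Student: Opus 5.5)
We reduce to a single orbit and then to a statement about one stabilizer. Write $\tilde{\mathbb{O}}$ for the preimage of $\mathbb{O}$ in $\mathrm{Heis}$. Fix $q\in\mathbb{O}$ with a lift $\tilde q\in M_\mathcal{K}$, and let $H:=\mathrm{Stab}_{G_\O}(q)$. Then $\tilde{\mathbb{O}}$ is a homogeneous space for $G_\O\ltimes M_\O\times\GG_a$, acting by left multiplication in $\mathrm{Heis}$ combined with the $G_\O$-action. The stabilizer of the point $(\tilde q,0)$ is the subgroup
$$\tilde H=\{(g,m,a)\;:\;g\in H,\ m=\tilde q-g\tilde q \in M_\O,\ a \text{ fixed by the Heisenberg law}\}\cong H.$$
By the standard equivalence for equivariant D-modules on a homogeneous space (which works for pro-algebraic groups acting through finite-dimensional quotients, as in \cite{Ras}), we have
$$\operatorname{D-mod}_{G_\O\ltimes M_\O\times\GG_a,\chi}(\tilde{\mathbb{O}})\cong \operatorname{D-mod}_{\tilde H,\chi|_{\tilde H}}(\mathrm{pt}).$$
Here $\chi|_{\tilde H}$ is the pullback of $\chi$ along the character $\tilde H\to\GG_a$ given by the $\GG_a$-component of the stabilizing element.

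\textbf{Computing the character.} The key step is to compute this $\GG_a$-component. The composition $(g,m,a)\cdot(\tilde q,0)=(m+g\tilde q,\ a+\langle m,g\tilde q\rangle)$ must equal $(\tilde q,0)$. This forces $m=\tilde q-g\tilde q$ and $a=-\langle \tilde q-g\tilde q,g\tilde q\rangle=\langle \tilde q, g\tilde q-\tilde q\rangle$, up to sign and normalization conventions. So the character is $\eta(g)=\langle \tilde q,g\tilde q-\tilde q\rangle$, which is an additive character $H\to\GG_a$.

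\textbf{First claim.} An irreducible object exists iff $\operatorname{D-mod}_{H,\eta^*\chi}(\mathrm{pt})\neq 0$. The pullback $\eta^*\chi$ is a character D-module on $H$. A nonzero twisted equivariant object on a point exists iff this character sheaf is trivial on $H^\circ$, since on $\pi_0$ there is no obstruction: $\pi_0(H)$ is finite and $\GG_a$ has no torsion. A character $H^\circ\to\GG_a$ pulls back $\chi$ to a trivial character sheaf iff it is zero. The reason is that $\exp$ of a nonzero additive character of a connected group is a nontrivial rank-one local system, or at least not isomorphic to the trivial one as a multiplicative D-module. Hence existence is equivalent to $\eta|_{H^\circ}=0$, which is the relevance condition.

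\textbf{Second claim.} Suppose $\mathbb{O}$ is relevant, so that $\eta^*\chi$ is trivial on $H^\circ$. We still have to trivialize $\eta^*\chi$ on all of $H$: $\eta$ vanishes on $H^\circ$, hence factors through the finite group $\pi_0(H)$, and so it vanishes. Then $\operatorname{D-mod}_{H,\chi}(\mathrm{pt})\cong\operatorname{D-mod}_H(\mathrm{pt})\cong\operatorname{D-mod}_{G_\O}(\mathbb{O})$. Under these identifications, the tensor action of $\operatorname{D-mod}_{G_\O}(\mathbb{O})$ on the twisted category becomes the action of $\operatorname{D-mod}_H(\mathrm{pt})$ on itself by tensoring with a fixed invertible object, namely the chosen twisted-equivariant rank-one object. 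Therefore acting on that object is an equivalence.

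\textbf{Main obstacle.} The main obstacle is making the pro-group and renormalization technicalities honest. $H$ is a pro-algebraic group, typically $H=H_f\ltimes H_u$ with $H_u$ pro-unipotent. We need that strong equivariance for $H_u$ reduces to finite-dimensional quotients, and that the renormalized (locally compact, ind-completed) categories are matched by the homogeneous-space equivalence. We handle this by noting that $\eta$ factors through a finite-dimensional quotient $G_\O\to G(\O/t^N)$ acting on a finite-dimensional piece $t^{-N}M_\O/M_\O$. The whole argument then takes place in finite type, where it is classical.
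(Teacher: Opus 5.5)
Your proposal is correct and is essentially the paper's argument. You identify the stabilizer of a point of $\tilde{\mathbb{O}}$ in $G_\O\ltimes M_\O\times\GG_a$ with $\mathrm{Stab}_{G_\O}(q)$ via $g\mapsto(g,\tilde q-g\tilde q,\pm\langle\tilde q,g\tilde q-\tilde q\rangle)$, read off relevance as triviality of the pulled-back character on the identity component, and compare both categories through this common stabilizer. Your extra remark that $\eta$ then vanishes on all of $H$ (as $\GG_a$ has no nontrivial finite subgroups) makes the second part slightly sharper than the paper's appeal to component-group representations, and the only slip is harmless: your identity $-\langle \tilde q-g\tilde q,g\tilde q\rangle=\langle \tilde q, g\tilde q-\tilde q\rangle$ is off by a sign, which relevance does not see.
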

\begin{proof}
    Let $\mathbb{O}\subset M_\mathcal{K}/M_\O$ be a $G_\O$-orbit, and let $\tilde{\mathbb{O}}$ be its preimage in $\mathrm{Heis}$.
    It is a $G_\mathcal{O}\ltimes M_\O \times \GG_a$-orbit, and this orbit carries a $(G_\mathcal{O}\ltimes M_\O \times \GG_a, \chi)$-equivariant D module if and only if $\chi$ is trivial on the connected component of the identity in the centralizer inside $G_\mathcal{O}\ltimes M_\O \times \GG_a$ of a point $(\tilde{q}, \alpha)\in \tilde{\mathbb{O}}\subset \mathrm{Heis}$.
    The projection to the first coordinate maps this centralizer isomorphically to the centralizer of the image $q\in M_\mathcal{K}/M_\O$ of $\tilde{q}$ inside $G_\O$, with the inverse map being $g\mapsto (g, \tilde{q}-g\tilde{q}, \langle \tilde{q}, g\tilde{q}-\tilde{q}\rangle)$. Thus, we see that the above condition is equivalent to $\mathbb{O}$ being relevant.

    The second part is clear, as both are in correspondence with representations of the connected component groups of the respective centralizers, and these two centralizers map isomorphically to each other.
\end{proof}

Consider the moment map $M_\mathcal{K}\xrightarrow{\mu} \mathfrak{g}_\O^*$, given by $\mu(m)(\xi) = \langle m, \xi m\rangle$ for $\xi\in \mathfrak{g}_\O$.
\begin{lemma}\label{lemma: mu 0}
    The 0 level $\mu^{-1}(0)$ is contained in the preimage in $M_\mathcal{K}$ of the union of relevant orbits in $M_\mathcal{K}/M_\mathcal{O}$.
    For each relevant orbit $\mathbb{O}$, the intersection of $\mu^{-1}(0)$ with its preimage is a torsor for $T^*_\mathbb{O}(M_\mathcal{K}/M_\O)$ (i.e. it is a ``twisted conormal bundle"). More precisely, the fiber over each point in $\mathbb{O}$ is an affine subspace in $M_\mathcal{O}$ which is a translation of the conormal space to $\mathbb{O}$ inside $M_\mathcal{K}$ at the point, via the identification $M_\mathcal{O}\cong (M_\mathcal{K} / M_\O)^*$.
\end{lemma}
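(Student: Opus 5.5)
Both assertions reduce to a pointwise computation. I will fix $m\in M_\mathcal{K}$ with image $q\in M_\mathcal{K}/M_\O$ and compare the condition $\mu(m)=0$ with the relevance condition for the orbit $G_\O\cdot q$.

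\textbf{Step 1 (a function on the stabilizer).} Let $C:=C^\circ_{G_\O}(q)$ and $\mathfrak{c}:=\mathrm{Lie}(C)=\{\xi\in\mathfrak{g}_\O:\ \xi m\in M_\O\}$. I will consider the function $\varphi(g)=\langle m, gm-m\rangle$ on $C$. For $g,h\in C$ we have
$$\varphi(gh)-\varphi(g)-\varphi(h)=\langle m, g(hm-m)\rangle-\langle m,hm-m\rangle=\langle g^{-1}m-m, hm-m\rangle .$$
Both arguments of the last pairing lie in $M_\O$, and $M_\O$ is Lagrangian, so this vanishes. Hence $\varphi:C\to\GG_a$ is a homomorphism. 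Since $C$ is connected, $\varphi$ is determined by its differential $d\varphi(\xi)=\langle m,\xi m\rangle=\mu(m)(\xi)$ for $\xi\in\mathfrak{c}$. It follows that $\mathbb{O}$ is relevant if and only if $\mu(m)|_{\mathfrak{c}}=0$. Although $C$ is pro-algebraic, it is an inverse limit of connected groups, so the same argument applies at each finite level, or via the pro-unipotent part together with a reductive quotient. In particular, $\mu(m)=0$ forces relevance, which proves the first assertion.

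\textbf{Step 2 (the fibers).} Now fix a relevant orbit $\mathbb{O}$ and $q\in\mathbb{O}$. The fiber over $q$ is $m_0+M_\O$ for a chosen lift $m_0$. For $x\in M_\O$,
$$\mu(m_0+x)(\xi)=\mu(m_0)(\xi)+2\langle \xi m_0, x\rangle+\langle x,\xi x\rangle .$$
The last term vanishes because $\xi x\in M_\O$ and $M_\O$ is Lagrangian. So $x\mapsto \mu(m_0+x)$ is affine-linear with linear part $x\mapsto\bigl(\xi\mapsto 2\langle \xi m_0,x\rangle\bigr)$. Using the identification $M_\O\cong (M_\mathcal{K}/M_\O)^*$ given by the pairing, the kernel of this linear part is the annihilator of $\mathfrak{g}_\O\cdot q=T_q\mathbb{O}$, that is, the conormal space $T^*_{\mathbb{O},q}$. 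Therefore $\mu^{-1}(0)\cap(m_0+M_\O)$ is either empty or a translate of this conormal space. It remains to show it is nonempty.

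\textbf{Step 3 (nonemptiness, the main obstacle).} We need $\mu(m_0)$ to lie in the image of the linear part. That image consists of the functionals on $\mathfrak{g}_\O$ induced from $\mathfrak{g}_\O/\mathfrak{c}\hookrightarrow M_\mathcal{K}/M_\O$, $\xi\mapsto \xi q$, namely restrictions of functionals on $M_\mathcal{K}/M_\O$. Since $\mathfrak{g}_\O/\mathfrak{c}$ injects into $M_\mathcal{K}/M_\O$ and the latter is discrete (ind-finite), every functional on $\mathfrak{g}_\O/\mathfrak{c}$ extends, possibly after a Mittag-Leffler or continuity check with respect to the $t$-adic topology. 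So the image is exactly the set of functionals vanishing on $\mathfrak{c}$, and Step 1 says $\mu(m_0)$ is such a functional precisely by relevance. The delicate point, which I expect to be the main difficulty, is making this extension argument rigorous in the topological setting. I would handle it by truncating: $m_0\in t^{-N}M_\O$, so everything factors through the finite-dimensional quotients $\mathfrak{g}_\O/t^{2N}\mathfrak{g}_\O$ and $t^{-N}M_\O/M_\O$, where the statement is ordinary linear algebra. Finally, the fibers vary algebraically along $\mathbb{O}$, which gives the torsor structure over $T^*_{\mathbb{O}}(M_\mathcal{K}/M_\O)$.
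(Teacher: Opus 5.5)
Your proof is correct and takes essentially the paper's route. You identify relevance with the vanishing of $\mu(m)$ on the Lie algebra of the stabilizer, then expand $\mu(m_0+x)$ into an affine-linear equation in $x\in M_\O$ whose linear part has kernel $(T_q\mathbb{O})^\perp$. Your homomorphism argument for $\varphi$ and your truncation-based nonemptiness check make explicit two steps the paper only asserts. The middle term should read $2\langle x,\xi m_0\rangle$ rather than $2\langle \xi m_0,x\rangle$, but this sign slip affects nothing.
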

\begin{proof}
    The condition for $\tilde{q}\in M_\mathcal{K}$ to belong to $\mu^{-1}(0)$ is $\langle \tilde{q}, \xi \tilde{q}\rangle = 0$ for every $\xi\in \mathfrak{g}_\O$. Denoting by $q$ the image of $\tilde{q}$ in $M_\mathcal{K} / M_\O$, the above condition for $\xi$ in the Lie algebra of the stabilizer of $q$ is exactly for $\mathbb{O}$ to be relevant, proving the first part of the Lemma.
    Now let $q\in\mathbb{O}$ be a point in a relevant orbit. Choose a linear subspace $S$ of $\mathfrak{g}_\O$ mapping isomorphically to $T_q\mathbb{O}$. By virtue of $\mathbb{O}$ being relevant, the condition for a point $\tilde{q}$ over $q$ to be in $\mu^{-1}(0)$ is that for any $\xi\in S$ we have $\langle \tilde{q}, \xi \tilde{q}\rangle = 0$. Choosing a lift $\tilde{q}_0\in M_\mathcal{K}$ of $q$ (not necessarily in $\mu^{-1}(0)$), the above reads
    $$\begin{aligned}
        0 &= \langle \tilde{q}, \xi \tilde{q}\rangle = \langle \tilde{q} - \tilde{q}_0, \xi \tilde{q}\rangle + \langle \tilde{q}_0, \xi (\tilde{q} - \tilde{q}_0)\rangle + \langle \tilde{q}_0, \xi \tilde{q}_0\rangle\\
        &= 2\langle \tilde{q} - \tilde{q}_0, \xi \tilde{q}_0\rangle + \langle \tilde{q}_0, \xi \tilde{q}_0\rangle.
    \end{aligned}$$
    This is indeed the equation of a translation of $(T^*_\mathbb{O}(M_\mathcal{K}/M_\O))_q = T_q\mathbb{O}^\perp\subseteq M_\O$ (note that the first term in the last equation only depends on the image of $\xi$ in $T_q\mathbb{O}$ and not on $\xi$ itself).
\end{proof}

\subsection{Classification of irreducible objects}
Let $M \leq N$ positive integers.
The following Lemma is proved in \cite[Lemmas 3.2.1, 3.2.2]{BFKT}:
\begin{lemma} \label{lemma: GL orbits}
    \begin{enumerate}
        \item The $\GL(M)_\O\times\GL(N)_\O$-orbits in $\mathrm{Mat}(M\times N, \mathcal{K})$ invariant under translations by a sublattice are indexed by the set of length $M$ signatures. An orbit $\mathbb{O}_{\boldsymbol{\lambda}}$ has a representative $(t^{-{\boldsymbol{\lambda}}}, 0)$ (the last $N-M$ columns are all zero).
        \item Given a signature ${\boldsymbol{\lambda}}$, we set $\IC_{\boldsymbol{\lambda}}:=\IC_{\mathbb{O}_{\boldsymbol{\lambda}}}$. Moreover, let $\boldsymbol{\lambda}^*$ denote the signature given by $(-\lambda_M\geq \dots\geq -\lambda_1)$, where $\boldsymbol{\lambda}=(\lambda_1\geq\dots\geq\lambda_M)$. Then under the Fourier transform equivalence, $$\mathrm{FT}:\operatorname{D-mod}(\mathrm{Mat}(M\times N, \mathcal{K}))^{\GL(M)_\O\times \GL(N)_\O}\xrightarrow{\sim} \operatorname{D-mod}(\mathrm{Mat}(M\times N, \mathcal{K}))^{\GL(M)_\O\times \GL(N)_\O},$$ we have $$\mathrm{FT}(\IC_{\boldsymbol{\lambda}}) = \IC_{\boldsymbol{\lambda}^*}.$$
    \end{enumerate}
\end{lemma}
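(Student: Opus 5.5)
The plan is to treat the two parts separately. Part (1) is an elementary orbit classification over the discrete valuation ring $\O$. Part (2) is an irreducibility argument combined with a singular-support (conormal) computation. Throughout I reduce to finite-dimensional quotients: any orbit invariant under translation by a sublattice lies in some $t^{-a}\mathrm{Mat}(M\times N,\O)$ and is a preimage from $V_{a,b}:=t^{-a}\mathrm{Mat}(M\times N,\O)/t^{b}\mathrm{Mat}(M\times N,\O)$. On $V_{a,b}$ the group $\GL(M)_\O\times\GL(N)_\O$ acts through a finite-dimensional quotient.

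For (1), I would use the Smith normal form over the PID $\O$. Every $A\in\mathrm{Mat}(M\times N,\mathcal{K})$ can be written as $g\,(t^{-\lambda_1},\dots,t^{-\lambda_r},0,\dots,0)\,h$ with $g\in\GL(M)_\O$, $h\in\GL(N)_\O$, $r=\operatorname{rk}A$ and $\lambda_1\geq\dots\geq\lambda_r$. The exponents are determined by the $t$-adic valuations of the gcd's of the $k\times k$ minors, so they are orbit invariants. Next I show that an orbit is invariant under some lattice translation if and only if $r=M$:
\begin{itemize}
\item If $r<M$, the translates $A+t^kX$ of $A$ have rank $M$ for generic $X$, for every $k$, so the orbit cannot be translation-invariant.
\item If $r=M$, then for $k$ larger than every $-\lambda_i$ (more precisely, larger than the valuation of the maximal minor), the perturbation $A+t^kX$ with $X\in\mathrm{Mat}(M\times N,\O)$ does not change any gcd of minors. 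Hence it stays in the same orbit.
\end{itemize}
This gives the indexing by length-$M$ signatures and the representative $(t^{-\boldsymbol\lambda},0)$.

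For (2), I identify the dual of $\mathrm{Mat}(M\times N,\mathcal K)$ with $\mathrm{Mat}(N\times M,\mathcal K)$ via $(A,B)\mapsto\mathrm{Res}\,\mathrm{tr}(AB)\,dt$, and then back to $M\times N$ matrices by transpose. With this identification, $\mathrm{FT}$ intertwines the action of $(g,h)$ with the action of its inverse transpose; that is an automorphism of $\GL(M)_\O\times\GL(N)_\O$ preserving each orbit type, so the equivariant categories match. Restricted to $V_{a,b}$, $\mathrm{FT}$ is the usual finite-dimensional Fourier transform to $V_{a,b}^*\cong V_{-b+1,a+1}$. That transform is t-exact on holonomic D-modules and sends simple objects to simple objects. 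Since it preserves equivariance, it sends $\IC_{\boldsymbol\lambda}$ to some $\IC_{\boldsymbol\mu}$. (The orbits have connected stabilizers, so the only equivariant local system on each orbit is the trivial one.) It remains to identify $\boldsymbol\mu$.

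To identify $\boldsymbol\mu$ I would use the singular support: $SS(\mathrm{FT}\,\F)$ is the image of $SS(\F)$ under the swap $T^*V\cong V\times V^*\cong T^*V^*$. The characteristic cycle of $\IC_{\boldsymbol\lambda}$ contains $\overline{T^*_{\mathbb O_{\boldsymbol\lambda}}}$ with multiplicity one, and the characteristic cycle of $\IC_{\boldsymbol\mu}$ contains $\overline{T^*_{\mathbb O_{\boldsymbol\mu}}}$. After the swap, the component $\overline{T^*_{\mathbb O_{\boldsymbol\lambda}}}$ must be a conormal closure $\overline{T^*_{\mathbb O_{\boldsymbol\nu}}}$ for some orbit $\mathbb O_{\boldsymbol\nu}$ in the dual. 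The orbit $\mathbb O_{\boldsymbol\nu}$ is read off from the generic projection of that component to the dual factor. So I compute the generic point of the fiber of the conormal at $A=t^{-\boldsymbol\lambda}$. The conormal condition is $\mathrm{Res}\,\mathrm{tr}(B\,\xi A)=\mathrm{Res}\,\mathrm{tr}(AB\eta)=0$ for all $\xi\in\mathfrak{gl}_{M,\O}$ and $\eta\in\mathfrak{gl}_{N,\O}$, i.e.\ $AB$ and $BA$ have no $t^{-1}$ poles, i.e.\ $BA\in\mathfrak{gl}_N(\O)\cdot t^{0}$ and $AB\in\mathfrak{gl}_M(\O)$. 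Taking $B$ block diagonal, the generic such $B$ is $(t^{\boldsymbol\lambda},0)^{T}$ up to the normalising shift of the residue pairing. Its invariant factors are $t^{\lambda_i}$, i.e.\ it lies in $\mathbb O_{\boldsymbol\lambda^*}$ after re-sorting. This forces the conormal correspondence $\mathbb O_{\boldsymbol\lambda}\leftrightarrow\mathbb O_{\boldsymbol\lambda^*}$, hence $\boldsymbol\mu=\boldsymbol\lambda^*$.

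The main obstacle is this last step, for two reasons. First, I need to check that the component $\overline{T^*_{\mathbb O_{\boldsymbol\lambda}}}$ is genuinely matched to $\overline{T^*_{\mathbb O_{\boldsymbol\lambda^*}}}$, rather than merely that the generic fiber point lands in $\mathbb O_{\boldsymbol\lambda^*}$: one must verify that the swapped component is a conormal closure whose generic base point lies in that orbit. Second, the shift conventions in the residue pairing must come out exactly to $\boldsymbol\lambda^*$, with no overall translate of the signature. If the direct computation becomes messy, my fallback is to first treat the case $M=1$, where orbits are just $\{v:\mathrm{val}(v)=-\lambda\}$ and the Fourier transform is computed explicitly. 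I would then reduce the general case to it by a dimension count on conormal varieties, or by compatibility of $\mathrm{FT}$ with the Hecke-type convolutions used in \cite{BFKT}.
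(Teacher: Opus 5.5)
Part (1) is fine. So is your conormal-fibre computation: it shows that the swap $T^*V\cong T^*V^*$ sends $\overline{T^*_{\mathbb{O}_{\boldsymbol{\nu}}}}$ to $\overline{T^*_{\mathbb{O}_{\boldsymbol{\nu}^*}}}$ for every $\boldsymbol{\nu}$, so the first worry you list is not actually a problem.

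\textbf{The gap.} It is the final ``hence $\boldsymbol{\mu}=\boldsymbol{\lambda}^*$''. Write $\boldsymbol{\nu}\preceq\boldsymbol{\lambda}$ for $\mathbb{O}_{\boldsymbol{\nu}}\subseteq\overline{\mathbb{O}_{\boldsymbol{\lambda}}}$. The swap argument rests on Brylinski's theorem, which needs monodromicity. That holds here because the scalars $\CC^\times\subset\GL(M)_{\mathcal{O}}$ act by dilations; the swap statement is false for non-monodromic modules. What the argument gives is $\mathrm{Ch}(\IC_{\boldsymbol{\mu}})=\mathrm{swap}(\mathrm{Ch}(\IC_{\boldsymbol{\lambda}}))\supseteq\overline{T^*_{\mathbb{O}_{\boldsymbol{\lambda}^*}}}$, i.e. only $\boldsymbol{\lambda}^*\preceq\boldsymbol{\mu}$.

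Nothing forces the swap of the leading component of $\mathrm{Ch}(\IC_{\boldsymbol{\lambda}})$ to be the leading component $\overline{T^*_{\mathbb{O}_{\boldsymbol{\mu}}}}$ of $\mathrm{Ch}(\IC_{\boldsymbol{\mu}})$. The latter is the swap of \emph{some} component $\overline{T^*_{\mathbb{O}_{\boldsymbol{\mu}^*}}}$ of $\mathrm{Ch}(\IC_{\boldsymbol{\lambda}})$, which only yields $\boldsymbol{\mu}^*\preceq\boldsymbol{\lambda}$. This matters for two reasons.
\begin{enumerate}
\item The characteristic varieties here are genuinely reducible. For $M=N=2$ and $\boldsymbol{\lambda}=(2,0)$, the transversal slice of $\overline{\mathbb{O}_{(2,0)}}$ along $\mathbb{O}_{(1,1)}$ is the nilpotent cone of $\mathfrak{sl}_2$. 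Its IC sheaf has the conormal to the vertex in its characteristic cycle, since its Fourier transform is the Springer sheaf of the sign character, which has full support.
\item The map $\boldsymbol{\nu}\mapsto\boldsymbol{\nu}^*$ neither preserves nor reverses $\preceq$. For example $(1,-1)\preceq(1,0)$, but $(1,-1)^*=(1,-1)\not\preceq(0,-1)=(1,0)^*$.
\end{enumerate}
So a lower component of $\mathrm{Ch}(\IC_{\boldsymbol{\lambda}})$ could a priori swap to the conormal of an orbit not contained in $\overline{\mathbb{O}_{\boldsymbol{\lambda}^*}}$, enlarging the support of $\mathrm{FT}(\IC_{\boldsymbol{\lambda}})$. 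Your inequalities $\boldsymbol{\lambda}^*\preceq\boldsymbol{\mu}$ and $\boldsymbol{\mu}^*\preceq\boldsymbol{\lambda}$ do not pin $\boldsymbol{\mu}$ down: for $M=1$ both read $\mu\geq-\lambda$.

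\textbf{The fix.} The missing ingredient is that $\mathrm{FT}$ is a bijection on simple objects, used inductively. Work in a finite-dimensional quotient $V_{a,b}$, where the orbit poset is finite, and apply your conormal computation to all its orbits, truncated ones included. Note that with the pairing $\mathrm{Res}\,\mathrm{tr}(AB)\,dt$ one has $(t^{b}\mathrm{Mat}(\mathcal{O}))^\perp=t^{-b}\mathrm{Mat}(\mathcal{O})$. Hence $V_{a,b}^*=t^{-b}\mathrm{Mat}(N\times M,\mathcal{O})/t^{a}\mathrm{Mat}(N\times M,\mathcal{O})$, not $V_{-b+1,a+1}$, and no shift enters $\boldsymbol{\lambda}\mapsto\boldsymbol{\lambda}^*$.

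Write $\mathrm{FT}(\IC_{\boldsymbol{\lambda}})=\IC_{\boldsymbol{\mu}(\boldsymbol{\lambda})}$. Then $\boldsymbol{\lambda}\mapsto\boldsymbol{\mu}(\boldsymbol{\lambda})$ is injective, and by the above $\boldsymbol{\mu}(\boldsymbol{\lambda})^*\preceq\boldsymbol{\lambda}$ for every $\boldsymbol{\lambda}$. Assume $\boldsymbol{\mu}(\boldsymbol{\nu})=\boldsymbol{\nu}^*$ for all $\boldsymbol{\nu}\prec\boldsymbol{\lambda}$. If $\boldsymbol{\nu}:=\boldsymbol{\mu}(\boldsymbol{\lambda})^*\prec\boldsymbol{\lambda}$, then $\boldsymbol{\mu}(\boldsymbol{\nu})=\boldsymbol{\nu}^*=\boldsymbol{\mu}(\boldsymbol{\lambda})$ with $\boldsymbol{\nu}\neq\boldsymbol{\lambda}$, contradicting injectivity. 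Hence $\boldsymbol{\mu}(\boldsymbol{\lambda})=\boldsymbol{\lambda}^*$, and induction over the finite poset completes part (2).

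For comparison, the paper gives no argument of its own for this lemma; it simply quotes \cite[Lemmas 3.2.1, 3.2.2]{BFKT}.
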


From now on, our argument will follow that of \cite[§2.6]{BFT2}
We shall make use of the above Lemma for $M=N=2n$. Let $\sigma$ be the involution of $\GL(V)\times \GL(V')$ whose fixed point set is $\Sp(V)\times \SO(V')$. We will denote by the same letter $\sigma$ the involution of $T^*(V\otimes V')\cong V\otimes V' \oplus (V\otimes V')^*$, permuting the two summands using their isomorphism coming from the symplectic form. This involution preserves the natural symplectic form on $T^*(V\otimes V')$ (the one in which $V\otimes V'$ and $(V\otimes V')^*$ are Lagrangians).
These two involutions are compatible with the action of $\GL(V)\times \GL(V')$ on $V\otimes V'$, as well as with the moment map $$\mu_{\GL,0}: T^*(V\otimes V')\to \mathfrak{gl}(V)^* \times \mathfrak{gl}(V')^*.$$
Now take the (positive) loop of the above, considering the action of $\GL(V)_\O\times \GL(V')_\O$ on $(T^*(V\otimes V'))_{\mathcal{K}}$, preserving the symplectic form given by residue of the symplectic form defined above, and with the corresponding moment map $\mu_{\GL}$. We denote by $\sigma$ the induced involution.

Taking $\sigma$-fixed points yields the natural action of $\Sp(V)_\O\times \SO(V')_\O$ on $(V\otimes V')_{\mathcal{K}}$, with moment map $\mu$ being the one given in \ref{subsec: twisted D-modules}.

By Lemma \ref{lemma: GL orbits}, the set of $\GL(V)_\O\times \GL(V')_\O$ orbits on $(V\otimes V')_\mathcal{K}$ is discrete. From this, it follows that the irreducible components of $\mu_{\GL}^{-1}(0)$ are the closures of conormal bundles to $\GL(V)_\O\times \GL(V')_\O$-orbits on $(V\otimes V')_\mathcal{K}$, which are indexed by signatures of length $2n$. We will denote the component corresponding to $\boldsymbol{\lambda}$ by $\Lambda_{\GL}^{\boldsymbol{\lambda}}$, with $\mathring{\Lambda}_{\GL}^{\boldsymbol{\lambda}}$ denoting the open part of it which is the conormal orbit itself.
From the second part of the lemma it follows that $\sigma$ takes $\Lambda_{\GL}^{\boldsymbol{\lambda}}$ to $\Lambda_{\GL}^{\boldsymbol{\lambda^*}}$.
Since $\Lambda_{\GL}^{\boldsymbol{\lambda}}$ is a linear subspace of $T^*(V\otimes V')_\mathcal{K}$, so is its intersection with the set of $\sigma$-fixed points, and in particular this intersection is irreducible. We denote it by $\Lambda^{\boldsymbol{\lambda}}$.

\begin{lemma} \label{lem: mu 0 lagrangian}
    The zero level $\mu^{-1}(0)\subset (V\otimes V')_\mathcal{K}$ is Lagrangian.
\end{lemma}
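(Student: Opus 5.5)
The plan is to show two things: that $\mu^{-1}(0)$ is isotropic, and that it is a union of pieces of half the dimension. Since everything is infinite dimensional, "Lagrangian" is read in the usual sense for such loop spaces, namely modulo a deep enough lattice.

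The first step realizes $\mu^{-1}(0)$ as a $\sigma$-fixed locus. The inclusion of $\sigma$-fixed points $(V\otimes V')_\mathcal{K}\hookrightarrow T^*(V\otimes V')_\mathcal{K}$ intertwines $\mu$ with the composition of $\mu_{\GL}$ and the projection $\mathfrak{gl}_\O^*\times\mathfrak{gl}_\O^*\to \mathfrak{sp}_\O^*\times \mathfrak{so}_\O^*$. The $\sigma$-anti-invariant part of $\mu_{\GL}$ vanishes identically on fixed points, because $\mu_{\GL}$ is $\sigma$-equivariant. Hence
\[
\mu^{-1}(0)=\mu_{\GL}^{-1}(0)^\sigma=\bigcup_{\boldsymbol{\lambda}}\Lambda_{\GL}^{\boldsymbol{\lambda}}\cap \big(T^*(V\otimes V')_\mathcal{K}\big)^\sigma .
\]
A point of $\Lambda_{\GL}^{\boldsymbol{\lambda}}$ fixed by $\sigma$ also lies in $\sigma(\Lambda_{\GL}^{\boldsymbol{\lambda}})=\Lambda_{\GL}^{\boldsymbol{\lambda}^*}$. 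So only the components with $\boldsymbol{\lambda}=\boldsymbol{\lambda}^*$ contribute their full fixed locus $\Lambda^{\boldsymbol{\lambda}}$. For the other components, the fixed points lie in $\Lambda_{\GL}^{\boldsymbol{\lambda}}\cap\Lambda_{\GL}^{\boldsymbol{\lambda}^*}$, and we absorb them into the closures of the self-dual pieces or of smaller ones.

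The second step is the linear algebra, carried out component by component. Each $\Lambda^{\boldsymbol{\lambda}}_{\GL}$ is a $\sigma$-stable Lagrangian when $\boldsymbol{\lambda}=\boldsymbol{\lambda}^*$: it is the closure of a conormal bundle, and is linear by the discussion above. The symplectic form $\omega$ on $T^*(V\otimes V')_\mathcal{K}$ is $\sigma$-invariant, so it splits orthogonally into its restrictions to the $+1$ and $-1$ eigenspaces. The $+1$ eigenspace is $(V\otimes V')_\mathcal{K}$ carrying a nonzero multiple of its own form. For a $\sigma$-stable Lagrangian $L$ we have $L=L^+\oplus L^-$. Isotropy gives that each $L^\pm$ is isotropic in its eigenspace. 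Maximality of $L$, together with the orthogonal splitting, gives $(L^+)^{\perp}\cap W^+=L^+$. So $\Lambda^{\boldsymbol{\lambda}}=L^+$ is Lagrangian in $(V\otimes V')_\mathcal{K}$.

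For non-self-dual $\boldsymbol{\lambda}$, $\Lambda^{\boldsymbol{\lambda}}$ is the fixed part of $L\cap\sigma L$, which is $\sigma$-stable and isotropic. Its fixed part is therefore isotropic in $(V\otimes V')_\mathcal{K}$. This already shows that every irreducible component of $\mu^{-1}(0)$ is isotropic. For the dimension, I would alternatively use Lemma \ref{lemma: mu 0}: over each relevant orbit the fiber is a torsor for the conormal space, so each piece has dimension exactly half (modulo lattices). Its closure is then isotropic of half dimension, hence Lagrangian, and this gives the reverse inclusion more cleanly.

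The main obstacle is making "half dimension" and "$L^\perp=L$" meaningful in the infinite-dimensional setting. My plan there is to truncate: fix $N$ large relative to the $\boldsymbol{\lambda}$ in question, so that everything lives in $t^{-N}(V\otimes V')_\O/t^{N}(V\otimes V')_\O$, a finite-dimensional symplectic space. The $\GL_\O$-orbits factor through finite jet groups there, and the argument becomes honest finite-dimensional linear algebra, with Lemma \ref{lemma: mu 0} supplying the dimension count orbit by orbit. A secondary point is checking that the $\sigma$-fixed points of non-self-dual components do not produce new components; the dimension count via Lemma \ref{lemma: mu 0} resolves this, since every component of $\mu^{-1}(0)$ is the closure of a twisted conormal bundle of some relevant orbit.
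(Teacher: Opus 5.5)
Your proposal is correct and follows essentially the paper's argument. Isotropy comes from $\mu^{-1}(0)=\mu_{\GL}^{-1}(0)^\sigma$ lying inside the Lagrangian $\mu_{\GL}^{-1}(0)$, with the fixed-point inclusion compatible with the symplectic forms; maximality (which the paper phrases as coisotropy) comes from the twisted-conormal description in Lemma \ref{lemma: mu 0}. The self-dual versus non-self-dual case analysis is not needed for this lemma; it is essentially what the paper does afterwards, via Lemma \ref{lemma: Lagrangians and involutions}, to identify the irreducible components.
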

\begin{proof}
    By the description above, it follows that $\mu_{\GL}^{-1}(0)$ is Lagrangian. As the inclusion of $V\otimes V'$ into $T^*(V\otimes V')$ as the $\sigma$-fixed points is compatible with the symplectic forms, we deduce that $\mu^{-1}(0)$ is isotropic, as it is contained in $\mu_{\GL}^{-1}(0)$. However, from the description in Lemma \ref{lemma: mu 0} of $\mu^{-1}(0)$ as the union of twisted conormal bundles to relevant orbits, we get that it must be coisotropic.
\end{proof}
\begin{corollary}
    The set of relevant orbits in $(V\otimes V')_\mathcal{K} / (V\otimes V')_\O$ is discrete.
\end{corollary}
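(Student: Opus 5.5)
The plan is to combine Lemma \ref{lem: mu 0 lagrangian} with the description of $\mu^{-1}(0)$ given in Lemma \ref{lemma: mu 0}. By Lemma \ref{lemma: mu 0}, $\mu^{-1}(0)$ is the disjoint union, over the relevant orbits $\mathbb{O}$, of the locally closed pieces $\mu^{-1}(0)\cap \pi^{-1}(\mathbb{O})$, where $\pi:(V\otimes V')_\mathcal{K}\to (V\otimes V')_\mathcal{K}/(V\otimes V')_\O$ is the projection. Each piece is a twisted conormal bundle to $\mathbb{O}$, hence is irreducible of dimension (relative to the lattice $(V\otimes V')_\O$) exactly half the ambient one.

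I will work inside a finite-dimensional truncation $t^{-N}(V\otimes V')_\O/t^{N}(V\otimes V')_\O$ for large $N$. Both the relevant orbits contained in $t^{-N}(V\otimes V')_\O/(V\otimes V')_\O$ and $\mu^{-1}(0)$ are compatible with this truncation, since the moment map condition only involves finitely many coefficients. In the truncation, every twisted conormal piece has the same dimension, namely half of the dimension of the (finite-dimensional) symplectic space. The union of the pieces is $\mu^{-1}(0)$, which by Lemma \ref{lem: mu 0 lagrangian} is Lagrangian, so it has finitely many irreducible components, all of this dimension.

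I then argue that each piece is dense in an irreducible component of $\mu^{-1}(0)$, since it is irreducible of maximal dimension. Distinct orbits give disjoint pieces, so they give distinct components. Hence only finitely many relevant orbits meet each truncation. Since $\pi$ restricted to a piece surjects onto $\mathbb{O}$, this means the relevant orbits inside each bounded region $t^{-N}(V\otimes V')_\O/(V\otimes V')_\O$ form a finite set, which is the required discreteness.

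The main obstacle is making the dimension count honest in the infinite-dimensional setting. I must check that a $G_\O$-orbit through a point of $t^{-N}(V\otimes V')_\O$ stays within it, which holds because $G_\O$ preserves these lattices. I must also check that $\mu^{-1}(0)$ is the preimage of a closed subvariety of the truncation, so that the Lagrangian statement of Lemma \ref{lem: mu 0 lagrangian} translates into a finite-dimensional equidimensionality statement. If needed, I would instead invoke the analogous finiteness for $\GL(V)_\O\times\GL(V')_\O$ from Lemma \ref{lemma: GL orbits}, together with the inclusion $\mu^{-1}(0)\subset\mu_{\GL}^{-1}(0)$, to get finiteness of components directly.
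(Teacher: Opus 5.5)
Your proposal is correct and is essentially the paper's argument. The paper deduces the corollary directly from Lemma~\ref{lem: mu 0 lagrangian} and the description in Lemma~\ref{lemma: mu 0} of $\mu^{-1}(0)$ as a disjoint union of half-dimensional twisted conormal bundles to relevant orbits; your truncation modulo $t^{N}(V\otimes V')_\O$ makes this dimension count precise, and in fact it only needs the isotropy half of Lemma~\ref{lem: mu 0 lagrangian} (that the truncated $\mu^{-1}(0)$ has dimension at most half), which already forces each irreducible, half-dimensional, locally closed piece to be dense in its own irreducible component.
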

We make use of the following elementary linear algebra lemma:
\begin{lemma}\label{lemma: Lagrangians and involutions}
    Let $\Lambda\subset M$ be a Lagrangian subspace of a symplectic vector space $M$. Let $\sigma$ be an involution of $M$ preserving the symplectic form.
    Then $\sigma(\Lambda)=\Lambda$ if and only if $\Lambda^\sigma\subset M^\sigma$ is Lagrangian.
\end{lemma}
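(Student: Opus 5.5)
The plan is to use the decomposition of $M$ into the $\pm1$ eigenspaces of $\sigma$. Write $M=M^+\oplus M^-$ with $M^+=M^\sigma$ and $M^-=\{m:\sigma m=-m\}$. Since $\sigma$ preserves $\omega$, for $x\in M^+$ and $y\in M^-$ we get $\omega(x,y)=\omega(\sigma x,\sigma y)=-\omega(x,y)$. Hence $M^+\perp M^-$, and both $M^\pm$ are symplectic subspaces; in particular $\dim M^\pm$ are even. Note that $\Lambda^\sigma=\Lambda\cap M^+$ is automatically isotropic in $M^+$, so only the dimension count $\dim(\Lambda\cap M^+)=\tfrac12\dim M^+$ is at stake.

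\emph{($\Rightarrow$)} Suppose $\sigma(\Lambda)=\Lambda$. Then $\Lambda$ splits as $\Lambda=(\Lambda\cap M^+)\oplus(\Lambda\cap M^-)$, via the projectors $\tfrac12(1\pm\sigma)$, which preserve $\Lambda$. Each summand is isotropic in the corresponding symplectic space $M^\pm$, so $\dim(\Lambda\cap M^\pm)\le\tfrac12\dim M^\pm$. The two dimensions sum to $\dim\Lambda=\tfrac12\dim M$, so both inequalities are equalities. In particular $\Lambda^\sigma$ is Lagrangian in $M^\sigma$.

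\emph{($\Leftarrow$)} Suppose $L:=\Lambda\cap M^+$ is Lagrangian in $M^+$. The key point is that $\Lambda\subseteq L^{\perp_\omega}$, the symplectic complement of $L$ in $M$. Because $M^+\perp M^-$, this complement is $L^{\perp_{M^+}}\oplus M^-$. Since $L$ is Lagrangian in $M^+$, we have $L^{\perp_{M^+}}=L$, so $L^{\perp_\omega}=L\oplus M^-$. Now take $\lambda\in\Lambda$ and write $\lambda=\lambda^++\lambda^-$. Then $\lambda^+\in L\subseteq\Lambda$, so $\lambda^-=\lambda-\lambda^+\in\Lambda$. Consequently $\sigma\lambda=\lambda^+-\lambda^-\in\Lambda$, giving $\sigma(\Lambda)\subseteq\Lambda$, and equality follows since $\sigma$ is an involution.

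In the paper this lemma is applied to infinite-dimensional spaces such as $T^*(V\otimes V')_\mathcal{K}$, with conormal-type Lagrangians. There I would run the same argument with "Lagrangian" meaning maximal isotropic, or equivalently $\Lambda^{\perp}=\Lambda$. In the forward direction, the dimension count is replaced by this reasoning: if $x\in M^+$ is $\omega$-orthogonal to $\Lambda\cap M^+$, then it is also orthogonal to $\Lambda\cap M^-$ (since $M^+\perp M^-$), hence to all of $\Lambda$, so $x\in\Lambda\cap M^+$. The reverse direction already used only $L^{\perp}=L$, so it carries over verbatim. This reformulation, avoiding dimensions, is the only step needing care; everything else is formal linear algebra.
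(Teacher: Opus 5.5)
Your proof is correct and complete. The paper gives no proof of this lemma; it only calls it an elementary linear algebra fact. Your argument fills that gap in the natural way, using the orthogonal splitting $M=M^+\oplus M^-$. In one direction you count dimensions for a $\sigma$-stable $\Lambda$. In the other you use $\Lambda\subseteq L^{\perp}=L\oplus M^-$ together with $L=\Lambda\cap M^+\subseteq\Lambda$.

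Your dimension-free version, with Lagrangian meaning $\Lambda^\perp=\Lambda$, is also sound, and it is a worthwhile addition. The paper actually applies the lemma inside the infinite-dimensional space $T^*(V\otimes V')_\mathcal{K}$, where the plain dimension count is not available.
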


\begin{proposition}
The set of irreducible components of $\mu^{-1}(0)$ is equal to the set of $\Lambda^{\boldsymbol{\lambda}}$ for self-dual signatures $\boldsymbol{\lambda}$.
\end{proposition}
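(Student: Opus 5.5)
We realize $\mu^{-1}(0)$ as the $\sigma$-fixed part of $\mu_{\GL}^{-1}(0)$, and then sort the $\GL$-components by whether $\sigma$ preserves them. First, $\mu^{-1}(0)=\mu_{\GL}^{-1}(0)^\sigma$. The inclusion $\mu^{-1}(0)\subseteq \mu_{\GL}^{-1}(0)$ was noted in the proof of Lemma \ref{lem: mu 0 lagrangian}. Conversely, let $x$ be $\sigma$-fixed with $\mu_{\GL}(x)=0$. The restriction of $\mu_{\GL}(x)$ to $\mathfrak{sp}(V)_\O\times\mathfrak{so}(V')_\O$ is $\mu(x)$, by compatibility of $\sigma$ with the moment maps, so $\mu(x)=0$. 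Consequently
$$\mu^{-1}(0)=\bigcup_{\boldsymbol{\lambda}}\Lambda^{\boldsymbol{\lambda}},$$
a union of irreducible closed subsets $\Lambda^{\boldsymbol\lambda}=(\Lambda^{\boldsymbol\lambda}_{\GL})^\sigma$, indexed by all signatures of length $2n$.

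Next, the self-dual components are irreducible components of $\mu^{-1}(0)$. Suppose $\boldsymbol\lambda=\boldsymbol\lambda^*$. Then $\sigma(\Lambda^{\boldsymbol\lambda}_{\GL})=\Lambda^{\boldsymbol\lambda}_{\GL}$, and $\Lambda^{\boldsymbol\lambda}_{\GL}$ is Lagrangian; more precisely, it is a linear Lagrangian in a finite-dimensional symplectic subquotient obtained by truncating in $t$, which is how we apply the finite-dimensional statement. Lemma \ref{lemma: Lagrangians and involutions} then says $\Lambda^{\boldsymbol\lambda}$ is Lagrangian in $(V\otimes V')_\mathcal{K}$. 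By Lemma \ref{lem: mu 0 lagrangian}, $\mu^{-1}(0)$ is Lagrangian, so every irreducible component has the same ``half dimension'' relative to lattices. Hence $\Lambda^{\boldsymbol\lambda}$ is a closed irreducible subset of maximal dimension, and therefore an irreducible component.

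Finally, the components $\Lambda^{\boldsymbol\lambda}$ with $\boldsymbol\lambda\neq\boldsymbol\lambda^*$ contribute nothing new, and distinct self-dual $\boldsymbol\lambda$ give distinct components. For $\boldsymbol\lambda\ne\boldsymbol\lambda^*$, Lemma \ref{lemma: Lagrangians and involutions} shows $\Lambda^{\boldsymbol\lambda}$ is not Lagrangian. It is still isotropic, so it is strictly smaller than half-dimensional. Every irreducible component of the Lagrangian $\mu^{-1}(0)$ is half-dimensional: this is the coisotropy coming from the twisted-conormal description in Lemma \ref{lemma: mu 0}. So $\Lambda^{\boldsymbol\lambda}$ is not a component, and since it is irreducible it lies inside some component, which must be one of the self-dual $\Lambda^{\boldsymbol\mu}$. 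For distinctness, take $\boldsymbol\lambda\ne\boldsymbol\mu$ both self-dual. The open conormal part $\mathring\Lambda^{\boldsymbol\lambda}_{\GL}$ lies over the orbit $\mathbb{O}_{\boldsymbol\lambda}$, so the generic point of $\Lambda^{\boldsymbol\lambda}$ lies over $\mathbb{O}_{\boldsymbol\lambda}$ intersected with the $\sigma$-fixed locus. That intersection is nonempty: it is the image of a Lagrangian, hence nonempty, open part, which we check using $\Lambda^{\boldsymbol\lambda}\cap\mathring\Lambda^{\boldsymbol\lambda}_{\GL}\neq\emptyset$ by dimension count. The generic points therefore project to different $\GL$-orbits, so $\Lambda^{\boldsymbol\lambda}\neq\Lambda^{\boldsymbol\mu}$.

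The main obstacle is making the dimension arguments rigorous in the infinite-dimensional setting. The plan is to replace $(V\otimes V')_\mathcal{K}$ by the finite-dimensional symplectic space $t^{-N}(V\otimes V')_\O/t^{N}(V\otimes V')_\O$ for $N\gg0$. All the components involved are $t^N$-translation invariant and the moment map condition descends, so Lemma \ref{lemma: Lagrangians and involutions} and the Lagrangian property of $\mu^{-1}(0)$ become honest finite-dimensional statements. A subtler point in the distinctness step is to verify that $\Lambda^{\boldsymbol\lambda}$ actually meets the open conormal part. I would check this by exhibiting the explicit $\sigma$-fixed point over the representative $t^{-\boldsymbol\lambda}$, adjusted to lie in $V\otimes V'$ using self-duality of $\boldsymbol\lambda$.
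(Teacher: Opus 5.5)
Your argument reconstructs the paper's proof faithfully. It uses the same decomposition $\mu^{-1}(0)=\bigcup_{\boldsymbol\lambda}\Lambda^{\boldsymbol\lambda}$, Lemma \ref{lemma: Lagrangians and involutions} in both directions, and the Lagrangian property from Lemma \ref{lem: mu 0 lagrangian}; handling non-self-dual $\boldsymbol\lambda$ contrapositively and adding a distinctness paragraph do not change the route. The gap lies in an input you share with the paper: that $\Lambda^{\boldsymbol\lambda}_{\GL}$ is a \emph{linear} Lagrangian (even after truncating in $t$), so that $\Lambda^{\boldsymbol\lambda}=(\Lambda^{\boldsymbol\lambda}_{\GL})^\sigma$ is linear and irreducible. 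Your step ``$\Lambda^{\boldsymbol\lambda}$ is a closed irreducible subset of maximal dimension, hence a component'' depends on this, as does every use of the purely linear-algebraic Lemma \ref{lemma: Lagrangians and involutions}. The assumption is false. $\Lambda^{\boldsymbol\lambda}_{\GL}$ projects onto $\overline{\mathbb{O}_{\boldsymbol\lambda}}$, which is cut out by valuation conditions on minors. For instance, $\mathrm{diag}(t^{-1},t)$ and $\mathrm{diag}(t,t^{-1})$ both lie in $\mathbb{O}_{(1,-1)}$, but their sum has determinant of valuation $-2$, so it is not in $\overline{\mathbb{O}_{(1,-1)}}$. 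Truncating in $t$ does not help.

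Irreducibility also genuinely fails, so the statement itself needs correcting. Take $n=1$, let $a,b\in V'$ be isotropic with $(a,b)=1$, and write $x=v_1\otimes a+v_2\otimes b$. The $\mathfrak{sp}(V)_{\mathcal{O}}$-part of $\mu(x)=0$ says $v_1\cdot v_2\in(\mathrm{Sym}^2V)_{\mathcal{O}}$, and the $\mathfrak{so}(V')_{\mathcal{O}}$-part says $\omega(v_1,v_2)\in\mathcal{O}$. Hence $\mu(x)=0$ if and only if $v_1\otimes v_2\in(V\otimes V)_{\mathcal{O}}$. So the irreducible components of $\mu^{-1}(0)$ are the linear Lagrangians $C_k:=t^{-k}V_{\mathcal{O}}\otimes a\oplus t^{k}V_{\mathcal{O}}\otimes b$ for $k\in\ZZ$. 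A generic point of $C_k$ has elementary divisors $t^{-|k|},t^{|k|}$, and it lies in $\mu^{-1}(0)=\mu_{\GL}^{-1}(0)^\sigma$, hence in the conormal to $\mathbb{O}_{(|k|,-|k|)}$. Therefore, for $\theta>0$, both $C_\theta$ and $C_{-\theta}$ lie in $\Lambda^{(\theta,-\theta)}$; they are exchanged by the reflection $a\leftrightarrow b$ in $\operatorname{O}(V')$. So $\Lambda^{(\theta,-\theta)}$ is reducible and is not a component. The same happens for every $n$ whenever $\theta_n>0$: the relevant orbits attached to $(\theta_1,\dots,\theta_n)$ and $(\theta_1,\dots,-\theta_n)$ have the same $\GL$-type. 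What is true is that the components are the irreducible components of the $\Lambda^{\boldsymbol\lambda}$, which are permuted by $\operatorname{O}(V')_{\mathcal{O}}$. This matches the classification by dominant coweights of $\SO(V')$ recalled at the start of Section \ref{sec: generation}. A correct proof must replace the linearity claim, for example by working at generic points of components via Lemma \ref{lemma: mu 0} together with a tangent-space version of Lemma \ref{lemma: Lagrangians and involutions}. The paper's own proof rests on the same linearity assertion and has the same gap.
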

\begin{proof}
    By Lemma \ref{lemma: Lagrangians and involutions}, if a signature $\boldsymbol{\lambda}$ is self-dual then $\Lambda^{\boldsymbol{\lambda}}$ is Lagrangian, and particular it is an irreducible component of $\mu^{-1}(0)$ (note that as discussed above, it is indeed irreducible). Moreover, each irreducible component of $\mu^{-1}(0)$ is Lagrangian, and must be contained in $\Lambda^{\boldsymbol{\lambda}}$ for some $\boldsymbol{\lambda}$ (not necessarily self dual), due to the description of the irreducible components of $\mu_{\GL}^{-1}(0)$. It follows that $\Lambda^{\boldsymbol{\lambda}}$ is an isotropic linear subspace of $(V\otimes V')_\mathcal{K}$ containing a Lagrangian variety, hence itself must be Lagrangian. By Lemma \ref{lemma: Lagrangians and involutions}, this implies that $\boldsymbol{\lambda}$ is self-dual.
\end{proof}
\begin{remark}
    It follows that the set of relevant orbits in $(V\otimes V')_\mathcal{K}/(V\otimes V')_\O$ is also indexed by self-dual signatures, and the closure of the orbit corresponding to a self-dual signature $\boldsymbol{\lambda}$ is the image of $\Lambda^{\boldsymbol{\lambda}}$ in $(V\otimes V')_\mathcal{K}/(V\otimes V')_\O$.
\end{remark}
\begin{lemma}
    Let $\boldsymbol{\lambda} = (\boldsymbol{\theta}, \boldsymbol{\theta}^*)$ be a self dual signature (so that $\boldsymbol{\theta}$ is a non-negative signature of length $n$). Then the stabilizer in $\Sp(V)_\O\times \SO(V')_\O$ of a point the image of $\Lambda^{\boldsymbol{\lambda}}$ in $(V\otimes V')_\mathcal{K} / (V\otimes V')_\mathcal{O}$ is connected.
\end{lemma}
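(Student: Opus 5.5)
To prove the Lemma, I would pick an explicit representative of the orbit and compute its stabilizer by hand. The goal is to show the stabilizer is an extension of a connected reductive group by a pro-unipotent group. Such an extension is connected, so the statement will follow.

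\textbf{Step 1: the representative.} Write $V=W\oplus W^*$, with $W,W^*$ Lagrangian of dimension $n$, and $V'=W'\oplus W'^*$, with $W',W'^*$ isotropic of dimension $n$. Choose dual bases $e_i,f_i$ of $V$ and $e'_i,f'_i$ of $V'$. For $\boldsymbol{\theta}=(\theta_1\geq\dots\geq\theta_n\geq 0)$, take
$$q_{\boldsymbol{\theta}}=\sum_{i=1}^n t^{-\theta_i}\,(e_i\otimes e'_i+f_i\otimes f'_i).$$
As a matrix in $\mathrm{Mat}(2n\times 2n,\mathcal{K})$ this is $\mathrm{diag}(t^{-\boldsymbol{\theta}},t^{-\boldsymbol{\theta}})$. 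After reordering the second block, it lies in the $\GL$-orbit of the self-dual signature $(\boldsymbol{\theta},\boldsymbol{\theta}^*)$, after twisting signs so that the pairing matches. I would first check that $q_{\boldsymbol{\theta}}$ is $\sigma$-compatible, i.e. lies in $\Lambda^{\boldsymbol{\lambda}}$ and projects to the open orbit of its image. This uses Lemma \ref{lemma: GL orbits} and the Remark following the Proposition on irreducible components of $\mu^{-1}(0)$.

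\textbf{Step 2: reduce to the $\GL$ stabilizer.} Identify $q$ with a map $\mathcal{K}$-linear $V'^*_\mathcal{K}\to V_\mathcal{K}$. Then $(g,g')$ stabilizes $q$ modulo $(V\otimes V')_\O$ exactly when $g q g'^{-1}\equiv q$ modulo integral matrices. This stabilizer is the $\sigma$-fixed part of the stabilizer $S_{\GL}$ in $\GL(V)_\O\times\GL(V')_\O$, and $S_{\GL}$ has a standard description. Group the indices by the distinct values $m_1>\dots>m_k$ of $\theta_i$, with $m_j$ occurring with multiplicity $d_j$. Then $S_{\GL}$ is the preimage of a "parahoric-type" condition: $g' \equiv t^{\boldsymbol{\lambda}}g^{\iota}t^{-\boldsymbol{\lambda}}$ modulo terms that are integral. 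Its reductive quotient is $\prod_j \GL_{2d_j}$ for the nonzero values, embedded diagonally, together with $\GL\times\GL$ of the blocks with $\theta_i=0$. Its kernel is pro-unipotent.

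\textbf{Step 3: pass to $\sigma$-fixed points.} The map to the reductive quotient is $\sigma$-equivariant, and its kernel $K$ is pro-unipotent with $\sigma$ acting on it. Since the characteristic is $0$, $K^\sigma$ is again pro-unipotent and hence connected; this holds because the exponential identifies $K$ with its Lie algebra equivariantly. The image of $S_{\GL}^\sigma$ is then the group of $\sigma$-fixed points in the reductive quotient, up to the usual $H^1(\sigma,K)=0$ lifting argument. This again uses pro-unipotence: every fixed point of the quotient lifts to a fixed point. The task therefore reduces to computing fixed points on the blocks.

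\textbf{Step 4: the blocks.} For a nonzero value $m_j$, the block group is $\GL_{2d_j}$ embedded diagonally as $g\mapsto(g,{}^t g^{-1}\text{-twisted})$. The involution restricts on it to an involution whose fixed points form the intersection of a symplectic and an orthogonal condition on the same $2d_j$-space. Using the form of $q$, this intersection is $\GL_{d_j}$, which is connected. For the $\theta_i=0$ block the condition is vacuous on that block modulo $\O$. There the fixed group is $\Sp_{2d_0}\times \mathrm{O}_{2d_0}$ intersected with the image of $\SO(V')$. This is the step I expect to be the \textbf{main obstacle}: the orthogonal factor is disconnected, and the determinant condition coming from $\SO(V')$ must cut the total fixed group down to a connected one. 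To settle it I would check that the determinant $\det g'$ over $\O$ couples the $\mathrm{O}_{2d_0}$ component with the $\GL_{d_j}$ blocks. The relevant blocks embed in $\SO$ via $W'\oplus W'^*$ with determinant $1$, so the overall determinant-$1$ condition forces the $\theta=0$ factor to lie in $\SO_{2d_0}$. This gives a connected reductive quotient $\Sp_{2d_0}\times\SO_{2d_0}\times\prod_{j}\GL_{d_j}$. I would verify this bookkeeping carefully, including the sign conventions of Step 1. Extending by the connected pro-unipotent $K^\sigma$ then gives connectedness.
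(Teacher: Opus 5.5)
The overall shape of your argument (a connected reductive part extended by a pro-unipotent group) is reasonable, but Step 1 picks a point in the wrong orbit, so Steps 2--4 compute the stabilizer of the wrong point.

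\textbf{The representative.} Your element $q_{\boldsymbol{\theta}}=\sum_i t^{-\theta_i}(e_i\otimes e'_i+f_i\otimes f'_i)$ has elementary divisors $t^{-\theta_1},t^{-\theta_1},\dots,t^{-\theta_n},t^{-\theta_n}$. No reordering of bases or change of signs alters elementary divisors. The $\GL$-orbit of $(\boldsymbol{\theta},\boldsymbol{\theta}^*)$ is instead that of $\mathrm{diag}(t^{-\boldsymbol{\theta}},t^{\boldsymbol{\theta}})$.

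\begin{itemize}
\item Since $\operatorname{val}\det q_{\boldsymbol{\theta}}=-2|\boldsymbol{\theta}|<0$ while $\det$ has valuation $\geq 0$ on $\overline{\mathbb{O}_{\boldsymbol{\lambda}}}$, we get $q_{\boldsymbol{\theta}}\notin\Lambda^{\boldsymbol{\lambda}}$.
\item Even its class modulo $(V\otimes V')_{\mathcal{O}}$ is not in the image of $\Lambda^{\boldsymbol{\lambda}}$. That class has type $(\theta_1,\theta_1,\dots,\theta_n,\theta_n)$, whereas the polar part of any point of $\overline{\mathbb{O}_{\boldsymbol{\lambda}}}$ has $\GL$-type of total size at most $|\boldsymbol{\theta}|$.
\item Concretely, for $\boldsymbol{\theta}=(1,0,\dots,0)$ your class is the rank-two tensor $t^{-1}q_0$, with $q_0=e_1\otimes e'_1+f_1\otimes f'_1$. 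The relevant orbit instead lies in the cone $C$ of rank-one tensors $t^{-1}v\otimes v'$ with $v'$ isotropic (Section \ref{sec: Hecke actions}).
\item The point $t^{-1}q_0$ is not even relevant. Take $\eta\in\mathfrak{sp}(V)$ acting by $-1$ on $e_1$, by $1$ on $f_1$, and by $0$ on the other basis vectors. Then $t\eta$ lies in the Lie algebra of the stabilizer, but $\langle t^{-1}q_0,\,t\eta\,t^{-1}q_0\rangle=\langle q_0,\eta q_0\rangle=2\neq 0$.
\end{itemize}

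\textbf{What the correct point looks like.} The correct point is $\sum_i(t^{-\theta_i}e_i\otimes e'_i+t^{\theta_i}f_i\otimes f'_i)$. Its class is $\sum_{\theta_i>0}t^{-\theta_i}e_i\otimes e'_i$, where the $e_i$ and the $e'_i$ span isotropic subspaces. Put $G=\Sp(V)\times\SO(V')$.

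\begin{itemize}
\item The stabilizer in $G_{\mathcal{O}}$ contains the pro-unipotent congruence subgroup $\ker\bigl(G_{\mathcal{O}}\to G(\mathcal{O}/t^{\theta_1})\bigr)$.
\item Modulo that subgroup it is cut out by parabolic conditions on the isotropic flags $\mathrm{span}(e_i:\theta_i\geq m)$ and $\mathrm{span}(e'_i:\theta_i\geq m)$, with $g$ and $g'$ acting contragrediently on corresponding graded pieces.
\item Its reductive part is $\prod_j\GL_{d_j}\times\Sp_{2d_0}\times\SO_{2d_0}$. The last factor is a Levi factor of a parabolic of the connected group $\SO(V')$, hence connected.
\end{itemize}

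So the blocks of your Step 4 (diagonal $\GL_{2d_j}$, and $\GL_{2d_0}\times\GL_{2d_0}$) never occur. The $\operatorname{O}$ versus $\SO$ ``main obstacle'' is an artifact of the wrong point. That your final group has the expected shape does not rescue the argument.

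\textbf{A second gap in Steps 2--3.} Independently of the choice of point, the stabilizer $S_{\GL}$ of $q$ modulo $(V\otimes V')_{\mathcal{O}}$ in $\GL(V)_{\mathcal{O}}\times\GL(V')_{\mathcal{O}}$ is not $\sigma$-stable. The involution $\sigma$ carries it to the stabilizer of $q$ for the contragredient action.

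\begin{itemize}
\item Example: take $n=1$, $q=t^{-1}e\otimes e'$, and a basis $e,f$ of $V$ with $\langle e,f\rangle=1$. Then $(\mathrm{diag}(1,2),1)\in S_{\GL}$, but $\sigma$ sends it to $(\mathrm{diag}(\tfrac{1}{2},1),1)\notin S_{\GL}$.
\item Hence ``the $\sigma$-fixed part of $S_{\GL}$'', the $\sigma$-action on its pro-unipotent kernel $K$, and the lifting via $H^1(\ZZ/2,K)=0$ are not available as you state them.
\item The $\sigma$-stable group is $S_{\GL}\cap\sigma(S_{\GL})$. This is the stabilizer of the corresponding $\sigma$-fixed point of $T^*(V\otimes V')_{\mathcal{K}}$ modulo $T^*(V\otimes V')_{\mathcal{O}}$, which is where the paper's $\sigma$ actually lives. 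Its reductive quotient would have to be computed afresh.
\end{itemize}

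It is simpler to bypass $\GL$ and $\sigma$ entirely. Compute directly in $\Sp(V)_{\mathcal{O}}\times\SO(V')_{\mathcal{O}}$ at the correct representative, modulo the congruence subgroup, as sketched above. Connectedness then follows from connectedness of the reductive part and of the pro-unipotent kernel.
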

\begin{proof}
    The proof of \cite[Corollary 3.2.3]{BFKT} applies verbatim.
\end{proof}
\begin{corollary}
    The irreducible objects in $\mathcal{C}$ are classified by their support in $(V\otimes V')_\mathcal{K} / (V\otimes V')_\O$, which is the closure of a $\Sp(V)_\O\times \SO(V')_\O$-orbit corresponding to a self-dual signature as above.
\end{corollary}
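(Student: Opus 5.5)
The plan is to combine the description of $\mathcal{C}$ as twisted equivariant D-modules on $(V\otimes V')_\mathcal{K}/(V\otimes V')_\O$ (Remark \ref{remark: twisted D-modules}) with the orbit analysis above.

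First, reduce to irreducible objects supported on a single orbit. By Proposition \ref{prop:oblv}, $\mathcal{C}^\heartsuit$ is a full subcategory of holonomic twisted D-modules. Every object is compact after forgetting equivariance, so every object of the heart has finite length. As in the finite-dimensional theory, each irreducible object $\mathcal{M}\in\mathcal{C}^\heartsuit$ has irreducible support. That support is a closed $\Sp(V)_\O\times \SO(V')_\O$-stable subset, and since $\mathcal{M}$ is of finite type it lives on a finite-dimensional quotient. Because the set of relevant orbits is discrete (the Corollary following Lemma \ref{lem: mu 0 lagrangian}), it suffices to check that the support of $\mathcal{M}$ is a union of orbits with an open dense one, $\mathbb{O}$. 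To see this, use that the singular support of $\mathcal{M}$ lies in $\mu^{-1}(0)$: equivariance forces the symbols of the $\mathfrak{g}_\O$-action to vanish. By Lemma \ref{lemma: mu 0}, $\mu^{-1}(0)$ lies over the union of relevant orbits and is a finite union of twisted conormal bundles. A holonomic module whose characteristic variety lies in this set is supported on a finite union of closures of relevant orbits, so the generic orbit $\mathbb{O}$ of the support is relevant.

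Second, restrict $\mathcal{M}$ to the open orbit $\mathbb{O}$ of its support; it becomes an irreducible $(G_\O\ltimes M_\O\times\GG_a,\chi)$-equivariant D-module on $\tilde{\mathbb{O}}$. The Proposition on relevant orbits identifies this category with $\operatorname{D-mod}_{G_\O}(\mathbb{O})$, i.e. with representations of the component group of the stabilizer. That stabilizer is connected (the preceding Lemma, using that relevant orbits correspond to self-dual signatures $(\boldsymbol{\theta},\boldsymbol{\theta}^*)$). Hence there is exactly one irreducible object on $\mathbb{O}$, up to isomorphism.

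Third, conclude. $\mathcal{M}$ is the intermediate extension of its restriction to $\mathbb{O}$, and this intermediate extension stays in $\mathcal{C}^\heartsuit$ since the restriction and extension functors are compatible with equivariance and with oblv. So $\mathcal{M}$ is determined by its support $\overline{\mathbb{O}}$, and conversely every relevant orbit carries such an $\IC$ object. The relevant orbits are indexed by self-dual signatures, by the Remark following the classification of the components of $\mu^{-1}(0)$.

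The main obstacle is the first step: showing rigorously, in this infinite-dimensional placid setting, that an irreducible object's support has a dense relevant orbit. I would first try to handle it by passing to finite-dimensional quotients $t^{-N}M_\O/M_\O$ with a congruence subgroup acting trivially, and then use the usual finite-orbit argument together with the discreteness of relevant orbits.
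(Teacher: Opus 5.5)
Your proposal is correct and uses the same ingredients from which the paper derives this corollary: the twisted D-module description of $\mathcal{C}$, the proposition characterizing relevant orbits and identifying the category on such an orbit with $\operatorname{D-mod}_{G_\mathcal{O}}(\mathbb{O})$, connectedness of stabilizers, and the indexing of relevant orbits by self-dual signatures. The paper only asserts that an irreducible object is supported on the closure of a single orbit, which is not automatic since $G_\mathcal{O}$ has infinitely many orbits. Your singular-support argument justifies it: characteristic varieties lie in $\mu^{-1}(0)$, which by Lemmas \ref{lemma: mu 0} and \ref{lem: mu 0 lagrangian} is Lagrangian and lies over the discrete set of relevant orbits. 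That same argument, rather than Proposition \ref{prop:oblv}, is also the correct source of the holonomicity and finite length you invoke at the start.
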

\begin{proof}[Proof of Proposition \ref{prop:Hecke generation}]
    Self dual signatures of length $2n$ also classify irreducible representations of $\SO(U')$ (call these representations $\tau_{\boldsymbol{\lambda}}$), and if $\boldsymbol{\lambda}$ is such a self dual signature, then the support of $\phi_{\SO(V')}(\tau_{\boldsymbol{\lambda}})$ is equal to the closure of the orbit corresponding to $\boldsymbol{\lambda}$.
    
    Let $\Delta_{\boldsymbol{\lambda}}$ be the standard object in $\mathcal{C}$ for the orbit corresponding to $\boldsymbol{\lambda}$, i.e. the $!$-extension of the unique irreducible equivariant twisted D-module on this orbit. We prove by induction on the closure order of orbits that $\Delta_{\boldsymbol{\lambda}}$ belongs to the full stable subcategory generated by $E_0$ under the Hecke action of $\mathrm{Rep}(\SO(U'))$. Call this subcategory $\mathcal{C}_0$.

    Letting $j$ be the inclusion of the orbit associated to $\boldsymbol{\lambda}$ into its closure, and $i$ the closed embedding of the complement, we have an exact triangle
    $$j_! j^! \phi_{\SO(V')}(\tau_{\boldsymbol{\lambda}}) \to \phi_{\SO(V')}(\tau_{\boldsymbol{\lambda}}) \to i_* i^* \phi_{\SO(V')}(\tau_{\boldsymbol{\lambda}})\to$$
    Since the right term is supported on smaller orbits, it belongs to $\mathcal{C}_0$, hence so does the left term.
    Moreover, $j^! \phi_{\SO(V')}(\tau_{\boldsymbol{\lambda}})$ is a non-zero direct sum of cohomologically shifted copies of the unique equivariant twisted D-module on the orbit, thus the left term is isomorphic to a non-zero direct sum of cohomologically shifted copies of $\Delta_{\boldsymbol{\lambda}}$. We deduce that $\Delta_{\boldsymbol{\lambda}}$ belongs to $\mathcal{C}_0$, as desired.

    The proposition now follows by a standard devissage argument.
\end{proof}

\section{A Deequivariantized endomorphism algebra}\label{sec: deeq}
\begin{definition}\label{def: deeq}
    For $A,B\in \mathcal{C}$, set
    $$\mathrm{Hom}_{\mathrm{deeq}}(A,B):=\mathrm{Hom}_{\mathrm{Ind}(\mathcal{C})}(A, \phi_{\SO(V')}(\CC[SO(U')])*B).$$
Here for the convolution action, $\CC[\SO(U')]$ is considered as an (ind-)representation of $\SO(U')$ via the left action. This Hom space acquires a natural action of $\SO(U')$.
\end{definition}
We set $\mathscr{E}:=\mathrm{End}_{\mathrm{deeq}}(E_0)$ and $\mathscr{G}:=\Sym(\mathfrak{so}(U)[-2])$.
These are dg-algebras carrying an $\SO(U')$-action (resp. $\SO(U)$-action).
We also let $\mathfrak{E}^\bullet:=H^\bullet(\mathscr{E})$ and $\mathfrak{G}^\bullet:=H^\bullet(\mathscr{G})$, considered as classical graded superalgebras.

Let us construct an $\SO(U')$-equivariant map $\psi:\mathscr{G}\to \mathscr{E}$.
Using the derived Satake equivalence, we get an $\SO(U)$-equivariant isomorphism
$$\Hom_{\operatorname{D-mod}_{\Sp(V)_\O}(\Gr_{\Sp(V)})}(\delta_1, \phi_{\Sp(V)}(\CC[\SO(U)]))\cong \Sym^\bullet(\mathfrak{so}(U)[-2])=\mathscr{G}.$$
This gives a natural $\SO(U)$-equivariant map
$$\mathscr{G}\to \mathrm{Hom}_{\mathrm{Ind}(\mathcal{C})}(E_0, \phi_{\Sp(V)}(\CC[\SO(U)])* E_0)\cong \mathrm{Hom}_{\mathrm{Ind}(\mathcal{C})}(E_0, \phi_{\SO(V')}(\CC[\SO(U)]|_{\SO(U')})* E_0).$$
Composing this with $\phi(-)*E_0$ applied to the restriction of polynomials map $$\CC[SO(U)]\to \CC[\SO(U')],$$ which is $\SO(U')\times \SO(U')$-equivariant, we get the desired map $\psi:\mathscr{G}\to \mathscr{E}$. We denote by the same notation the resulting map on cohomologies $\psi:\mathfrak{G}^\bullet\to \mathfrak{E}^\bullet$.

We set also $\psi':\Sym(\mathfrak{so}(U')[-2])\cong \Hom_{\operatorname{D-mod}_{\SO(V')_\O}(\Gr_{\SO(V')})}(\delta_1, \phi_{\SO(V')}(\CC[\SO(U')]))\xrightarrow{*E_0} \mathscr{E}$ to be the map given by convolution with $E_0$, compatible with the action of $\SO(U')$.
Let $\varpi_0:\mathfrak{so}(U')\to \mathfrak{so}(U)$ be the negative of the natural inclusion (i.e. $x\mapsto -x$). We will denote in the same way the induced map $\Sym(\mathfrak{so}(U')[-2])\to \Sym(\mathfrak{so}(U)[-2])$
\begin{proposition}\label{prop: psi psi' identification}
    $\psi'\cong\psi\circ \varpi_0$.
\end{proposition}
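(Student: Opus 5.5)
The plan is to reduce the comparison of the two algebra maps to a comparison of their restrictions to the degree-$2$ generators, and then to identify those generators with Chern classes of a single line bundle acting on $E_0$, as anticipated in the outline.

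First I will replace the statement about dg-algebra maps by one about generators. Both $\psi'$ and $\psi\circ\varpi_0$ are $\SO(U')$-equivariant maps out of the free algebra $\Sym(\mathfrak{so}(U')[-2])$, and both are multiplicative: $\psi'$ because convolution with $E_0$ is monoidal, and $\psi$ because of the monoidality of the isomorphism $\nu$ of Proposition \ref{prop: Hecke identification}. So it is enough to compare their restrictions to $\mathfrak{so}(U')[-2]$. Here I am taking for granted that the multiplicative structures on both sides are the ones coming from convolution of deequivariantized Homs, so a map out of the free algebra is determined by the generators. By equivariance, such a restriction is an element of $\Hom_{\SO(U')}(\mathfrak{so}(U'), \mathfrak{E}^2)$.

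Next I unwind the definition of $\mathrm{Hom}_{\mathrm{deeq}}$ with $\CC[\SO(U')]=\bigoplus_\tau \tau\otimes\tau^*$ and apply Lemma \ref{lemma: rigidity}. This identifies the relevant data with a single class
$$c\in \Ext^2\big(E_0, \phi_{\SO(V')}(\mathfrak{so}(U'))*E_0\big).$$
For $\psi'$, the class $c$ is the image under $-*E_0$ of the tautological class in $\Ext^2(\delta_1,\phi_{\SO(V')}(\mathfrak{so}(U')))$ coming from derived Satake. For $\psi$, I start from the tautological class in $\Ext^2(\delta_1,\phi_{\Sp(V)}(\mathfrak{so}(U)))$, act on $E_0$, transport along $\nu$, and project along $\mathfrak{so}(U)|_{\SO(U')}\to\mathfrak{so}(U')$. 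Using the invariant forms on $\mathfrak{so}(U)$ and $\mathfrak{so}(U')$ to identify these adjoint representations with their duals, each tautological class corresponds, via the coevaluation $\CC\to\mathfrak{g}\otimes\mathfrak{g}$, to an element of $\Ext^2(\delta_1,\phi(\mathfrak{g}\otimes\mathfrak{g}))$ determined by an invariant bilinear form. The result of Appendix \ref{appendix: line bundles and quadratic forms} identifies such elements with equivariant first Chern classes of multiplicative line bundles on $\Gr_G$.

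I then pass through the Hecke identification: $\nu$ is compatible with self-duality (Proposition \ref{prop: Hecke relation identification}), so it carries coevaluation maps to coevaluation maps. This reduces the claim to an equality of two degree-$2$ operators on $E_0$: the action of $c_1$ of a multiplicative line bundle $\mathcal{L}_1$ on $\Gr_{\Sp(V)}$ and of $c_1$ of a multiplicative line bundle $\mathcal{L}_2$ on $\Gr_{\SO(V')}$. The key input is that the Weil action is built from $\operatorname{D-mod}_{-1/2}(\Sp(V\otimes V')_\mathcal{K})$. Consequently, the Chern class of the restricted determinant line bundle $\mathcal{D}$ (Appendix \ref{appendix: det line bundle}) acts on $E_0$ compatibly from both sides, because it is pulled back from the single bundle on $\Gr_{\Sp(V\otimes V')}$. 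The restrictions of $\mathcal{D}$ to $\Gr_{\Sp(V)}$ and $\Gr_{\SO(V')}$ have levels proportional to $\dim V'$ and $\dim V$ respectively. The remaining check is that, under the normalization of the Appendix, these levels correspond exactly to the forms on $\mathfrak{so}(U)$ and $\mathfrak{so}(U')$ related by restriction along $U'\subset U$.

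The main obstacle is this last normalization and sign bookkeeping: matching the levels of $\mathcal{D}|_{\Gr_{\Sp(V)}}$ and $\mathcal{D}|_{\Gr_{\SO(V')}}$ with the dual-side invariant forms, and producing the sign in $\varpi_0$. I expect the sign to come from the $-1/2$ twist, equivalently from the minus sign in the moment map $(g,\xi)\mapsto(-\xi,\mathrm{Ad}_g\xi)$, so that the $\Sp(V)$-class acts as the negative of the $\SO(V')$-class. I would first verify this sign on the minuscule computation of Claim \ref{claim: Hecke computation}, where both sides act on the explicit costalk \eqref{eq: costalk of IC at 0} and the action of $c_1$ is multiplication by the hyperplane class on $\PP(V)$, respectively on the quadric $Q$.
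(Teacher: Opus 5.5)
Your reduction agrees with the paper's up to the point where both generators become Chern-class endomorphisms. You restrict to $\mathfrak{so}(U')\subset H^2$ and embed the canonical elements into $\mathrm{Ext}^2$ of $\phi(U)$, $\phi(U')$ via $4B_U$, $4B_{U'}$. Appendix~\ref{appendix: line bundles and quadratic forms} then identifies them with cup product by $c_1(\mathcal{D}_{\mathrm{Sp}(V)})$ and $c_1(\mathcal{D}_{\mathrm{SO}(V')})$, and the forms are compatible under restriction. Since $c_{\mathcal{L}}(\delta_1)=0$, what must be compared are two elements of $\mathrm{Ext}^2$ of $\phi_{\mathrm{Sp}(V)}(U)*E_0\cong\widetilde{\mathrm{IC}}_C\oplus E_0$, on the $\widetilde{\mathrm{IC}}_C$-block, rather than operators on $E_0$.

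\textbf{The gap.} The gap is that decisive comparison. Proposition~\ref{prop: Hecke identification} only controls degree $0$ (functors out of $\mathrm{Rep}(\mathrm{SO}(U))$), so transporting degree-$2$ classes under $\nu$ is exactly what has to be proved. Your justification, that the class ``is pulled back from the single bundle on $\mathrm{Gr}_{\mathrm{Sp}(V\otimes V')}$'' and so acts compatibly from both sides, does not work. The two endomorphisms are cup products on different convolution spaces: $\overline{X}$ over $\overline{\mathrm{Gr}^{\mathrm{min}}_{\mathrm{Sp}(V)}}$, and $\tilde{C}_1$ over $Q$. These reach $\mathrm{Gr}_{\mathrm{Sp}(V\otimes V')}$ by different maps, so nothing forces agreement. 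In fact they are anti-compatible. Over a point $v\otimes v'\in C\setminus\{0\}$ the two Lagrangians are $\Lambda\otimes V'_{\mathcal{O}}$ and $V_{\mathcal{O}}\otimes\Lambda'$. Here $\Lambda$ is the lattice of a point of $\mathrm{Gr}^{\mathrm{min}}_{\mathrm{Sp}(V)}$ over $\ell=\mathbb{C}v$, and $\Lambda'$ is the lattice of the point $\ell'=\mathbb{C}v'\in Q$. Their determinant lines relative to $(V\otimes V')_{\mathcal{O}}$ are $\ell^{\otimes 4n}$ and $\ell'^{\otimes 4n}$. These are mutually inverse because $v\otimes v'$ trivializes $\ell\otimes\ell'$. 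This is where the sign in $\varpi_0$ comes from, not from the $-1/2$ twist or the sign in the moment map.

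\textbf{What is needed.} The paper proceeds in three steps.
\begin{enumerate}
\item It shows that restriction $\mathrm{Ext}^2(\widetilde{\mathrm{IC}}_C,\widetilde{\mathrm{IC}}_C)\to\mathrm{Ext}^2(j^*\widetilde{\mathrm{IC}}_C,j^*\widetilde{\mathrm{IC}}_C)$ is injective. By \eqref{eq: costalk of IC at 0}, $\iota_0^!\mathcal{H}om(\mathrm{IC}_C,\mathrm{IC}_C)$ lives in degrees $\geq 4$, so its equivariant $H^2$ vanishes.
\item Over $C\setminus\{0\}$, both $\pi_1$ and $\pi_2$ are isomorphisms and the $E_0$-summand disappears. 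By the projection formula, Claim~\ref{claim: Hecke computation}, Corollary~\ref{cor: Hecke exact triangle}, and $\mathcal{D}_{\mathrm{SO}(V')}|_Q\cong\mathcal{O}_Q(2)$, $\mathcal{D}_{\mathrm{Sp}(V)}|_{\mathrm{Gr}^{\mathrm{min}}_{\mathrm{Sp}(V)}}\cong\mathcal{O}_{\mathbb{P}(V)}(2)$, the two classes become $c_1(\mathcal{O}_Q(2))$ and $c_1(\mathcal{O}_{\mathbb{P}(V)}(2))$.
\item These sum to zero, because $C\setminus\{0\}$ is the complement of the zero section in $\mathcal{O}_Q(-1)\otimes\mathcal{O}_{\mathbb{P}(V)}(-1)$.
\end{enumerate}
Your proposed sign check on the costalk at $0$ is aimed at the wrong place. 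That is exactly the part shown to contribute nothing to this $\mathrm{Ext}^2$. You also give no way to follow the $\mathrm{Sp}(V)$-side action through the splitting $\widetilde{\mathrm{IC}}_C\oplus E_0$ there. On the open part, by contrast, the comparison is a one-line Chern class identity.
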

\begin{proof}
    Consider the invariant symmetric bilinear form induced on $\mathfrak{so}(U')$ from the embedding in $\mathrm{End}(U')$, which carries the natural trace form (compatible under the isomorphism $\mathrm{End}(U')\cong U'\otimes (U')^*$ with the obvious self duality of the second). This bilinear form is non-degenerate, and we denote its inverse by $B_{U'}$. It is a nondegenerate invariant bilinear form on $\mathfrak{so}(U')^*$.
    Similarly, one has the invariant symmetric bilinear form $B_U$ on $\mathfrak{so}(U)^*$ constructed in the same way.
    
    These two forms are considered in Appendix \ref{appendix: det line bundle} where certain line bundles $\mathcal{D}_{\Sp(V)}$ and $\mathcal{D}_{\SO(V')}$ on $\Gr_{\Sp(V)}$ and $\Gr_{\SO(V')}$ are constructed, and it is shown that for any representation $\rho$ of $\SO(U')$ (resp. $\SO(U)$), the endomorphism of $\rho\otimes \mathcal{O}_{\mathfrak{so}(U')^*[2]}$ (resp. $\rho\otimes \mathcal{O}_{\mathfrak{so}(U)^*[2]}$) given by $4B_{U'}$ under the map $(\mathfrak{so}(U')\otimes \mathfrak{so}(U'))^{\SO(U')}\to \mathrm{End}(\rho)\otimes \Sym(\mathfrak{so}(U')[-2]) [2]$ (resp. $4B_U$ under a similar map) corresponds under derived Satake to cup product with the equivariant first Chern class of $\mathcal{D}_{\SO(V')}$ (resp. $\mathcal{D}_{\Sp(V)}$).
    
    In order to prove Proposition \ref{prop: psi psi' identification}, it is enough to prove the isomorphism in question for the linear generators of $H^2(\Sym(\mathfrak{so}(U')[-2])) = \mathfrak{so}(U')$, which is what we will do.
    The image $\psi'(\mathfrak{so}(U'))\subseteq \Ext^2_{\operatorname{D-mod}_{\SO(V')_\O}(\Gr_{\SO(V')})}(E_0, \phi_{\SO(V')}(\CC[\SO(U')])*E_0)$ is contained in the direct summand
    $$\Ext^2_{\operatorname{D-mod}_{\SO(V')_\O}(\Gr_{\SO(V')})}(E_0, \phi_{\SO(V')}(\mathfrak{so}(U')^*)*E_0)\otimes \mathfrak{so}(U'),$$
    and is given by tensoring with the action on $E_0$ of the canonical element in $$\Ext^2_{\operatorname{D-mod}_{\SO(V')_\O}(\Gr_{\SO(V')})}(\delta_1, \phi_{\SO(V')}(\mathfrak{so}(U')^*)),$$ given under derived Satake by the image of 1 under the composition $$\CC\to (\mathfrak{so}(U')\otimes \mathfrak{so}(U')^*)^{\SO(U')}\to \Ext^2_{\mathrm{QCoh}^{[]}_{\SO(U')}(\mathfrak{so}(U')^*)}(\O_{\mathfrak{so}(U')^*[2]}, \mathfrak{so}(U')^*\otimes \O_{\mathfrak{so}(U')^*[2]}).$$
    Using the bilinear form $4B_{U'}$ and derived Satake, the above $\Ext^2$ group is viewed a direct summand of $$\begin{aligned}
        &\Ext^2_{\operatorname{D-mod}_{\SO(V')_\O}(\Gr_{\SO(V')})}(\delta_1, \phi_{\SO(V')}(U'\otimes (U')^*))\\\cong& \Ext^2_{\operatorname{D-mod}_{\SO(V')_\O}(\Gr_{\SO(V')})}(\phi_{\SO(V')}(U'), \phi_{\SO(V')}(U'))
    \end{aligned}.$$
    We have the following commutative diagram:
    $$\begin{tikzcd}
    \CC\ar[r]\ar[rd, "4B_{U'}"']& (\mathfrak{so}(U')\otimes \mathfrak{so}(U')^*)^{\SO(U')}\ar[r]\ar[d, "4B_{U'}"]& \Hom(\O_{\mathfrak{so}(U')^*}, \mathfrak{so}(U')^*\otimes \O_{\mathfrak{so}(U')^*})^{\SO(U')}\ar[d, "4B_{U'}"]\\
    &(\mathfrak{so}(U')\otimes \mathfrak{so}(U'))^{\SO(U')}\ar[r]& \Hom(\O_{\mathfrak{so}(U')^*}, \mathfrak{so}(U')\otimes \O_{\mathfrak{so}(U')^*})^{\SO(U')}\ar[d, hook, "\oplus"]\\&&\mathrm{End}(U'\otimes \O_{\mathfrak{so}(U')^*})^{\SO(U')}\ar[d, "\cong"]\\&&\Ext^2_{\operatorname{D-mod}_{\SO(V')_\O}(\Gr_{\SO(V')})}(\phi_{\SO(V')}(U'), \phi_{\SO(V')}(U'))
    \end{tikzcd}$$
    From Corollary \ref{cor: derived Satake of canonical endomorphism} we get that the resulting element in $\Ext^2_{\operatorname{D-mod}_{\SO(V')_\O}(\Gr_{\SO(V')})}(\phi_{\SO(V')}(U'), \phi_{\SO(V')}(U'))$ is given by cap product by the equivariant first Chern class of $\mathcal{D}_{\SO(V')}$.
    Similarly, the restriction of $\psi$ to $\mathfrak{so}(U')$ is given by tensoring with a similarly constructed element in the bottom left corner of the following commutative diagram:
    $$\begin{tikzcd}
        \Ext^2_{\operatorname{D-mod}_{\Sp(V)_\O}(\Gr_{\Sp(V)})}(\delta_1, \phi_{\Sp(V)}(\mathfrak{so}(U)^*))\ar[r, hook, "4B_U"]\ar[d, "*E_0"]&\Ext^2_{\operatorname{D-mod}_{\Sp(V)_\O}(\Gr_{\Sp(V)})}(\phi_{\Sp(V)}(U), \phi_{\Sp(V)}(U))\ar[d, "*E_0"]\\
        \Ext^2_\mathcal{C}(E_0, \phi_{\Sp(V)}(\mathfrak{so}(U)^*)* E_0)\ar[r, hook, "4B_U"]\ar[d, two heads]&\Ext^2_\mathcal{C}(\phi_{\Sp(V)}(U)*E_0, \phi_{\Sp(V)}(U)*E_0) \ar[d, two heads]\\
        \Ext^2_{\mathcal{C}}(E_0, \phi_{\SO(V')}(\mathfrak{so}(U')^*) *E_0)\ar[r, hook, "4B_{U'}"]&\Ext^2_{\mathcal{C}}(\phi_{\SO(V')}(U')*E_0, \phi_{\SO(V')}(U')*E_0)
    \end{tikzcd}$$
    Here the horizontal maps are inclusion of direct summands and the bottom vertical maps are projections onto direct summands. Namely, the specified element is the image of an element in the top left corner of the above diagram, whose image in the top right corner is given by cap product by the equivariant first Chern class of $\mathcal{D}_{\Sp(V)}$.

    Note that it is in the commutativity of this diagram that our choices of $B_U$ and $B_{U'}$ matter.
    
    Recall from section \ref{sec: Hecke actions} the varieties $C, \tilde{C}_1, Q, X$, the map $\pi_1$, the D-modules $\F_\mathrm{min}, \F_Q$, and the results of Claim \ref{claim: Hecke computation} and Corollary \ref{cor: Hecke exact triangle}.
    By Remark \ref{remark: equivariant Hecke calculation recipe}, there are equivariant lifts
    $$\tilde{\F}_\mathrm{min}\in \operatorname{D-mod}_{\chi}^{\SO(V')_\O\times \Sp(V)_\O}(\overline{\Gr_{\Sp(V)}^\mathrm{min}}\times (V\otimes V')_{\mathcal{K}} / (V\otimes V')_\O),$$
    $$\tilde{\F}_Q\in \operatorname{D-mod}_{\chi}^{\SO(V')_\O\times \Sp(V)_\O}(\overline{\Gr_{\SO(V')}^\mathrm{min}}\times (V\otimes V')_{\mathcal{K}} / (V\otimes V')_\O),$$
    of $\F_\mathrm{min}, \F_Q$ respectively, which are supported on $\overline{X}, \tilde{C}_1$ respectively (and so we will consider them as twisted D-modules on $\overline{X}, \tilde{C}_1$), and such that their push forwards along the projections to $(V\otimes V')_{\mathcal{K}} / (V\otimes V')_\O$ are isomorphic to $\phi_{\Sp(V)}(U)*E_0, \phi_{\SO(V')}(U')*E_0$ respectively.
    
    We have $$\phi_{\SO(V')}(U')\cong \mathrm{IC}_Q\cong \underline{\CC}_Q[2n-2],$$ $$\phi_{\Sp(V)}(U)\cong \mathrm{IC}_{\overline{\Gr_{\Sp(V)}^\mathrm{min}}}\cong \underline{\CC}_{\overline{\Gr_{\Sp(V)}^\mathrm{min}}}[2n].$$
    The map $$\begin{aligned}
        *E_0\/:\ &\Ext^2_{\operatorname{D-mod}_{\SO(V')_\O}(\Gr_{\SO(V')})}(\phi_{\SO(V')}(U'), \phi_{\SO(V')}(U'))\to \Ext^2_\mathcal{C}(\phi_{\SO(V')}(U')*E_0, \phi_{\SO(V')}(U')* E_0)\\&\cong \Ext^2_\mathcal{C}(\pi_{1*}\tilde{\F_Q}, \pi_{1*}\tilde{\F_Q})
    \end{aligned}$$ is then given by considering the cap product with the pull back along the map $\tilde{C}_1\to Q\subseteq \Gr_{\SO(V')}$ of the appropriate equivariant cohomology class, and then pushing forward along $\pi_1$.
    The element corresponding to the map $\psi'$ is thus given by the push forward along $\pi_1$ of the multiplication by the pull back of the equivariant first Chern class $c_1(\mathcal{D}_{\SO(V')}|_Q)= c_1(\O_Q(2))$ (see Appendix \ref{appendix: det line bundle}) to $\tilde{C}_1$.

    Similarly, recall that $\Gr_{\Sp(V)}^\mathrm{min}\cong \O_{\PP(V)}(2)$. Let $\pi$ be the projection from $\overline{X}$ to $t^{-1}(V\otimes V')_\mathcal{O} / (V\otimes V')_\O$.
    The map $$\begin{aligned}
        *E_0\/:\ &\Ext^2_{\operatorname{D-mod}_{\Sp(V)_\O}(\Gr_{\Sp(V)})}(\phi_{\Sp(V)}(U), \phi_{\Sp(V)}(U))\to \Ext^2_\mathcal{C}(\phi_{\Sp(V)}(U)*E_0, \phi_{\Sp(V)}(U)* E_0)\\&\cong \Ext^2_\mathcal{C}(\pi_* \tilde{\F}_\mathrm{min}, \pi_* \tilde{\F}_\mathrm{min})
    \end{aligned}$$
    is given by considering the cap product with the pull back along the map $\overline{X}\to\overline{\Gr_{\Sp(V)}^\mathrm{min}}$ of the appropriate equivariant cohomology class, and then pushing forward to $(V\otimes V')_\mathcal{K} / (V\otimes V')_\O$.
    The element corresponding to $\psi$ is given by applying this map to the element given by cap product with $c_1\left(\mathcal{D}_{\Sp(V)}|_{\overline{\Gr_{\Sp(V)}^\mathrm{min}}}\right)$.

    Let $\mathcal{H}om$ be the inner hom functor of Definition \ref{def:inner hom}.
    We let $\widetilde{\mathrm{IC}}_C\in \mathcal{C}$ be the unique equivariant lift of $\mathrm{IC}_C$ (see the results of Section \ref{sec: generation}). We have $\phi_{\SO(V')}(U')*E_0\cong \widetilde{\mathrm{IC}}_C$.
    Letting $\iota_0:\{0\}\to C$ be the closed embedding of the origin, and $j$ the open embedding of its complement, we have an exact triangle
    $$\iota_0^* \iota_0^! \mathcal{H}om(\widetilde{\mathrm{IC}}_C, \widetilde{\mathrm{IC}}_C)\to \mathcal{H}om(\widetilde{\mathrm{IC}}_C, \widetilde{\mathrm{IC}}_C) \to j_* j^* \mathcal{H}om(\widetilde{\mathrm{IC}}_C, \widetilde{\mathrm{IC}}_C)\to$$
    Taking equivariant cohomology, we get a long exact sequence
    $$\cdots \to H^2_{\Sp(V)_\O\times \SO(V')_\O}(\iota_0^! \mathcal{H}om(\widetilde{\mathrm{IC}}_C, \widetilde{\mathrm{IC}}_C))\to \Ext^2(\widetilde{\mathrm{IC}}_C, \widetilde{\mathrm{IC}}_C)\to \Ext^2(j^*\widetilde{\mathrm{IC}}_C, j^*\widetilde{\mathrm{IC}}_C)\to \cdots$$
    Now $\iota_0^! \mathcal{H}om(\widetilde{\mathrm{IC}}_C, \widetilde{\mathrm{IC}}_C)$ is an equivariant D-module on a point, whose underlying vector space is $$\iota_0^!\mathcal{H}om(\mathrm{IC}_C, \mathrm{IC}_C) \cong \iota_0^!\mathrm{IC}_C \otimes \iota_0^!\mathrm{IC}_C\stackrel{\eqref{eq: costalk of IC at 0}}{\cong} \left(\bigoplus_{i=1}^{2n-1}\CC[-2i]\oplus \CC[-2n]\right)\otimes \left(\bigoplus_{i=1}^{2n-1}\CC[-2i]\oplus \CC[-2n]\right).$$

    Consider the spectral sequence $$H^p_{\Sp(V)_\O\times \SO(V')_\O}(H^q(\iota_0^! \mathcal{H}om(\mathrm{IC}_C, \mathrm{IC}_C)))\Rightarrow H^{p+q}_{\Sp(V)_\O\times \SO(V')_\O}(\iota_0^! \mathcal{H}om(\widetilde{\mathrm{IC}}_C, \widetilde{\mathrm{IC}}_C)).$$
    All terms in its second page for which $p<0$ or $q<4$ vanish, and so in particular $$H^2_{\Sp(V)_\O\times \SO(V)_\O}(\iota_0^! \mathcal{H}om(\widetilde{\mathrm{IC}}_C, \widetilde{\mathrm{IC}}_C))=0.$$
    As a result, we get that the map $\Ext^2(\widetilde{\mathrm{IC}}_C, \widetilde{\mathrm{IC}}_C)\to \Ext^2(j^*\widetilde{\mathrm{IC}}_C, j^*\widetilde{\mathrm{IC}}_C)$ is injective.

    Thus, it is enough to compare the restrictions to $C\setminus \{0\}$ of the two ext-elements constructed above.
    By base change, the restriction to $C \setminus \{0\}$ of the element corresponding to $\psi$ can be computed by first restricting from $\overline{X}$ to the preimage of $(V\otimes V')_\mathcal{K} / (V\otimes V')_\O \setminus \{0\}$ in $X$, and then pushing forward to $(V\otimes V')_\mathcal{K} / (V\otimes V')_\O \setminus \{0\}$ (we know the result to be supported on $C\setminus \{0\}$).
    Recall that by the computations performed in Appendix \ref{appendix: det line bundle}, the restriction of $\mathcal{D}_{\Sp(V)}$ to $\Gr_{\Sp(V)}^\mathrm{min}$ is isomorphic to the pullback of $\O_{\PP(V)}(2)$.
    By the projection formula, Corollary \ref{cor: Hecke exact triangle} and Claim \ref{claim: Hecke computation}, the above ext element is given by cap product with $c_1(\O_{\PP(V)}(2))$.
    Over $C\setminus \{0\}$, $\pi_1$ is an isomorphism, and so we must compare the first Chern classes of the line bundles $\O_Q(2)$ and $\O_{\PP(V)}(2)$ pulled back to $C\setminus \{0\}$. We may describe $C\setminus \{0\}$ as the complement to the zero section in the total space of the line bundle $\O_Q(-1)\otimes \O_{\PP(V)}(-1)$ over $Q\times \PP(V)$. The sum of these two equivariant Chern classes is $$c_1(\O_Q(2)) + c_1(\O_{\PP(V)}(2)) = -2c_1(\O_Q(-1)\otimes \O_{\PP(V)}(-1)) = 0,$$
    which concludes the proof of the proposition.
    The last equality holds because on the total space of $\O_Q(-1)\otimes \O_{\PP(V)}(-1)$ on $Q\times \PP(V)$, the equivariant first Chern class of the line bundle $\O_Q(-1)\otimes \O_{\PP(V)}(-1)$ is given by the equivariant characteristic class of the zero section. Restricting to the complement of the zero section, we get 0.
\end{proof}
\begin{proposition}\label{prop: Barr-Beck}
    The functor $\Hom_{\mathrm{deeq}}(E_0, -)$ induces an equivalence of categories $$\mathcal{C}\xrightarrow{\sim} D_\mathrm{perf}^{\SO(U')}(\mathscr{E}).$$
\end{proposition}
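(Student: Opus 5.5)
The plan is a Barr--Beck type argument for module categories over $\mathrm{Rep}(\SO(U'))$, in the spirit of \cite[§6]{BFKT} and \cite{BFT2}. Consider the functor $F:=\Hom_{\mathrm{deeq}}(E_0,-)$ from $\mathrm{Ind}(\mathcal{C})$ to $\SO(U')$-equivariant $\mathscr{E}$-modules. First I would make the module structure explicit. The composition of deequivariantized homs uses the algebra structure on $\CC[\SO(U')]$ together with the monoidality of $\phi_{\SO(V')}(-)*$. This makes $\mathscr{E}$ an algebra object in $\mathrm{Ind}(\mathrm{Rep}(\SO(U')))$ and makes $F(B)$ an equivariant right (or left) $\mathscr{E}$-module. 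In other words, $F$ is the functor $\underline{\Hom}(E_0,-)$, the inner hom of $\mathrm{Ind}(\mathcal{C})$ relative to its $\mathrm{Rep}(\SO(U'))$-module structure, followed by the equivalence
$$\mathrm{Ind}(\mathrm{Rep}(\SO(U')))\text{-modules}\ \simeq\ \mathrm{QCoh}(\mathrm{pt}/\SO(U'))$$
and deequivariantization. Equivalently, by Lemma \ref{lemma: rigidity}, for $\rho\in\mathrm{Rep}(\SO(U'))$ we have
$$\Hom(\rho,\Hom_{\mathrm{deeq}}(E_0,B))\cong\Hom_{\mathcal{C}}(\phi_{\SO(V')}(\rho)*E_0,B).$$

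Next I would show that $F$ restricted to the full subcategory spanned by objects of the form $\phi_{\SO(V')}(\rho)*E_0$ is fully faithful onto the free modules $\rho\otimes\mathscr{E}$. Using the rigidity isomorphism above with $B=\phi_{\SO(V')}(\rho')*E_0$, one gets
$$\Hom_{\mathcal{C}}(\phi(\rho)*E_0,\phi(\rho')*E_0)\cong \Hom_{\SO(U')}(\rho,\rho'\otimes\mathscr{E})=\Hom_{\mathscr{E}\text{-mod}^{\SO(U')}}(\rho\otimes\mathscr{E},\rho'\otimes\mathscr{E}).$$
One also has to check that $F$ induces exactly this identification, which amounts to compatibility of $F$ with the $\mathrm{Rep}(\SO(U'))$-action: $F(\phi(\rho)*B)\cong\rho\otimes F(B)$. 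This follows from the projection formula $\rho\otimes\CC[\SO(U')]\cong\CC[\SO(U')]\otimes\underline{\rho}$ for the left regular representation.

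Since both sides are idempotent complete stable categories, full faithfulness on generators extends to the thick closures. On the spectral side, $D_{\mathrm{perf}}^{\SO(U')}(\mathscr{E})$ is by definition the thick (idempotent complete stable) closure of the free modules $\rho\otimes\mathscr{E}$. On the geometric side, I would invoke Proposition \ref{prop:Hecke generation}. The main obstacle is that Definition \ref{def: generation} gives generation only in the weak sense of orthogonals, not thick closure. So I would instead argue in $\mathrm{Ind}(\mathcal{C})$, where $F$ is continuous (since $E_0$ is compact in $\mathrm{Ind}(\mathcal{C})$, and $\CC[\SO(U')]$ is a filtered colimit of finite-dimensional representations, so $F$ commutes with colimits) and conservative by Proposition \ref{prop:Hecke generation}. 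A continuous conservative functor that is fully faithful on a set of compact generators is an equivalence onto modules. Restricting to compact objects then identifies $\mathcal{C}$ (using that $\mathcal{C}$ is idempotent complete and equals $\mathrm{Ind}(\mathcal{C})^c$) with $D_{\mathrm{perf}}^{\SO(U')}(\mathscr{E})$. Alternatively, I would use the explicit dévissage in the proof of Proposition \ref{prop:Hecke generation}: each $\Delta_{\boldsymbol{\lambda}}$, and hence each irreducible object by finiteness of supports, lies in the thick closure, so $\mathcal{C}$ is the thick closure of the Hecke images of $E_0$ directly.
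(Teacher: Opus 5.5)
Your argument is correct and is essentially the paper's: pass to $\mathrm{Ind}(\mathcal{C})$, get conservativity of $\Hom_{\mathrm{deeq}}(E_0,-)$ from Proposition \ref{prop:Hecke generation} and continuity from compactness of $E_0$, use $\Hom_{\mathrm{deeq}}(E_0,\rho*E_0)\cong\rho\otimes\mathscr{E}$, and restrict the resulting equivalence $\mathrm{Ind}(\mathcal{C})\simeq(\mathscr{E}\mathrm{-mod})^{\SO(U')}$ to compact objects. The only difference is packaging: the paper invokes Barr--Beck--Lurie and identifies the monad on $\mathrm{Ind}(\mathrm{Rep}(\SO(U')))$ as $-\otimes\mathscr{E}$, whereas you phrase the same step as full faithfulness on the compact generators $\phi_{\SO(V')}(\rho)*E_0$, which are sent to the free modules $\rho\otimes\mathscr{E}$.
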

\begin{proof}
    We consider the adjunction (after passing to the ind-completions)
    $$\begin{tikzcd}[column sep=3.5cm]\mathrm{Ind}(\mathrm{Rep}(\SO(U')))\ar[r, "-*E_0" , shift left=1.2ex] & \mathrm{Ind}(\mathcal{C})\ar[l,"{\Hom_\mathrm{deeq}(E_0, -)}", shift left=1.2ex]\end{tikzcd}.$$
    Now by Proposition \ref{prop:Hecke generation}, the functor $\Hom_\mathrm{deeq}(E_0, -)$ is conservative.
    It also preserves geometric realizations (in fact, all colimits, as $E_0$ is compact in the ind-completion, and the Hecke action commutes with arbitrary colimits).
    Thus, by the Barr-Beck-Lurie theorem (\cite[Theorem 4.7.3.5]{Lu}), we get that $\Hom_\mathrm{deeq}(E_0, -)$ induces an equivalence of categories between $\mathrm{Ind}(\mathcal{C})$ and the category of modules over the monad $\Hom_\mathrm{deeq}(E_0, -*E_0)$ in $\mathrm{Ind}(\mathrm{Rep}(\SO(U')))$. Notice that for a representation $V$ of $\SO(U')$, we have
    $$\Hom_\mathrm{deeq}(E_0, V*E_0)\cong V\otimes \Hom_\mathrm{deeq}(E_0, E_0),$$ and so this monad is just tensoring with the algebra $\mathscr{E}$.
    Thus, we get an equivalence
    $$\mathrm{Ind}(\mathcal{C})\xrightarrow{\sim} (\mathscr{E}\mathrm{-mod})^{\SO(U')}.$$ Passing to compact objects, we get the desired equivalence of categories.
\end{proof}

Fix a pair of transversal Lagrangians $L,L^*\subset V$. The subgroup of $\Sp(V)$ preserving $L,L^*$ is a Levi subgroup isomorphic to $\GL(L)$.
\begin{definition}
    For a Lie group $G$ with a Lie algebra $\mathfrak{g}$ we set $\Sigma_{\mathfrak{g}}:=\mathfrak{g}^*\git G$.
    We also set
    \begin{enumerate}
        \item $\Sigma_{2n+1}:=\Sigma_{\mathfrak{so}(U)}$
        \item $\Sigma_{2n}:=\Sigma_{\mathfrak{so}(U')}$
        \item $\Sigma_{\mathfrak{gl}}:=\Sigma_{\mathfrak{gl}_n}$, considered with the natural map to $\Sigma_{2n+1}$.
        \item $\Sigma:=\Sigma_{2n}\times \Sigma_{\mathfrak{gl}}$, considered with the natural map to $\Sigma_{2n}\times \Sigma_{2n+1}$.
    \end{enumerate}
\end{definition}

The following theorem follows from purity considerations, as in \cite[Lemmas 3.3.1, 3.3.2, 3.3.3]{BFKT}, \cite[§2.7]{BFT2}:
\begin{theorem}\label{thm: purity}
    \begin{enumerate}
        \item The dg-algebra $\mathscr{E}$ is formal, i.e. it is quasi-isomorphic to $\mathfrak{E}^\bullet$ considered as a dg-algebra with trivial differential.
        \item $\mathfrak{E}^\bullet$ is a free module over $H^\bullet_{\SO(V')_\O\times \Sp(V)_\O}(\mathrm{pt})\cong \CC[\Sigma_{2n}\times \Sigma_{2n+1}]$.
        \item $$\tilde{\mathfrak{E}}^\bullet:=\Ext^\bullet_{\operatorname{\mathcal{W}-\mathrm{mod}}^{\SO(V')_\O\times \GL(L)_\O, \mathrm{lc}}_{\mathrm{deeq}}}(E_0, E_0)\cong \CC[\Sigma]\otimes_{\CC[\Sigma_{2n}\times \Sigma_{2n+1}]}\mathfrak{E}^\bullet.$$ In particular, it is free as a module over $\CC[\Sigma]$. Here the deequivariantized Ext (i.e. cohomologies of the deequivariantized hom) is defined in the same manner as before, again with respect to the Hecke action coming from $\SO(V')$.
    \end{enumerate}
\end{theorem}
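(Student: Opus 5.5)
The plan is to follow the purity strategy of \cite[Lemmas 3.3.1--3.3.3]{BFKT}. First I will rewrite $\mathscr{E}$ as equivariant cohomology of a pure object. Using the description of $\mathcal{C}$ by twisted equivariant D-modules on $(V\otimes V')_\mathcal{K}/(V\otimes V')_\O$ from Section \ref{subsec: twisted D-modules}, Lemma \ref{lemma: rigidity} and the decomposition $\CC[\SO(U')]\cong\bigoplus_\lambda \tau_\lambda\otimes\tau_\lambda^*$ give
$$\mathscr{E}\cong\bigoplus_\lambda \tau_\lambda^*\otimes \Hom_\mathcal{C}(E_0,\phi_{\SO(V')}(\tau_\lambda)*E_0).$$
Each summand equals $H^\bullet_{\SO(V')_\O\times\Sp(V)_\O}(\iota_0^!\,\phi_{\SO(V')}(\tau_\lambda)*E_0)$, since $E_0$ is the delta module at the origin. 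The convolution $\phi_{\SO(V')}(\tau_\lambda)*E_0$ is the proper pushforward of a simple holonomic module from $\Gr_{\SO(V')}\times(V\otimes V')_\mathcal{K}/(V\otimes V')_\O$ (Corollary \ref{cor: Hecke calculation recipe}, Remark \ref{remark: equivariant Hecke calculation recipe}). For $\SO(V')$ the quadratic form in the exponential vanishes, as in the computation for $Q$ in Section \ref{sec: Hecke actions}, so this module is a genuine IC sheaf of geometric origin, and it carries a mixed (Hodge or Frobenius) structure that is pure of weight $0$.

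The key step is to show that $\iota_0^!$ of these objects is pure and concentrated in even degrees, with Tate weights equal to the degree. I would realize the objects through the contracting $\GG_m$-action (loop rotation combined with dilation on $(V\otimes V')_\mathcal{K}/(V\otimes V')_\O$) that contracts every orbit closure to $0$. By Braden's hyperbolic localization or the contraction lemma, $\iota_0^!$ of a pure weight-$0$ IC sheaf is then pure. Evenness should follow from the parity vanishing of the costalks of relevant-orbit IC's, as in \cite[§2.7]{BFT2}. I expect this to be the main obstacle, and I would first try to prove it by reducing via Lemma \ref{lemma: GL orbits} and the $\sigma$-fixed point description to the $\GL$ case, checking it directly on the resolutions used there. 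Once purity holds, the spectral sequence for equivariant cohomology degenerates, so $\mathfrak{E}^\bullet$ is free over $H^\bullet_{\SO(V')_\O\times\Sp(V)_\O}(\mathrm{pt})=\CC[\Sigma_{2n}\times\Sigma_{2n+1}]$, which is part (2). Since the equivariant cohomology of the point is also pure and even, formality follows by the standard weight argument (Deligne splitting: the weight filtration splits multiplicatively, giving a quasi-isomorphism to cohomology), which is part (1).

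For part (3) I will run the same computation with $\Sp(V)_\O$ replaced by its Levi $\GL(L)_\O$. Restricting equivariance from $\Sp(V)_\O$ to $\GL(L)_\O$ on pure objects gives, by degeneration for both groups together with the standard fact $H_{G}^\bullet(-)\otimes_{H_G(\mathrm{pt})}H_{T}(\mathrm{pt})\cong H_T^\bullet(-)$ for equivariantly formal objects (applied through the maximal torus, taking Weyl invariants),
$$\tilde{\mathfrak{E}}^\bullet\cong H^\bullet_{\GL(L)_\O}(\mathrm{pt})\otimes_{H^\bullet_{\Sp(V)_\O}(\mathrm{pt})}\mathfrak{E}^\bullet.$$
Finally, $H^\bullet_{\SO(V')_\O\times\GL(L)_\O}(\mathrm{pt})\cong\CC[\Sigma]$, and freeness over $\CC[\Sigma]$ follows by base change from part (2).
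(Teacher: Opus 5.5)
Your proposal has a genuine gap at its one non-formal input, the purity of $\phi_{\SO(V')}(\tau_\lambda)*E_0$. You justify it by asserting that for $\SO(V')$ the quadratic form of Corollary \ref{cor: Hecke calculation recipe} vanishes, as it does on $Q$. This is false beyond minuscule orbits.

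Unwinding Section \ref{sec: finite case Hecke action theta calculation} with $L_0^*=t^{-1}(V\otimes V')[t^{-1}]$, the form on $L_g=V\otimes\Lambda'$, $\Lambda'=gV'_\O$, is $B(\ell,\ell')=-\mathrm{Res}\langle \ell_-,\ell'_+\rangle$, where $\ell_-,\ell'_+$ denote polar and regular parts. Equivalently, $B(v\otimes a,w\otimes b)=-\langle v,w\rangle\,\mathrm{Res}(a_-,b_+)$. Isotropy of $\Lambda'$ makes this vanish at $T$-fixed lattices. On $Q$ it vanishes only because $Q$ is an orbit of the \emph{constant} group $\SO(V')$, which preserves $L_0^*$.

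For non-constant $h\in\SO(V')_\O$ the form need not vanish. For $n=2$, take a basis $e_1,\dots,e_4$ of $V'$ with $(e_1,e_3)=(e_2,e_4)=1$ and all other pairings zero. Let $X\in\mathfrak{so}(V')$ be given by $Xe_1=e_4$, $Xe_2=-e_3$, $Xe_3=Xe_4=0$, so $X^2=0$ and $1+tX\in\SO(V')_\O$. The lattice $\Lambda'=(1+tX)\bigl(t^{-1}\O e_1\oplus t^{-1}\O e_2\oplus t\O e_3\oplus t\O e_4\bigr)$ lies in the open $\SO(V')_\O$-orbit of a non-minuscule coweight. It contains $a=t^{-1}e_1+e_4$ and $b=t^{-1}e_2-e_3$, with $\mathrm{Res}(a_-,b_+)=-1\neq 0$. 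Hence on the open stratum the simple module $\F$ is $\sqrt{\mathcal{D}}\otimes\exp(f)$ with $f\not\equiv 0$, i.e.\ it is irregular. In the model where $\Hom(E_0,-)=\iota_0^!$, nothing in your argument shows that its pushforward is regular, let alone of geometric origin. So mixed Hodge modules or Frobenius weights are not available as you claim.

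This is exactly why the paper lifts everything to Mochizuki's polarizable (wild) twistor $D$-modules:
\begin{enumerate}
\item the simple holonomic $\F$ lifts to a pure object \cite[Theorem 1.4.4]{Moch1};
\item its projective pushforward stays pure \cite[Theorem 18.1.1]{Moch1};
\item $\iota_0^!$ is a hyperbolic restriction for the contracting $\GG_m$-action, hence preserves purity.
\end{enumerate}
With that replacement, the rest of your plan is the paper's argument: degeneration of the equivariant spectral sequence, formality by weights, and base change along $\CC[\Sigma_{2n}\times\Sigma_{2n+1}]\to\CC[\Sigma]$ for part (3). One could instead try the model $\operatorname{D-mod}^*((V'\otimes L)_\mathcal{K})$ of Section \ref{sec: localization}, where $\SO(V')_\mathcal{K}$ acts geometrically. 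There, however, only $\GL(L)_\O$ acts geometrically, so the $\Sp(V)_\O$-statements would need a separate deduction.

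Separately, you do not need evenness at this stage. If the costalk is pure with weight equal to degree, the spectral-sequence differentials and the obstruction to formality vanish for weight reasons alone. The paper derives even-degree concentration only afterwards, from the localization theorem (Claim \ref{claim: loc thm}(2)). So the parity-vanishing step you single out as the main obstacle can be dropped.
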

\begin{proof}[Proof (Sketch)]
    For part 1, we first lift all encountered objects to the mixed setting of polarizable twistor D-modules of \cite{Moch1},\cite{Moch2} (this can be done as by \cite[Theorem 1.4.4]{Moch1}, all semisimple holonomic D-modules can be lifted to pure objects in this mixed setting).
    We then aim to show that the mixed lift of $\Hom_{\operatorname{\mathcal{W}-\mathrm{mod}}}(E_0, \phi_{\SO(V')}(\CC[\SO(U')])*E_0)$ is pure as a mixed sheaf on a point.
    This can be identified with $\iota_0^!(\phi_{\SO(V')}(\CC[\SO(U')])*E_0)$, where $\iota_0:\{0\}\hookrightarrow (V\otimes V')_\mathcal{K} / (V\otimes V')_\O$.
    We have used the equivalence of categories $\operatorname{\mathcal{W}-\mathrm{mod}}\cong \operatorname{D-mod}((V\otimes V')_\mathcal{K} / (V\otimes V')_\O)$ obtained by fixing $t^{-1}(V\otimes V')[t^{-1}]$ as a Lagrangian co-lattice transversal to $(V\otimes V')_\O$.
    The sheaf $\phi_{\SO(V')}(\CC[\SO(U')])*E_0$ is pure, using Corollary \ref{cor: Hecke calculation recipe}, Remark \ref{remark: equivariant Hecke calculation recipe}, and \cite[Theorem 18.1.1]{Moch1}.
    Moreover, the functor $\iota_0^!$ preserves purity, as it is a hyperbolic restriction functor using the contracting $\GG_m$-action on the vector space $(V'\otimes V)_{\mathcal{K}} / (V'\otimes V)_\O$.
    Any pure sheaf on a point is formal, in particular so is $\Hom_{\operatorname{\mathcal{W}-\mathrm{mod}}}(E_0, \phi_{\SO(V')}(\CC[\SO(U')])*E_0)$.
    Now consider the spectral sequence converging to equivariant Exts, whose second page consists of the equivariant cohomologies of the point with coefficients in non-equivariant Exts. Since all objects appearing in the second page are pure, this spectral sequence degenerates, and we get that $\mathfrak{E}^\bullet$ is also pure (when taking the grading into consideration), which implies that $\mathscr{E}$ is pure, hence formal. From the degeneration of this spectral sequence we also get Part 2 immediately. For Part 3 consider the above spectral sequence, as well with the analogous spectral sequence for $\SO(V')_\O\times \GL(L)_\O$ instead of $\SO(V')_\O\times \Sp(V)_\O$ (which also degenerates at the second page for the same reason), along with the natural map between them. Indeed, at the second page, in which both degenerate, the second spectral sequence is obtained from the first by base change along the map $\CC[\Sigma_{2n}\times \Sigma_{2n+1}]\to \CC[\Sigma]$.
\end{proof}

The rest of this paper is dedicated to the proof of the following Proposition
\begin{proposition}\label{prop: psi isom}
    The map $\psi:\mathfrak{G}^\bullet\to \mathfrak{E}^\bullet$ is an isomorphism.
\end{proposition}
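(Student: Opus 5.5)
The argument follows the outline in the introduction. By Theorem \ref{thm: purity}, $\mathscr{E}$ is formal and $\mathfrak{E}^\bullet$ is a free graded module over $\CC[\Sigma_{2n}\times\Sigma_{2n+1}]$. I first check, via the localization theorem applied to the fixed points of a maximal torus of $\SO(V')_\O\times\Sp(V)_\O$ acting on $(V\otimes V')_\mathcal{K}/(V\otimes V')_\O$, that $\mathfrak{E}^\bullet$ is concentrated in even degrees and is commutative. Hence $Y:=\Spec(\mathfrak{E}^\bullet)$ is an affine scheme with a $\GG_m\times\SO(U')$-action, and $\psi$ gives an equivariant morphism $\psi^*:Y\to\mathfrak{so}(U)^*$. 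It is enough to show that $\psi^*$ is an isomorphism.

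\textbf{Step 1 (invariants).} I show that $\psi$ induces an isomorphism $\CC[\mathfrak{so}(U)^*]^{\SO(U')}\to(\mathfrak{E}^\bullet)^{\SO(U')}$. By classical invariant theory, $\CC[\mathfrak{so}(U)^*]^{\SO(U')}\cong\CC[\Sigma_{2n}\times\Sigma_{2n+1}]$: it is generated by the invariants of $\mathfrak{so}(U)$ together with the $\SO(U')$-invariants of the restriction to $\mathfrak{so}(U')$. On the other side, $(\mathfrak{E}^\bullet)^{\SO(U')}$ is the non-deequivariantized endomorphism algebra $\Ext^\bullet_\mathcal{C}(E_0,E_0)$. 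By Theorem \ref{thm: purity} it is free over $H^\bullet_{\SO(V')_\O\times\Sp(V)_\O}(\mathrm{pt})$, and a costalk computation at $0$ shows the rank is one. Proposition \ref{prop: psi psi' identification} then identifies the two copies of equivariant cohomology with the pullbacks of $\CC[\Sigma_{2n}]$ and $\CC[\Sigma_{2n+1}]$ under $\psi$.

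\textbf{Step 2 (generic torsor structure).} I base change along $\Sigma\to\Sigma_{2n}\times\Sigma_{2n+1}$ and localize at the generic locus. On the spectral side, I build a pseudo-slice $\Sigma\to\mathfrak{so}(U)^*$. Concretely, an element of $\Sigma_{\mathfrak{gl}}$ gives a block-diagonal element in the Levi $\mathfrak{gl}_n\subset\mathfrak{so}(U)$, and I adjust it by a Kostant-type slice of $\mathfrak{so}(U')$ so that the restriction to $\mathfrak{so}(U')^*$ realizes a given point of $\Sigma_{2n}$. I then check two things: over the localized base, $\mathfrak{so}(U)^*$ becomes a trivial $\SO(U')$-torsor over $\Sigma$, and the $\SO(U')$-saturation of the slice has complement of codimension $\ge2$. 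On the geometric side, part 3 of Theorem \ref{thm: purity} and the localization theorem, now with respect to the torus of $\GL(L)_\O\times\SO(V')_\O$, identify the localized $\tilde{\mathfrak{E}}^\bullet$ with functions on a trivial $\SO(U')$-torsor over localized $\Sigma$. The torus-fixed points should be the relevant orbits indexed by coweights, and these correspond to $\CC[\SO(U')]$ via the deequivariantization. Since an equivariant map between trivial torsors over the same base is an isomorphism, $\psi$ becomes an isomorphism after this base change and localization.

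\textbf{Step 3 (extension from the open locus).} Freeness (Theorem \ref{thm: purity}) makes $\mathfrak{E}^\bullet$ torsion-free and reflexive. It then suffices to show that $\psi^*$ is an isomorphism off codimension $2$ and that $Y$ satisfies Hartogs-type extension, so that $\psi$ is an isomorphism by normality of $\mathfrak{so}(U)^*$. To get the isomorphism off codimension $2$, I need the spectral pseudo-slice to factor through $Y$. I would construct a pseudo-slice $\Sigma\to Y$ directly via equivariant cohomology: restrict $E_0$ to the $\GL(L)_\O\times\SO(V')_\O$-equivariant setting and use the explicit Hecke images of the minuscule coweights (Claim \ref{claim: Hecke computation}) to produce Chern-class elements in $\tilde{\mathfrak{E}}^\bullet$. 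I then compute their images under $\psi$ and match them with the spectral slice.

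The main obstacle is this last factorization. It requires explicit equivariant computations on the geometric side and careful bookkeeping of signs, including $\varpi_0$ and the involution $x\mapsto -x$. The case $n=1$, where the codimension estimates can fail, I would handle separately by computing $\mathfrak{E}^\bullet$ directly.
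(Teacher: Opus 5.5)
Your outline matches the paper's architecture: purity and formality, the invariant-theoretic identification of $(\mathfrak{E}^\bullet)^{\SO(U')}$, generic triviality via localization, a slice whose saturation misses only codimension $\geq 2$, and an extension argument. However, the step you flag as the main obstacle is where the proof actually lives, and the mechanism you propose for it does not produce what is needed. For the slice $\Sigma\to\mathfrak{so}(U)^*$ to factor through $Y=\Spec(\mathfrak{E}^\bullet)$, you need an algebra homomorphism $\mathfrak{E}^\bullet\to\CC[\Sigma]$ over $\CC[\Sigma_{2n}\times\Sigma_{2n+1}]$, i.e.\ a family of characters of $\tilde{\mathfrak{E}}^\bullet$. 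Exhibiting particular elements of $\tilde{\mathfrak{E}}^\bullet$ as Chern classes attached to the minuscule Hecke images gives no such homomorphism. The missing idea is a fiber functor. The assignment $\chi(M)=H^\bullet_{\SO(V')_\O\times\GL(L)_\O}\big((V'\otimes L)_\mathcal{K},\phi_{\SO(V')}(M)*E_0\big)$ is a tensor functor from $\mathrm{Rep}(\SO(U'))$ to graded vector bundles on $\Sigma$ (local freeness by the purity argument of Theorem~\ref{thm: purity}, tensoriality as in \cite[Lemma 3.5.1]{BFKT}). So $\mathcal{T}=\Spec_\Sigma\chi(\CC[\SO(U')])$ is a $\GG_m$-equivariant $\SO(U')$-torsor. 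The action of $\mathscr{E}$ on $\chi(\CC[\SO(U')])$ respects its algebra structure, and so gives an equivariant map $\eta:\mathcal{T}\to Y$. A section $s$ of $\mathcal{T}$, hence a point $\Sigma\to Y$, exists only because $\mathcal{T}$ is trivial (Proposition~\ref{prop: torsor triviality}).

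Even with $\eta$ and $s$ in hand, $\xi=\psi\circ\eta\circ s$ is determined only up to an $\SO(U')$-valued twist and an automorphism of $\Sigma$. So it is not something you can simply compute and compare; the paper pins it down by a rigidity argument:
\begin{enumerate}
\item Smoothness of $\Sigma\to\Sigma_{2n}$ forces $\xi(0)=(e,v^*)$ with $e$ regular nilpotent.
\item The $\GG_m$-action on the fiber of $\mathcal{T}$ over $0$ is right multiplication by a cocharacter $\lambda$ with $\mathrm{Ad}_{\lambda(c)}e=c^2e$. By Kostant's theorem, $\lambda$ is $2\rho^\vee$ for a principal triple $(e,f,h)$.
\item Hence $\xi$ lands in the attractor $(e+\mathfrak{b}^-)\times(v^*+\ker f^n)$, and can be moved into the Kostant slice by some $\gamma:\Sigma\to\mathrm{U}^-$.
\item The equalities $\langle v^*+\omega,(e+y)^i(v^*+\omega)\rangle=\langle v_0^*+v,(e+y)^i(v_0^*+v)\rangle$ force $v^*=\pm v_0^*$, and force the $U'$-component to sweep out exactly $v_0^*+\CC fv_0^*+N$.
\end{enumerate}
This also dictates the correct spectral slice, $\Sigma_{2n}\times(v_0^*+N+\CC fv_0^*)$, whose $\varpi$-image stabilizes an isotropic $n$-plane $M$ with $M\cap U'=N$. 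That is a parabolic condition, not a block-diagonal Levi one.

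Two smaller corrections. In Step 2, the torus-fixed locus in $(V'\otimes L)_\mathcal{K}$ is just the origin. The factor $\CC[\SO(U')]$ comes from Lemma~\ref{lemma: Hecke action on costalk} (the Hecke action on the costalk at $0$), not from fixed points indexed by coweights. In Step 3, no Hartogs property of $Y$ is needed, and freeness over $\CC[\Sigma_{2n}\times\Sigma_{2n+1}]$ would not give one for loci not pulled back from that base. Instead, freeness gives $\mathfrak{E}^\bullet\hookrightarrow\mathfrak{E}^\bullet_{\mathrm{loc}}\cong\mathfrak{G}^\bullet_{\mathrm{loc}}$, so each element is a rational function on $\mathfrak{so}(U)^*$. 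It is then shown to be regular by pulling it back along $\SO(U')\times\Sigma\cong\mathcal{T}\to Y\to\mathfrak{so}(U)^*$, which is the saturation map of the slice.
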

\begin{proof}[Proof that Proposition \ref{prop: psi isom} implies Theorem \ref{thm: main}]
    Assuming Proposition \ref{prop: psi isom}, it follows that $\psi:\mathscr{G}\to \mathscr{E}$ is an isomorphism too. We get from Proposition \ref{prop: Barr-Beck} the desired equivalence of categories, and the required compatibility follows from \ref{prop: Hecke identification} along with \ref{prop: psi psi' identification}.
\end{proof}

\section{Invariant theory}\label{sec: invariant theory}
\subsection{The categorical quotient}\label{sec: Categorical Quotient}
We choose and fix a vector $u_0\in(U')^\perp\subset U$ of square length 1.
We also fix the Killing forms on $\mathfrak{so}(U),\mathfrak{so}(U'), \mathfrak{gl}(L)$ associated to the representations $U,U', L$ respectively. We will use these throughout this section and subsequent ones to identify the above Lie algebras with their linear duals. We will try to formulate our results in a manner that is independent of these choices, but use these identifications freely in the proofs.

There is an $\SO(U')$-equivariant isomorphism $\mathfrak{so}(U')\times U' \cong \mathfrak{so}(U)$, given by
$$(x, v) \mapsto -x-u_0v^t+vu_0^t.$$
Where $w^t:=\langle w, -\rangle$ for any $w\in U$, and $w_1w_2^t:=\langle w_2, -\rangle w_1$.
We will denote by $$\varpi: \mathfrak{so}(U')^*\times U'\to \mathfrak{so}(U)^*$$ the resulting isomorphism, where we use the Killing forms above to identify the Lie algebras with their linear duals.
By \cite[Table 3a, row 2]{Sch}, the ring of invariants $\CC[\mathfrak{so}(U')^*\times U']^{\SO(U')}$ is a polynomial ring generated by the following invariants:
\begin{itemize}
    \item The generators of $\CC[\mathfrak{so}(U')^*]^{\SO(U')}=\CC[\Sigma_{2n}]$.
    \item $(x,v)\mapsto\langle v, x^{2k} v\rangle$, for $k=0,1,\ldots, n$.
\end{itemize}
Given a $k\times k$ matrix $x$ we denote its characteristic polynomial by $$z^k + c_1(x) z^{k-1} + \ldots + c_k(x).$$

We will make use of the following formula, (see for example \cite[Theorem A.2]{M}):
\begin{proposition}\label{prop: char poly formula}
    Let $x\in \mathfrak{gl}(W)$ and let $w\in W, w^*\in W^*$. Then
    $$c_i(x+ww^*) = c_i(x) - \sum_{j=0}^{i-1} c_{i-j-1}(x) \langle w^*, x^jw \rangle.$$
\end{proposition}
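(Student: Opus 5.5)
The plan is to compare characteristic polynomials directly, using the matrix determinant lemma and expanding the resolvent as a formal power series in $z^{-1}$. Write $d=\dim W$ and $\chi_x(z)=\det(z-x)=\sum_{k=0}^{d} c_k(x)z^{d-k}$ with $c_0=1$, which is the normalization fixed just before the statement. The identity is polynomial in the entries of $x$, $w$, $w^*$. So I will first prove it on the Zariski-dense open set where $z-x$ is invertible over $\CC((z^{-1}))$. In fact this holds for all $x$, since $\det(z-x)$ is monic in $z$.

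\textbf{Step 1: matrix determinant lemma.} For a rank-one perturbation, over the field $\CC((z^{-1}))$ we have
$$\det(z-x-ww^*)=\det(z-x)\,\bigl(1-\langle w^*,(z-x)^{-1}w\rangle\bigr).$$
This follows from $\det(1-ab^*)=1-\langle b^*,a\rangle$, applied with $a=(z-x)^{-1}w$ and $b^*=w^*$.

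\textbf{Step 2: expand the resolvent.} In $\mathrm{End}(W)\otimes\CC((z^{-1}))$ we have the Neumann series $(z-x)^{-1}=\sum_{j\ge 0}x^j z^{-j-1}$. Hence
$$\chi_{x+ww^*}(z)=\chi_x(z)-\sum_{k\ge 0}\sum_{j\ge 0}c_k(x)\langle w^*,x^jw\rangle z^{d-k-j-1}.$$
The left-hand side is a polynomial in $z$. This is consistent because $\chi_x(z)(z-x)^{-1}=\mathrm{adj}(z-x)$ is polynomial, so the negative powers of $z$ in the double sum must cancel. We only need the coefficients of nonnegative powers anyway.

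\textbf{Step 3: read off coefficients.} Compare the coefficient of $z^{d-i}$ on both sides for $1\le i\le d$. On the left it is $c_i(x+ww^*)$. On the right it is $c_i(x)$ minus the sum over pairs with $k+j+1=i$, $k,j\ge 0$, which is exactly $\sum_{j=0}^{i-1}c_{i-j-1}(x)\langle w^*,x^jw\rangle$. This gives the stated formula.

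The only point needing care is the sign and indexing convention, namely that $c_i$ denotes the coefficients of $\det(z-x)$ rather than of $\det(x-z)$. Otherwise the argument is routine. An alternative route, if one prefers to avoid formal series, is to apply Cayley–Hamilton to $\mathrm{adj}(z-x)=\sum_{m=0}^{d-1}z^{m}\sum_{j}c_{d-1-m-j}(x)x^j$ and then use $\det(z-x-ww^*)=\det(z-x)-\langle w^*,\mathrm{adj}(z-x)w\rangle$. This yields the same coefficient comparison.
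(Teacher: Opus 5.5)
Your argument is correct. The paper does not prove this proposition itself; it quotes it from \cite[Theorem A.2]{M} and uses it as a black box in Claim~\ref{claim: categorical quotient} and Claim~\ref{claim: pseudo slice}. What your write-up adds is a short self-contained proof in place of that citation. It has three steps:
\begin{itemize}
\item the rank-one identity $\det(z-x-ww^*)=\det(z-x)\bigl(1-\langle w^*,(z-x)^{-1}w\rangle\bigr)$;
\item the Neumann expansion of the resolvent in $\mathrm{End}(W)\otimes\CC((z^{-1}))$;
\item comparison of the coefficients of $z^{d-i}$.
\end{itemize}
You normalize the $c_i$ as the coefficients of $\det(z-x)$, which matches the paper's monic convention. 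That normalization is exactly what produces the minus sign in front of the sum. For example, at $i=1$ the identity reads $-\mathrm{tr}(x+ww^*)=-\mathrm{tr}(x)-\langle w^*,w\rangle$. Your preliminary reduction to a Zariski-dense open set is unnecessary, as you note yourself, because $z-x$ is always invertible over $\CC((z^{-1}))$. The adjugate variant you sketch at the end is also valid and gives the same coefficient comparison.
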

\begin{claim}\label{claim: categorical quotient}
    The map $$(\mathfrak{so}(U')^*\times U')\git \SO(U') \to \Sigma_{2n}\times \Sigma_{2n+1}$$
    whose first coordinate is given projecting on the $\mathfrak{so}(U')^*$-coordinate and applying the quotient map for $\mathfrak{so}(U')^*$, and whose second coordinate is given by composing $\varpi$ with the quotient map for $\mathfrak{so}(U)^*$, is an isomorphism.
\end{claim}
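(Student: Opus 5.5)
We prove the claim at the level of coordinate rings. Using the result of \cite[Table 3a, row 2]{Sch} quoted above, $\CC[\mathfrak{so}(U')^*\times U']^{\SO(U')}$ is a polynomial ring in two families of generators. The first family consists of the $n$ basic invariants of $\mathfrak{so}(U')^*$; we take these to be $c_2,c_4,\dots,c_{2n-2}$ together with the Pfaffian. The second family consists of the $n+1$ invariants $q_k(x,v)=\langle v,x^{2k}v\rangle$ for $k=0,\dots,n$. The target $\Sigma_{2n}\times\Sigma_{2n+1}$ is an affine space with coordinate ring a polynomial ring in $n+n=2n$ generators: the basic invariants of $\mathfrak{so}(U')^*$, and $c_2,c_4,\dots,c_{2n}$ of $\mathfrak{so}(U)^*$.

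Before going further we must reconcile these counts. The Schwarz list has $n+(n+1)=2n+1$ generators, while the target has only $2n$. We therefore check the count against dimension: $\dim(\mathfrak{so}_{2n}\times\CC^{2n})-\dim\SO_{2n}=2n$. So one of the $q_k$ must be redundant. Indeed, $q_n$ should be expressible through the others via Cayley--Hamilton. The relevant identity is $x^{2n}+c_2x^{2n-2}+\dots+c_{2n}=0$, where $c_{2n}=\mathrm{Pf}^2$. Pairing it with $v$ on both sides expresses $q_n$ polynomially in $q_0,\dots,q_{n-1}$ and the $c_{2i}(x)$. We will therefore use the free generators $\mathrm{Pf}$, $c_{2i}(x)$ for $1\le i<n$, and $q_k$ for $0\le k\le n-1$, which is $2n$ generators in total. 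We read the citation as listing $k=0,\dots,n-1$; if it really includes $k=n$, the redundancy just described removes it.

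The proof then reduces to showing that the pullback map on invariant polynomial rings carries one generating set onto the other by a triangular change of variables. The first factor of the map simply pulls the $\mathfrak{so}(U')^*$-invariants back identically. For the second factor we compute $c_i(\varpi(x,v))$. Write $\varpi(x,v)=-x-u_0v^t+vu_0^t$. This is $-x$ plus a rank-two perturbation, and we can handle it either by applying Proposition~\ref{prop: char poly formula} twice or by a direct block-matrix computation. In the decomposition $U=U'\oplus\CC u_0$ the operator is
\[
\begin{pmatrix}-x & v\\ -v^t & 0\end{pmatrix},
\]
and a Schur complement computation gives
\[
\det\bigl(z-\varpi(x,v)\bigr)=\det(z+x)\,\bigl(z+v^t(z+x)^{-1}v\bigr).
\]
We then expand $(z+x)^{-1}=\sum_j(-x)^jz^{-j-1}$. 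Since $x$ is skew, $\langle v,x^{j}v\rangle=0$ for odd $j$, so only the even powers $q_k$ survive. The result is
\[
c_{2m}(\varpi(x,v))=c_{2m}(x)+q_{m-1}+\sum_{j<m-1} (\text{polynomial in } c_{2i}(x))\,q_j .
\]
Hence, modulo the ideal generated by the $c_{2i}(x)$, $c_{2m}(\varpi)$ is $q_{m-1}$ plus lower $q$'s.

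The pulled-back generators are thus related to $(c_{2i}(x),\mathrm{Pf}(x),q_0,\dots,q_{n-1})$ by a unitriangular polynomial substitution, and therefore generate the invariant ring. Both rings are polynomial rings in $2n$ variables, so a surjective graded homomorphism between them is an isomorphism. This proves Claim~\ref{claim: categorical quotient}.

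The main obstacle is the bookkeeping. We must pin down the exact generator list from \cite{Sch}, including whether $q_n$ appears and must be discarded via Cayley--Hamilton, and we must keep the signs from $-x$ and from the Killing-form identifications. The triangularity itself is robust, since it only uses the leading term $q_{m-1}$.
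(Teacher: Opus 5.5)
Your proof is correct and follows the paper's own argument. Both take the polynomial generators from \cite{Sch} and show that $c_{2m}(\varpi(x,v)) = c_{2m}(x) + \sum_{k=0}^{m-1} c_{2(m-1-k)}(x)\langle v, x^{2k}v\rangle$, so that over $\CC[\Sigma_{2n}]$ the pulled-back generators are a unitriangular substitution of $\langle v, x^{2k} v\rangle$. The only difference is that you get this identity from a Schur complement rather than from Proposition \ref{prop: char poly formula}, and you are right that the list of invariants $\langle v, x^{2k}v\rangle$ should stop at $k=n-1$ (the $k=n$ term is redundant by Cayley--Hamilton), which is what the paper's computation actually uses.
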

\begin{proof}
    The ring $\CC[\Sigma_{2n+1}]$ is generated by the even coefficients of the characteristic polynomial (the odd ones vanish). We have for $i$ even:
    \begin{equation}\label{eq: invariants}
        \begin{aligned}
            c_i(-x-u_0v^t+vu_0^t) =&c_i(x+u_0v^t-vu_0^t) \\=& c_i(x+u_0v^t) + \sum_{j=0}^{i-1} c_{i-j-1}(x+u_0v^t) \langle u_0, (x+u_0v^t)^j v \rangle\\
            =&c_i(x+u_0v^t) + \sum_{j=0}^{i-1} c_{i-j-1}(x+u_0v^t) \langle u_0, u_0v^tx^{j-1} v \rangle\\
            =& c_i(x) - \sum_{j=0}^{i-1} c_{i-j-1}(x) \langle v, x^j u_0 \rangle \\&+ \sum_{j=0}^{i-1} \left(c_{i-j-1}(x) - \sum_{k=0}^{i-j-2} c_{i-j-2-k}(x)\langle v, x^k u_0 \rangle\right) \langle v, x^{j-1} v \rangle\\
            =& c_i(x) + \sum_{j=0}^{i-1} \left(c_{i-j-1}(x)\right) \langle v, x^{j-1} v \rangle\\
        \end{aligned}
    \end{equation}
    We see that indeed over the ring $\CC[\Sigma_{2n}]$, the term $c_i(-x-u_0v^t+vu_0^t)$ is equal to $\langle v, x^{i-2} v \rangle$ plus lower order terms.
\end{proof}

\begin{proposition}\label{prop: psi-inv}
    Under the isomorphisms $$(\mathfrak{E}^\bullet)^{\SO(U')} = \Ext^\bullet_{\mathcal{C}}(E_0, E_0)\cong H^\bullet_{\SO(V')_\O\times \Sp(V)_\O}(\mathrm{pt})=\CC[\Sigma_{2n}\times \Sigma_{2n+1}],$$
    $$(\mathfrak{G}^\bullet)^{\SO(U')}\stackrel{\ref{claim: categorical quotient}}{\cong} \CC[\Sigma_{2n}\times \Sigma_{2n+1}],$$
    the map $(\mathfrak{G}^\bullet)^{\SO(U')}\to (\mathfrak{E}^\bullet)^{\SO(U')}$ induced from $\psi$ is the identity map.
\end{proposition}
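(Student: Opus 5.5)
The plan is to identify both sides with invariants of $\mathscr{G}$ pulled back along $\mathscr{G}^{\SO(U)}$ and $\Sym(\mathfrak{so}(U')[-2])^{\SO(U')}$, and to use that these two subalgebras generate $(\mathfrak{G}^\bullet)^{\SO(U')}$ by Claim \ref{claim: categorical quotient}. Since $\psi$ is a map of algebras and is $\SO(U')$-equivariant, it suffices to check the statement on the two families of generators: the pullbacks of $\CC[\Sigma_{2n+1}]=(\mathfrak{G}^\bullet)^{\SO(U)}$ and the pullbacks of $\CC[\Sigma_{2n}]=\Sym(\mathfrak{so}(U')[-2])^{\SO(U')}$ along $\varpi_0$.

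\emph{First family.} I would use derived Satake for $\Sp(V)$. It identifies $\mathscr{G}^{\SO(U)}=\Hom(\delta_1,\delta_1)$ with $H^\bullet_{\Sp(V)_\O}(\mathrm{pt})=\CC[\Sigma_{2n+1}]$, and this is the standard normalization of the Satake isomorphism on invariants. The composite of $\psi$ with restriction to invariants is the map $\Hom(\delta_1,\delta_1)\to\Ext^\bullet_{\mathcal{C}}(E_0,E_0)$ given by acting on $E_0$. This composite is the action of $H^\bullet_{\Sp(V)_\O}(\mathrm{pt})$ on equivariant Ext's, through its projection factor of $H^\bullet_{\SO(V')_\O\times\Sp(V)_\O}(\mathrm{pt})$. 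One point needs care here. I must check that the invariant part of $\Hom(\delta_1,\phi(\CC[\SO(U)]))$, followed by restriction $\CC[\SO(U)]\to\CC[\SO(U')]$ and passage to $\SO(U')$-invariants, lands in the unit component. It does, since both maps send the constant function $1$ to $1$. So on these generators $\psi$ is the identity.

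\emph{Second family.} By Proposition \ref{prop: psi psi' identification}, $\psi\circ\varpi_0=\psi'$. The same argument applied to $\SO(V')$ then shows that $\psi'$ on $\CC[\Sigma_{2n}]$ is the action of $H^\bullet_{\SO(V')_\O}(\mathrm{pt})$ on $E_0$, which is the identity onto the $\CC[\Sigma_{2n}]$ factor. One subtlety is the sign in $\varpi_0(x)=-x$ versus the first coordinate of the map in Claim \ref{claim: categorical quotient}, which uses $\varpi(x,v)=-x+\dots$ and projection to $x$. The projection of $\varpi^{-1}$ onto $\mathfrak{so}(U')^*$ differs from the dual of the inclusion by exactly this sign, and $\varpi_0$ is chosen precisely to absorb it. I would also check the sign against the involution $x\mapsto -x$ on $\Sigma_{2n}$ (and $\Sigma_{2n+1}$). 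This involution acts trivially because only even invariants occur for $\mathfrak{so}$, except the Pfaffian, whose sign convention I would fix compatibly.

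Combining the two families, the induced map on $\SO(U')$-invariants is a ring map that is the identity on a generating set, hence the identity. I expect the main obstacle to be this bookkeeping of normalizations. The identification $(\mathfrak{E}^\bullet)^{\SO(U')}=\Ext_{\mathcal{C}}(E_0,E_0)\cong H^\bullet(\mathrm{pt})$ itself needs Theorem \ref{thm: purity}(2) and the fact that $\delta_1*E_0=E_0$ with $\Ext^\bullet(E_0,E_0)$ free of rank one. The matching of the Pfaffian sign and the Killing form identifications in $\varpi$ with those of derived Satake also has to be checked.
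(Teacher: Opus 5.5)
Your proposal is correct and follows essentially the same route as the paper. The paper also checks the induced map on the two generating subalgebras: for $\CC[\Sigma_{2n+1}]$ it uses the normalization of derived Satake on $\Ext^\bullet(\delta_1,\delta_1)$ together with the fact that $\CC\to\CC[\SO(U)]\to\CC[\SO(U')]$ preserves constants, and for $\CC[\Sigma_{2n}]$ it uses Proposition \ref{prop: psi psi' identification}.

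Your extra worry about the involution $x\mapsto -x$ can be dropped, and as stated it is slightly off: this involution is not trivial on $\Sigma_{2n}$ for odd $n$, because the Pfaffian changes sign. It never enters, however, since the dual of $\varpi_0$ composed with $\varpi$ is exactly the projection $(x,v)\mapsto x$.
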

\begin{proof}
    First recall that for any reductive group $G$ that the map given by the composition
    $$\CC[\Sigma_{\check{G}}] \cong \CC[\mathfrak{\check{g}}^*]^{\check{G}}\cong \Ext^\bullet_{\operatorname{D-mod}_{G_\O}(\Gr_{G})}(\delta_1, \delta_1)\cong \CC[\Sigma_{\check{G}}],$$
    where the middle map is given by the derived Satake isomorphism, is the identity map.

    Next, the composition
    $$\begin{aligned}
        \CC[\Sigma_{2n}\times \Sigma_{2n+1}]&\cong \Ext^\bullet_{\operatorname{D-mod}_{\SO(V')_\O\times \Sp(V)_\O}(\Gr_{\SO(V')}\times \Gr_{\Sp(V)})}(\delta_1, \delta_1) \\&\xrightarrow{*E_0} \Ext^\bullet_{\mathcal{C}}(E_0, E_0)\cong \CC[\Sigma_{2n}\times \Sigma_{2n+1}]
    \end{aligned}$$ is again the identity map.

    By the above, the restriction of the map $*E_0$ above to $\CC[\Sigma_{2n}]$ is the identity map on this component, and is equal to the map induced by $\psi'$. By Proposition \ref{prop: psi psi' identification}, this is the same as the composition $\CC[\Sigma_{2n}]\to (\mathfrak{G}^\bullet)^{\SO(U')}\xrightarrow{\psi} (\mathfrak{E}^\bullet)^{\SO(U')}\cong \CC[\Sigma_{2n}\times \Sigma_{2n+1}]$ (note that the composition of $\varpi^{-1}$ with the projection on $\mathfrak{so}(U')^*$ is the natural restriction map along the embedding $\mathfrak{so}(U')\to \mathfrak{so}(U)$).
    Last, since the composition $\CC\to \CC[\SO(U)] \xrightarrow{\mathrm{restr}} \CC[\SO(U')]$ is the natural map (inclusion of scalars), we get that the composition $$\begin{aligned}
        &\Ext^\bullet_{\mathcal{C}}(E_0, E_0)\cong \Ext^\bullet_{\mathrm{Ind}(\mathcal{C})}(E_0, \phi_{\SO(V)}(\CC[\SO(U)])*E_0)^{\SO(U)} \to \Ext^\bullet_{\mathrm{Ind}(\mathcal{C})}(E_0, \phi_{\SO(V)}(\CC[\SO(U)])*E_0)^{\SO(U')}\\
        & \cong \Ext^\bullet_{\mathrm{Ind}(\mathcal{C})}(E_0, \phi_{\SO(V')}(\CC[\SO(U)])*E_0)^{\SO(U')}
        \xrightarrow{\mathrm{restr}} \Ext^\bullet_{\mathrm{Ind}(\mathcal{C})}(E_0, \phi_{\SO(V')}(\CC[\SO(U')])*E_0)^{\SO(U')}
        \\&\cong \Ext^\bullet_{\mathcal{C}}(E_0, E_0)
    \end{aligned}$$
    is the identity map.
    Thus, we get that the composition $\CC[\Sigma_{2n+1}]\to (\mathfrak{G}^\bullet)^{\SO(U')}\xrightarrow{\psi} (\mathfrak{E}^\bullet)^{\SO(U')}$ is the same as the restriction of the map $*E_0$ above to $\CC[\Sigma_{2n+1}]$, hence the identity on this component too.
\end{proof}

\subsection{A pseudo-slice}\label{sec: pseudo-slice}
In this section we will assume that $n\geq 2$.
A full treatment of the case $n=1$ will be given in Appendix \ref{appendix: n=1}.
We shall construct a ``pseudo-slice" for the quotient map $\mathfrak{so}(U')^*\times U' \to \Sigma_{2n}\times \Sigma_{2n+1}$. Let $(e,f,h)$ be a principal $\mathfrak{sl}_2$ triple in $\mathfrak{so}(U')$, on the choice of which the slice will depend. As an $\mathfrak{sl}_2$-module, $U'$ is the direct sum of two irreducible representations - one, which we denote by $J\subset U'$, of dimension $1$ and one, which we denote by $I\subset U'$, of dimension $2n-1$. It is an easy check that $I,J$ are perpendicular.
Set $N:=\ker(f^{n-1})\subset I$ (the span of negative weight spaces for $h$ in $I$), an isotropic subspace of $U'$ of dimension $n-1$. Similarly, we set $N^*:=\ker(e^{n-1})\cap I$. It is another isotropic subspace of the same dimension, which the bilinear form identifies with the linear dual of $N$.
We choose a vector $v_0^*\in N^*$ of weight 2 with respect to $h$, such that $\langle fv_0^*, fv_0^*\rangle=-(n(n-1))^2$
(our construction will depend on such a choice, unique up to a sign). Consider $$M:=N + \CC(fv_0^* + n(n-1) u_0).$$ It is an $n$-dimensional isotropic subspace of $U$.
Finally, we set $$\Sigma_{(e, f, h, v_0^*)}:= \Sigma_{2n}\times (v_0^* + N + \CC(fv_0^*))\subseteq \mathfrak{so}(U')^*\times U'.$$
Where the embedding above is via the Kostant slice of $\mathfrak{so}(U')^*$. That is, $\Sigma_{2n}\cong e+\mathfrak{z}_f\subset \mathfrak{so}(U')\cong \mathfrak{so}(U')^*$.
\begin{claim}
    For all $\sigma\in \Sigma_{(e, f, h, v_0^*)}$, the element $\varpi(\sigma)\in \mathfrak{so}(U)^*\cong \mathfrak{so}(U')$ preserves $M$.
\end{claim}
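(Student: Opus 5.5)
The plan is a direct weight computation. I use the Killing forms to regard $\varpi(\sigma)$ as the explicit element $-x-u_0v^t+vu_0^t\in\mathfrak{so}(U)$. Here $x=e+z$ with $z\in\mathfrak{z}_f$ (Kostant slice), and $v=v_0^*+n'+c\,fv_0^*$ with $n'\in N$ and $c\in\CC$. Since $M$ is spanned by $N$ and the single vector $m:=fv_0^*+n(n-1)u_0$, it suffices to check two things: $\varpi(\sigma)w\in M$ for $w\in N$, and $\varpi(\sigma)m\in M$. Throughout I use the $h$-weight decomposition of $U'=I\oplus J$ and the facts $u_0\perp U'$, $x u_0=0$, $\langle u_0,u_0\rangle=1$.

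\emph{Step 1: describe $\mathfrak{z}_f$ by weights.} Decompose $\mathfrak{so}(U')=\Lambda^2I\oplus (I\otimes J)$ as $\mathfrak{sl}_2$-modules. The $\Lambda^2 I$-part of $\mathfrak{z}_f$ consists of lowest weight vectors, which strictly lower $h$-weights on $I$ and kill $J$. The $I\otimes J$-part is a lowest weight vector of weight $-(2n-2)$. It sends the top weight vector of $I$ into $J$, sends $J$ to the bottom weight vector of $I$ (which lies in $N$), and kills all other weight vectors of $I$. Consequently, because $n\geq2$, every $z\in\mathfrak{z}_f$ maps $N$ into $N$ and maps the weight-$0$ vector $fv_0^*$ into $N$. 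I would verify this carefully from the standard realization of the principal $\mathfrak{sl}_2$ in $\mathfrak{so}_{2n}$; it is the step most likely to hide a small case issue.

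\emph{Step 2: vectors in $N$.} For $w\in N$ we have $u_0^tw=0$, so $\varpi(\sigma)w=-ew-zw-\langle v,w\rangle u_0$. By Step 1, $zw\in N$. Likewise $ew\in N$ unless $w$ has a component of weight $-2$. In that case $ew$ picks up a weight-$0$ component proportional to $fv_0^*$. By the weight pairing, $\langle v,w\rangle=\langle v_0^*,w\rangle$. So I must check that $-ew_{-2}-\langle v_0^*,w_{-2}\rangle u_0$ is a multiple of $m$. Write $w_{-2}=a\,f^2v_0^*$; then $ew_{-2}$ equals a computable $\mathfrak{sl}_2$-constant times $fv_0^*$. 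Using $\langle v_0^*,f^2v_0^*\rangle=-\langle fv_0^*,fv_0^*\rangle=(n(n-1))^2$, the ratio of the two coefficients should come out to be exactly $n(n-1)$. This constant-matching is the main obstacle: it is precisely where the normalization $\langle fv_0^*,fv_0^*\rangle=-(n(n-1))^2$ is used, and I would carry out the $\mathfrak{sl}_2$ computation in dimension $2n-1$ explicitly.

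\emph{Step 3: the vector $m$.} Here $\varpi(\sigma)u_0=v$ and $\varpi(\sigma)fv_0^*=-xfv_0^*-\langle v,fv_0^*\rangle u_0$. By weights, $\langle v,fv_0^*\rangle=c\langle fv_0^*,fv_0^*\rangle=-c(n(n-1))^2$. Hence
$$\varpi(\sigma)m=-efv_0^*-zfv_0^*+c(n(n-1))^2u_0+n(n-1)(v_0^*+n'+c\,fv_0^*).$$
For $v_0^*$ of weight $2$ in the irreducible module of dimension $2n-1$ (highest weight $2(n-1)$), $efv_0^*=n(n-1)v_0^*$, so the $v_0^*$ terms cancel. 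By Step 1, $zfv_0^*\in N$, and $n(n-1)n'\in N$. The remaining terms are $n(n-1)c\,(fv_0^*+n(n-1)u_0)=n(n-1)c\,m$. Therefore $\varpi(\sigma)m\in M$, which together with Step 2 finishes the claim.
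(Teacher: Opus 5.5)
Your proposal is correct and is essentially the paper's argument. The paper splits $\varpi(\sigma)$ by linearity into $\varpi(e,v_0^*)$, $\varpi(\mathfrak{z}_f,0)$, $\varpi(0,N)$ and $\varpi(0,fv_0^*)$ instead of splitting $M$ into $N$ and $m$, but the checks are the same. This includes the constant you left pending: $ef^2v_0^*=n(n-1)fv_0^*$ gives $\varpi(e,v_0^*)(f^2v_0^*)=-n(n-1)fv_0^*-(n(n-1))^2u_0=-n(n-1)\,m$.

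The only cosmetic difference concerns $\mathfrak{z}_f$. You use the explicit decomposition $\Lambda^2I\oplus(I\otimes J)$, whereas the paper notes that $\mathfrak{z}_f$ commutes with $f$ and is nilpotent. It therefore maps $\ker(f^n)\cap\mathrm{Im}(f)$ into $\ker(f^{n-1})\cap\mathrm{Im}(f)=N$.
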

\begin{proof}
    It is enough to show that $\varpi(e,v_0^*), \varpi(\mathfrak{z}_f, 0),\varpi(0, N)$ and $\varpi(0,fv_0^*)$ each preserve $M$.
    First, it is clear that $\varpi(e, v_0^*)$ sends the subspace of $N$ perpendicular to $v_0^*$ into $N$. We then compute
    $$\varpi(e, v_0^*)(f^2v_0^*) = -ef^2v_0^* - \langle v_0^*, f^2v_0^*\rangle u_0 = -n(n-1)fv_0^* - (n(n-1))^2u_0\in M$$
    and
    $$\varpi(e, v_0^*)(fv_0^* + n(n-1) u_0) = -efv_0^* + n(n-1)v_0^* = 0.$$
    This proves the claim for $(e, v_0^*)$.
    For $(\mathfrak{z}_f, 0)$, we first note the image of $M$ is contained in $N+J=\ker(f^{n-1})\subset U'$. Indeed, the image is contained in $\mathfrak{z}_f \ker(f^n)\subseteq \ker(f^{n-1})$ (the last inclusion is because $\mathfrak{z}_f$, preserving $\ker(f^n)$ and being nilpotent, acts trivially on the 1-dimensional $\ker(f^n)/\ker(f^{n-1})$). Second, the image is contained in $\mathfrak{z}_f \mathrm{Im}(f)\subset \mathrm{Im}(f)$. Thus, the image is contained in $(N+J)\cap \mathrm{Im}(f) = N\subseteq M$.

    For $(0,N)$ the claim is clear.
    Finally, for $(0,fv_0^*)$ it is only needed to check that
    $$\varpi(0,fv_0^*)(fv_0^*+n(n-1)u_0) = -\langle fv_0^*, fv_0^* \rangle u_0 + n(n-1) fv_0^* = n(n-1)(fv_0^* + n(n-1)u_0)\in M.$$
\end{proof}
\begin{claim}\label{claim: pseudo slice}
    There is an isomorphism $\Sigma_{(e, f, h, v_0^*)}\cong \Sigma=\Sigma_{2n}\times \Sigma_{\mathfrak{gl}}$, given by $(x,v)\mapsto (x, [\varpi(x, v)|_M])$, which intertwines the quotient map of Claim \ref{claim: categorical quotient} into $\Sigma_{2n}\times \Sigma_{2n+1}$ with the natural map $\Sigma_{2n}\times \Sigma_{\mathfrak{gl}}\to \Sigma_{2n}\times \Sigma_{2n+1}$.
\end{claim}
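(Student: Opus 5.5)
The map $(x,v)\mapsto (x,[\varpi(x,v)|_M])$ is well defined by the previous claim: $\varpi(x,v)$ preserves the $n$-dimensional isotropic subspace $M$, so its restriction is an element of $\mathfrak{gl}(M)$. We identify $\mathfrak{gl}(M)$ with $\mathfrak{gl}(L)$ by choosing a basis, and take the class in $\Sigma_{\mathfrak{gl}}$, i.e.\ the coefficients of its characteristic polynomial.

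\textbf{Compatibility with the maps to $\Sigma_{2n}\times\Sigma_{2n+1}$.} I would do this first, since it is formal. The first coordinates agree tautologically. For the second, let $y=\varpi(x,v)\in\mathfrak{so}(U)$. Since $y$ preserves the isotropic $M$, it preserves $M^\perp\supset M$. The quotients are $M^\perp/M$, which is one-dimensional with a nondegenerate form, and $U/M^\perp\cong M^*$. Hence $y$ is block upper triangular, with diagonal blocks $y|_M$, a scalar on $M^\perp/M$ that must vanish because $y$ is skew, and $-(y|_M)^*$ on $M^*$. So
$$\det(z-y)=z\,\det(z-y|_M)\det(z+y|_M),$$
which is exactly the natural map $\Sigma_{\mathfrak{gl}}\to\Sigma_{2n+1}$ (the inclusion of the Levi $\GL(L)\subset \SO(U)$ composed with passage to invariants).

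\textbf{Isomorphism.} Both sides are affine spaces of dimension $\dim\Sigma_{2n}+n$; on the left the $v$-part $v_0^*+N+\CC fv_0^*$ has dimension $n$. Both carry natural $\GG_m$-gradings: on the left use the contracting action twisted by $h$, as for the Kostant slice; on the right use the standard weights. The map is triangular over $\Sigma_{2n}$, so it suffices, fiberwise over $x\in\Sigma_{2n}$, to show that
$$w\mapsto\bigl(c_1(y|_M),\dots,c_n(y|_M)\bigr),\qquad v=v_0^*+w\in v_0^*+N+\CC fv_0^*,$$
is an isomorphism of affine $n$-spaces. I would show that it is graded and that its linear part is triangular with nonzero diagonal. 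Concretely, choose a basis of $M$ consisting of the $h$-weight vectors $f^kv_0^*$, $k\ge 2$, spanning $N$, together with $m_0=fv_0^*+n(n-1)u_0$. The computations in the preceding proof show that $\varpi(e,v_0^*)$ acts on $M$ as a regular nilpotent of companion (Jordan) type along this chain: $f^{k+1}v_0^*\mapsto f^kv_0^*$, up to nonzero scalars and the $u_0$ correction, and $m_0\mapsto 0$. Adding $\varpi(0,w)$ and $\varpi(\mathfrak{z}_f,0)$ then fills in the last row or column of a companion matrix, each parameter contributing in a distinct weight. Each coefficient $c_k(y|_M)$ is therefore a nonzero multiple of one coordinate of $w$, namely its $h$-weight-matched coordinate, plus terms of lower weight. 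This is the standard Kostant slice argument for $\mathfrak{gl}_n$.

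\textbf{Main obstacle.} The hard part is exactly this fiberwise triangularity: verifying that the $n$ coordinates of $w$ enter $c_1,\dots,c_n$ with distinct weights and nonzero leading coefficients. This is where the normalization $\langle fv_0^*,fv_0^*\rangle=-(n(n-1))^2$ and the assumption $n\ge2$ enter; for example, $\varpi(0,fv_0^*)$ acts on $m_0$ by the scalar $n(n-1)$, which must not be degenerate. I would organize this through a weight count: assign $v_0^*$, $N$ and $fv_0^*$ their $h$-weights shifted so that $\varpi$ is homogeneous. As a consistency check, the identity $\det(z-y)=z\det(z-y|_M)\det(z+y|_M)$, compared with formula \eqref{eq: invariants} of Claim \ref{claim: categorical quotient} (where $c_i$ has top term $\langle v,x^{i-2}v\rangle$), confirms that the weights match. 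Quasi-finiteness together with gradedness then gives bijectivity, and hence an isomorphism, because both sides are graded polynomial rings with positive weights.
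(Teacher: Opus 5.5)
Your proposal is correct, and it shares the paper's skeleton. You use the same basis $w_0=fv_0^*+n(n-1)u_0$, $w_i=f^{i+1}v_0^*$ of $M$. You make the same observation that $x_0=\varpi(e,v_0^*)|_M$ shifts this chain with nonzero coefficients. You aim at the same statement: $c_k(\varpi(x,v)|_M)=\pm\lambda_k a_{k-1}+P_k(y,a_0,\dots,a_{k-2})$ with $\lambda_k\neq 0$ constant, where $a_{k-1}$ is the coefficient of $f^kv_0^*$.

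The difference is how you obtain this triangular shape.

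\textbf{The paper's route.} It applies Proposition \ref{prop: char poly formula} to the rank-one term $-(n(n-1)a_0u_0+v)u_0^t|_M$. It then uses that $x_0$ is upper triangular and $y|_M$ strictly lower triangular, so $a_{k-1}$ can enter $c_k$ only through $\langle u_0,x_0^{k-1}w_{k-1}\rangle$.

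\textbf{Your route.} You use the twisted action $c.(x,v)=(c^2\mathrm{Ad}_{(-2\rho^\vee)(c)}x,\,c^2(-2\rho^\vee)(c)v)$, and this works:
\begin{itemize}
\item it preserves $\Sigma_{(e,f,h,v_0^*)}$, and it preserves $M$ because $(-2\rho^\vee)(c)$ fixes $u_0$ and $fv_0^*$ and preserves $N$;
\item all coordinates get positive weight;
\item both $a_{k-1}$ and $c_k$ get weight $2k$.
\end{itemize}
Hence $a_{k-1}$ can occur in $c_k$ only as $\lambda_k a_{k-1}$ with $\lambda_k$ constant, and $a_j$ with $j\ge k$ cannot occur at all. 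What remains is a computation at $y=0$. There the matrix is $x_0$ plus the single column $\varpi(0,w)w_0=n(n-1)\sum_k a_kw_k$, so $\lambda_k$ is $\pm n(n-1)$ times a product of superdiagonal entries of $x_0$, hence nonzero.

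Your route needs no information about $y|_M$ and no perturbation formula; the cost is checking the equivariance above. (Incidentally, $y|_M$ is strictly lower triangular, not a single row or column as you wrote, but your argument does not need this.) Your flag argument with $M\subset M^\perp\subset U$ correctly supplies the compatibility with $\Sigma_{2n+1}$ that the paper calls obvious.

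Two points to tidy:
\begin{itemize}
\item The grading must be applied to the whole map $(y,a)\mapsto(y,c(y,a))$. The fiber map over a fixed $x\neq e$ is not graded.
\item The closing step should not appeal to quasi-finiteness: a graded quasi-finite map such as $z\mapsto z^2$ need not be bijective. The isomorphism comes from the triangular form with constant nonzero $\lambda_k$, which can be inverted recursively by polynomials. Equivalently, it comes from invertibility of the differential at the origin together with the positive grading.
\end{itemize}
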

\begin{proof}
Let $(e+y, v_0^*+v)\in \Sigma_{(e, f, h, v_0^*)}$, with $y\in \mathfrak{z}_f, v\in N+\mathrm{Span}(fv_0^*)$.
Recall that the map from the Kostant slice of $\mathfrak{so}(U')$ to $\Sigma_{2n}$ is an isomorphism, hence the polynomials on $\Sigma_{2n}$ determine $y$ uniquely.
We choose the basis for $M$ given by $w_i=f^i (fv_0^* + n(n-1)u_0)$, $i=0,\dots,n-1$.
In this basis, $x_0:=\varpi(e,v_0^*)|_M$ takes the form of a nilpotent upper triangular matrix with 
$w_0\in \ker(e, v_0^*)$ and for $i>0$, $w_i=f^{i+1}v_0^*$ goes to a non-zero multiple of $w_{i-1}$. Also, $y$ takes the form of a nilpotent lower triangular matrix, that is, $yw_i\in \mathrm{Span}(w_{i+1},\dots,w_{n-1})$.
Write $v = \sum_{k=0}^{n-1} a_k f^{k+1}v_0^*$.
We have $$v^t|_M = a_0 (fv_0^*)^t|_M = -n(n-1)a_0u_0^t|_M,$$ and we calculate using \ref{prop: char poly formula} for $i=1,\dots n$:
$$
\begin{aligned}
c_i(\varpi(e+y, v_0^*+v)|_M) =& (-1)^ic_i(x_0+(y+u_0v^t-vu_0^t)|_M)
\\=& (-1)^ic_i(x_0+(y-(n(n-1)a_0u_0+v)u_0^t)|_M)
\\=& (-1)^ic_i(x_0+y|_M) +\\&+ (-1)^i\sum_{j=0}^{i-1}c_{i-j-1}(x_0+y|_M)\langle u_0, (x_0+y)^j (n(n-1)a_0u_0+v)\rangle
\\=& (-1)^ic_i(x_0+y|_M) +\\&+ (-1)^i\sum_{j=0}^{i-1}c_{i-j-1}(x_0+y|_M)\left\langle u_0, (x_0+y)^j \left(\sum_{k=0}^{n-1}a_kw_k\right)\right\rangle
\\=& (-1)^i\alpha_i a_{i-1} + (-1)^iP_i(y, a_0,\dots, a_{i-2})
\end{aligned}
$$
Where $\alpha_i=\langle u_0, x_0^{i-1} w_{i-1}\rangle$ is a non-zero constant and $P_i(y, a_0,\dots, a_{i-1})$ is an expression depending on $y$ and $a_j$ for $j<i$.
This indeed shows that the map in the claim is an isomorphism. The claim about compatibility of maps into $\Sigma_{2n}\times \Sigma_{2n+1}$ is obvious.
\end{proof}

\begin{lemma}\label{lemma: Sigma saturation}
    The image of the saturation map $\SO(U')\times \Sigma_{(e,f,h, v_0^*)}\to \mathfrak{so}(U')^*\times U'$ is Zariski dense with complement of codimension $\geq 2$. This image also contains the preimage along the map $\mathfrak{so}(U')^*\times U'\to \Sigma_{2n}\times \Sigma_{2n+1}$ of a Zariski open dense subset. Moreover, for any point $(x,v)$ in this preimage, its stabilizer in $SO(U')$ is trivial.
\end{lemma}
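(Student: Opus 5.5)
The plan is to work on the generic locus given by the Kostant-type description. Let $q:\mathfrak{so}(U')^*\times U'\to \Sigma_{2n}\times\Sigma_{2n+1}$ be the quotient map of Claim \ref{claim: categorical quotient}. Write $\Sigma_{\mathrm{sl}}:=\Sigma_{(e,f,h,v_0^*)}$ for the pseudo-slice, and let $\mathrm{sat}:\SO(U')\times\Sigma_{\mathrm{sl}}\to \mathfrak{so}(U')^*\times U'$ be the saturation map.

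\textbf{Step 1 (generic fibres are single free orbits).} Let $(x,v)$ have $x$ regular semisimple in $\mathfrak{so}(U')$. Decompose $v=\sum_\chi v_\chi$ according to the eigenlines of $x$ (the weights $\pm t_1,\dots,\pm t_n$). Formula \eqref{eq: invariants} shows that the invariants $\langle v,x^{2k}v\rangle$ determine the products $\langle v_{t_i},v_{-t_i}\rangle$. Impose the open condition that all these products are nonzero, i.e. $v$ has nonzero pairing in each hyperbolic plane. Then the stabilizer of $x$ is the maximal torus $T$, acting on the pair $(v_{t_i},v_{-t_i})$ by $(s_i,s_i^{-1})$, and these pairs are fixed by the values of $q$ up to the $T$-action. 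Hence the fibre of $q$ over such a point is a single $\SO(U')$-orbit with trivial stabilizer. Call $\Omega\subset\Sigma_{2n}\times\Sigma_{2n+1}$ the open dense set where the discriminant of $x$ and these products are nonzero; it is the complement of a discriminant-type hypersurface.

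\textbf{Step 2 (the pseudo-slice meets every generic fibre).} By Claim \ref{claim: pseudo slice}, $q|_{\Sigma_{\mathrm{sl}}}$ factors as $\Sigma\to\Sigma_{2n}\times\Sigma_{2n+1}$, which is surjective because $\Sigma_{\mathfrak{gl}}\to\Sigma_{2n+1}$ is finite and surjective. So every fibre over $\Omega$ meets $\Sigma_{\mathrm{sl}}$, and by Step 1 the fibre is exactly the orbit of that point. This shows $q^{-1}(\Omega)\subseteq\mathrm{Im}(\mathrm{sat})$, which gives density, the open-dense-preimage statement, and triviality of stabilizers on $q^{-1}(\Omega)$.

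\textbf{Step 3 (codimension $\ge 2$).} This is the main obstacle. The complement of $q^{-1}(\Omega)$ is a divisor $D=q^{-1}(\partial\Omega)$, so it suffices to show that the image of $\mathrm{sat}$ meets a dense open subset of every irreducible component of $D$. The components lie over the discriminant of $\Sigma_{2n}$ and over the loci where some $\langle v_{t_i},v_{-t_i}\rangle$ vanishes. For each component I would exhibit a generic point of the form $g\cdot\sigma$ with $\sigma\in\Sigma_{\mathrm{sl}}$, using the following argument. Since the saturation map has equidimensional source, of dimension $\dim\SO(U')+\dim\Sigma=\dim(\mathfrak{so}(U')^*\times U')$, it suffices to check that its differential is surjective at one point of $\Sigma_{\mathrm{sl}}$ lying over a generic point of each divisor component. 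Equivalently, the stabilizer of that point is finite and its orbit is transverse to $\Sigma_{\mathrm{sl}}$.

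For the components over the discriminant of $\Sigma_{2n}$, I expect the Kostant slice property (every regular $x$ meets $e+\mathfrak{z}_f$, and regular elements have complement of codimension $3$) to handle them, combined with an analysis of $v$ similar to Step 1 but with a regular nilpotent Jordan block. For the components where a product $\langle v_{t_i},v_{-t_i}\rangle$ vanishes, one uses $x$ regular semisimple with $v$ having one isotropic component, and checks that the stabilizer in $T$ is still finite. This requires $n\ge2$, so that $\Sigma_{\mathrm{sl}}$ contains the $N$-directions needed to reach such $v$, presumably via the triangular computation of Claim \ref{claim: pseudo slice}.

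If the transversality check becomes too computational, I would fall back on a dimension count. I would show that $\mathrm{sat}$ has finite generic fibres over each divisor component, because the map $\Sigma_{\mathrm{sl}}\to\Sigma_{2n}\times\Sigma_{2n+1}$ is finite. Then the closure of $\mathrm{Im}(\mathrm{sat})\cap D$ has full dimension in each component of $D$. Since the image is constructible, its complement would then have codimension $\ge2$.
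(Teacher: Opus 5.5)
Your Steps 1 and 2 are sound, and they give a more direct route than the paper's to density, to the open-preimage statement and to triviality of stabilizers. Over the locus where $x$ is regular semisimple and all $\langle v_{t_i},v_{-t_i}\rangle\neq 0$, the fibre of $q$ is a single free orbit. Since $\Sigma\to\Sigma_{2n}\times\Sigma_{2n+1}$ is surjective, the slice meets that orbit. (The paper instead works on the locus where $x$ and $\varpi(x,v)$ are regular semisimple with no common eigenvalue.)

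The gap is Step 3, which is the part of the lemma used later: Corollary~\ref{cor-item: regularity can be checked on Sigma} extends regular functions across the complement of the image. There you only formulate a criterion, namely smoothness of $\mathrm{sat}$ at one point of $\Sigma_{(e,f,h,v_0^*)}$ on each component of $D$, and say you ``expect'' it to hold. No component is pinned down, no point is exhibited, and no differential is computed. Note also that you need the image to \emph{contain}, not merely meet, a nonempty open subset of each component.

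The fallback does not repair this. Finiteness of $\Sigma_{(e,f,h,v_0^*)}\to\Sigma_{2n}\times\Sigma_{2n+1}$ only bounds how many slice points lie in a fibre of $q$. By contrast, $\mathrm{sat}^{-1}(y)$ is a union of cosets of $\mathrm{Stab}_{\SO(U')}(y)$, so finiteness of the fibres of $\mathrm{sat}$ is a statement about stabilizers. That is not automatic on the slice: for instance, the stabilizer of $(e,v_0^*)$ contains a unipotent one-parameter subgroup of $Z_{\SO(U')}(e)$, namely $\exp$ of the top-weight element of $\mathfrak{z}_e$.

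The underlying difficulty is that the mechanism of Step 1 breaks down exactly on $D$. Consider a generic point of the divisor where $x$ and $\varpi(x,v)$ share a nonzero eigenvalue, so that some $\langle v_{t_i},v_{-t_i}\rangle=0$. There the fibre of $q$ contains two distinct free $\SO(U')$-orbits ($v_{t_i}=0\neq v_{-t_i}$ and the reverse) plus a smaller orbit. Knowing that the slice meets the fibre therefore no longer tells you which points are hit. Over $\{\mathrm{disc}(x)=0\}$, $x$ is not even semisimple.

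The missing idea, which is how the paper handles this, is an intrinsic pointwise criterion for lying in the image. A pair $(x,v)$ with $x$ regular lies in $\SO(U')\cdot\Sigma_{(e,f,h,v_0^*)}$ as soon as there is an isotropic $n$-dimensional $\varpi(x,v)$-stable subspace $M'\subset U$ such that:
\begin{itemize}
\item $N'=M'\cap U'$ has dimension $n-1$;
\item $\varpi(x,v)(N')^{\perp,U}\subset\CC v+(N')^{\perp,U}$;
\item $v$ is cyclic modulo $(N')^{\perp,U}$.
\end{itemize}
This is proved by conjugating the Borel defined by the resulting flags to the standard one and then moving $x$ into the Kostant slice.

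One then writes down an explicit set $Y$, defined by: $x$ and $\varpi(x,v)$ are regular; for each eigenvalue $\lambda\neq0$ of $\varpi(x,v)$, $v_\lambda$ or $v_{-\lambda}$ is cyclic in its generalized eigenspace; and $x$ is invertible or $\dim U_0=1$. The complement of $Y$ is visibly of codimension $\geq 2$, and for points of $Y$ one builds $M'$ from the spectral decomposition of $\varpi(x,v)$. Without such a characterization, or an actual transversality-and-stabilizer computation at explicit slice points on each component of $D$, the codimension-$\geq 2$ claim remains unproved.
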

\begin{proof}
    We set $X\subset \mathfrak{so}(U')\times U'\times \mathrm{Gr}_{n-1}(U')\times \Gr_n(U)$ to be the subset of quadruples $(x, v, N', M')$ (here $N', M'$ are $n-1$ and $n$ dimensional linear subspaces of $U'$ and $U$ respectively) satisfying:
    \begin{itemize}
        \item $x$ is regular
        \item $M'$ is isotropic (that is, $(M')^\perp \supset M'$)
        \item $N' = M'\cap U'$
        \item $M'$ is preserved by $\varpi(x, v)$
        \item $\varpi(x, v)\left((N')^{\perp, U}\right)\subset \CC v + (N')^{\perp, U}$, and $v$ is cyclic for $\varpi(x,v)$ modulo $(N')^{\perp, U}$ (that is, $\CC v + (N')^{\perp, U}$ generates $U$ as a $\varpi(x,v)$-module)
    \end{itemize}
    Clearly, by setting $(x, v)=g\sigma, N'= gN, M' = gM$ for $(g, \sigma)\in SO(U')\times \Sigma_{(e,f,h, v_0^*)}$, the saturation map factors through $X$.
    We show that
    \begin{enumerate}
        \item The map $\SO(U')\times \Sigma_{(e,f,h, v_0^*)} \to X$ is onto.
        \item The image of the map $X\to \mathfrak{so}(U')\times U'$ (given by projection on the first two coordinates) is dense with complement of codimension $\geq 2$.
        \item The above image contains the preimage along the map $\mathfrak{so}(U')\times U'\to \Sigma_{2n}\times \Sigma_{2n+1}$ of a Zariski open dense subset.
        \item Over this Zariski open dense subset, the stabilizers are trivial (as in the formulation of the lemma).
    \end{enumerate}
    To show (1), we begin by identifying $U' / (N')^{\perp, U'}\cong U / (N')^{\perp, U} \cong (N')^*$. We obtain an ascending filtration of this space by setting $$F^i(N')^*:=\mathrm{Span}(\{\varpi(x, v)^j v\mid j\leq i\}).$$
    We also consider the dual filtration, defined on $N'$ by taking the perpendicular subspaces.
    Let $\mathrm{B}'\subset \SO(U')$ be the subgroup formed by transformation preserving $N'$ as well as the above filtrations on $N', U' / (N')^\perp$. This is a Borel subgroup of $\SO(U')$, and we transform $(x, v, M', N')$ by an element of $\SO(U')$ transforming this subgroup to the standard negative Borel with respect to the principal $\mathfrak{sl}_2$ triple $(e,f,h)$.
    In particular, we may assume that $N' = N$, and that the above filtrations defined on $N^*, N$ are the standard ones, preserved by the standard negative Borel subgroup.

    Next, we note that by definition, for $0\leq i \leq n-2$, $\varpi(x, v)$ takes $F^iN^*$ into $F^{i+1}N^*$ with the resulting quotient map $$\varpi(x,v): F^iN^*/F^{i-1}N^*\to F^{i+1}N^*/F^iN^*$$ being an isomorphism of one dimensional vector spaces.
    
    One easily observes that it follows that $x$ satisfies these same properties, and that $xN^{\perp, U'}\subset \CC v \oplus N^{\perp, U'}$ and $x N \subset N^{\perp, U'}$.
    Moreover, taking $s\in N$ such that $\langle s, v\rangle = 1$, we see that
    $$\begin{aligned}
        x^2s-v &= x(xs+u_0) - \langle -xs-u_0, v \rangle u_0 - v - \langle xs, v \rangle u_0 =\\& -x(\varpi(x,v)s) - \langle \varpi(x,v)s, v \rangle u_0 + \langle \varpi(x,v)s, u_0 \rangle v - \langle xs, v \rangle u_0 =\\& \varpi(x, v)^2s - \langle xs, v \rangle u_0\in U'\cap(M'+\CC u_0)\subset N^{\perp, U'}.
    \end{aligned}$$
    This implies that $xN\nsubseteq N$ and $x^2N\nsubseteq xN$, as well as $\langle xs, xs\rangle = -\langle v, s \rangle = -1$.
    From all the above, it follows that one may transform by an element of the standard torus (the centralizer of $h$) so that $x\in e + \mathfrak{b}^-$, where $\mathfrak{b}^-$ is the Lie algebra of the standard negative Borel $\mathrm{B}^-$.
    We may further transform by an element of $\mathrm{U}^-$, the unipotent radical of $\mathrm{B}^-$, so that $x\in \Sigma_{2n}$.
    Since $xs\in xN+N = eN+N$ and $x^2s-v\in N^{\perp, U'}$, we deduce that $$xs \in \frac{fv_0^*}{n(n-1)} + N,$$
    which implies that $M'=M$.
    Now, write
    $$M\ni \varpi(x, v) (fv_0^* + n(n-1)u_0) = -xfv_0^* -\langle fv_0^*, v\rangle u_0 + n(n-1)v = n(n-1)(v-v_0^*) +(e-x)fv_0^* -\langle fv_0^*, v\rangle u_0.$$
    This implies that
    $$v-v_0^*\in (M + \CC u_0)\cap U' = \CC fv_0^* + N.$$
    This finishes part 1 of the proof.

    For part 2, look at the spectral decomposition $U = \bigoplus_{\lambda\in S} U_\lambda$ with respect to the linear transformation $\varpi(x, v)$ (here $S$ is the set of eigenvalues). We write also $v = \sum_{\lambda\in S} v_\lambda$, $v_\lambda\in U_\lambda$. We show that the following subset $Y\subset\mathfrak{so}(U')\times U'$, the complement of which is of codimension at least 2, is contained in the image of $X$:
    \begin{enumerate}
        \item $x$ and $\varpi(x, v)$ are regular.
        \item For each $\lambda\in S\setminus \{0\}$, either $v_\lambda$ is a cyclic vector for $\varpi(x,v)$ in $U_\lambda$ or $v_{-\lambda}$ is a cyclic vector for $\varpi(x, v)$ in $U_{-\lambda}$.
        \item Either $x$ is invertible, or $\dim U_0=1$.
    \end{enumerate}

    We start by showing that the complement of $Y$ is indeed of codimension at least 2.
    The first requirement indeed cuts out only a codimension $\geq 2$ subset.
    Now, assuming that the first requirement indeed holds, for any $\lambda\in S$, the $\varpi(x, v)$-module $U_\lambda$ is cyclic.
    The subset of such $(x,v)$ for which there exists a $\lambda\in S\setminus\{0\}$ so that $v_\lambda$ and $v_{-\lambda}$ are both not cyclic in $U_\lambda, U_{-\lambda}$ respectively is also of codimension 2. Last, the third requirement also forbids a space of codimension 2, as the subsets in which $x$ is not invertible and in which $\dim U_0 > 1$ (note that $\dim U_0\geq 1$ always) are both closed subvarieties, and don't share any irreducible components.
    
    Now let us show that $Y$ is contained in the image of $X$.
    Let $(x,v)\in Y$, and let $S^+\subset S\setminus\{0\}$ be a subset which contains exactly one of $\lambda, -\lambda$ for every $\lambda\in S\setminus \{0\}$, and so that for any $\lambda\in S^+$, $v_\lambda$ is a cyclic vector in $U_\lambda$ for $\varpi(x, v)$. Let $S^- := S\setminus (S^+\cup \{0\})$.
    We also set $d := \dim U_0$ (note that $d$ must be odd), and $$\tilde{U}_0 := \ker(\varpi(x,v)^{\frac{d-1}{2}}) = \varpi(x, v)^{\frac{d+1}{2}}U_0.$$
    We define $M' := \bigoplus_{\lambda\in S^-} U_\lambda \oplus \tilde{U}_0$. It is an isotropic subspace of dimension $n$, preserved by $\varpi(x, v)$. 
    
    \begin{claim}\label{claim: v_0 subcyclic}
        If $u_0$ is in the image of $\varpi(x, v)$, then $\dim U_0 = 1$. Moreover, whether this holds or not, $v_0$ is cyclic for $\varpi(x, v)$ in $\varpi(x, v)U_0$.
    \end{claim}
    \begin{proof}
        Assume the $u_0$ is in the image of $\varpi(x, v)$, that is, $\varpi(x,v)w = u_0$ for some $w\in U$. This implies that $w\in U'$, and $xw=0$, hence $\dim U_0 = 1$ by condition (3) above. Note that $v\in \varpi(x,u)U$, and the desired cyclicity condition holds trivially.
        If $u_0$ is not in the image of $\varpi(x, v)$, the $U_0$-part of $u_0$ is cyclic for $\varpi(x, v)$. Since $\varpi(x, v)u_0 = v$, this would imply that $v_0$ is cyclic for $\varpi(x, v)$ in $\varpi(x, v)U_0$.
    \end{proof}

    It is impossible that $M'\subset U'$, as if this were the case, then it would follow from $$U'\supset M'\supset\varpi(x,v)M'=(-x-u_0v^t)M'$$ that $M'\subseteq v^{\perp, U'}$. By definition of $S^+$ along with Claim \ref{claim: v_0 subcyclic}, this implies that $S = \{0\}$ and $\dim U_0 \leq 3$. However, we assumed that $n>1$.

    So the subspace $N':= M'\cap U'$ is of dimension $n-1$, and
    $$(N')^{\perp, U} = (M')^\perp + \CC u_0.$$
    Since $\varpi(x,v)\in \mathfrak{so}(U)$ preserves $M'$, it must also preserve $(M')^\perp$. Also, $\varpi(x, v)u_0 = v$, and so we see that indeed $$\varpi(x, v)\left((N')^{\perp, U}\right)\subset \CC v + (N')^{\perp, U}.$$

    It follows from Claim \ref{claim: v_0 subcyclic} above that
    $$(M')^\perp + \CC u_0\nsubseteq \varpi(x,v)U$$
    and that $v$ is cyclic for $\varpi(x, v)$ modulo $(N')^{\perp, U} = (M')^\perp + \CC u_0$ (This is proved by looking at each eigenvalue separately).
    Thus, $(x, v, N', M')\in X$, concluding the proof that $Y$ is contained in the image of $X$.
    
    To show part 3, namely that the image of $X$ contains the preimage along the map $\mathfrak{so}(U')\times U'\to \Sigma_{2n}\times \Sigma_{2n+1}$ of a Zariski open dense subset, we show that $Y$ contains such a preimage.
    More specifically, we show that $Y$ contains the subset of $\mathfrak{so}(U')\times U'$ consisting of pairs $(x, v)$ such that both $x$ and $\varpi(x, v)$ are regular semisimple and such that $x$ and $\varpi(x, v)$ share no common eigenvalues.
    Condition 1  in the definition of $Y$ holds trivially on this subset, and also condition 3, as $\varpi(x, v)$ must have $0$ as an eigenvalue, hence $x$ must be invertible.
    To show that condition 2 holds, note that each $U_\lambda$ is 1-dimensional, and so it is enough to show that for each $\lambda\neq 0$, the component of $v$ in $U_\lambda$ is non-zero. Assume by contradiction that it is, and take $w\in U_{-\lambda}$ to be a non-zero vector of eigenvalue $-\lambda$. We must have $\langle w, v\rangle = 0$.
    Thus,
    $$-\lambda w = \varpi(x, v) w = -xw + \langle u_0, w \rangle v\in U',$$
    so that necessarily $w\in U'$, and the above equality reads $xw = \lambda w$, contradicting the assumption that $x$ and $\varpi(x, v)$ share no eigenvalues.

    Now let us show that for any $(x,v)\in \mathfrak{so}(U')\times U'$ for which $x,\varpi(x,v)$ are both regular semisimple and share no common eigenvalues, as above, the stabilizer of this pair in $\SO(U')$ is trivial. For such a pair $(x,v)$, we have shown that $v$ has non-zero component in $U_\lambda$ for each $\lambda\neq 0$. Thus, if $g\in \SO(U')$ is an element centralizing $(x, v)$, $g$ must preserve all the subspaces $U_\lambda$ of $U$, and moreover to restrict to the identity operator on each $U_\lambda$ for $\lambda\neq 0$. Since $g$ also fixes $u_0$, and we know that $U_0\nsubseteq U'$ (otherwise we get that $x$ must also annihilate $U_0$, and so $0$ would be a common eigenvalue of $x$ and $\varpi(x, v)$), we get that $g$ must also be the identity on $U_0$, hence $g=1$.
\end{proof}

Let $$\mathfrak{M}:=(\mathfrak{so}(U')^*\times U')\times_{\Sigma_{2n}\times \Sigma_{2n+1}}\Sigma,$$
and let $\upsilon: \SO(U')\times \Sigma_{(e, f, h, v_0^*)}\to \mathfrak{M}$ be the map whose first coordinate is given by the saturation map, and whose second coordinate is the projection map onto $\Sigma_{(e, f, h, v_0^*)}\cong \Sigma$.
\begin{corollary}
    \begin{enumerate}[label=(\roman*), ref=\thecorollary(\roman*)]
        \item We have $\mathfrak{M}\git \SO(U')\cong \Sigma$, and $\Sigma_{e,f,h, v_0^*}\to \mathfrak{M}$ is a Weierstraß section for the $\SO(U')$ action.
        \item \label{cor-item: regularity can be checked on Sigma} If the pullback to $\SO(U')^*\times \Sigma_{e, f, h, v_0^*}$ of a rational function on $\mathfrak{so}(U')^*\times U'$ is regular, then the original function must be regular.
        \item \label{cor-item: upsilon gen} Letting $\Sigma_{\mathrm{gen}}\cong \Sigma_{(e, f, h, v_0^*),\mathrm{gen}}$ and $(\Sigma_{2n}\times \Sigma_{2n+1})_\mathrm{gen}$ be the generic points of $\Sigma\cong \Sigma_{(e, f, h, v_0^*)}$ and $\Sigma_{2n}\times \Sigma_{2n+1}$ respectively, the map $$\upsilon_{\mathrm{gen}}:=\upsilon\times_\Sigma \Sigma_\mathrm{gen}:\SO(U')\times \Sigma_{(e, f, h, v_0^*),\mathrm{gen}} \xrightarrow{\sim}(\mathfrak{so}(U')^*\times U')\times_{\Sigma_{2n}\times \Sigma_{2n+1}}\Sigma_{\mathrm{gen}}$$ is an isomorphism.
    \end{enumerate}
\end{corollary}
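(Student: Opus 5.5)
All three parts will be deduced from Claim \ref{claim: pseudo slice}, Claim \ref{claim: categorical quotient} and Lemma \ref{lemma: Sigma saturation}. I would prove (ii) first, since (i) uses it.

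For (ii), let $\phi$ be a rational function on $\mathfrak{so}(U')^*\times U'$ whose pullback along the saturation map $\SO(U')\times \Sigma_{(e,f,h,v_0^*)}\to \mathfrak{so}(U')^*\times U'$ is regular. I would show that $\phi$ is regular on the image of this map, and then use normality of the affine space $\mathfrak{so}(U')^*\times U'$: a function regular outside a subset of codimension $\geq 2$ extends (Hartogs). By Lemma \ref{lemma: Sigma saturation}, the complement of the image has codimension $\geq 2$. Regularity at a point of the image needs local sections or flatness of the saturation map there. The plan is to prove that the saturation map is smooth (or at least flat with reduced fibers) over the open set $Y$ from the proof of Lemma \ref{lemma: Sigma saturation}. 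I would reduce this to a tangent computation at points of $\Sigma_{(e,f,h,v_0^*)}$ lying over $Y$, and then spread it by $\SO(U')$-equivariance. Surjectivity of the differential amounts to $[\mathfrak{so}(U'),\sigma]+T\Sigma_{(e,f,h,v_0^*)}$ being the whole tangent space. Dimensions match, since $\dim \SO(U')+\dim\Sigma = n(2n-1)+2n$. So I need injectivity, i.e. finiteness of stabilizers, and this should follow as in the triviality-of-stabilizers argument of the lemma, using the fact that $x$ and $\varpi(x,v)$ are regular on $Y$. \textbf{This is the main obstacle.} If the stabilizer argument fails on parts of $Y$, I would instead shrink $Y$ to an open set whose complement still has codimension $\geq 2$.

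For (i), the composition $\Sigma_{(e,f,h,v_0^*)}\to\mathfrak{M}\to\Sigma$ is the isomorphism of Claim \ref{claim: pseudo slice}, so $\Sigma$ is a retract of $\mathfrak{M}\git\SO(U')$. It remains to show that $\CC[\mathfrak{M}]^{\SO(U')}\to \CC[\Sigma_{(e,f,h,v_0^*)}]$ is injective. By Claim \ref{claim: categorical quotient} and flat base change of invariants (since $\SO(U')$ is reductive), $\CC[\mathfrak{M}]^{\SO(U')}=\CC[\Sigma_{2n}\times\Sigma_{2n+1}]\otimes\CC[\Sigma]\otimes_{\CC[\Sigma_{2n}\times\Sigma_{2n+1}]}\CC[\Sigma_{2n}\times\Sigma_{2n+1}]=\CC[\Sigma]$. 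This already gives $\mathfrak{M}\git\SO(U')\cong\Sigma$, and the section is Weierstra\ss{} because it meets every closed-orbit fiber compatibly with the quotient. Injectivity on invariants follows from this identification.

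For (iii), I would base change $\upsilon$ to the generic point of $\Sigma$. By Lemma \ref{lemma: Sigma saturation}, over the generic point every point of the fiber lies in the image and has trivial stabilizer, so $\upsilon_{\mathrm{gen}}$ is bijective on geometric points. The fiber of $\mathfrak{M}$ over a generic point of $\Sigma$ is a single closed orbit, because generically $x$ and $\varpi(x,v)$ are regular semisimple and $\Sigma\to\Sigma_{2n}\times\Sigma_{2n+1}$ is \'etale generically. Hence $\upsilon_{\mathrm{gen}}$ is an equivariant map of $\SO(U')$-torsors over $\Sigma_{\mathrm{gen}}$, and therefore an isomorphism. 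This requires smoothness of the generic fiber of $\mathfrak{M}$, which holds by generic smoothness in characteristic $0$.
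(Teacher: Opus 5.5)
\paragraph{Parts (i) and (iii).} Your arguments here are fine and match the paper in substance. For (i) you use base change of invariants for the reductive group $\SO(U')$ together with Claim~\ref{claim: pseudo slice}. For (iii) you use trivial generic stabilizers to make $\upsilon_{\mathrm{gen}}$ a map of trivial torsors. One correction in (iii): the reason the generic fibre of $\mathfrak{M}\to\Sigma$ is a single orbit is not regular semisimplicity or generic \'etaleness. It is that all orbits there have trivial stabilizer, so they have the dimension of the fibre and are all closed, and a fibre of a categorical quotient contains only one closed orbit.

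\paragraph{Part (ii): the plan fails.} The step ``I need injectivity, i.e.\ finiteness of stabilizers'' is false. The smoothness you aim for fails on $Y$, and on every open set whose complement has codimension $\geq 2$.
\begin{itemize}
\item Injectivity of the differential of the saturation map at $(1,\sigma)$ needs two things: a trivial stabilizer Lie algebra, and the transversality $[\mathfrak{so}(U'),\sigma]\cap T_\sigma\Sigma_{(e,f,h,v_0^*)}=0$. The second fails along a divisor.
\item Let $q$ be the quotient map to $\Sigma_{2n}\times\Sigma_{2n+1}$ and $\pi:\Sigma\to\Sigma_{2n}\times\Sigma_{2n+1}$. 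Then $q\circ\mathrm{sat}=\pi\circ\mathrm{pr}_\Sigma$.
\item $\pi$ is ramified at every point lying over the locus where $\varpi(x,v)$ has $0$ as an eigenvalue of multiplicity at least $3$. This comes from the branch divisor of $\Sigma_{\mathfrak{gl}}\to\Sigma_{2n+1}$.
\item $q$ is smooth at $(x,v)$ whenever $x$ and $\varpi(x,v)$ are regular and the stabilizer is finite. To see this, decompose $\mathfrak{so}(U)=\mathfrak{so}(U')\oplus\varpi(0,U')$ orthogonally. Then $\varpi(0,U')+[\mathfrak{so}(U),\varpi(x,v)]=\mathfrak{so}(U)$ holds iff $\mathfrak{z}_{\mathfrak{so}(U)}(\varpi(x,v))\cap\mathfrak{so}(U')=0$, i.e.\ iff the stabilizer is finite.
\item These conditions hold at the generic points of $q^{-1}$ of that divisor. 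Those points lie in $Y$, and stabilizers on $Y$ are in fact trivial.
\item If the saturation map were \'etale over such a point, $\pi\circ\mathrm{pr}_\Sigma$ would be smooth there, contradicting the ramification of $\pi$.
\end{itemize}
So the saturation map is ramified over a divisor meeting $Y$, even though stabilizers there are trivial. Shrinking $Y$ cannot remove a divisor while keeping the complement of codimension $\geq 2$. ``Flat with reduced fibres'' fails for the same reason.

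\paragraph{The fix.} No local property of the saturation map is needed.
\begin{itemize}
\item Since $\mathfrak{so}(U')^*\times U'$ is normal, it suffices to show that $\phi$ has no pole along any prime divisor $D$.
\item The image of the saturation map is constructible with complement of codimension $\geq 2$, so it contains the generic point $\eta_D$. Choose $z$ mapping to $\eta_D$.
\item The local homomorphism $\mathcal{O}_{\eta_D}\to\mathcal{O}_z$ is injective, because the map is dominant and the source is integral.
\item Suppose $\phi=u\,\varepsilon^{-k}$ with $u$ a unit, $\varepsilon$ a uniformizer and $k>0$. Its pullback is $u'\varepsilon'^{-k}$ with $u'\in\mathcal{O}_z^\times$ and $0\neq\varepsilon'\in\mathfrak{m}_z$. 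This element does not lie in $\mathcal{O}_z$, contradicting regularity of the pullback.
\end{itemize}
This is presumably what the paper means by (ii) following ``trivially'' from Lemma~\ref{lemma: Sigma saturation}.

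Alternatively, your descent argument only needs flatness, not smoothness. On the finite-stabilizer locus flatness is automatic by miracle flatness, since there the map is quasi-finite between smooth varieties of the same dimension.
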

\begin{proof}
    Parts 1 and 2 follow trivially from Lemma \ref{lemma: Sigma saturation}.
    For part 3, let $(\Sigma_{2n}\times\Sigma_{2n+1})_\mathrm{gen}$ be the generic point of $\Sigma_{2n}\times\Sigma_{2n+1}$. The composition of $\upsilon_\mathrm{gen}$ with the finite map
    $$(\mathfrak{so}(U')^*\times U')\times_{\Sigma_{2n}\times \Sigma_{2n+1}}\Sigma_{\mathrm{gen}}\to (\mathfrak{so}(U')^*\times U')\times_{\Sigma_{2n}\times \Sigma_{2n+1}}(\Sigma_{2n}\times \Sigma_{2n+1})_\mathrm{gen}$$
    is onto by the second part of the Lemma.
    Since $(\Sigma)_\mathrm{gen}\to (\Sigma_{2n}\times \Sigma_{2n+1})_\mathrm{gen}$ is a connected covering of connected reduced 0-dimensional schemes (spectra of fields), it follows that $\upsilon_\mathrm{gen}$ itself must be onto. By the last part of Lemma \ref{lemma: Sigma saturation}, we get that $\upsilon_\mathrm{gen}$ is in fact an isomorphism.
\end{proof}

\section{Localization}\label{sec: localization}
\begin{definition}
    We set
    $$\tilde{\mathfrak{G}}^\bullet :=\mathfrak{G}^\bullet\otimes_{\CC[\Sigma_{2n}\times \Sigma_{2n+1}]} \CC[\Sigma].$$
    Moreover, for any $\CC[\Sigma]$-module $M$, we set $M_\mathrm{loc}:=M\otimes_{\CC[\Sigma]} \CC(\Sigma)$.
\end{definition}

As a setup for the next lemma, let $G$ be a reductive group, and let $N\in \mathrm{Rep}(G)$. For $W\in \mathrm{Rep}(\check{G})$, we set $W=\oplus_{d\in \ZZ} W_d$ to be the grading by weights of $2\rho$ on $W$. We define $W^{[]}:=\oplus_{d\in \ZZ} W_d[-d]$ (note that all shifts are by even integers). By \cite{MV}, we have $R\Gamma(\Gr_{G}, \phi_{G}(W))\cong W^{[]}$, and this isomorphism is compatible with the tensor structure.
\begin{lemma}\label{lemma: Hecke action on costalk}
    For $G,N$ as above, the functor $i^{!,ren}:\operatorname{D-mod}_{G_\O}^*(N_\mathcal{K})\to \operatorname{D-mod}_{G_\O}(\mathrm{pt})$ intertwines the Hecke action of $\mathrm{Rep}(\check{G})$ on $\operatorname{D-mod}_{G_\O}^*(N_\mathcal{K})$ with the action on $\operatorname{D-mod}_{G_\O}(\mathrm{pt})$ given for $W\in \mathrm{Rep}(\check{G})$ by tensoring with $W^{[]}$, using the obvious tensor structure.
\end{lemma}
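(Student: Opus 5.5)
We will reduce the statement to a computation of the Hecke action on the costalk at the origin, exploiting the contracting $\GG_m$-action on $N_\mathcal{K}$ and the hyperbolic-localization description of $i^{!,ren}$. The basic input is the geometric description of the Hecke action: for $\F\in \operatorname{D-mod}_{G_\O}^*(N_\mathcal{K})$ and $W\in\mathrm{Rep}(\check G)$, the object $\phi_G(W)*\F$ is the pushforward along the action map $a: G_\mathcal{K}\times^{G_\O} N_\mathcal{K}\to N_\mathcal{K}$ of the twisted product $\phi_G(W)\tilde\boxtimes\F$.

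Base change along $i:\{0\}\to N_\mathcal{K}$ then gives
$$i^{!,ren}(\phi_G(W)*\F)\cong R\Gamma\bigl(a^{-1}(0), (\phi_G(W)\tilde\boxtimes \F)|^!_{a^{-1}(0)}\bigr).$$
The fiber $a^{-1}(0)$ consists of pairs $(g,n)$ with $gn=0$, i.e.\ $n=0$, so $a^{-1}(0)\cong \Gr_G$ embedded as $G_\mathcal{K}\times^{G_\O}\{0\}$. The $!$-restriction of the twisted product to $\Gr_G\times^{G_\O}\{0\}$ is the twisted product of $\phi_G(W)$ with the $G_\O$-equivariant complex $i^{!,ren}\F$ on a point.

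This yields an isomorphism
$$i^{!,ren}(\phi_G(W)*\F)\cong R\Gamma_{G_\O}\bigl(\Gr_G,\phi_G(W)\bigr)\otimes_{H^\bullet_{G_\O}}i^{!,ren}\F,$$
taken in the equivariant sense; more precisely, it is $R\Gamma$ of $\Gr_G$ with coefficients in $\phi_G(W)$, viewed as a $G_\O$-equivariant object on the point after pushing forward along $\Gr_G\to\mathrm{pt}$, and then tensored with $i^{!,ren}\F$. By \cite{MV}, the equivariant cohomology $R\Gamma_{G_\O}(\Gr_G,\phi_G(W))$ is free over $H^\bullet_{G_\O}(\mathrm{pt})$. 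Indeed it is pure, and its nonequivariant fiber is $W^{[]}$. The remaining question is whether the equivariant pushforward $p_*\phi_G(W)\in\operatorname{D-mod}_{G_\O}(\mathrm{pt})$ is canonically $W^{[]}\otimes \O$, i.e.\ equivariantly trivial. Here we would use the $2\rho$-grading from the hyperbolic-localization (weight functor) description of the fiber functor. The weight functors are defined $T$-equivariantly and split $p_*$ into a direct sum. Combining this with the equivalence $\operatorname{D-mod}_{G_\O}(\mathrm{pt})\cong H^\bullet_G$-modules and with formality (both sides are pure, with cohomology in even degrees), we identify $p_*\phi_G(W)\cong W^{[]}\otimes H^\bullet_{G}$ compatibly with tensor products.

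The main obstacle is monoidal compatibility, i.e.\ making the isomorphism respect the action structure (associativity with convolution) and not just hold objectwise. We will handle this through the fusion/convolution compatibility of the Mirković--Vilonen fiber functor, which states that $R\Gamma(\Gr,\phi_G(W_1)\star\phi_G(W_2))\cong W_1^{[]}\otimes W_2^{[]}$. We will check that the base-change isomorphisms above compose correctly under iterated convolution $G_\mathcal{K}\times^{G_\O}G_\mathcal{K}\times^{G_\O}N_\mathcal{K}$. The fiber of the iterated action map over $0$ is the convolution Grassmannian, and the costalk computation reproduces $R\Gamma$ of the convolution product, which \cite{MV} identifies with the tensor product. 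Equivariant subtleties such as the renormalized $!$-pullback on the placid ind-scheme $N_\mathcal{K}$ need only be handled on compact (locally compact) objects, where everything reduces to finite-dimensional $G_\O$-stable pieces $t^{-k}N_\O/t^{k'}N_\O$.
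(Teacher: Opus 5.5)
Your argument is essentially the paper's proof: base change along the Cartesian square with $a^{-1}(0)\cong \Gr_G\times\{0\}$, identification of the $!$-restriction of $\phi_G(W)\tilde\boxtimes\F$ with $\phi_G(W)\otimes i^{!,ren}\F$, \cite{MV} for $R\Gamma(\Gr_G,\phi_G(W))\cong W^{[]}$, and monoidality via coherence of base change on the iterated convolution diagram over $0$. Two side remarks: the paper reads $R\Gamma(\Gr_G,\phi_G(W)\otimes i^{!,ren}\F)\cong W^{[]}\otimes i^{!,ren}\F$ directly off \cite{MV} without the extra discussion of equivariant triviality of $p_*\phi_G(W)$ that you raise, and your weight-functor sketch for that point only yields a canonical splitting $T$-equivariantly, not $G_\O$-equivariantly; also, the hyperbolic-localization framing you open with is never used.
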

\begin{proof}
Consider the Cartesian square
$$\begin{tikzcd}
\Gr_{G}\times \{0\}\ar[r, "i_\mathrm{Gr}"]\ar[d]& G_\mathcal{K}\stackrel{G_\O}{\times} N_\mathcal{K} \ar[d]\\
\{0\}\ar[r, "i"] & N_\mathcal{K}
\end{tikzcd}$$
For any $\F\in \operatorname{D-mod}^*_{G_\O}(N_\mathcal{K})$ and $W\in \mathrm{Rep}(\check{G})$, we have by base change $$i^{!,ren} (\phi_{G}* \F)\cong R\Gamma(\Gr_{G}, i_\mathrm{Gr}^{!,ren}(\phi_{G}(W)\tilde{\boxtimes} \F)) \cong R\Gamma(\Gr_{G}, \phi_{G}(W)\otimes i^{!,ren} \F)\cong W^{[]}\otimes i^{!,ren}\F.$$
To show compatibility with the tensor structure one uses the coherence of base change morphisms along the following diagram:
$$\begin{tikzcd}[row sep=3em, column sep=1em]
    G_\mathcal{K}\times^{G_\O} \Gr_{G}\times \{0\}\ar[rr]\ar[dr, "\mathrm{mul}"]\ar[dd, "\mathrm{pr}_{1,3}"] && G_\mathcal{K}\times^{G_\O} G_\mathcal{K}\times^{G_\O} N_\mathcal{K}\ar[dr, "{(\mathrm{mul},\mathrm{id}_{N_\mathcal{K}})}"]\ar[dd, "{(\mathrm{id}_{G_\mathcal{K}}, \mathrm{act})}"{pos=0.75}]\\
    &\Gr_{G}\times \{0\}\ar[rr]\ar[dd]&& G_\mathcal{K}\times^{G_\O}N_\mathcal{K}\ar[dd]\\
    \Gr_{G}\times \{0\}\ar[rr]\ar[dr]&& G_\mathcal{K}\times^{G_\O}N_\mathcal{K}\ar[dr]\\
    &\{0\}\ar[rr]&& N_\mathcal{K}\\
\end{tikzcd}$$
\end{proof}

\begin{claim}\label{claim: loc thm}\begin{enumerate}
    \item Disregarding the gradings, there exists an $\SO(U')$-equivariant isomorphism of $\CC(\Sigma)$-algebras $$\tilde{\mathfrak{E}}^\bullet_\mathrm{loc} \cong \CC[\SO(U')]\otimes \CC(\Sigma).$$
    \item The algebra $\mathfrak{E}^\bullet$ is supported in even degrees, and is commutative.
    \item Assuming $n\geq 2$, the map $$\psi_\mathrm{loc}:\mathfrak{G}^\bullet_\mathrm{loc}\to \mathfrak{E}^\bullet_\mathrm{loc}$$ induced from $\psi$ is an isomorphism (the case of $n=1$ will be proved in Appendix \ref{appendix: n=1}).
\end{enumerate}
\end{claim}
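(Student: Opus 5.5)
The plan is to prove part 1 by the localization theorem for the torus $T=T_{\SO(V')}\times T_L$ (maximal torus of $\SO(V')\times\GL(L)$), then read off parts 2 and 3.

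\textbf{Reduction to fixed points.} By Theorem \ref{thm: purity}(3), $\tilde{\mathfrak{E}}^\bullet$ is the deequivariantized Ext of $E_0$ in $\operatorname{\mathcal{W}-\mathrm{mod}}^{\SO(V')_\O\times\GL(L)_\O,\mathrm{lc}}$. Identifying this with twisted D-modules on $(V\otimes V')_\mathcal{K}/(V\otimes V')_\O$, it is computed as the equivariant costalk $\iota_0^{!}\bigl(\phi_{\SO(V')}(\CC[\SO(U')])*E_0\bigr)$. After inverting $\CC[\Sigma]$, hence all nonzero characters of $\mathrm{Lie}(T)$ that occur, the localization theorem lets us replace the ambient ind-scheme by its $T$-fixed locus.

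\textbf{Identifying the fixed locus.} Write $V=L\oplus L^*$ and choose generic cocharacters. The $T_L$-fixed part of $(V\otimes V')_\mathcal{K}$ is zero, because every weight of $T_L$ on $V$ is nonzero. Only the fixed points of $\Gr_{\SO(V')}$ in the support of the Hecke kernel survive, namely the $T_{\SO(V')}$-fixed points $t^\lambda$, one for each coweight $\lambda$. At each such point the kernel of Corollary \ref{cor: Hecke calculation recipe} restricts to a rank-one exponential module with trivial quadratic form. So the localized costalk becomes
\[
\bigoplus_\lambda \mathrm{Hom}(\lambda\text{-weight space},\CC[\SO(U')])\otimes\CC(\Sigma),
\]
using Lemma \ref{lemma: Hecke action on costalk} at the fixed point $0$. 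This decomposes $\CC[\SO(U')]$ by weights, so the localized algebra is $\CC[\SO(U')]\otimes\CC(\Sigma)$ as an $\SO(U')$-module.

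\textbf{Algebra structure and parts 2 and 3.} Compatibility with composition follows from the tensor compatibility in Lemma \ref{lemma: Hecke action on costalk}. The ring structure matches the multiplication on $\CC[\SO(U')]$, because the fiber functor is monoidal. This gives part 1.

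For part 2, note first that $\tilde{\mathfrak{E}}^\bullet$ is free over $\CC[\Sigma]$ by Theorem \ref{thm: purity}, so $\mathfrak{E}^\bullet$ embeds into $\tilde{\mathfrak{E}}^\bullet_\mathrm{loc}$. Commutativity then follows from part 1. For evenness, each fixed-point contribution sits in even degrees, since shifts by $W^{[]}$ are even and torus-equivariant cohomology is even. Freeness forces no odd part to survive, and this even-degree property should be checked before localizing.

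For part 3, $\psi_\mathrm{loc}$ is an $\SO(U')$-equivariant map of $\CC(\Sigma)$-algebras. On invariants it induces the identity over $\Sigma_{2n}\times\Sigma_{2n+1}$ by Proposition \ref{prop: psi-inv}. By Corollary \ref{cor-item: upsilon gen}, the source becomes a trivial $\SO(U')$-torsor over $\Sigma_\mathrm{gen}$, and by part 1 so does the target. An equivariant morphism of trivial torsors over the same base is an isomorphism.

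\textbf{Main obstacle.} The hardest step is justifying localization for the infinite-dimensional, twisted, deequivariantized setting. This requires controlling the ind-object $\CC[\SO(U')]$ weight by weight and showing that the twisting character is trivial at the fixed points. I would first try to truncate to finitely many orbits for each isotypic component and apply the finite-dimensional localization theorem there.
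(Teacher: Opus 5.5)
Your parts 2 and 3 follow the paper: injectivity of $\mathfrak{E}^\bullet\to\tilde{\mathfrak{E}}^\bullet_{\mathrm{loc}}$ from Theorem \ref{thm: purity}, then Proposition \ref{prop: psi-inv}, Corollary \ref{cor-item: upsilon gen} and the ``equivariant map of trivial torsors'' argument. Part 1, on which both rest, has a genuine gap.

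\textbf{The gap in part 1.} You work in the twisted model on $(V\otimes V')_\mathcal{K}/(V\otimes V')_\O$. There $E_0=\delta_0$, so $\Hom(E_0,-)$ is already exactly $\iota_0^!$ and nothing is left to localize on the ambient space. In that model the Hecke action comes from the Weil kernel, not from a geometric action of $\SO(V')_\mathcal{K}$, which does not preserve $(V\otimes V')_\O$. So Lemma \ref{lemma: Hecke action on costalk} does not apply there, and its conclusion actually fails.

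Take $n=1$ and let $\omega$ be the fundamental character as in Appendix \ref{appendix: n=1}. That appendix shows $\Ext^\bullet(E_0,\phi_{\SO(V')}(\omega^{\otimes k})*E_0)$ is free of rank one on a generator of degree $2|k|$. Hence $\iota_0^!(\phi_{\SO(V')}(\omega^{\otimes k})*E_0)$ is $\CC[-2|k|]$, not $(\omega^{\otimes k})^{[]}=\CC$. Moreover the product $yz$ is an Euler class, not $1$.

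Localizing on $\Gr_{\SO(V')}$ to the points $t^\lambda$ gives at best, for each $W$ separately, a $\CC(\mathfrak{t})$-linear identification of the localized Ext with $W\otimes\CC(\mathfrak{t})$. It carries $\lambda$-dependent shifts and normalizations from the fibres $(t^\lambda M_\O+M_\O)/M_\O$. You give no reason these identifications are compatible with convolution, and that compatibility is the entire content of part 1. An $\SO(U')$-equivariant commutative $\CC(\Sigma)$-algebra that is the regular representation is the ring of an $\SO(U')$-torsor over $\Spec\CC(\Sigma)$, and such a torsor need not be trivial. You also cannot assume commutativity, since you derive it from part 1.

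\textbf{The missing idea.} Use the polarization $V=L\oplus L^*$ to change the model, not only to note that $T_L$ has no fixed vectors. It gives $\operatorname{\mathcal{W}-\mathrm{mod}}\cong\operatorname{D-mod}^*((V'\otimes L)_\mathcal{K})$, untwisted and compatible with the geometric action of $\SO(V')_\mathcal{K}\times\GL(L)_\mathcal{K}$. This is why one passes from $\Sp(V)_\O$ to $\GL(L)_\O$ via Theorem \ref{thm: purity}(3), and then to a maximal torus $T$ and its Weyl group invariants.

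\begin{itemize}
\item In this model $E_0=i_{\O,*}\omega^{ren}_{(V'\otimes L)_\O}$ is not a delta, and the $T$-fixed locus of $(V'\otimes L)_\mathcal{K}$ is $\{0\}$.
\item The localization theorem gives $\Ext^\bullet(\F,\G)_{\mathrm{loc}}\cong\Ext^\bullet(i^{!,ren}\F,i^{!,ren}\G)_{\mathrm{loc}}$, compatibly with composition.
\item Lemma \ref{lemma: Hecke action on costalk} then identifies $i^{!,ren}(\phi_{\SO(V')}(W)*E_0)\cong W^{[]}$ as a tensor functor. This yields $\CC[\SO(U')]\otimes\CC(\mathfrak{t})$ directly as an algebra, in even degrees.
\end{itemize}

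No localization on $\Gr_{\SO(V')}$ is needed, and the twisting problem you flag as the main obstacle never arises. Taking Weyl group invariants gives part 1.
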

\begin{proof}
    Let $T\subset \SO(V')\times \GL(L)$ be a maximal torus, and let $W$ be its Weyl group. From part 3 of Theorem \ref{thm: purity} we deduce that
    $$\tilde{\mathfrak{E}}^\bullet\cong \left(\Ext^\bullet_{\mathrm{Ind}(\operatorname{\mathcal{W}-\mathrm{mod}}^{T_\O,\mathrm{lc}})}(E_0, \phi_{\SO(V')}(\CC[\SO(U')])*E_0)\right)^W.$$
    Set $$i:\{0\}\to (V'\otimes L)_\mathcal{K}$$
    and $$i_\O: (V'\otimes L)_\mathcal{O}\to (V'\otimes L)_\mathcal{K}.$$
    By fixing the pair of transversal Lagrangians $L, L^*$, we get an isomorphism $$\operatorname{\mathcal{W}-\mathrm{mod}}\cong \operatorname{D-mod}^*((V'\otimes L)_\mathcal{K})$$
    compatible with the natural $\SO(V')_\mathcal{K}\times \GL(L)_\mathcal{K}$ actions on both sides.

    Under the above equivalence, $E_0$ corresponds to the renormalized dualizing ($\SO(V')_\O\times \GL(L)_\O$-equivariant) D-module supported on $(V'\otimes L)_\O$, i.e. $i_{\O,*}\omega^{ren}_{(V'\otimes L)_\mathcal{\O}}$.
    From the localization theorem (see \cite[(6.2)]{GKM}, cf. \cite[sections 3.1, 3.2]{BFN}), we deduce that for any $\F, \G\in \operatorname{D-mod}_{T_\O}((V'\otimes L)_\mathcal{K})$ we have:
    \begin{equation}\label{eq:loc thm}
        \Ext^\bullet_{\operatorname{D-mod}_{T_\O}((V'\otimes L)_\mathcal{K})}(\F,\G)\otimes_{\CC[\mathfrak{t}[2]]} \CC(\mathfrak{t}[2])\cong\Ext^\bullet_{\operatorname{D-mod}_{T_\O}(\mathrm{pt})}(i^{!,ren}\F,i^{!,ren}\G) \otimes_{\CC[\mathfrak{t}[2]]}\CC(\mathfrak{t}[2]).
    \end{equation}

    From Lemma \ref{lemma: Hecke action on costalk} together with \eqref{eq:loc thm}, we deduce the isomorphism of algebras $$\begin{aligned}
        &\Ext^\bullet_{\mathrm{Ind}(\operatorname{\mathcal{W}-\mathrm{mod}}^{T_\O,\mathrm{lc}})}(E_0, \phi_{\SO(V')}(\CC[\SO(U')])*E_0)\otimes_{\CC[\mathfrak{t}[2]]} \CC(\mathfrak{t}[2])\\\cong&
        \Ext^\bullet_{\operatorname{D-mod}_{T_\O}(\mathrm{pt})}(\CC, \CC[\SO(U')]^{[]})\otimes_{\CC[\mathfrak{t}[2]]}\CC(\mathfrak{t}[2])\\\cong &
        \CC[\SO(U')]\otimes \Ext^\bullet_{\operatorname{D-mod}_{T_\O}(\mathrm{pt})}(\CC, \CC)\otimes_{\CC[\mathfrak{t}[2]]}\CC(\mathfrak{t}[2]) \cong \CC[\SO(U')]\otimes \CC(\mathfrak{t}[2]).
    \end{aligned}$$
    Note that for the above chain of isomorphisms is compatible with the gradings when we equip $\CC[\SO(U)]$ with the grading given by $2\rho$. In particular, the above algebra is supported in even degrees.
    Taking W-invariants, we get an isomorphism $\tilde{\mathfrak{E}}^\bullet_\mathrm{loc} \cong \CC[\SO(U')]\otimes \CC(\Sigma)$, proving part 1.

    Part 2 follows, as due to Theorem \ref{thm: purity} the map $\mathfrak{E}^\bullet\to \tilde{\mathfrak{E}}^\bullet_\mathrm{loc}$ is injective, and $\tilde{\mathfrak{E}}^\bullet_\mathrm{loc}$ is commutative and supported in even degrees.

    By Corollary \ref{cor-item: upsilon gen}, there is an isomorphism $\tilde{\mathfrak{G}}^\bullet_{\mathrm{loc}} \cong \CC[\SO(U')]\otimes \CC(\Sigma)$. Since the map $\tilde{\psi}_\mathrm{loc}$ is $\CC(\Sigma)$-linear (by Proposition \ref{prop: psi-inv}) and $\SO(U')$-equivariant, it must be an isomorphism. It is a base change of $\psi_\mathrm{loc}$, hence $\psi_\mathrm{loc}$ is also an isomorphism, proving part 3.
\end{proof}

\section{Equivariant Cohomology}\label{sec: equivariant cohomology}
The goal of this section is to prove Proposition \ref{prop: pseudo inverse}.
In this section we will once again assume that $n\geq 2$.

As in \cite[§3.5]{BFKT}, we consider the equivariant cohomology functor
$$\chi:\mathrm{Rep}(\SO(U'))\to \mathrm{Vect}^{\GG_m}(\Sigma)$$
$$M\mapsto H^\bullet_{\SO(V')_\O\times \GL(L)_\O}((V'\otimes L)_\mathcal{K}, \phi_{\SO(V')}(M)*E_0).$$
The fact that the resulting coherent sheaves on $\Sigma$ are indeed locally free is proved using the same purity argument used in the proof of Theorem \ref{thm: purity}.
Note that here we take the direct sum of cohomologies in all degrees, so we really get a $\ZZ$-graded (non-derived) vector bundle on $\Sigma$, and the $\GG_m$-equivariant structure is given by the grading.

The following Lemma is proved in \cite[Lemma 3.5.1]{BFKT}:
\begin{lemma}
$\chi$ is a tensor functor.
\end{lemma}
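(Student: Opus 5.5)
We need to show that $\chi$ is a tensor functor, i.e.\ to produce natural isomorphisms $\chi(M_1\otimes M_2)\cong \chi(M_1)\otimes_{\CC[\Sigma]}\chi(M_2)$ compatible with associativity, commutativity and unit constraints. We follow the strategy of \cite[Lemma 3.5.1]{BFKT}, which in turn models the proof that equivariant cohomology of the affine Grassmannian is a tensor functor on the Satake category, and build the structure on the generic locus first. The unit constraint is immediate: $\phi_{\SO(V')}(\mathbf{1})*E_0=E_0$ corresponds under the equivalence $\operatorname{\mathcal{W}-\mathrm{mod}}\cong \operatorname{D-mod}^*((V'\otimes L)_\mathcal{K})$ to $i_{\O,*}\omega^{ren}$. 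Its equivariant cohomology is $H^\bullet_{\SO(V')_\O\times\GL(L)_\O}(\mathrm{pt})=\CC[\Sigma]$, by contractibility of $(V'\otimes L)_\O$.

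\textbf{Step 1: the generic locus.} By the purity argument of Theorem \ref{thm: purity}, each $\chi(M)$ is a free $\CC[\Sigma]$-module. Reduce equivariance to a maximal torus $T$ with Weyl group $W$, so that $\chi(M)=\big(H^\bullet_{T_\O}(\phi_{\SO(V')}(M)*E_0)\big)^W$. This reduction is legitimate because cohomology with respect to $T$ is obtained by base change from cohomology with respect to the full group. By the localization theorem \eqref{eq:loc thm}, the map
$$H^\bullet_{T_\O}(\F)\to H^\bullet_{T_\O}(i^{!,ren}\F)$$
becomes an isomorphism after tensoring with $\CC(\mathfrak{t}[2])$. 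Lemma \ref{lemma: Hecke action on costalk} identifies $i^{!,ren}(\phi_{\SO(V')}(M)*E_0)$ with $M^{[]}\otimes i^{!,ren}E_0$, compatibly with tensor structures. Hence over $\CC(\mathfrak{t}[2])$ we get $\chi_T(M)_{\mathrm{loc}}\cong M\otimes\CC(\mathfrak{t}[2])$, which is a tensor functor. This yields a rational tensor structure
$$\chi(M_1)\otimes\chi(M_2)\dashrightarrow\chi(M_1\otimes M_2)$$
satisfying all coherence axioms, since they hold on the fiber functor $M\mapsto M$.

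\textbf{Step 2: extending over $\Sigma$, the main obstacle.} Both sides are vector bundles on the normal scheme $\Sigma$, so it suffices to show the rational isomorphism is regular and invertible in codimension one. For this I would write down an honest map of $\CC[\Sigma]$-modules. Let $\mathrm{conv}$ denote the product coming from convolution/fusion,
$$\Ext^\bullet(E_0, \phi(M_1)*E_0)\otimes \Ext^\bullet(E_0,\phi(M_2)*E_0)\to \Ext^\bullet(E_0,\phi(M_1)*\phi(M_2)*E_0).$$
Here
$$H^\bullet_{\SO(V')_\O\times \GL(L)_\O}\big((V'\otimes L)_\mathcal{K},\F\big)=\Ext^\bullet\big(i_{\O,*}\omega^{ren}\text{-type object},\F\big),$$
up to duality, using the Verdier-dual description of $E_0$; in this description the map $\mathrm{conv}$ is given by $\phi(M_1)*(-)$ followed by composition. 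This map is regular, and by Step 1 it agrees with the localized tensor structure, since both are compatible with $i^{!,ren}$. Its determinant is then a regular function on $\Sigma$ that is invertible generically. To show it is a unit, I would compare graded ranks: both sides are graded free modules of the same graded rank, namely $\dim M_1\otimes M_2$ with the $2\rho$-grading, by the $E_2$-degeneration from the purity argument. I would then check surjectivity modulo the augmentation ideal, i.e.\ on non-equivariant cohomology. There it reduces to the Mirković--Vilonen tensor compatibility $R\Gamma(\phi(M_1)*\phi(M_2)*E_0)$ for the fiber functor, using that non-equivariant cohomology $H^\bullet(\phi(M)*E_0)$ equals $H^\bullet(\Gr,\phi(M))$ twisted by the stalk. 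Graded Nakayama over the positively graded ring $\CC[\Sigma]$ then upgrades surjectivity at the cone point to an isomorphism.

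\textbf{Step 3: coherence and symmetry.} The associativity and unit constraints follow from associativity of convolution. Symmetry is subtler, since convolution is only naturally commutative via fusion. Because all modules are free, hence torsion-free, each coherence diagram may be checked after localization to $\CC(\Sigma)$. There the diagrams reduce to those of the fiber functor $M\mapsto M^{[]}$, which are known to hold by Lemma \ref{lemma: Hecke action on costalk}. I expect the non-equivariant surjectivity check in Step 2 to be the main difficulty, because it is where the geometry of $E_0$ on $(V'\otimes L)_\mathcal{K}$ actually enters.
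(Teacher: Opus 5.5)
The paper gives no argument of its own here; it only cites \cite[Lemma 3.5.1]{BFKT}. Your Step 2 carries the whole argument, and it does not work, because the map it rests on is the wrong one. You take the ``honest'' structure map to be the Yoneda product on $M\mapsto \Ext^\bullet(E_0,\phi_{\SO(V')}(M)*E_0)$, and you identify this with $\chi(M)$ ``up to duality''. That identification is false. For $\Ext^\bullet(E_0,\phi_{\SO(V')}(-)*E_0)$ the Yoneda product is genuinely not an isomorphism, so your Nakayama check at the cone point must fail.

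Here is a concrete counterexample.
\begin{itemize}
\item $\phi_{\SO(V')}(U')*E_0\cong\widetilde{\mathrm{IC}}_C$ is simple and not isomorphic to $E_0$.
\item By \eqref{eq: costalk of IC at 0} and the spectral sequence from non-equivariant to equivariant Ext, $\Ext^\bullet(E_0,\phi_{\SO(V')}(U')*E_0)$ is concentrated in degrees $\geq 2$. So every product of two such classes lies in degrees $\geq 4$.
\item On the other hand, $\Ext^0(E_0,\phi_{\SO(V')}(U'\otimes U')*E_0)\supseteq \Ext^0(E_0,E_0)=\CC$, coming from the summand $\mathbf{1}\subset U'\otimes U'$.
\end{itemize}
Hence the product map is not surjective, even modulo the augmentation ideal.

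Conceptually, and consistently with the main theorem: the Yoneda product is the algebra structure of $\tilde{\mathfrak{E}}^\bullet$, whose spectrum is $\mathfrak{M}$ (Theorem \ref{thm: purity} together with the main result). $\mathfrak{M}$ has $\SO(U')$-fixed points over $0\in\Sigma$, so it is not a torsor. The paper deliberately keeps $\chi(\CC[\SO(U')])=\CC[\mathcal{T}]$ separate from $\tilde{\mathfrak{E}}^\bullet$. The map $\mathcal{T}\to\Spec\tilde{\mathfrak{E}}^\bullet$ induced by $\eta$ is only generically an isomorphism.

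What does work, and makes the lemma nearly formal, is to compute $\chi$ directly by base change; this also makes the localization in Step 1 unnecessary.
\begin{itemize}
\item Put $G=\SO(V')$, $N=V'\otimes L$, $K=\SO(V')_{\mathcal{O}}\times\GL(L)_{\mathcal{O}}$. Write $\phi_G(M)*E_0=a_*(\phi_G(M)\tilde{\boxtimes} E_0)$, where $a\colon G_{\mathcal{K}}\times^{G_{\mathcal{O}}}N_{\mathcal{K}}\to N_{\mathcal{K}}$ is ind-proper.
\item The quotient $K\backslash (G_{\mathcal{K}}\times^{G_{\mathcal{O}}}N_{\mathcal{K}})$ is the fiber product over $\mathrm{pt}/K$ of $B:=G_{\mathcal{O}}\backslash\Gr_G\times \mathrm{pt}/\GL(L)_{\mathcal{O}}$ and $N_{\mathcal{K}}/K$. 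Here $r\colon B\to \mathrm{pt}/K$ uses the right $G_{\mathcal{O}}$-torsor.
\item Let $p\colon N_{\mathcal{K}}/K\to\mathrm{pt}/K$. Base change and the projection formula give $\chi(M)\cong H^\bullet(B,\phi_G(M)\otimes^! r^!p_*E_0)$.
\item $E_0$ is the pushforward of the renormalized dualizing sheaf of the contractible $N_{\mathcal{O}}$, so $p_*E_0$ is a shifted dualizing sheaf.
\item Hence, up to a fixed shift, $\chi(M)\cong H^\bullet_{G_{\mathcal{O}}}(\Gr_G,\phi_G(M))\otimes_{H^\bullet_{G_{\mathcal{O}}}(\mathrm{pt})}\CC[\Sigma]$, naturally in $M$ and compatibly with convolution.
\end{itemize}
The lemma then reduces to the standard fact that $H^\bullet_{G_{\mathcal{O}}}(\Gr_G,-)$ is a symmetric tensor functor on the Satake category. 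That fact uses K\"unneth plus purity on the convolution Grassmannian, the agreement of the left and right $H^\bullet_{G_{\mathcal{O}}}(\mathrm{pt})$-actions on Satake sheaves, and fusion for the commutativity constraint. The geometry of $E_0$ enters only through the contractibility of $N_{\mathcal{O}}$, not through a non-equivariant surjectivity check.
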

This gives rise to the construction of a $\GG_m$-equivariant $\SO(U')$-torsor $\mathcal{T}$ over $\Sigma$, given by $$\mathcal{T}:=\Spec_{\Sigma}(\chi(\CC[\SO(U')]))\cong \Spec(\Gamma(\Sigma, \chi(\CC[\SO(U')]))).$$
As in \cite{BFKT}, we will define a $\SO(U')\times \GG_m$-equivariant map $\mathcal{T}\xrightarrow{\eta} \Spec(\mathfrak{E}^\bullet)$.
It is given by the natural action of $\mathscr{E}$ on $\chi(\CC[\SO(U')])$, obtained by applying the equivariant cohomologies functor to the composition
$$\begin{aligned}
    \mathscr{E}\otimes(\phi_{\SO(V')}(\CC[\SO(U')])*E_0)&\to \phi_{\SO(V')}(\CC[\SO(U')])*(\phi_{\SO(V')}(\CC[\SO(U')])*E_0) \\&= \phi_{\SO(V')}(\CC[\SO(U')]\otimes \CC[\SO(U')])*E_0\xrightarrow{\mathrm{mult}}\phi_{\SO(V')}(\CC[\SO(U')])*E_0.
\end{aligned}$$
This action respects the structure of $\chi(\CC[SO(U')])$ as a module over itself, and hence gives rise to the desired map $\mathfrak{E}^\bullet\to \CC[\mathcal{T}]$.

We make the following
\begin{proposition}
    The composition
    $$\Sigma\xrightarrow{\eta/\SO(U')} \Spec(\mathfrak{E}^\bullet)/\SO(U')\xrightarrow{\psi/\SO(U')} \Spec(\mathfrak{G}^\bullet)/\SO(U')\to \Spec(\mathfrak{G}^\bullet)\git\SO(U')\stackrel{\ref{claim: categorical quotient}}{\cong}\Sigma_{2n}\times \Sigma_{2n+1}$$ is the natural map $\Sigma\to \Sigma_{2n}\times \Sigma_{2n+1}$.
\end{proposition}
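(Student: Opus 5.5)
The plan is to reduce the statement to a comparison of ring maps on invariant functions, and then to evaluate both sides through equivariant cohomology of $E_0$ itself.

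\textbf{Translation to rings.} Everything in sight is affine, or affine modulo $\SO(U')$, and the last map is the categorical quotient. So the composition in question corresponds to a ring map
$$\CC[\Sigma_{2n}\times\Sigma_{2n+1}]\cong(\mathfrak{G}^\bullet)^{\SO(U')}\xrightarrow{\psi}(\mathfrak{E}^\bullet)^{\SO(U')}\xrightarrow{\eta^*}\CC[\mathcal{T}]^{\SO(U')}=\CC[\Sigma].$$
Here the last identification holds because $\mathcal{T}$ is an $\SO(U')$-torsor over the affine $\Sigma$. By Proposition \ref{prop: psi-inv}, under $(\mathfrak{E}^\bullet)^{\SO(U')}\cong\Ext^\bullet_\mathcal{C}(E_0,E_0)\cong\CC[\Sigma_{2n}\times\Sigma_{2n+1}]$ the first arrow is the identity. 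It therefore suffices to show the following: the restriction of $\eta^*$ to $\SO(U')$-invariants, viewed as a map
$$\CC[\Sigma_{2n}\times\Sigma_{2n+1}]=H^\bullet_{\SO(V')_\O\times\Sp(V)_\O}(\mathrm{pt})\to\CC[\Sigma]=H^\bullet_{\SO(V')_\O\times\GL(L)_\O}(\mathrm{pt}),$$
is the natural restriction map. This is the map induced on equivariant cohomology of a point by the inclusion $\GL(L)\subset\Sp(V)$, which is exactly the natural map $\Sigma\to\Sigma_{2n}\times\Sigma_{2n+1}$.

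\textbf{Identifying $\eta^*$ on invariants.} The $\SO(U')$-invariant part of $\CC[\SO(U')]$ is the trivial representation $\CC\cdot 1$. The invariant functions on $\mathcal{T}$ are thus $\chi(\mathbf{1})=H^\bullet_{\SO(V')_\O\times\GL(L)_\O}(E_0)$, and this equals $\CC[\Sigma]$ as a free rank-one module via the equivariant-cohomology-of-a-point action. One checks this using purity, as in Theorem \ref{thm: purity}, or directly, since $E_0$ corresponds to the dualizing sheaf on the contractible $(V'\otimes L)_\O$. An invariant element of $\mathscr{E}$ lies in $\Ext^\bullet_\mathcal{C}(E_0,E_0)$. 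By construction of $\eta$, such an element acts on $\chi(\mathbf{1})$ through the multiplication $\CC[\SO(U')]\otimes\CC[\SO(U')]\to\CC[\SO(U')]$ restricted to invariants, which is just $\mathbf 1\otimes\mathbf 1\to\mathbf 1$. So the action is given by applying the functor $H^\bullet_{\SO(V')_\O\times\GL(L)_\O}$ to the endomorphism of $E_0$.

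\textbf{Naturality check.} Every element of $\Ext^\bullet_\mathcal{C}(E_0,E_0)\cong H^\bullet_{\SO(V')_\O\times\Sp(V)_\O}(\mathrm{pt})$ is multiplication by an equivariant class of a point. Such a class acts on the $\SO(V')_\O\times\GL(L)_\O$-equivariant cohomology through restriction of equivariant parameters, by naturality of the $H^\bullet_G(\mathrm{pt})$-module structure under the forgetful functor from $\Sp(V)_\O$- to $\GL(L)_\O$-equivariance. Evaluating on the unit $1\in\chi(\mathbf 1)$ gives the restricted class. Hence $\eta^*$ on invariants is the restriction map, which proves the proposition.

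\textbf{Main obstacle.} The step needing the most care is the second: making sure that the isomorphism $(\mathfrak{E}^\bullet)^{\SO(U')}\cong\CC[\Sigma_{2n}\times\Sigma_{2n+1}]$ used in Proposition \ref{prop: psi-inv} agrees with the one implicit in $\eta$. In particular, the deequivariantization via $\mathbf 1\subset\CC[\SO(U')]$ must be compatible with the identification $\CC[\mathcal{T}]^{\SO(U')}=\chi(\mathbf 1)=\CC[\Sigma]$, and the latter must be the unit/module structure rather than some twist. I would verify this by tracking the unit of the tensor functor $\chi$ from \cite[Lemma 3.5.1]{BFKT}.
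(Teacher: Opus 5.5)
Your proposal is correct and follows essentially the same route as the paper. You reduce via Proposition \ref{prop: psi-inv} to the $\SO(U')$-invariant part, then observe that $(\mathfrak{E}^\bullet)^{\SO(U')}\cong H^\bullet_{\SO(V')_\O\times\Sp(V)_\O}(\mathrm{pt})$ acts on $\chi(\CC[\SO(U')])$ by restricting equivariant parameters to $H^\bullet_{\SO(V')_\O\times\GL(L)_\O}(\mathrm{pt})=\CC[\Sigma]$, followed by the tautological module-structure map $\CC[\Sigma]\to\CC[\mathcal{T}]$. Your extra care about passing to $\chi(\mathbf{1})$ and tracking the unit of $\chi$ only makes explicit what the paper asserts in one line.
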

\begin{proof}
    By Corollary \ref{prop: psi-inv}, it is enough to show that the composition
    $$\Sigma\to \Spec(\mathfrak{E}^\bullet)/\SO(U')\to \Spec(\mathfrak{E}^\bullet)\git\SO(U')$$
    is the natural map $\Sigma\to \Sigma_{2n}\times \Sigma_{2n+1}$.
    Look at the map $(\mathfrak{E}^\bullet)^{\SO(U')}\to \CC[\mathcal{T}]$. It is given by
    $$\begin{aligned}
        H^\bullet_{\SO(V')_\O\times \Sp(V)_\O}(\mathrm{pt})&\to H^\bullet_{\SO(V')_\O\times \GL(L)_\O}(\mathrm{pt})\cong \Ext^\bullet_{\operatorname{\mathcal{W}-\mathrm{mod}}^{\SO(V')_\O\times \GL(L)_\O}}(E_0, E_0)\\
        &\to H^\bullet_{\SO(V')_\O\times \GL(L)_\O}((V'\otimes L)_\mathcal{K}, \phi_{\SO(V')}(\CC[\SO(U')])*E_0).
    \end{aligned}$$
    But this coincides with the composition of the obvious map $\CC[\Sigma_{2n}\times \Sigma_{2n+1}]\to \CC[\Sigma]$ and the tautological map $\CC[\Sigma]\to \CC[\mathcal{T}]$, realizing equivariant cohomology as a module over the equivariant cohomology of a point.
\end{proof}
The following proposition is a consequence of \cite[Theorem 1]{Wed}:
\begin{proposition}\label{prop: torsor triviality}
    The torsor $\mathcal{T}$ is trivial (forgetting the $\GG_m$-equivariant structure), and there is a $\GG_m$-equivariant isomorphism $\mathcal{T}\cong \Sigma\times (\mathcal{T}\times_{\Sigma} \{0\})$.
\end{proposition}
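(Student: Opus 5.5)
We deduce the statement from Wedhorn's result \cite[Theorem 1]{Wed}. In the form we use, it says the following: a $\GG_m$-equivariant torsor under a reductive group over an affine space with a contracting $\GG_m$-action (all weights positive) is isomorphic, $\GG_m$-equivariantly, to the pullback of its restriction to the fixed point $0$.

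The first step is to check that $\Sigma=\Sigma_{2n}\times\Sigma_{\mathfrak{gl}}$ fits this setting. Both factors are Chevalley quotients of Lie algebras: $\Sigma_{2n}=\mathfrak{so}(U')^*\git\SO(U')$ and $\Sigma_{\mathfrak{gl}}=\mathfrak{gl}_n^*\git\GL_n$. By Chevalley's theorem each is an affine space whose coordinates are the basic invariants. Under the grading coming from equivariant cohomology, $\CC[\Sigma]\cong H^\bullet_{\SO(V')_\O\times\GL(L)_\O}(\mathrm{pt})$, these coordinates have strictly positive even degrees. So $\GG_m$ acts linearly on $\Sigma$ with positive weights, contracting it to $0$.

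The second step is to check that $\mathcal{T}$ is a $\GG_m$-equivariant $\SO(U')$-torsor over $\Sigma$, with $\GG_m$ acting compatibly on base and total space. By the lemma above, $\chi$ is a tensor functor. It lands in $\GG_m$-equivariant vector bundles on $\Sigma$, local freeness coming from the purity argument of Theorem \ref{thm: purity}. It is exact on $\mathrm{Rep}(\SO(U'))$, since that category is semisimple. A fiber functor of this kind, composed with the fiber at any point, yields a torsor (Tannakian formalism, as in \cite[§3.5]{BFKT}). Hence $\mathcal{T}=\Spec_\Sigma\chi(\CC[\SO(U')])$ is an $\SO(U')$-torsor. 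The grading gives the $\GG_m$-action, and it commutes with the right $\SO(U')$-action, which is of degree zero.

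The third step applies \cite[Theorem 1]{Wed}. A smooth affine group scheme over $\Sigma$ acted on by a contracting $\GG_m$ is governed by its fiber over $0$, and in the same way the torsor $\mathcal{T}$ is governed by $\mathcal{T}_0:=\mathcal{T}\times_\Sigma\{0\}$ together with its $\GG_m$-action. This gives $\mathcal{T}\cong\Sigma\times\mathcal{T}_0$ equivariantly. Forgetting $\GG_m$, the fiber $\mathcal{T}_0$ is an $\SO(U')$-torsor over a point over the algebraically closed field $\CC$, hence trivial. Therefore $\mathcal{T}$ is trivial.

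The main obstacle is matching the hypotheses of \cite{Wed} exactly. The action on $\mathcal{T}_0$ is a $\GG_m$-action commuting with $\SO(U')$, equivalently a cocharacter of $\SO(U')$ up to conjugacy, and we must check it is compatible with the equivariant structure used in that theorem. We must also ensure the weights on $\Sigma$ are strictly positive, which holds because no invariant of degree zero appears beyond the constants. If a version of the theorem for torsors is not directly available, the fallback is a direct argument: the grading gives a contraction, and lifting a trivialization of $\mathcal{T}_0$ order by order uses $H^1$ of an affine base vanishing together with smoothness of $\SO(U')$.
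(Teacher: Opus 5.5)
Your proposal is correct and takes the same route as the paper, which deduces the statement directly from \cite[Theorem 1]{Wed}. Your checks make explicit the hypotheses the paper leaves implicit: $\Sigma$ is an affine space with strictly positive weights, $\mathcal{T}$ is a $\GG_m$-equivariant $\SO(U')$-torsor whose two actions commute, and $\mathcal{T}\times_\Sigma\{0\}$ is trivial over $\CC$. If you ever need the fallback, you can choose the order-by-order lifts over the finite-length thickenings $\Spec(\CC[\Sigma]/\CC[\Sigma]_{>d})$ to be $\GG_m$-invariant by linear reductivity of $\GG_m$, and positivity of the grading lets you pass from the resulting formal section to an actual one.
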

From now on, we fix an isomorphism as above and a point $p\in\mathcal{T}\times_{\Sigma} \{0\}$. This yields an isomorphism $\SO(U')\xrightarrow[g\mapsto gp]{\sim}\mathcal{T}\times_{\Sigma} \{0\}$.
Composing the section $\Sigma\xrightarrow{(id,p)} \mathcal{T}$, which we call $s$, with the above map $\eta:\mathcal{T}\to \Spec(\mathfrak{G}^\bullet)\cong \mathfrak{so}(U')^*\times U'$ we obtain a ``pseudo-slice" $\xi: \Sigma\to \mathfrak{so}(U')^*\times U'$ for the quotient map $\mathfrak{so}(U')^*\times U'\to \Sigma_{2n}\times \Sigma_{2n+1}$. We will show that this pseudo-slice is the same as the pseudo-slice constructed in Section \ref{sec: pseudo-slice}, for an appropriate choice of $(e,f,h,v_0^*)$.
More precisely, we will prove the following Proposition, to which the rest of this section is dedicated.

\begin{proposition}\label{prop: pseudo inverse}
    Let $\iota$ be the composition of the (inverse of the) isomorphism in Claim \ref{claim: pseudo slice} with the natural inclusion $\Sigma_{(e, f, h, v_0^*)}$ into $\mathfrak{so}(U')^*\times U'$.
    There are choices of a quadruple $(e,f,h,v_0^*)$ as in Section \ref{sec: pseudo-slice}, a $\GG_m$-equivariant automorphism $\tau$ of $\Sigma$, and a morphism $\gamma:\Sigma\to \SO(U')$ so that $\gamma(\sigma)\xi(\sigma) = \iota \circ \tau(\sigma)$.
    Equivalently, setting $s':=s\circ \tau^{-1}$ and $\gamma' := \gamma \circ \tau^{-1}$, the following diagram commutes:
    
    \[\begin{tikzcd}[column sep=6em]
    \SO(U')\times \Sigma \ar[r, "\cong", "{(g, \sigma)\mapsto g\gamma'(\sigma)s'(\sigma)}"'] \ar[rrr, bend left, "{(g, \sigma)\mapsto g\iota(\sigma)}"] & \mathcal{T} \ar[r,"\eta"]
    &\mathrm{Spec}(\mathfrak{E}^\bullet)\ar[r, "\psi"]&\mathrm{Spec}(\mathfrak{G}^\bullet)=\mathfrak{so}(U')^*\times U'.
    \end{tikzcd}\]
\end{proposition}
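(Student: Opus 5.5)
The plan is to identify the pseudo-slice $\xi=\psi\circ\eta\circ s$ by computing it at the most degenerate point $0\in\Sigma$, and then to use $\GG_m$-equivariance to extend from there.

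\textbf{Step 1: the fiber over $0$.} Since $\mathcal{T}\cong\Sigma\times(\mathcal{T}\times_\Sigma\{0\})$ is $\GG_m$-equivariant (Proposition \ref{prop: torsor triviality}), the image $\xi(0)=\psi\eta(p)$ is a $\GG_m$-fixed point up to the $\SO(U')$-action. Following \cite[§3.5]{BFKT}, we will compute the fiber of $\chi(\CC[\SO(U')])$ at $0\in\Sigma$, i.e.\ non-equivariant cohomology. Via the tensor functor $M\mapsto H^\bullet(\phi_{\SO(V')}(M)*E_0)$ this fiber is the fiber functor $\mathrm{Rep}(\SO(U'))\to\mathrm{Vect}$. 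The $\GG_m$-grading on it is given by a cocharacter of $\SO(U')$, namely $2\rho$, coming from the cohomological grading, as in Lemma \ref{lemma: Hecke action on costalk}.

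The elements of $\mathfrak{E}^2=\psi(\mathfrak{so}(U')\oplus U')$ act on this fiber in degree $+2$. Hence $\psi\eta(p)$ corresponds to a pair $(x,v)$ with $x$ of $2\rho$-weight $2$ and $v$ of weight $2$. Concretely, the action of $\psi'(\mathfrak{so}(U'))$ is cup product with Chern classes on $\Gr_{\SO(V')}$ (Proposition \ref{prop: psi psi' identification}), so $x$ is the Lefschetz operator. By \cite{MV} and the hard Lefschetz theorem, $x=e$ is a principal nilpotent, which produces the triple $(e,f,h)$ with $h=2\rho$.

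The $U'$-component $v$ has weight $2$ and is determined by the action of $\psi(U')$. We will show that $v\neq 0$ and that, after rescaling by the torus centralizing $e$ (here we use the sign freedom), it satisfies the normalization $\langle fv_0^*,fv_0^*\rangle=-(n(n-1))^2$. This gives $v=v_0^*\in N^*$.

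\textbf{Step 2: extending over all of $\Sigma$.} Let $B^-$ be the negative Borel. We will show that the whole section $\xi$, after conjugation by some $\gamma:\Sigma\to\SO(U')$, lands in $e+\mathfrak{b}^-\times(v_0^*+\text{lower weights})$. This follows from the $\GG_m$-equivariance of $\xi$ together with contraction to $\xi(0)$: the $\GG_m$-weights of the coordinates of $\xi(\sigma)-\xi(0)$ are all non-positive relative to $h$.

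Next we rerun the argument of part (1) in the proof of Lemma \ref{lemma: Sigma saturation}. We conjugate by $U^-$ to move $x$ into the Kostant slice, and we use that generically the invariants force $v\in v_0^*+N+\CC fv_0^*$. This is exactly the normal form computed there, and the conjugating elements depend regularly on $\sigma$ because the Kostant section is a slice. This yields $\gamma$ with $\gamma\xi\in\Sigma_{(e,f,h,v_0^*)}$.

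Finally, let $\tau$ be the composition of $\gamma\xi$ with the isomorphism of Claim \ref{claim: pseudo slice}. It is $\GG_m$-equivariant, and it commutes with the projections to $\Sigma_{2n}\times\Sigma_{2n+1}$ by the preceding proposition. An endomorphism of $\Sigma$ over the finite cover $\Sigma\to\Sigma_{2n}\times\Sigma_{2n+1}$ is generically a deck transformation, hence an automorphism. Alternatively, we can invoke Claim \ref{claim: pseudo slice}, which identifies the $M$-restricted characteristic polynomial as the $\Sigma_{\mathfrak{gl}}$ coordinate.

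\textbf{Main obstacle.} The hardest point is Step 1 for the $U'$-component: showing that $\psi(U')$ acts on the non-equivariant cohomology of the fiber functor nontrivially, and with exactly the weight-$2$ vector $v_0^*$. I would approach this through Proposition \ref{prop: Hecke identification}. The degree-$2$ elements of $\mathfrak{so}(U)$ act through Chern classes on $\Gr_{\Sp(V)}$, so their restriction to $U'$ can be computed with the explicit geometry of $\tilde C_2$ versus $\tilde C_1$ from Claim \ref{claim: Hecke computation}, comparing $c_1(\O_{\PP(V)}(2))$ across the two summands $IC_C\oplus E_0$.
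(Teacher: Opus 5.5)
Your outer reductions are sound and largely parallel the paper:

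\begin{itemize}
\item the attractor argument placing $\xi$ in $(e+\mathfrak{b}^-)\times(v^*+\ker(f^n))$;
\item the $\GG_m$-equivariant $\mathrm{U}^-$-conjugation into the Kostant slice;
\item the remark that a finite endomorphism of $\Sigma$ over $\Sigma_{2n}\times\Sigma_{2n+1}$ is an automorphism.
\end{itemize}
Your hard-Lefschetz identification of $x$ as a principal nilpotent with $h=2\rho^\vee$ is a plausible substitute for the paper's smoothness-plus-Kostant argument. The gaps are in the two places where the actual content of the proposition lies.

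\textbf{First gap: the normalization of $v^*$ is not available.}
\begin{itemize}
\item The centralizer of a principal nilpotent $e$ in $\SO(U')$ is the centre $\{\pm1\}$ times a unipotent group, so there is no torus centralizing $e$.
\item Conjugating by $T=Z_h$ rescales $e$, $f$ and $v^*$ together and leaves $\langle fv^*,fv^*\rangle$ unchanged. The $\GG_m$-scaling also leaves it unchanged.
\item Writing $v^*=cv_0^*$, the scalar $c$ is therefore an intrinsic invariant of $\xi$. The proposition forces $c=\pm1$, because $\tau(0)=0$ and $\iota(0)=(e,v_0^*)$.
\item Moreover $c$ cannot be seen at the point $\xi(0)$ itself: every $(e,cv_0^*)$, including $c=0$, maps to $0\in\Sigma_{2n}\times\Sigma_{2n+1}$.
\end{itemize}
The paper extracts $c=\pm1$ from first-order information along the normalized section. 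For $n\geq 3$ it differentiates \eqref{eq: section invariants} (with $i=2n-4$, $v=0$) at $y=0$ and tests against $y=\lambda f^{2n-5}$. For $n=2$ it uses fixed points of the twisted action of $\sqrt{-1}$ to exclude $c=0$ and force $c^2=1$.

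Your suggested route via $\tilde{C}_1,\tilde{C}_2$ and $c_1(\O_{\PP(V)}(2))$ does not reach this. Proposition \ref{prop: psi psi' identification} only controls $\psi$ on $\mathfrak{so}(U')$. Nothing in your sketch computes how $\psi(U')$ acts on the fibre at $0$, let alone with the exact constant.

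\textbf{Second gap: in Step 2 the invariants do not force $v\in v_0^*+N+\CC fv_0^*$.}
\begin{itemize}
\item After moving $x$ into the Kostant slice, $\zeta'(\sigma)$ lies in the $(n+1)$-dimensional space $v^*+\ker(f^n)$, while $\Sigma_{2n+1}$ imposes only $n$ conditions.
\item So for fixed $x$ the fibres are positive-dimensional, and they meet $v_0^*+N+\CC fv_0^*$ in only finitely many points.
\item You cannot simply rerun part (1) of Lemma \ref{lemma: Sigma saturation}. That argument starts from a point of $X$, i.e.\ an isotropic $\varpi(x,v)$-stable $M'$ with $N'=M'\cap U'$ satisfying the cyclicity condition.
\item You would have to produce such data regularly along $\xi$, including at $\sigma=0$. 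There its existence is equivalent to $c=\pm1$, so this does not avoid the first gap.
\end{itemize}
The constraint must come from the global structure of $\zeta'$, and this is where the paper does most of its work:
\begin{enumerate}
\item The contracting $(\AA^1,\times)$-action gives one nonzero linear condition on $\omega\in\ker(f^n)$. Hence the image lies in $v_0^*+\CC\epsilon+N$ with $\epsilon\in\CC fv_0^*+J$.
\item A degree count ($2^n$ on both sides) makes $\zeta'|_{\{x\}\times\Sigma_{\mathfrak{gl}}}$ an isomorphism onto that affine space.
\item Explicit Jacobian computations of the invariants $\langle w,x^{2k}w\rangle$ on a small test family ($x=e$ for $n\geq 3$, $x=e+f$ for $n=2$) are restricted to the locus where the Jacobian degenerates. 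This shows the $J$-component of $\epsilon$ vanishes.
\end{enumerate}
Your proposal replaces this chain with an assertion, so as it stands it does not prove the proposition.
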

\begin{remark}
    The above twist by $\gamma$ can be reformulated as replacing the section $s$ by its twist $\gamma s$.
\end{remark}

We begin with the following lemma:
\begin{lemma}
    The image of $0\in \Sigma$ under $\xi$ is of the form $(e,v^*)$, where $e$ is a regular nilpotent element, and $v^*\in U'$.
\end{lemma}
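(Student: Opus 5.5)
The plan is to use $\GG_m$-equivariance together with the already established compatibility with categorical quotients. The section $s=(\mathrm{id},p)$ is $\GG_m$-equivariant, because $\mathcal{T}\cong\Sigma\times(\mathcal{T}\times_\Sigma\{0\})$ equivariantly and $p$ lies in the fiber over $0$. The maps $\eta$ and $\psi$ are $\GG_m\times\SO(U')$-equivariant. Hence $\xi$ is $\GG_m$-equivariant, where $\Sigma$ carries its contracting action (positive weights, since $\CC[\Sigma]$ is positively graded in cohomological degree). The point $0\in\Sigma$ is the unique $\GG_m$-fixed point of $\Sigma$, so $\xi(0)$ is a $\GG_m$-fixed point of $\mathfrak{so}(U')^*\times U'$ with its weight-$2$-type action. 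It therefore lies in the fiber over $0\in\Sigma_{2n}\times\Sigma_{2n+1}$ of the quotient map, by the preceding Proposition.

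Write $\xi(0)=(e,v^*)$. Since the image of $0$ in $\Sigma_{2n}$ is $0$, $e$ is nilpotent, and it remains to show $e$ is regular. Here I would argue by dimension, using the torsor. The composite $\SO(U')\times\Sigma\cong\mathcal{T}\xrightarrow{\psi\circ\eta}\mathfrak{so}(U')^*\times U'$ is $\SO(U')$-equivariant. By Claim~\ref{claim: loc thm}(3) and Claim~\ref{claim: loc thm}(1), after base change to the generic point of $\Sigma$ it becomes an isomorphism of trivial torsors. Since the source and target of $\mathcal{T}\to\Spec(\mathfrak{E}^\bullet)\times_{\Sigma_{2n}\times\Sigma_{2n+1}}\Sigma$ are flat over $\Sigma$ (Theorem~\ref{thm: purity} gives freeness of $\mathfrak{E}^\bullet$), I would argue that the induced map over $\Sigma$ is dominant onto the component $\mathfrak{M}$. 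By upper semicontinuity of stabilizer dimension under the $\GG_m$-contraction, the fiber over $0$ then contains an orbit of dimension $\dim\SO(U')$, namely the orbit of $\xi(0)$, which is the image of $\SO(U')\times\{0\}$.

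The key step is to show that the stabilizer of $(e,v^*)$ in $\SO(U')$ is finite, or at least that $e$ has centralizer of dimension $n$. I would obtain this from the fact that the fiber of $\mathfrak{M}$ over $0\in\Sigma$ is the nilcone-type variety $\{(x,v): x \text{ nilpotent},\ \varpi(x,v)\text{ nilpotent}\}$. Its dimension is at most $\dim\SO(U')$, by the count in Lemma~\ref{lemma: Sigma saturation} for $n\geq2$. Its top-dimensional pieces have $x$ regular: if $x$ lies in a non-regular nilpotent orbit $\mathbb{O}$, then $\dim\mathbb{O}\leq \dim\SO(U')-n-2$, and the fiber in $v$ of the condition that $\varpi(x,v)$ is nilpotent has dimension at most $n+1$. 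Hence that stratum has dimension strictly less than $\dim\SO(U')$.

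The main obstacle is justifying that the orbit of $\xi(0)$ is full-dimensional, i.e.\ that the map from the torsor fiber at $0$ is quasi-finite rather than collapsing. For this I would try the approach of \cite{BFKT}: the section $s$ restricted to $0$ corresponds to the non-equivariant costalk fiber functor. The action of $H^2$ (the degree-$2$ part $\mathfrak{so}(U')\subset\mathfrak{G}^\bullet$ via $\psi'$) on $\chi(U')|_0=H^\bullet(\phi_{\SO(V')}(U')*E_0)$ is cup product with $c_1(\mathcal{D}_{\SO(V')})$, by Proposition~\ref{prop: psi psi' identification}. That operator is the principal nilpotent (hard Lefschetz for $\mathrm{IC}_Q$). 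This directly gives that the $\mathfrak{so}(U')^*$-component $e$ acts on $U'$ as a Lefschetz operator with a Jordan block of size $2n-1$, hence is regular nilpotent.
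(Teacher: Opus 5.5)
Your operative argument, nilpotency from the preceding Proposition plus the Lefschetz computation in your last paragraph, is sound. It is not the paper's route, and two earlier steps are wrong and should be removed.

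\emph{The equivariance claim in your first paragraph is false.} The section $s=(\mathrm{id},p)$ is not $\GG_m$-equivariant for the plain weight-$2$ action. The group $\GG_m$ acts on $\mathcal{T}\times_\Sigma\{0\}\cong\SO(U')$ by right multiplication by $2\rho^\vee$ (Proposition~\ref{prop: pincipal sl2}). So $\xi$ only intertwines the natural action on $\Sigma$ with the action twisted by $-2\rho^\vee$. If $\xi(0)$ were fixed by plain scaling it would be $(0,0)$, contradicting the statement you are proving. You do not need this step: $\xi(0)$ lies over $0\in\Sigma_{2n}\times\Sigma_{2n+1}$ directly by the preceding Proposition, and that gives nilpotency.

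\emph{The semicontinuity step runs the wrong way.} Upper semicontinuity of stabilizer dimension lets stabilizers jump up at special points such as $\xi(0)$. It therefore cannot produce a full-dimensional orbit there. The dimension count of the nilcone-type fiber is then moot, and both of those paragraphs can be dropped.

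\emph{Comparison with the paper.} The paper's proof is two lines. By the preceding Proposition, the composite of $\xi$ with $\mathfrak{so}(U')^*\times U'\to\mathfrak{so}(U')^*\to\Sigma_{2n}$ is the projection $\Sigma_{2n}\times\Sigma_{\mathfrak{gl}}\to\Sigma_{2n}$, which is smooth. Hence the differential of the Chevalley map is surjective at the first component of every $\xi(\sigma)$, and by Kostant's criterion that component is regular.

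Your route proceeds as follows:
\begin{itemize}
\item You identify the specialization of $\chi$ at $0$ with non-equivariant cohomology, so $\chi(U')|_0\cong H^\bullet(Q)$ up to shift. This holds because in the $L$-polarization $E_0$ is the dualizing sheaf of $(V'\otimes L)_\O$ and the $\SO(V')$-Hecke action is geometric.
\item You use Proposition~\ref{prop: psi psi' identification} to read the $\mathfrak{so}(U')^*$-coordinate of $\xi(0)$ off $\psi'$, up to sign.
\item The Chern class identification comes from Corollary~\ref{cor: derived Satake of canonical endomorphism}, not from Proposition~\ref{prop: psi psi' identification}. What becomes cup product with $c_1(\O_Q(2))$ is the image of $e$ under $4B_{U'}\colon\mathfrak{so}(U')^*\to\mathfrak{so}(U')$ acting on $U'$, not any individual element of $\mathfrak{so}(U')$.
\item Hard Lefschetz on the $(2n-2)$-dimensional quadric gives Jordan type $(2n-1,1)$, which in $\mathfrak{so}_{2n}$ is exactly the regular nilpotent orbit.
\end{itemize}

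Your route needs considerably more input and only treats $\sigma=0$, whereas Kostant's criterion gives regularity on all of $\xi(\Sigma)$ at no cost. In exchange, yours identifies $e$ concretely, up to scalar, with the Lefschetz operator (Ginzburg's principal nilpotent), which is more than the lemma asks for.
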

\begin{proof}
    Since the map $\Sigma\to \Sigma_{2n}$ is smooth, the differential of the map $\mathfrak{so}(U')^*\times U'\to \Sigma_{2n}$ must be surjective for all points in the image of $\xi$, hence their $\mathfrak{so}(U')^*$-component must be a regular element.
\end{proof}

\begin{proposition}\label{prop: pincipal sl2}
    There is a choice of $f,h\in\mathfrak{so}(U')$ which together with $e\in \mathfrak{so}(U')^*\cong \mathfrak{so}(U')$ above form a principal $\mathfrak{sl}_2$-triple $(e,f,h)$, such that if $T:=Z_h$ is the stabilizer of $h$ in $\SO(U')$ and $2\rho^\vee:\GG_m\to T\to \SO(U')$ is the composition of the cocharacter $2\rho^\vee\in X_*(T)$ (given with respect to the choice of positive roots defined by $e$) with the inclusion map into $\SO(U')$, then the $\GG_m$-action on $\mathcal{T}\times_{\Sigma}\{0\}\cong \SO(U')$ is given by right multiplication by $2\rho^\vee$.
\end{proposition}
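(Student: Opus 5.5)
The plan is to compute the $\GG_m$-action on the fiber $\mathcal{T}_0:=\mathcal{T}\times_\Sigma\{0\}$ directly from the tensor functor $\chi$ restricted to $0\in\Sigma$, in the spirit of \cite[§3.5]{BFKT}.

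\textbf{Step 1: identify the fiber functor at $0$.} The fiber at $0$ of $\chi(M)$ is $\chi(M)\otimes_{\CC[\Sigma]}\CC$. By the freeness coming from purity (Theorem \ref{thm: purity} and its analogue for $\chi$), this equals the non-equivariant cohomology $H^\bullet((V'\otimes L)_\mathcal{K},\phi_{\SO(V')}(M)*E_0)$. This defines a graded fiber functor $\chi_0:\mathrm{Rep}(\SO(U'))\to\mathrm{Vect}^{\GG_m}$, and $\mathcal{T}_0$ is the torsor of isomorphisms between $\chi_0$ and the forgetful functor. A $\GG_m$-action on $\mathcal{T}_0$ commuting with the left $\SO(U')$-action is the same as a grading on $\chi_0$. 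After choosing $p$, it is therefore given by right multiplication by a cocharacter $\lambda:\GG_m\to\SO(U')$, determined up to conjugacy. Concretely, $\lambda$ is the cocharacter whose weight decomposition on each $M$ recovers the cohomological grading of $\chi_0(M)$ under the trivialization.

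\textbf{Step 2: compute the grading.} I will show $\chi_0(M)\cong M^{[]}$ as graded vector spaces, functorially and tensorially. Use the localization argument of Claim \ref{claim: loc thm} together with Lemma \ref{lemma: Hecke action on costalk}: the costalk at $0$ intertwines the Hecke action with $W\mapsto W^{[]}$. Because cohomology is free over $H^\bullet_{T_\O}(\mathrm{pt})$, the graded ranks of $\chi(M)$ and of $i^{!,ren}(\phi(M)*E_0)\cong M^{[]}$ agree. The cleaner alternative is to use the contracting $\GG_m$-action on $(V'\otimes L)_\mathcal{K}/(V'\otimes L)_\O$, which identifies global cohomology with the costalk at $0$ (hyperbolic localization, as in the proof of Theorem \ref{thm: purity}). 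This makes $\chi_0\cong(-)^{[]}$ an isomorphism of tensor functors. The grading on $M^{[]}$ is, by definition, the $2\rho^\vee$-weight grading for some choice of Borel. Hence $\lambda$ is conjugate to $2\rho^\vee$, which is a regular cocharacter.

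\textbf{Step 3: match with $e$.} We must also choose the conjugate so that its centralizer $T$ and its positive system are the ones attached to the regular nilpotent $e=\xi(0)$. By the previous Lemma, $e$ is regular nilpotent. Since $\eta$ and $s$ are $\GG_m$-equivariant and $0$ is $\GG_m$-fixed, $\xi(0)$ is fixed by the combined action. On $\mathfrak{so}(U')^*$ this action has weight $2$ from the grading, twisted by $\mathrm{Ad}$ of $\lambda$ because of the right multiplication. So $\mathrm{Ad}(\lambda(t))e=t^{-2}e$, up to the sign conventions for weights. Thus $e$ lies in the sum of root spaces of height one for $\lambda/2$ and is regular. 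Let $h=d\lambda$; then $[h,e]=2e$ up to sign, so by Kostant's theorem there is a unique $f$ completing $(e,h,f)$ to a principal $\mathfrak{sl}_2$-triple. With respect to the positive roots defined by $e$, we get $\lambda=2\rho^\vee$ and $T=Z_h$.

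The main obstacle is Step 2: upgrading the rank comparison to a tensor-compatible isomorphism $\chi_0\cong(-)^{[]}$ that respects the algebra structure on $\chi(\CC[\SO(U')])$. Otherwise one only determines the grading up to conjugacy on each representation separately. The sign and normalization check in Step 3 (weight $2$ versus $-2$, and left versus right multiplication) is routine but needs care.
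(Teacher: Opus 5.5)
Your Step 3 is the paper's proof, and it proves the proposition on its own. Step 2, which you single out as the main obstacle, is not needed.

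The paper argues as follows. The point $s(0)=p$ is fixed by $(\lambda(c),c^{-1})\in\SO(U')\times\GG_m$. Applying the equivariant map $\psi\circ\eta$ gives $c^{-2}\mathrm{Ad}_{\lambda(c)}e=e$, hence $[d\lambda,e]=2e$. Since $e$ is regular nilpotent, Kostant's theorem (last part of \cite[Theorem 3.6]{Kos}) completes $(e,d\lambda)$ to a principal triple.

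This step needs no a priori knowledge that $\lambda$ is conjugate to $2\rho^\vee$. Fix any principal triple $(e,h_0,f_0)$ through $e$. If $[h,e]=2e$, then $h-h_0\in\mathfrak{z}_e$. For regular nilpotent $e$, the centralizer $\mathfrak{z}_e$ is abelian, $\mathrm{ad}\,h_0$-stable, and concentrated in positive $\mathrm{ad}\,h_0$-degrees. It follows that $\mathrm{Ad}_{\exp y}h_0=h_0-[h_0,y]$ for $y\in\mathfrak{z}_e$. Hence $h=\mathrm{Ad}_{\exp y}h_0$ for a unique $y\in\mathfrak{z}_e$, and $(e,h,\mathrm{Ad}_{\exp y}f_0)$ is a principal triple.

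So regularity of $h=d\lambda$, the maximal torus $T=Z_h$, and the equality $\lambda=2\rho^\vee$ all follow directly. The tensor-compatible identification $\chi_0\cong(-)^{[]}$ can be dropped entirely, together with its rank comparison and contraction arguments.

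The sign you leave open in Step 3 is not just a convention, however. The fixed-point computation above gives $[d\lambda,e]=+2e$. That sign is exactly what yields $2\rho^\vee$ rather than $-2\rho^\vee$ with respect to the positive roots defined by $e$, so you need to carry it out.
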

\begin{proof}
    Since the $\GG_m$-action on $\mathcal{T}\times_{\Sigma}\{0\}\cong \SO(U')$ commutes with the action by multiplication from the left of $\SO(U')$ on itself, it is given by right multiplication by some cocharacter $\lambda:\GG_m\to \SO(U')$.
    For $c\in \CC^\times$, we consider $(\lambda(c), c^{-1})\in \SO(U')\times \GG^m\circlearrowright \mathcal{T}$. We have:
    $$s(0) = (\lambda(c), c^{-1}).s(0)\stackrel{\psi\circ\eta}{\longmapsto} (\lambda(c), c^{-1}).(e, v^*) = (c^{-2} \mathrm{Ad}_{\lambda(c)}e, c^{-2} \lambda(c)v^*).$$
    Thus, $\mathrm{Ad}_{\lambda(c)}e = c^2 e$.
    Differentiating, we get that $h:=d\lambda\in \mathfrak{so}(U')$ is semisimple, and satisfies $[h,e]=2e$.
    
    By the last part of \cite[Theorem 3.6]{Kos}, we get that there exists $f\in \mathfrak{so}(U')$ such that $(e,f,h)$ form a principal $\mathfrak{sl}_2$-triple, and in particular $h$ must be regular semisimple, hence $T:=Z_h$ is a maximal torus.
    Since $e$ is regular, it follows from $[h,e]=2e$ that $\lambda:\GG_m\to T$ must be equal to the cocharacter $2\rho^\vee$.
\end{proof}
From now on we fix such $f,h$ and $T$.
\begin{proposition}\label{prop: Gm equivariance of section}
    The image of the section $\Sigma\xrightarrow{s} \mathcal{T}$ is invariant under the $\GG_m$-action obtained by restricting the $\GG_m\times \SO(U')$-action along $\GG_m\xrightarrow{(1, -2\rho^\vee)}\GG_m\times \SO(U')$. Moreover, the resulting $\GG_m$ action on $\Sigma$ is the natural one.
\end{proposition}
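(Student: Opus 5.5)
Since $s$ is a section of the projection $\mathcal{T}\to\Sigma$, I will reduce the statement to the $\GG_m$-equivariance of $s$ itself, twisted by $-2\rho^\vee$.

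First set up the actions. Let $\GG_m$ act on $\mathcal{T}$ via $c\mapsto (c,\,2\rho^\vee(c)^{-1})\in \GG_m\times\SO(U')$; write $c\star t$ for this action. The projection $\mathcal{T}\to\Sigma$ is $\SO(U')$-invariant and $\GG_m$-equivariant for the natural (grading) action on $\Sigma$. Hence it intertwines $\star$ with the natural action on $\Sigma$. Consequently it suffices to prove $c\star s(\sigma)=s(c\cdot\sigma)$ for all $c,\sigma$: this gives invariance of the image, and the induced action on $\Sigma\cong s(\Sigma)$ is then automatically the natural one.

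Next, use the trivialization of Proposition \ref{prop: torsor triviality}. Through the fixed $\GG_m$-equivariant isomorphism $\mathcal{T}\cong\Sigma\times(\mathcal{T}\times_\Sigma\{0\})$, and the identification $\mathcal{T}\times_\Sigma\{0\}\cong\SO(U')$ via $g\mapsto gp$, we have $s(\sigma)=(\sigma,p)$. The original $\GG_m$-action on $\mathcal{T}$ is the product of the natural action on $\Sigma$ and the action on the fiber over $0$. By Proposition \ref{prop: pincipal sl2}, the latter is right multiplication by $2\rho^\vee$, i.e. $c\cdot(gp)=g\,2\rho^\vee(c)\,p$. The left $\SO(U')$-action on $\mathcal{T}$ is compatible with the trivialization, because the isomorphism is one of $\SO(U')$-torsors. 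Therefore
$$c\star(\sigma,p)=\bigl(c\cdot\sigma,\ 2\rho^\vee(c)^{-1}\,2\rho^\vee(c)\,p\bigr)=(c\cdot\sigma,p)=s(c\cdot\sigma).$$

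The main point to verify is that the trivialization can be chosen simultaneously $\GG_m$-equivariant and $\SO(U')$-equivariant in exactly this product form. This is what Proposition \ref{prop: torsor triviality} provides via Wedhorn's theorem, which extends the fiber over $0$ by contracting along $\GG_m$. I also need to check that the cocharacter $\lambda$ appearing in Proposition \ref{prop: pincipal sl2} was computed with respect to the same point $p$, hence the same identification $g\mapsto gp$. If the two identifications differed, the cocharacter would only change by conjugation, and this is absorbed into the choice of $(e,f,h)$ already fixed there. Finally, I need the left and right multiplications to commute, which they do, so the computation above is valid pointwise in $\sigma$ and $c$.
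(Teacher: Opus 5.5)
Your proposal is correct and is essentially the paper's own argument: using the $\GG_m$-equivariant trivialization $\mathcal{T}\cong\Sigma\times(\mathcal{T}\times_\Sigma\{0\})$ together with Proposition \ref{prop: pincipal sl2}, one computes $(c,(-2\rho^\vee)(c)).(\sigma,p)=(c.\sigma,\,2\rho^\vee(c)^{-1}\,2\rho^\vee(c)\,p)=(c.\sigma,p)$, so $s$ is equivariant for the twisted action, and the induced action on $\Sigma$ is the natural one. Your extra check about the base point is automatic: $p$ is fixed before Proposition \ref{prop: pincipal sl2}, and that proposition uses the same identification $g\mapsto gp$.
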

\begin{proof}
    By Propositions \ref{prop: torsor triviality} and \ref{prop: pincipal sl2}, we know that
    $$(c, -2\rho^\vee(c)).(\sigma, 1) = (c.\sigma, (-2\rho^\vee)(c)\cdot (2\rho^\vee(c))) = (c.\sigma, 1).$$
    (Note that here $c.\sigma$ is the natural action, i.e. dilations by $c^2$).
\end{proof}

\begin{corollary}
    The point $\xi(0)=(e,v^*)\in \mathfrak{so}(U')^*\times U'$ is a fixed point for the twisted $\GG_m$ action given by $$c.(x,v)=(c^2\mathrm{Ad}_{(-2\rho^\vee)(c)}(x), c^2 (-2\rho^\vee)(c) v).$$
    Moreover, the image of $\xi$ is contained in the attractor $\mathcal{A}$ to $(e, v^*)$ with respect to this twisted $\GG_m$ action.
\end{corollary}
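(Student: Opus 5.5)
The corollary should follow from Proposition \ref{prop: Gm equivariance of section} by transporting the twisted $\GG_m$-action along $\psi\circ\eta$. The plan has three steps.

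First, identify the twisted action on $\mathcal{T}$ with one on $\mathfrak{so}(U')^*\times U'$. The map $\psi\circ\eta:\mathcal{T}\to \mathfrak{so}(U')^*\times U'$ is $\SO(U')\times\GG_m$-equivariant. On the target, $\GG_m$ acts with weight $2$ in both factors, since $\mathfrak{G}^\bullet=\Sym(\mathfrak{so}(U)[-2])$ and $\mathfrak{so}(U)\cong\mathfrak{so}(U')\oplus U'$ sits in degree $2$. Hence the restriction of the $\GG_m\times\SO(U')$-action along $c\mapsto (c,(-2\rho^\vee)(c))$ goes to
$$c.(x,v)=\bigl(c^2\mathrm{Ad}_{(-2\rho^\vee)(c)}x,\ c^2(-2\rho^\vee)(c)v\bigr).$$
I need to check the conventions against the computation in the proof of Proposition \ref{prop: pincipal sl2}. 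There, $(\lambda(c),c^{-1})$ acted by $c^{-2}$ together with $\mathrm{Ad}_{\lambda(c)}$, so the signs come out consistently as written in the statement.

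Second, derive fixedness and the image condition from Proposition \ref{prop: Gm equivariance of section}. That proposition says the image of $s$ is stable under this twisted action, and the induced action on $\Sigma$ is the natural contracting one. Therefore $\xi=\psi\circ\eta\circ s$ is $\GG_m$-equivariant from $\Sigma$, with its natural action, to $\mathfrak{so}(U')^*\times U'$ with the twisted action. Since $0\in\Sigma$ is fixed by the natural action, $\xi(0)=(e,v^*)$ is a fixed point.

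Third, prove the attractor statement. For any $\sigma\in\Sigma$, the natural action has positive weights on the coordinate ring of $\Sigma$: the generators have degrees $2d_i>0$, since $\Sigma$ is an affine space with generators in positive even degrees. So $\lim_{c\to0}c.\sigma=0$ exists. By equivariance and continuity of $\xi$, $\lim_{c\to 0}c.\xi(\sigma)=\xi(0)=(e,v^*)$. This is exactly the condition for $\xi(\sigma)$ to lie in the attractor $\mathcal{A}$.

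The main point of care is the first step: making sure the grading-induced $\GG_m$-action on $\Spec(\mathfrak{G}^\bullet)$ really has weight $2$ on both components under the isomorphism $\varpi$, and that the sign and convention of the cocharacter match Proposition \ref{prop: Gm equivariance of section}. Everything else is formal.
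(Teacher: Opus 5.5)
Your proposal is correct and follows the paper's own proof. The twisted action is the restriction of the $\GG_m\times\SO(U')$-action along $(1,-2\rho^\vee)$, and Proposition~\ref{prop: Gm equivariance of section} makes $\xi$ intertwine the natural contracting action on $\Sigma$ with it; both the fixed-point claim and the attractor claim then follow because $0$ is fixed and attracts all of $\Sigma$. Your additional check of the weight-$2$ grading and sign conventions against Proposition~\ref{prop: pincipal sl2} is a detail the paper leaves implicit.
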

\begin{proof}
    The described twisted action is nothing but the restriction of the $\GG_m\times \SO(U')$-action along $\GG_m\xrightarrow{(1, -2\rho^\vee)}\GG_m\times \SO(U')$. Since $\xi$ intertwines the natural $\GG_m$-action on $\Sigma$ with this action, and the natural action on $\Sigma$ attracts all of $\Sigma$ to the fixed point $0\in\Sigma$, the result follows.
\end{proof}
One readily checks that $\mathcal{A} = (e + \mathfrak{b}^-)\times (v^* + \ker(f^n))$, $\mathfrak{b}^-$ being the Borel subalgebra containing $f$. Let also $\mathrm{U}^-$ be the unipotent radical of the associated Borel subgroup of $\SO(U')$. By identifying $e+\mathfrak{z}_f\cong \Sigma_{2n}$, we get that $e + \mathfrak{b}^-\cong \mathrm{U}^-\times \Sigma_{2n}$. This isomorphism is $\GG_m$-equiariant, where the action on $e+\mathfrak{b}^-$ is given by $(2, -2\rho^\vee)$, and the action on $\mathrm{U}^-$ is given by $\mathrm{Ad}_{-2\rho^\vee}$. By projecting on the $\mathrm{U}^-$-coordinate and taking inverse, we get a $\GG_m$-equivariant map $\gamma:\Sigma\to \mathrm{U}^-$ such that if we set $\zeta(\sigma) = \gamma(\sigma)\xi(\sigma)$, then $\zeta(\sigma) = (\sigma, \zeta'(\sigma))$ for some $\zeta':\Sigma\to v^* + \ker(f^n)$. The map $\zeta'$ is $\GG_m$-equivariant, where the $\GG_m$-action on $v^*+\ker(f^n)$ is the twisted one, considered above.

Also, $v^*$ must belong to the weight 2 subspace of $h$ on $U'$. This subspace is one dimensional, and we choose $v_0^*$ as in Section \ref{sec: pseudo-slice}, so that $v^* = cv_0^*$ for some $c\in \CC$.

\begin{remark}
    Note that up until now we didn't use any specific feature of $\mathfrak{so}(U')^*\times U'$ (apart from our description of the fixed points for the twisted $\GG_m$-action and of the attractor), and we only need the moment map to $\mathfrak{so}(U')^*$, which is a general feature.
\end{remark}
Using the isomorphism $\Sigma_{e,f,h,v_0^*}\cong \Sigma$ of Section \ref{sec: pseudo-slice}, and denoting $$\omega=\omega(e+y,v_0^*+v):=\zeta'(e+y,v_0^*+v)-v^*\in \ker(f^n)$$ for $(e+y, v_0^*+v)\in \Sigma_{e,f,h,v_0^*}$, we get that for all such $y,v$ and all even $0\leq i \leq 2n-2$, one has
\begin{equation}\label{eq: section invariants}
    \langle v^*+\omega, (e+y)^{i} (v^*+\omega)\rangle = \langle v_0^*+v, (e+y)^{i} (v_0^*+v)\rangle.
\end{equation}
\begin{proposition}
    We must have $c=\pm 1$.
\end{proposition}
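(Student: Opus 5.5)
We extract $c$ from the identity \eqref{eq: section invariants} by specializing to the origin of the slice and comparing the top even invariant. At $\sigma=0\in\Sigma$, i.e.\ $y=0$ and $v=0$, we have $\omega(e,v_0^*)=\zeta'(0)-v^*$. Since $\zeta'$ is $\GG_m$-equivariant, $0$ is fixed and $\zeta'(0)$ is the fixed point $\xi(0)$'s $U'$-component, so $\zeta'(0)=v^*$ and $\omega(0)=0$. (Recall $\gamma(0)=1$, because $\gamma$ is $\GG_m$-equivariant into $\mathrm{U}^-$ with a contracting action, hence sends the fixed point $0$ to the identity.) Then \eqref{eq: section invariants} at $\sigma=0$ reads
$$c^2\langle v_0^*, e^{i}v_0^*\rangle=\langle v_0^*, e^{i}v_0^*\rangle$$
for all even $0\le i\le 2n-2$.

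It therefore suffices to find one even $i$ with $\langle v_0^*, e^{i}v_0^*\rangle\neq 0$. Since $v_0^*$ has $h$-weight $2$ in the irreducible $(2n-1)$-dimensional $\mathfrak{sl}_2$-module $I$, and $e$ raises weights by $2$ while the pairing on $I$ pairs weight $k$ with weight $-k$, we have $\langle v_0^*, e^i v_0^*\rangle=0$ unless $2+(2+2i)=0$. That never happens for $i\ge 0$, so $\sigma=0$ gives no information, and the naive specialization fails.

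The fix is to use the $\GG_m$-equivariance to organize \eqref{eq: section invariants} by weights and compare a term linear in the slice coordinates. Take $y=0$ and $v=a\,f^{k+1}v_0^*$ with $a$ a scalar, and write $\omega=\omega(a)$, which is polynomial in $a$. Expanding the right-hand side of \eqref{eq: section invariants} gives
$$2a\langle v_0^*, e^i f^{k+1}v_0^*\rangle+a^2\langle f^{k+1}v_0^*,e^if^{k+1}v_0^*\rangle.$$
Choosing $i=k+1$ (with the parity adjusted to make $i$ even) makes the linear coefficient $2a\langle v_0^*,e^{k+1}f^{k+1}v_0^*\rangle$ nonzero. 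On the left-hand side, the terms linear in $a$ come from $2\langle c\,v_0^*, e^i\omega_1\rangle$, where $\omega_1$ is the linear part of $\omega$.

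To control $\omega_1$, we use equivariance: $\omega$ is equivariant for the twisted action on $\ker(f^n)$, so $\omega_1$ has a prescribed weight, matching the weight of $f^{k+1}v_0^*$. This forces $\omega_1=b\,f^{k+1}v_0^*$ for some scalar $b$ (or a multiple of the $J$-component if the weights allow it, but $J$ pairs trivially with $I$). The comparison then only gives $cb=1$, which is not enough on its own. We therefore also use the quadratic terms and the $i=0$ identity $\langle v^*+\omega,v^*+\omega\rangle=\langle v_0^*+v,v_0^*+v\rangle$, which is the norm invariant $\langle v,v\rangle$. Comparing the pure $a^2$ terms for an index $k$ with $f^{k+1}v_0^*$ non-isotropic gives $b^2=1$ (taking $i=0$ and $k$ chosen so that $f^{k+1}v_0^*$ pairs with itself; the natural choice is the middle vector $fv_0^*$ of weight $0$, with $\langle fv_0^*,fv_0^*\rangle=-(n(n-1))^2\neq0$). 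Together with $cb=1$, this yields $c=\pm1$.

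The main obstacle is pinning down $\omega_1$, and possibly the $a^2$-part of $\omega$. For the weight-$0$ coordinate $a_0$ (the $fv_0^*$ direction), the twisted weights of $a_0$ and of vectors in $\ker(f^n)$ must be matched carefully, including the $J$-direction, which also has $h$-weight $0$. If needed, I would eliminate the $J$-component by using the $i=2$ invariant together with $eJ=0$.
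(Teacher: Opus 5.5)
Your reduction to $\omega(e,v_0^*)=0$ and your observation that $\sigma=0$ alone gives nothing are fine. The gap is that the two scalar relations you combine at the end concern different unknowns, so the argument does not close.

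First, along $v=af^{k+1}v_0^*$ the linear coefficient $\langle v_0^*,e^{i}f^{k+1}v_0^*\rangle$ is nonzero only for $i=k-1$, since the $h$-weights $2$ and $2i-2k$ must cancel. For your choice $i=k+1$ it is a multiple of $\langle v_0^*,v_0^*\rangle=0$. So $cb=1$ is available only for odd $k\geq 1$, with $b$ the coefficient of $f^{k+1}v_0^*$ in the linear part of $\omega$ along that line.

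Your relation $b^2=1$ is instead taken along $v=afv_0^*$ (i.e.\ $k=0$). There no linear relation exists at all, because every $\langle v_0^*,e^{i}fv_0^*\rangle$ vanishes, and the scalar in question is the coefficient of a different map. The relations $cb=1$ and $b^2=1$ for two different $b$'s say nothing about $c$.

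Moreover, even along $afv_0^*$ the $a^2$-coefficient of the $i=0$ identity is not $b^2\langle fv_0^*,fv_0^*\rangle$. Twisted equivariance allows $\omega=a(bfv_0^*+b'j)+a^2b_1f^2v_0^*+\cdots$, since $fv_0^*$ and $J$ have twisted weight $2$ while $f^2v_0^*$ has twisted weight $4$. The coefficient is therefore $2cb_1\langle v_0^*,f^2v_0^*\rangle+b^2\langle fv_0^*,fv_0^*\rangle+b'^2\langle j,j\rangle$. This is exactly the ``main obstacle'' you leave open, and the $i=2$ identity together with $eJ=0$ does not remove the $b_1$-term.

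The paper never needs to know $\omega$. For $n\geq 3$ it takes $v=0$ and $i=2n-4$ and differentiates \eqref{eq: section invariants} in $y$ at $y=0$. Since $e^{n-1}v^*=0$ forces $e^{2n-4}v^*=0$, the contribution $2\langle e^{2n-4}v^*,D\omega\rangle$ vanishes. This leaves $c^2D=D$ for the linear functional $D(y)=2(-1)^n\langle e^{n-2}v_0^*,ye^{n-3}v_0^*\rangle$ on $\mathfrak{z}_f$, which is not identically zero. The case $n=2$ is handled separately via fixed points of $\sqrt{-1}\in\CC^\times$.

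Your idea can also be repaired, but only by staying on a single line: take $k$ odd with $n-2<2k\leq 2n-2$. Then an $a^m$-term of $\omega$ with $m\geq 2$ would have $h$-weight $2-m(2k+2)<-(2n-2)$, and no $J$-component is allowed, so $\omega=ab\,f^{k+1}v_0^*$ exactly. The $i=k-1$ identity gives $cb=1$, and the $i=2k$ identity gives $b^2=1$ because $\langle f^{k+1}v_0^*,e^{2k}f^{k+1}v_0^*\rangle\neq 0$. Hence $c^2=1$, uniformly for $n\geq 2$. As written, however, the proposal does not prove the statement.
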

\begin{proof}
    First assume $n\geq 3$.
    Restricting to $i=2n-4$ and $v=0$, we take the differential of \eqref{eq: section invariants} at $y=0$. The differential of the left-hand side is given by
    \[
    \begin{aligned}
    \frac{d}{dy}\langle v^*+\omega, (e+y)^{2n-4} (v^*+\omega)\rangle &= \langle v^*, \frac{d}{dy}(e+y)^{2n-4} v^*\rangle + 2\langle e^{2n-4}v^*, \frac{d}{dy}\omega\rangle
    \\&\stackrel{n \geq 3}{=} 2(-1)^n\frac{d}{dy}\langle e^{n-2}v^*, y e^{n-3}v^*\rangle.
    \end{aligned}
    \]
    The last equality follows from $e^{n-1}v^*=0$.
    Comparing with the same expression for $(e+y, v_0^*)$, we get that
    $$\frac{d}{dy}\langle e^{n-2}v^*, y e^{n-3}v^*\rangle = \frac{d}{dy}\langle e^{n-2}v_0^*, y e^{n-3}v_0^*\rangle.$$
    That is, either $c^2=1$ or
    $$\frac{d}{dy}\langle e^{n-2}v_0^*, y e^{n-3}v_0^*\rangle = 0.$$
    By taking $y=\lambda f^{2n-5}$ and varying $\lambda$, we see that this is impossible, hence $c=\pm 1$.

    Now we give a proof for $n=2$.
    Take $y=f, v=0$. Since $(e+f,v_0^*)$ is a fixed point for the (twisted) action by $\sqrt{-1}\in \CC^\times$, as $f$ lies in the weight 4 subspace of the twisted action of $\GG_m$, we get that $(v^* + \omega)$ is a fixed point too. This implies that $\omega=\zeta'(e+f, v_0^*)\in \ker f$ (this is the only eigenspace for the twisted $\GG_m$-action for which the weight is positive and divisible by 4).
    Now writing
    $2\langle v^*, \omega \rangle = \langle v^* + \omega, v^* + \omega\rangle = \langle v_0^*, v_0^*\rangle = 0$
    we see that either $c=0$ or $\omega=0$. If $\omega=0$ we find that 
    $$\langle v^*, (e+f)^2v^*\rangle = \langle v_0^*, (e+f)^2v_0^*\rangle\neq 0,$$
    hence $c^2=1$.
    If $c=0$, choose $v\in \ker f$ so that $\langle v_0^*, v\rangle = \frac{1}{2}$ and take $y=0$.
    Utilizing the same argument as before (again $(e,v_0^*+v)$ is a fixed point for the twisted action by $\sqrt{-1}\in \CC^\times$, hence $\omega\in \ker(f)$), one finds that
    $$1 = \langle v_0^* + v, v_0^* + v\rangle = \langle \omega, \omega\rangle = 0,$$
    a contradiction.
\end{proof}
Potentially replacing our choice of $v_0^*$ by $-v_0^*$, we may assume $c=1$.
\begin{lemma}
    The image of $\zeta'$ is contained in the translation by $v_0^*$ of a linear subspace of $U'$, of dimension $n$.
\end{lemma}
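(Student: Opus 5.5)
The plan is to exploit only the $\GG_m$-equivariance of $\zeta'$ together with a weight count. Once equivariance pins down the weights, no geometric input is needed.

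First I will record the weights. By construction $\zeta'$ is $\GG_m$-equivariant from $\Sigma$, with its natural action (dilation by $c^2$ on $\mathfrak{so}(U')^*$ and on $\mathfrak{gl}(L)^*$), to $v^*+\ker(f^n)$, with the twisted action $c.v=c^2(-2\rho^\vee)(c)v$. Since $c=1$ we have $v^*=v_0^*$. Also $\zeta'(0)=v_0^*$, so $\omega=\zeta'-v_0^*$ is a $\GG_m$-equivariant map $\Sigma\to\ker(f^n)$ with $\omega(0)=0$.

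Next I will decompose $\ker(f^n)$ into $h$-weight spaces. Under the principal $\mathfrak{sl}_2$ we have $U'=I\oplus J$, and $\ker(f^n)=N\oplus\CC fv_0^*\oplus J$. Here $N$ is spanned by the $h$-weights $-2,\dots,-(2n-2)$, while $fv_0^*$ and $J$ both have $h$-weight $0$. A vector of $h$-weight $m$ has twisted weight $2-m$. Hence $\ker(f^n)$ has twisted weights $4,6,\dots,2n$ on $N$, and twisted weight exactly $2$ on the $2$-dimensional space $P:=\CC fv_0^*\oplus J$. In particular all twisted weights on $\ker(f^n)$ are positive.

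The third step is a weight count on $\CC[\Sigma]$. Write $\omega=\omega_N+\omega_P$ according to $\ker(f^n)=N\oplus P$. Equivariance forces each coordinate of $\omega_P$ to be a function on $\Sigma$ of weight exactly $2$. The ring $\CC[\Sigma]=\CC[\Sigma_{2n}]\otimes\CC[\Sigma_{\mathfrak{gl}}]$ is a polynomial ring on homogeneous generators, and a polynomial of degree $d$ on the Lie algebra has weight $2d$.
\begin{itemize}
\item For $\Sigma_{2n}$ the generators are the invariants of degrees $2,4,\dots,2n-2$ and the Pfaffian of degree $n\geq2$, so all have weight $\geq4$.
\item For $\Sigma_{\mathfrak{gl}}$ the generators $c_1,\dots,c_n$ have weights $2,4,\dots,2n$.
\end{itemize}
Thus the weight-$2$ part of $\CC[\Sigma]$ is the line $\CC c_1$, and there are no weight-$0$ functions other than constants. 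Consequently $\omega_P=c_1\cdot p$ for a fixed vector $p\in P$. Therefore the image of $\zeta'$ lies in
$$v_0^*+N+\CC p,$$
which is the translate by $v_0^*$ of a linear subspace of dimension $(n-1)+1=n$, as claimed.

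The only point needing care is the bookkeeping of signs and conventions. Specifically, I must confirm that the twisted action on $U'$ is $c^2(-2\rho^\vee)(c)$, which gives twisted weight $2-m$ on $h$-weight $m$, and that $fv_0^*$ and $J$ indeed both sit at $h$-weight $0$ inside $\ker(f^n)$. Both should follow directly from the Corollary describing the attractor $\mathcal{A}$ and from the normalization of $v_0^*$ in Section \ref{sec: pseudo-slice}. If a convention flip made $N$ negative, the constant term would still be excluded by $\omega(0)=0$, so the argument is robust. I expect later steps to identify the line $\CC p$ with $\CC fv_0^*$.
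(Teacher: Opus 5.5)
Your proposal is correct and is essentially the paper's argument. The paper extends the (half-weight) contracting action to an $(\AA^1,\times)$-action and differentiates at $\alpha=0$. This extracts exactly the lowest-weight component $\CC fv_0^*\oplus J$ of $\omega$ and shows that it lies on the fixed line $\CC\, D_{(e,v_0^*)}\zeta'(0,fv_0^*)$. Your observation that the weight-$2$ part of $\CC[\Sigma]$ is $\CC c_1$ is the same weight count, expressed on coordinate functions rather than on tangent vectors at the fixed point, and your vector $p$ is a multiple of the paper's later $\epsilon$.
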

\begin{proof}
Since the twisted $\GG_m$-action has even weights on $\mathfrak{so}(U')\times U'$ it factors through $\GG_m\xrightarrow{2}\GG_m$, i.e. we have another action of $\GG_m$, by half the weights, which is compatible with the map $\zeta'$. This is the $\GG_m$ action we shall use for the rest of the proof of this lemma. This action is contracting, hence it extends to an action of the multiplicative monoid $(\AA^1,\times)$.

Let $(x, v_0^*+v)\in \Sigma$. Taking derivative of the $(\AA^1,\times)$-action at $0$, we get
$$\frac{d}{d\alpha}(\alpha.(x, v_0^*+v))|_{\alpha=0} = (0, \frac{-1}{n^2(n-1)^2}\langle v, fv_0^*\rangle fv_0^*) \in \CC(0, fv_0^*).$$
Hence, if $(x,v_0^*+v)\mapsto (x, v_0^*+\omega)$, then
\begin{equation}\label{eq: linear constraint}
    D_{(e,v_0^*)}\zeta'(0, fv_0^*)\ni \frac{d}{d\alpha}(\alpha.(x, v_0^*+\omega))|_{\alpha=0} = (0, \frac{-1}{n^2(n-1)^2}\langle \omega, fv_0^*\rangle fv_0^* + \pi_J(\omega)).
\end{equation}
Here $\pi_J$ is the orthogonal projection on $J\subset U'$, where $J$ is as in Section \ref{sec: pseudo-slice}.
Note that the above expression appearing in the right-hand side is just the projection of $\omega$ on the weight 0 subspace of $h$ on $U'$.
This is a non-zero linear constraint on $\omega\in \ker(f^n)$. Since $\dim \ker(f^n)=n+1$, the claim follows.
\end{proof}
\begin{corollary}\label{cor: pointwise isomorphism}
    The image of $\zeta'$ is equal to the translation by $v_0^*$ of a linear subspace $S\subset U'$, of dimension $n$. Moreover, for each $x\in \Sigma$, the restriction of $\zeta'$ to $\{x\}\times \Sigma_\mathfrak{gl}$ is an isomorphism onto $v_0^*+S$. In particular, $\zeta$ gives an isomorphism $\Sigma\xrightarrow{\sim} \Sigma_{2n}\times (v_0^*+S)$.
\end{corollary}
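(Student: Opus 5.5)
The plan is to combine the linear constraint from the previous lemma with the invariant equations \eqref{eq: section invariants} and a degree count. By that lemma, the image of $\zeta'$ lies in $v_0^*+S$ for some $n$-dimensional linear subspace $S\subset U'$. Concretely, $S$ is the hyperplane of $\ker(f^n)$ cut out by the nonzero linear constraint \eqref{eq: linear constraint}. To show the image is all of $v_0^*+S$, it suffices to prove that for every fixed $x\in\Sigma_{2n}$ the map $\zeta'_x:=\zeta'|_{\{x\}\times\Sigma_{\mathfrak{gl}}}\colon\Sigma_{\mathfrak{gl}}\to v_0^*+S$ is an isomorphism. Both sides are $n$-dimensional affine spaces, and $\zeta'_x$ is a morphism between them.

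\textbf{Injectivity and properness of $\zeta'_x$.} Composing $\zeta'_x$ with the invariants $w\mapsto\langle w,(e+y)^{i}w\rangle$ for even $0\le i\le 2n-2$ (here $x=e+y$), equation \eqref{eq: section invariants} says the result equals the corresponding invariants of the slice point $(e+y,v_0^*+v)$. By Claim \ref{claim: categorical quotient}, together with the triangular formula \eqref{eq: invariants}, these invariants together with $\CC[\Sigma_{2n}]$ determine the image in $\Sigma_{2n+1}$. By Claim \ref{claim: pseudo slice} and the proof there, for fixed $y$ they determine the point of $\Sigma_{\mathfrak{gl}}$ only up to the finite map $\Sigma_{\mathfrak{gl}}\to\Sigma_{2n+1}$. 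Hence the composite
\[
\Sigma_{\mathfrak{gl}}\xrightarrow{\zeta'_x} v_0^*+S\xrightarrow{q_x}\Sigma_{2n+1}
\]
is the natural finite map. In particular $\zeta'_x$ is quasi-finite and proper, hence finite. A finite morphism between $n$-dimensional affine spaces is surjective, which gives the first statement: the image equals $v_0^*+S$.

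\textbf{Degree one.} To upgrade to an isomorphism, I would use $\GG_m$-equivariance. The map $\zeta'$ is $\GG_m$-equivariant for contracting actions with positive weights on both sides, and $\zeta'_0(0)=v_0^*$. Therefore $\zeta'_x$ is a finite equivariant map of graded polynomial rings, at least for $x=0$. By the previous step, $q_0\circ\zeta'_0$ has degree $\deg(\Sigma_{\mathfrak{gl}}\to\Sigma_{2n+1})=2^n n!/n!=2^n$. So it suffices to show that $q_0\colon v_0^*+S\to\Sigma_{2n+1}$ already has degree $2^n$. I would compute this from the weights. The invariants $\langle w,e^{i}w\rangle$ restricted to $v_0^*+S$ are, by the triangularity in Claim \ref{claim: pseudo slice}, of the form (nonzero constant)$\cdot a_{i}$ plus lower terms in suitable weight coordinates $a_k$ on $S$. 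For the pieces of $S$ not in $N$ the relevant term becomes quadratic, and comparing the weights of the $a_k$ with the degrees $2,4,\dots,2n$ via the graded degree formula $\prod\deg(\text{target gens})/\prod\deg(\text{source coords})$ should give exactly $2^n$. Then $\zeta'_0$ has degree $1$, and a degree-one finite equivariant map of affine spaces is an isomorphism by normality.

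\textbf{From $x=0$ to all $x$.} For general $x$, the family $\zeta'\colon\Sigma\to\Sigma_{2n}\times(v_0^*+S)$ is finite over $\Sigma_{2n}$. Its degree is constant on the flat family, so it is $1$ everywhere, and $\zeta$ is an isomorphism. The main obstacle is the weight/degree bookkeeping in the degree-one step, in particular correctly identifying the weights of $S$ under the twisted action. I would first sanity-check this computation in the case $n=2$.
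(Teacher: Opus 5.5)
Your argument is correct. Its first half is exactly the paper's: the composite $\Sigma_{\mathfrak{gl}}\to v_0^*+S\to\Sigma_{2n+1}$ is the natural finite map, so $\zeta'_x$ is finite and hence onto $v_0^*+S$.

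The difference is in the degree step. The paper asserts directly, for every $x$, that $q_x$ has degree $2^n$ because it is given by the $n$ quadratic functions $\langle v_0^*+\omega, x^{2i}(v_0^*+\omega)\rangle$. Read as a B\'ezout count for non-homogeneous quadrics, this only gives $\deg q_x\le 2^n$, i.e.\ $\deg\zeta'_x\ge 1$. Equality needs an extra input (no solutions at infinity), which the paper does not spell out. Your route supplies it. At the $\GG_m$-fixed fibre $x=e$ (the point $0\in\Sigma_{2n}$) everything is graded with positive weights, and finiteness plus the Hilbert-series formula pins the degree down exactly. You then transport the result to all $x$ by constancy of rank.

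There are two simplifications worth making.

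First, you do not need the triangularity of Claim~\ref{claim: pseudo slice}, and at this point you cannot use it, since $S$ is not yet known to be $\CC fv_0^*+N$. It suffices that $S=\CC\epsilon\oplus N$ with $\epsilon$ of $h$-weight $0$. Then the coordinates on $S$ have twisted degrees $2,4,\dots,2n$, while $\langle w,e^{2i}w\rangle$ has degree $4i+4$, which gives $2^n$.

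Second, and better, apply the graded count to $\zeta$ itself. It is finite and $\GG_m$-equivariant, and source and target have the same generator degrees: $\Sigma_{2n}$ on both sides, and $2,4,\dots,2n$ for both $\Sigma_{\mathfrak{gl}}$ and $S$. So $\zeta$ has degree one onto the normal variety $\Sigma_{2n}\times(v_0^*+S)$, hence is an isomorphism, and the fibrewise statement follows by base change. This removes the need to justify flatness, which in your version should be cited as miracle flatness for a finite map between smooth varieties of equal dimension.

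Finally, your heading ``Injectivity and properness'' is a misnomer: that paragraph proves finiteness, not injectivity.
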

\begin{proof}
    Let $S$ be a linear subspace of dimension $n$ as in the Lemma above. The composition $$\{x\}\times \Sigma_\mathfrak{gl}\xrightarrow{\zeta'} v_0^*+S \to \Sigma_{2n+1}$$ is the usual map, and the three of them are of dimension $n$, hence the image of $\zeta'$ is equal to $v_0^*+S$, and the map from $v_0^*+S$ to $\Sigma_{2n+1}$ must be finite. This map is given by $v_0^*+\omega\mapsto \langle v_0^*+\omega, x^{2i} (v_0^*+\omega)\rangle$, and so it is of degree $2^n$. This is also the degree of the projection from $\Sigma_\mathfrak{gl}$ to $\Sigma_{2n+1}$, hence the map $\zeta'|_{\{x\}\times \Sigma_\mathfrak{gl}}$ is of degree 1, i.e. an isomorphism.
\end{proof}
\begin{proposition}
    The above $S$ is equal to $\CC fv_0^*+N$, where $N:=\ker(f^{n-1})$ is as in Section \ref{sec: pseudo-slice}. That is, the image of $\zeta$ is equal to $\Sigma_{(e, f, h, v_0^*)}$.
\end{proposition}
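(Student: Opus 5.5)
The plan is to pin down $S$ using two facts: $\GG_m$-equivariance of $\zeta'$, and the invariant identities \eqref{eq: section invariants} evaluated at the base point $x=e$ (i.e.\ $y=0$). The natural target is the weight decomposition. Under the halved twisted $\GG_m$-action from the proof of the previous lemma, the affine space $v_0^*+\ker(f^n)$ is contracted to $v_0^*$, and $\zeta'$ is equivariant. Hence the linear space $S=\mathrm{Im}(\zeta')-v_0^*$ is $\GG_m$-stable. It is therefore a sum of $h$-weight subspaces of $\ker(f^n)$ (each $h$-eigenspace there is one-dimensional, with weights $0,-2,\dots,-2n$ after the twist). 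Since $\ker(f^n)=J\oplus \CC fv_0^*\oplus N$, with $N$ the span of the negative weights $-4,\dots,-2n$ up to normalization, $S$ is determined by which single weight line is omitted. The claim $S=\CC fv_0^*+N$ says exactly that the omitted line is $J$, the weight-$0$ line. Equivalently, $\pi_J(\omega)=0$ for all $\omega\in S$.

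To exclude $J$, I would use \eqref{eq: linear constraint}. That relation says the projection of $\omega$ onto the weight-$0$ subspace (namely $\CC fv_0^*\oplus J$) lies in the line $\CC\cdot D\zeta'(0,fv_0^*)$. Fixing $x=e$, Corollary \ref{cor: pointwise isomorphism} gives $\zeta'(e,\cdot):\{e\}\times\Sigma_{\mathfrak{gl}}\xrightarrow{\sim} v_0^*+S$. I would then compare the invariants $\langle v_0^*+\omega, e^{2k}(v_0^*+\omega)\rangle$ with $\langle v_0^*+v, e^{2k}(v_0^*+v)\rangle$ for $v\in \CC fv_0^*+N$. Since $e^{2k}$ raises weight by $4k$ and the pairing only couples weights $\mu$ and $-\mu$, the $J$-component $\omega_J$ (weight $0$, with $eJ=fJ=0$) contributes only through $\langle \omega_J,\omega_J\rangle$ in the $k=0$ invariant. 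In every other invariant it does not appear.

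The plan is to show that if $S$ contained a vector with nonzero $J$-component, then, because $S$ is $\GG_m$-stable, $S$ would contain $J$ itself. Dimension count then forces $S$ to omit some other weight line $\CC w$, where $w\in \CC fv_0^*\oplus N$. Now restrict to $\{e\}\times\Sigma_{\mathfrak{gl}}$, where the higher invariants $\langle \cdot, e^{2k}\cdot\rangle$ for $k\ge1$ pair weight lines in $\CC fv_0^*+N+\CC v_0^*$ nondegenerately. Because $J$ does not contribute to any $k\ge 1$ invariant, the map $v_0^*+S\to\Sigma_{2n+1}$ would have a positive-dimensional fibre along the direction $J$ modulo the $k=0$ invariant, or would fail to separate the weight $w$. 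Either way it could not be finite of degree $2^n$, contradicting Corollary \ref{cor: pointwise isomorphism}. Concretely, I would expect that the invariant of highest $e$-degree detecting the weight of $w$ becomes independent of one coordinate.

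The main obstacle is making this degree or finiteness argument precise: determining which weight line is forced out, and ruling out the possibility that $J$ substitutes for $fv_0^*$ via the $k=0$ invariant (both have weight $0$ after the twist). For that case I would fall back on \eqref{eq: linear constraint}. It ties the $J$-component to $D\zeta'(0,fv_0^*)$, and I would compute this derivative directly from \eqref{eq: section invariants} at first order along $(0,fv_0^*)$, as was done in the proof that $c=\pm1$. The aim is to show its $J$-part vanishes, using that $\langle fv_0^*,fv_0^*\rangle\neq0$ while $J$ is orthogonal to the image of $e$.
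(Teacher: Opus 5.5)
Your reduction starts from a false premise, and the tools you propose cannot detect what actually has to be proved. In $\ker(f^n)=J\oplus\CC fv_0^*\oplus N$ the $h$-weight-$0$ part is the plane $\CC fv_0^*\oplus J$, since both lines have the same twisted weight. So $\GG_m$-stability only makes $S$ a graded subspace whose weight-$0$ piece can be any line in that plane. A vector of $S$ with nonzero $J$-component therefore does not force $J\subseteq S$. What $\GG_m$-stability, \eqref{eq: linear constraint} and $\dim S=n$ actually give is $S=\CC\epsilon+N$ with $\epsilon=D_{(e,v_0^*)}\zeta'(0,fv_0^*)=\alpha fv_0^*+\beta j$. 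The entire content of the proposition is $\beta=0$, i.e.\ the position of a line inside a plane of constant weight.

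The degree count does not see $\beta$. Write points of $v_0^*+S$ as $v_0^*+b_0\epsilon+\sum_{i\geq1}b_if^{i+1}v_0^*$. At $x=e$ the invariants $\langle\cdot,e^{2k}\cdot\rangle$ with $k\geq1$ involve $\epsilon$ only through $\alpha$ (as $ej=0$ and $j\perp eU'$), and the $k=0$ invariant only through $\alpha^2+\beta^2$. For non-isotropic $\epsilon$ the resulting graded map $v_0^*+S\to\Sigma_{2n+1}$ has fibre over $0$ the single point $v_0^*$, hence is finite of degree $2^n$ whatever $\beta$ is. For instance, for $n=3$ the normalized invariants are $b_1-\frac{1}{2}(\alpha^2+\beta^2)b_0^2$, $3b_1^2-4\alpha b_0b_2$ and $\frac{1}{2}b_2^2$. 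So Corollary~\ref{cor: pointwise isomorphism} is not contradicted.

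Your fallback is vacuous. Since $\langle v_0^*,e^{2k}\epsilon\rangle=0$ for $h$-weight reasons, \eqref{eq: section invariants} holds trivially to first order along $(0,fv_0^*)$. At second order $\epsilon$ enters only through $\langle\epsilon,\epsilon\rangle\propto\alpha^2+\beta^2$, mixed with unknown higher-weight components of $\zeta'$. For $n=2$ the approach fails outright: there also $e^2fv_0^*=0$, so both invariants at $x=e$ depend on $\epsilon$ only through $\alpha^2+\beta^2$. Then $\{e\}\times\Sigma_{\mathfrak{gl}}$ and $v_0^*+S$ are isomorphic covers of $\Sigma_{2n+1}$ for every non-isotropic $\epsilon$, and nothing computed at $x=e$ can determine $\beta$.

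The missing idea has two parts. First, compare the two covers by something finer than their degree, namely their ramification loci (where the Jacobian of the invariants drops rank), which any isomorphism over $\Sigma_{2n+1}$ must match. Second, do this at a point $x$ where $\alpha$ and $\alpha^2+\beta^2$ enter independently. The asymmetry you noticed ($J$ is invisible to the $k\geq1$ invariants while $fv_0^*$ is not) is the right one. But it becomes usable only through the cross term $-(n+1)(n-2)\alpha b_0b_2$ in the normalized $\langle\cdot,e^2\cdot\rangle$, which is nonzero only for $n\geq3$.

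For $n\geq 3$ the paper restricts to $W=v_0^*+\mathrm{Span}(fv_0^*,f^2v_0^*,f^3v_0^*)$. It shows, by a top-nonvanishing-invariant argument, that $\zeta'$ maps $\{e\}\times W$ isomorphically onto $v_0^*+\mathrm{Span}(\epsilon,f^2v_0^*,f^3v_0^*)$. It then imposes $P_i=P_i'$ together with singularity of both Jacobians on the locus $a_2=\frac{n(n-1)}{(n+1)(n-2)}a_0a_1\neq0$. At $a_0=1$, $a_1=2$ this forces $\alpha^2+\beta^2=\alpha^2$, so $\beta=0$.

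For $n=2$ the paper works at $x=e+f$ instead, where $\langle(e+f)\cdot,(e+f)\cdot\rangle$ sees $\alpha^2$ alone. The same Jacobian comparison at $a_1=\frac{1}{2}$ shows that $\beta\neq0$ would confine $(b_0,b_1)$ to finitely many values while $a_0$ varies, which is impossible.
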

\begin{proof}
    Let $$\epsilon := D_{(e,v_0^*)}\zeta'(0, fv_0^*)\in \CC fv_0^* + J.$$ By \eqref{eq: linear constraint} we have $S = v_0^* + \CC \epsilon + N$. We write $\epsilon = \alpha fv_0^* + \beta j$, with $j\in J$ satisfying $\langle j, j\rangle = -n^2(n-1)^2$.
    The claim of the proposition is equivalent to showing that $\beta=0$.
    
    We start by giving the proof in the case $n\geq 3$.
    Consider $$W:=\left(v_0^*+\mathrm{Span}(\{fv_0^*, f^2v_0^*, f^3v_0^*\})\right)\subset \Sigma_{(e, f, h, v_0^*)}.$$
    We claim that the image of $\{e\}\times W$ under $\zeta'$ is equal to $v_0^* + \mathrm{Span}(\epsilon, f^2v_0^*, f^3v_0^*)$.
    Indeed, let $w\in W$, and write $\zeta'((e, w))=b_0\epsilon + \sum_{i=1}^{n-1}b_i f^{i+1}v_0^*$. Let $i$ be the maximal integer such that $b_i\neq 0$.
    If $i > 2$, we have
    $$0 = \langle w, e^{2i} w\rangle = \langle \zeta'(e, w), e^{2i} \zeta'(e, w)\rangle = (-1)^i b_i^2 \langle e^if^{i+1}v_0^*, e^if^{i+1}v_0^*\rangle \neq 0.$$
    This shows that $\zeta'(\{e\}\times W)\subseteq v_0^* + \mathrm{Span}(\epsilon, f^2v_0^*, f^3v_0^*)$. Both are 3-dimensional affine spaces, and by Corollary \ref{cor: pointwise isomorphism}, $\{e\}\times W$ maps under $\zeta'$ isomorphically to its image. Thus, we get the desired equality.

    Let $w\in W, w = v_0^*+a_0fv_0^* + a_1f^2v_0^* + a_2 f^3v_0^*$, and $\zeta'(e, w) = v_0^* + b_0\epsilon + b_1f^2v_0^* + b_2f^3v_0^*$.
    Setting
    \[\begin{aligned}
        P_0(w) &= \frac{1}{2n^2(n-1)^2}\langle w, w\rangle = a_1-\frac{1}{2}a_0^2\\
        P_1(w) &= \frac{1}{2n^3(n-1)^3}\langle w, e^2 w\rangle = \frac{1}{2}n(n-1)a_1^2 - (n+1)(n-2)a_0a_2\\
        P_2(w) &= -\frac{1}{2n^4(n-1)^4(n+1)^2(n-2)^2}\langle w, e^4 w\rangle = \frac{1}{2}a_2^2\\
        P_0'(\zeta'(e, w)) &= \frac{1}{2n^2(n-1)^2}\langle \zeta'(e, w), \zeta'(e, w)\rangle = b_1-\frac{1}{2}(\alpha^2+\beta^2)b_0^2\\
        P_1'(\zeta'(e, w)) &= \frac{1}{2n^3(n-1)^3}\langle \zeta'(e, w), e^2 \zeta'(e, w)\rangle = \frac{1}{2}n(n-1)b_1^2 - (n+1)(n-2)\alpha b_0b_2\\
        P_2'(\zeta'(e, w)) &= -\frac{1}{2n^4(n-1)^4(n+1)^2(n-2)^2}\langle \zeta'(e, w), e^4 \zeta'(e, w)\rangle = \frac{1}{2}b_2^2
    \end{aligned}\]
    The Jacobian matrix of $(P_0, P_1, P_2)$ is $$\begin{pmatrix}
    -a_0&-(n+1)(n-2)a_2&0\\
    1&n(n-1)a_1&0\\
    0& -(n+1)(n-2)a_0 & a_2
    \end{pmatrix}$$
    and of $(P_0', P_1', P_2')$ is $$\begin{pmatrix}
    -(\alpha^2+\beta^2)b_0&-(n+1)(n-2)\alpha b_2&0\\
    1&n(n-1)b_1&0\\
    0& -(n+1)(n-2)\alpha b_0 & b_2
    \end{pmatrix}.$$
    If we restrict to the locus where $a_2$ is non-zero and the Jacobian matrix of $(P_0, P_1, P_2)$ is singular, i.e.
    $$a_2 = \frac{n(n-1)}{(n+1)(n-2)}a_0a_1,$$
    We get that the Jacobian matrix of $(P_0', P_1', P_2')$, must also be singular and $b_2\neq 0$ (by comparison of $P_2$ to $P_2'$), i.e.
    $$\alpha b_2 = \frac{n(n-1)}{(n+1)(n-2)}(\alpha^2+\beta^2)b_0b_1.$$
    We get that for any $a_0, a_1\in \CC$ which are non-zero, there are $b_0, b_1\in \CC\setminus\{0\}$ such that
    \[
    \begin{aligned}
        (P_0=P_0')&\ \ \ \ b_1-\frac{1}{2}(\alpha^2+\beta^2)b_0^2 = a_1-\frac{1}{2}a_0^2\\
        (P_1=P_1')&\ \ \ \ \frac{1}{2}b_1^2 - (\alpha^2+\beta^2)b_0^2b_1 = \frac{1}{2}a_1^2 - a_0^2a_1\\
        (P_2 = P_2')&\ \ \ \ (\alpha^2+\beta^2)^2b_0^2b_1^2 = \alpha^2 a_0^2a_1^2
    \end{aligned}
    \]

    Choosing $a_0=1, a_1 = 2$ we get that
    \[
    \begin{aligned}
        &b_1-\frac{1}{2}(\alpha^2+\beta^2)b_0^2 = \frac{3}{2}\\
        &\frac{1}{2}b_1 - (\alpha^2+\beta^2)b_0^2 = 0\\
        &(\alpha^2+\beta^2)^2b_0^2b_1^2 = 4\alpha^2
    \end{aligned}
    \]
    From the first two equation we get that $(\alpha^2+\beta^2)b_0^2=1, b_1=2$.
    Hence, the third equation reads $4(\alpha^2+\beta^2)=4\alpha^2$ and so $\beta=0$, as desired.

    Now we assume $n=2$.
    We denote $$\zeta'(e+f, v_0^* + a_0fv_0^* + a_1f^2v_0^*)= v_0^*+b_0\epsilon+b_1f^2v_0^*,$$
    where recall that $\epsilon = \alpha fv_0^* + \beta j$ and $j\in J, \langle j, j\rangle = -4$.
    We have
    \[\begin{aligned}
        P_0(a_0, a_1) &= \frac{1}{8}\langle v_0^* + a_0fv_0^* + a_1f^2v_0^*, v_0^* + a_0fv_0^* + a_1f^2v_0^*\rangle = a_1-\frac{1}{2}a_0^2\\
        P_1(a_0, a_1) &= \frac{1}{4}\langle (e+f)(v_0^* + a_0fv_0^* + a_1f^2v_0^*), (e+f) (v_0^* + a_0fv_0^* + a_1f^2v_0^*)\rangle = -(1+2a_1)^2 + 4a_0^2\\
        P_0'(b_0, b_1) &= \frac{1}{8}\langle v_0^*+b_0\epsilon+b_1f^2v_0^*, v_0^*+b_0\epsilon+b_1f^2v_0^*\rangle = b_1-\frac{1}{2}(\alpha^2+\beta^2)b_0^2\\
        P_1'(b_0, b_1) &= \frac{1}{4}\langle (e+f)(v_0^* + b_0\epsilon + b_1f^2v_0^*), (e+f) (v_0^* + b_0\epsilon + b_1f^2v_0^*)\rangle = -(1+2b_1)^2 + 4\alpha^2b_0^2\\
    \end{aligned}\]
    with $P_0 = P_0'$ and $P_1 = P_1'$.
    The Jacobian matrix of $(P_0, P_1)$ is $$\begin{pmatrix}
        -a_0 & 8a_0\\
        1 & -4(1+2a_1)
    \end{pmatrix}$$
    and its determinant is equal to $4a_0(-1+2a_1)$.
    The Jacobian matrix of $(P_0', P_1')$ is $$\begin{pmatrix}
        -b_0(\alpha^2+\beta^2)&8\alpha^2b_0\\
        1& -4(1+2b_1)
    \end{pmatrix}$$
    and its determinant is equal to $4b_0(-\alpha^2+\beta^2+(\alpha^2+\beta^2)2b_1)$.
    Setting $a_1 = \frac{1}{2}$, we get that we must have:
    \[
    \begin{aligned}
        (P_0=P_0')&\ \ \ \ b_1-\frac{1}{2}(\alpha^2+\beta^2)b_0^2 = \frac{1}{2}-\frac{1}{2}a_0^2\\
        (P_1=P_1')&\ \ \ \ -(1+2b_1)^2+4\alpha^2b_0^2 = -4+4a_0^2\\
        (\mathrm{Jacobian}=0)&\ \ \ \ 4b_0(-\alpha^2+\beta^2+(\alpha^2+\beta^2)2b_1)=0
    \end{aligned}
    \]
    From the first two equations we get
    $$(1-2b_1)^2 + 4\beta^2b_0^2 = 0$$
    We see that if $b_0=0$ then $b_1=\frac{1}{2}$, which is impossible if $a_0\neq 0$. So we must have by the third equation above
    $$2(\alpha^2+\beta^2)b_1 = \alpha^2-\beta^2.$$
    This implies that
    $$\left(\frac{\beta^2-\alpha^2}{\alpha^2+\beta^2}\right)^2 + 4\beta^2b_0^2 = 0.$$
    If $\beta\neq 0$, it would follow that the values of $b_0, b_1$ are fixed, but this is clearly impossible as $a_0$ is allowed to vary.
\end{proof}
\begin{corollary}
The morphism $\zeta$ is equal to a $\GG_m$-equivariant automorphism $\tau$ of $\Sigma$ over $\Sigma_{2n}\times \Sigma_{2n+1}$, composed with the inverse of the isomorphism of Claim \ref{claim: pseudo slice}, and then with the inclusion of $\Sigma_{(e, f, h, v_0^*)}$ into $\mathfrak{so}(U')\times U'$.
\end{corollary}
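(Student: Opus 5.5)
The plan is to define $\tau$ directly and then check that it has the required properties. Denote by $\kappa:\Sigma_{(e,f,h,v_0^*)}\xrightarrow{\sim}\Sigma$ the isomorphism of Claim \ref{claim: pseudo slice}. By the last Proposition, $\zeta=\gamma\xi$ takes values in $\Sigma_{(e,f,h,v_0^*)}\subset \mathfrak{so}(U')\times U'$. Hence we may set
$$\tau:=\kappa\circ\zeta:\Sigma\to\Sigma .$$
With this definition, $\zeta=\iota\circ\tau$ holds by construction, since $\iota=(\text{inclusion})\circ\kappa^{-1}$. What remains is to show three things: that $\tau$ is an automorphism, that it is $\GG_m$-equivariant, and that it lies over $\Sigma_{2n}\times\Sigma_{2n+1}$.

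Bijectivity will follow from Corollary \ref{cor: pointwise isomorphism}, combined with the identification $S=\CC fv_0^*+N$. The $\Sigma_{2n}$-coordinate of $\zeta(\sigma)$ is the $\Sigma_{2n}$-coordinate of $\sigma$, because $\zeta(\sigma)=(\sigma,\zeta'(\sigma))$ by construction. On each fiber $\{x\}\times\Sigma_{\mathfrak{gl}}$, the map $\zeta'$ is an isomorphism onto $v_0^*+S$. So $\zeta:\Sigma\to\Sigma_{2n}\times(v_0^*+S)=\Sigma_{(e,f,h,v_0^*)}$ is a morphism over $\Sigma_{2n}$ of smooth affine varieties that restricts to an isomorphism on every fiber. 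To conclude that it is an isomorphism, I would argue as follows. The morphism is bijective onto a normal (indeed smooth) target, so Zariski's main theorem makes it an isomorphism. Alternatively, one can check that the Jacobian is fiberwise invertible, so the map is étale and bijective, hence an isomorphism. Since $\kappa$ is an isomorphism, $\tau$ is then an automorphism.

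For the compatibility over $\Sigma_{2n}\times\Sigma_{2n+1}$, I would use the Proposition computing the composition $\Sigma\to\Spec(\mathfrak{G}^\bullet)\git\SO(U')\cong\Sigma_{2n}\times\Sigma_{2n+1}$. That Proposition says that $\xi$ composed with the quotient map is the natural map. Multiplying by $\gamma(\sigma)\in\mathrm{U}^-\subset\SO(U')$ does not change invariants, so $\zeta$ also composes to the natural map. On the other side, Claim \ref{claim: pseudo slice} states that $\kappa$ intertwines the quotient map with the natural map $\Sigma\to\Sigma_{2n}\times\Sigma_{2n+1}$. Hence $\tau$ commutes with the projections to $\Sigma_{2n}\times\Sigma_{2n+1}$.

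For $\GG_m$-equivariance: $\zeta$ is $\GG_m$-equivariant for the natural action on $\Sigma$ and the twisted action on the attractor $\mathcal{A}$. This holds because $\xi$ is equivariant by Proposition \ref{prop: Gm equivariance of section} and $\gamma$ was constructed equivariantly. Next, I would check that $\Sigma_{(e,f,h,v_0^*)}$ is stable under the twisted action and that $\kappa$ is equivariant for it. This should reduce to two facts. First, the twisted action preserves $e+\mathfrak{z}_f$ and the affine space $v_0^*+N+\CC fv_0^*$, since $v_0^*$ has $h$-weight $2$ and is fixed while $N+\CC fv_0^*$ consists of negative weights. Second, $\kappa$ is built from $\mathrm{SO}(U')$-conjugation-invariant data, namely characteristic polynomials of $\varpi(x,v)|_M$, which scale with weight $2$; here $M$ itself is preserved by the torus. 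I expect this weight bookkeeping to be the only mildly delicate point, and I would carry it out by checking that the torus element $-2\rho^\vee(c)$ preserves $M$ and acts on $\varpi(x,v)|_M$ by conjugation.
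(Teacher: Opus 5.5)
Your proposal is correct and follows the paper's argument: $\zeta$ is an isomorphism onto $\Sigma_{(e,f,h,v_0^*)}$ by Corollary \ref{cor: pointwise isomorphism} together with the identification $S=\CC fv_0^*+N$. It is compatible with the maps to $\Sigma_{2n}\times\Sigma_{2n+1}$ and with the $\GG_m$-actions, and then $\tau=\kappa\circ\zeta$. Your explicit check that $-2\rho^\vee$ preserves $M$, so that $\kappa$ intertwines the twisted action with the natural one on $\Sigma$, fills in a step the paper leaves implicit. One small correction: $fv_0^*$ has $h$-weight $0$, not a negative weight, but this does not affect the argument.
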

\begin{proof}
    We showed that $\zeta$ is equal to the composition of an isomorphism $\Sigma\to \Sigma_{(e, f, h, v_0^*)}$ and the inclusion of $\Sigma_{(e, f, h, v_0^*)}$ into $\mathfrak{so}(U')\times U'$. Both $\zeta$ and the above inclusion are $\GG_m$ equivariant (with respect to the twisted $\GG_m$ action on $\mathfrak{so}(U')\times U'$), hence we get that the isomorphism $\Sigma\to \Sigma_{(e, f, h, v_0^*)}$ we have is also $\GG_m$-equivariant.
    We also have compatibility with the maps to $\Sigma_{2n}\times \Sigma_{n+1}$.
    Identifying $\Sigma\cong \Sigma_{(e, f, h, v_0^*)}$ using the isomorphism of Claim \ref{claim: pseudo slice}, we get the desired result.
\end{proof}
Proposition \ref{prop: pseudo inverse} now follows immediately.
\section{Proof of Proposition \ref{prop: psi isom}, $n\geq 2$}\label{sec: conclusion}
Recall that by Claim \ref{claim: loc thm}, the morphism
$$\psi_\mathrm{loc}:\mathfrak{E}^\bullet_\mathrm{loc}\cong\mathfrak{G}^\bullet_\mathrm{loc}$$ is an isomorphism.
Since $\Spec(\mathfrak{G}^\bullet)$ is irreducible, and in particular has no irreducible components mapping non-dominantly to $\Sigma_{2n+1}\times \Sigma_{2n}$, the map
$$\mathfrak{G}^\bullet\hookrightarrow\mathfrak{G}^\bullet_\mathrm{loc}$$ is injective, hence $\psi:\mathfrak{G}^\bullet\hookrightarrow \mathfrak{E}^\bullet$ must also be injective.

To show that $\psi$ is surjective, take $f\in \mathfrak{E}^\bullet\hookrightarrow \mathfrak{G}^\bullet_\mathrm{loc}$, which we may view as a rational function on $\mathrm{Spec}(\mathfrak{G}^\bullet)$, whose pull back to $\Spec(\mathfrak{E}^\bullet)$ via $\psi$ is regular. The pull back of this function back to $SO(U')\times \Sigma$ via the maps in Proposition \ref{prop: pseudo inverse} is regular, hence by Corollary \ref{cor-item: regularity can be checked on Sigma} $f$ must regular too.
\qed
\begin{appendices}
\section{The case $n=1$} \label{appendix: n=1}
In this appendix we treat the case $n=1$. The results of Sections \ref{sec: Hecke actions}, \ref{sec: generation}, \ref{sec: deeq}, \ref{sec: Categorical Quotient}, and \ref{sec: localization} are valid for this case too.

We are able to compute $\mathfrak{E}^{\bullet}$ directly, as follows:
Let $L$ and $L^*$ be the two Lagrangian subspaces of $V'$. They are both $\SO(V')$-invariant, and so we may identify
$\mathcal{C}\cong \operatorname{D-mod}_{\SO(V')(\O)\times \Sp(V)(\O),\mathrm{lc}}(V\otimes L)$. We may also identify $L\cong \CC$ so that $\SO(V')\cong \GG_m$ acts on it by dilations.
Using this identification we will write $V$ for $V\otimes L$, were the $\SO(V')\cong \GG_m$-action is understood to be by dilations.

Let $\omega:\GG_m\xrightarrow{\mathrm{id}}\GG_m$ denote the fundamental representation. Then we clearly have $$\phi_{\GG_m}(\omega^{\otimes k}) * E_0 = \delta_{t^kV(\O)}.$$
Now we can explicitly compute
\begin{equation}\label{eq:n=1 computation}
    \begin{aligned}
        \mathrm{Ext}^\bullet(E_0, \phi_{\GG_m}(\CC[\GG_m]) * E_0) &= \bigoplus_{k\in \ZZ} \mathrm{Ext}^\bullet(\delta_{V(\O)}, \delta_{t^kV(\O)}) \otimes \omega^{-k} \\&= H^\bullet_{\GG_m\times \Sp(V)}(\mathrm{pt})\oplus \bigoplus_{k\in \ZZ_{>0}} H^\bullet_{\GG_m\times \Sp(V)}(\mathrm{pt})\otimes y^k \otimes \omega^{-k} \oplus\\& \bigoplus_{k\in \ZZ_{>0}} H^\bullet_{\GG_m\times \Sp(V)}(\mathrm{pt})\otimes z^k \otimes \omega^{k}
    \end{aligned}
\end{equation}
where $y, z$ are of cohomological degree $2$. Note that the above computes the module structure over $H^\bullet_{\GG_m\times \Sp(V)}(\mathrm{pt})$ but not the algebra structure - namely it does not tell us the product $yz$, which can be computed as an equivariant Euler class.

However, since we want to show that $\psi$ is an isomorphism (rather than existence of an abstract isomorphism), we will use a different argument.

\begin{lemma}
    In the notation of Claim \ref{claim: loc thm}, we have $\tilde{\mathfrak{G}}^\bullet_\mathrm{loc}\cong \CC[\SO(U')]\otimes \CC(\Sigma)$.
\end{lemma}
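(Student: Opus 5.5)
For $n=1$ I would compute both sides explicitly. The Lie algebra $\mathfrak{so}(U')$ is one-dimensional and $\SO(U')\cong \GG_m$. Choose a basis $v_1,v_2$ of $U'$ consisting of isotropic vectors with $\langle v_1,v_2\rangle=1$, on which $\GG_m$ acts with weights $1$ and $-1$. Then
$$\mathfrak{G}^\bullet=\CC[\mathfrak{so}(U')^*\times U']=\CC[x,v_1,v_2],$$
where $x$ is the linear coordinate on $\mathfrak{so}(U')^*$ (of weight $0$) and $v_1,v_2$ also denote the corresponding coordinate functions on $U'$. The invariant ring is $\CC[x,v_1v_2]$; note that $\langle v,v\rangle$ is a nonzero multiple of $v_1v_2$.

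The plan is to make the map to $\Sigma_{2}\times\Sigma_{3}$ and the cover $\Sigma\to\Sigma_2\times\Sigma_3$ explicit.
\begin{enumerate}
\item $\Sigma_2=\mathfrak{so}(U')^*$ has coordinate $x$. By Claim \ref{claim: categorical quotient} (whose proof does not use $n\geq 2$), $\CC[\Sigma_3]$ is generated by $c_2$. Formula \eqref{eq: invariants} gives $c_2\circ\varpi=\alpha x^2+\beta v_1v_2$ for constants $\alpha,\beta$, and $\beta\neq 0$ because the $\langle v,v\rangle$ term enters with nonzero coefficient; this is precisely the statement of the claim that $(x,v_1v_2)\mapsto(x,c_2)$ is an isomorphism.
\item $\Sigma_{\mathfrak{gl}}=\mathfrak{gl}_1$ has coordinate $a$. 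An element of $\mathfrak{gl}(L)\subset\mathfrak{sp}(V)$, viewed in $\mathfrak{so}(U)$, has eigenvalues $a,0,-a$, so the map $\Sigma_{\mathfrak{gl}}\to\Sigma_3$ is $a\mapsto \gamma a^2$ for some $\gamma\neq 0$.
\end{enumerate}
Consequently $\Sigma=\AA^2$ with coordinates $(x,a)$, and
$$\tilde{\mathfrak{G}}^\bullet=\CC[x,a,v_1,v_2]/\bigl(\beta v_1v_2-(\gamma a^2-\alpha x^2)\bigr).$$

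Now localize to $\CC(\Sigma)=\CC(x,a)$. The element $u:=(\gamma a^2-\alpha x^2)/\beta$ is nonzero in $\CC(x,a)$ because $\gamma\neq 0$, so it is a unit. Hence
$$\tilde{\mathfrak{G}}^\bullet_{\mathrm{loc}}\cong \CC(x,a)[v_1,v_2]/(v_1v_2-u)\cong\CC(x,a)[v_1,v_1^{-1}],$$
where $v_2=uv_1^{-1}$. With its $\GG_m$-action by weight on $v_1$, this is $\CC[\GG_m]\otimes\CC(\Sigma)=\CC[\SO(U')]\otimes\CC(\Sigma)$ as an $\SO(U')$-equivariant $\CC(\Sigma)$-algebra.

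The only step needing care is the nonvanishing of the two constants $\beta$ and $\gamma$, together with $\gamma a^2-\alpha x^2\neq 0$. For $\beta$, I would specialize \eqref{eq: invariants} to $i=2$ and read off the term coming from $\langle v,v\rangle$. For $\gamma$, I would evaluate $c_2$ on a diagonal element of $\mathfrak{gl}(L)$ with eigenvalues $a,0,-a$. Once $\gamma\neq 0$, the element $\gamma a^2-\alpha x^2$ is a nonzero polynomial in $(x,a)$, hence a unit in $\CC(x,a)$; this is the key point, and it is exactly where passing to the double cover $\Sigma$ matters.
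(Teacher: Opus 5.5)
Your argument is correct and is essentially the paper's proof. The paper normalizes coordinates so that the relation becomes $x^2-yz=w^2$, then sets $y'=y/(x+w)$, $z'=z/(x-w)$ to obtain $\CC(\Sigma)[y',z']/(y'z'-1)$; you keep the constants $\alpha,\beta,\gamma$ abstract and invert $u=(\gamma a^2-\alpha x^2)/\beta$ directly, which amounts to the same thing. One small correction to your closing remark: the invertibility of $u$ does not depend on passing to $\Sigma$, since $c-\alpha x^2$ is already a unit in $\CC(\Sigma_2\times\Sigma_3)$, so for $n=1$ the generic fiber is a trivial $\GG_m$-torsor even before the base change.
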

\begin{proof}
    We choose a coordinate $x$ (of degree 2) on $\mathfrak{so}(U')^*\cong \Sigma_2\cong \AA^1$, and coordinates $y,z$ (of degree 2) on $U'$ which represent a basis of $U'$ in which the quadratic form is given by $2yz$. Then the map to $\Sigma_3\cong \AA^1$ is given by $(x,y,z)\mapsto x^2-yz$. Thus, we get
    $$\tilde{\mathfrak{G}}^\bullet_\mathrm{loc}\cong \CC(x,w)[y,z] / (x^2 - yz-w^2)\cong \CC(\Sigma)[y', z'] / (y'z'-1)\cong \CC[\SO(U')]\otimes \CC(\Sigma),$$
    where $y' = \frac{y}{x+w}$ and $z' = \frac{z}{x-w}$. As the $\SO(U')$-action is given by rescaling $y$ and $z$ by $\lambda$ and $\lambda^{-1}$ respectively, we see that the above isomorphism is $\SO(U')$-equivariant.
\end{proof}
\begin{corollary}
    The map $\psi_\mathrm{loc}:\mathfrak{G}^\bullet_\mathrm{loc}\to \mathfrak{E}^\bullet_\mathrm{loc}$ is an isomorphism.
\end{corollary}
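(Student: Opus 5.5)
The plan is to run the argument of Claim \ref{claim: loc thm}, part 3, with the preceding Lemma supplying the trivialization of $\tilde{\mathfrak{G}}^\bullet_\mathrm{loc}$. For $n\geq 2$ that trivialization came from Corollary \ref{cor-item: upsilon gen}; here the Lemma provides it directly.

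First, on the spectral side, the Lemma gives an $\SO(U')$-equivariant isomorphism of $\CC(\Sigma)$-algebras $\tilde{\mathfrak{G}}^\bullet_\mathrm{loc}\cong \CC[\SO(U')]\otimes \CC(\Sigma)$. On the geometric side, parts 1 and 2 of Claim \ref{claim: loc thm} do not use $n\geq 2$, as the appendix notes. They give an $\SO(U')$-equivariant isomorphism $\tilde{\mathfrak{E}}^\bullet_\mathrm{loc}\cong \CC[\SO(U')]\otimes\CC(\Sigma)$ and tell us that $\mathfrak{E}^\bullet$ is commutative. Part 3 of Theorem \ref{thm: purity} identifies $\tilde{\mathfrak{E}}^\bullet$ with the base change $\CC[\Sigma]\otimes_{\CC[\Sigma_{2}\times\Sigma_{3}]}\mathfrak{E}^\bullet$. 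Next, Proposition \ref{prop: psi-inv}, which is valid for $n=1$, shows that on $\SO(U')$-invariants $\psi$ is the identity of $\CC[\Sigma_2\times\Sigma_3]$. Hence $\psi$ is $\CC[\Sigma_2\times\Sigma_3]$-linear, and its base change $\tilde\psi_\mathrm{loc}:\tilde{\mathfrak{G}}^\bullet_\mathrm{loc}\to\tilde{\mathfrak{E}}^\bullet_\mathrm{loc}$ is a $\CC(\Sigma)$-linear, $\SO(U')$-equivariant map of commutative algebras.

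Geometrically, $\tilde\psi_\mathrm{loc}$ corresponds to an $\SO(U')$-equivariant morphism between two trivial $\SO(U')$-torsors over $\Spec\CC(\Sigma)$. Any such morphism is an isomorphism, since it is determined by the image of a section and is then right multiplication by a point of $\SO(U')(\CC(\Sigma))$. So $\tilde\psi_\mathrm{loc}$ is an isomorphism.

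It remains to descend from $\tilde\psi_\mathrm{loc}$ to $\psi_\mathrm{loc}$. For $n=1$ we have $\Sigma=\Sigma_2\times\Sigma_{\mathfrak{gl}_1}$, and $\Sigma_{\mathfrak{gl}_1}\to\Sigma_3$ is the squaring map $w\mapsto w^2$. Thus $\CC(\Sigma)$ is a degree-two field extension of $\CC(\Sigma_2\times\Sigma_3)$, hence faithfully flat. Here $\mathfrak{G}^\bullet_\mathrm{loc}$ and $\mathfrak{E}^\bullet_\mathrm{loc}$ should be read as localizations at the generic point of $\Sigma_2\times\Sigma_3$; as in the $n\geq2$ case, the notation is only loosely specified and I would read it this way. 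The map $\tilde\psi_\mathrm{loc}$ is the base change of $\psi_\mathrm{loc}$ along this field extension, so faithful flatness shows that $\psi_\mathrm{loc}$ is an isomorphism.

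The main obstacle is checking that parts 1 and 2 of Claim \ref{claim: loc thm} really go through for $n=1$. The localization argument only uses the torus $T\subset\SO(V')\times\GL(L)$, Lemma \ref{lemma: Hecke action on costalk} and Theorem \ref{thm: purity}, none of which depend on $n$, so I expect this to be routine. I would also confirm that the Lemma's isomorphism is an isomorphism of algebras over $\CC(\Sigma)$ with the correct structure map, namely $w^2=x^2-yz$ matching the quotient map of Claim \ref{claim: categorical quotient} for $n=1$.
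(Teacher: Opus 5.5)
Your proposal is correct and follows the paper's own argument. The paper's proof is just ``same as part 3 of Claim \ref{claim: loc thm}'': the preceding Lemma replaces Corollary \ref{cor-item: upsilon gen} as the source of the trivialization $\tilde{\mathfrak{G}}^\bullet_\mathrm{loc}\cong \CC[\SO(U')]\otimes\CC(\Sigma)$, and then $\CC(\Sigma)$-linearity (Proposition \ref{prop: psi-inv}), $\SO(U')$-equivariance and descent along the base change finish the proof. Your faithful-flatness argument for the finite field extension $\CC(\Sigma_2\times\Sigma_3)\subset\CC(\Sigma)$ spells out the descent step that the paper leaves implicit.
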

\begin{proof}
    Same as the proof of the case $n\geq 2$ given as part 3 of Claim \ref{claim: loc thm}.
\end{proof}

The map $\psi$ is compatible with the grading coming from the $\GG_m$ action (due to deequivariantization with respect to $\SO(U')\cong\GG_m$). Due to the observation that $\mathfrak{G}^\bullet\to \mathfrak{G}^\bullet_\mathrm{loc}$ is injective (as in Section \ref{sec: conclusion}) along with the above Corollary, $\psi$ is injective too. By \eqref{eq:n=1 computation} we see that in each weight of $\SO(U')\cong \GG_m$, $\mathfrak{E}^\bullet$ and $\mathfrak{G}^\bullet$ are of the same finite dimension (=1) as free modules over $H^\bullet_{\GG_m\times \Sp(V)}(\mathrm{pt})$, hence $\psi$ is an isomorphism, concluding the proof of Proposition \ref{prop: psi isom}, hence of theorem \ref{thm: main} in this case.

\section{Multiplicative line bundles on the affine Grassmannian and invariant bilinear forms}\label{appendix: line bundles and quadratic forms}
Let $G$ be a reductive group.
We give a way to attach to each line bundle $\mathcal{L}$ on the affine Grassmannian of $G$ equipped with multiplicative structure an invariant bilinear form on $\mathfrak{g}^*$. We also define certain endomorphisms on the Satake category attached to $\mathcal{L}$ and to such a bilinear form, and we show that they are the same.
\begin{definition}
    A multiplicative line bundle $\mathcal{L}$ on $\Gr_G$ is a $G_\O$-equivariant line bundle on $\Gr_G$ along with an isomorphism in $\mathrm{Pic}_{G_\O}(G_{\mathcal{K}}\times^{G_\O}\Gr_G)$ of $m^*\mathcal{L}$ and $\mathrm{pr}_1^*\mathcal{L}\otimes \mathrm{pr}_2^*(\mathcal{L})$, satisfying the evident cocycle condition. Here $m:G_{\mathcal{K}}\times^{G_\O}\Gr_G\to \Gr_G$ is the multiplication map, and $\mathrm{pr}_1: G_{\mathcal{K}}\times^{G_\O}\Gr_G \to \Gr_G$ and $\mathrm{pr_2}: G_{\mathcal{K}}\times^{G_\O}\Gr_G \to G_\O \backslash \Gr_G$ are the projections.
\end{definition}
Let $\mathcal{L}$ be a multiplicative line bundle on $\Gr_G$, and let $c_1(\mathcal{L})$ be the $G_\O$-equivariant first Chern class of $\mathcal{L}$. We define $c_\mathcal{L}$ to be the endomorphism (of cohomological degree 2) of identity functor of $\operatorname{D-mod}_{G_\O}(\Gr_G)$ given by cup product with $c_1(\mathcal{L})$.
\begin{proposition}
    We have $c_\mathcal{L}(\F_1 * \F_2) = c_\mathcal{L}(\F_1) * \mathrm{id}_{\F_2} + \mathrm{id}_{\F_1} * c_\mathcal{L}(\F_2)$.
\end{proposition}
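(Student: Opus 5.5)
The plan is to prove the Leibniz rule geometrically, via the convolution diagram and the multiplicative structure of $\mathcal{L}$. Recall that $\F_1 * \F_2 = m_*(\F_1\tilde{\boxtimes}\F_2)$, where $\F_1\tilde{\boxtimes}\F_2$ is the twisted external product on the convolution space $G_\mathcal{K}\times^{G_\O}\Gr_G$, and $m$ is the (ind-proper) multiplication map. Write $c(-)$ for cup product with the equivariant first Chern class. The key steps are as follows.

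\textbf{Step 1: push through $m$.} By the projection formula for the proper map $m$, the endomorphism $c_\mathcal{L}(\F_1*\F_2)$ equals $m_*$ applied to cup product with $m^*c_1(\mathcal{L}) = c_1(m^*\mathcal{L})$ on $\F_1\tilde{\boxtimes}\F_2$. This compatibility of Chern class action with proper pushforward is a natural transformation statement and holds in the equivariant setting.

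\textbf{Step 2: use multiplicativity.} The multiplicative structure gives an isomorphism $m^*\mathcal{L}\cong \mathrm{pr}_1^*\mathcal{L}\otimes \mathrm{pr}_2^*\mathcal{L}$ in $\mathrm{Pic}_{G_\O}(G_\mathcal{K}\times^{G_\O}\Gr_G)$. Taking equivariant first Chern classes, and using additivity of $c_1$ under tensor products, we get
\[c_1(m^*\mathcal{L}) = \mathrm{pr}_1^*c_1(\mathcal{L}) + \mathrm{pr}_2^*c_1(\mathcal{L}).\]
Here $\mathrm{pr}_2$ lands in $G_\O\backslash\Gr_G$, so $\mathrm{pr}_2^*c_1(\mathcal{L})$ makes sense as an equivariant class. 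This is exactly why the multiplicative structure must be taken in the $G_\O$-equivariant Picard group.

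\textbf{Step 3: identify each summand.} It remains to show that cup product with $\mathrm{pr}_1^*c_1(\mathcal{L})$ on $\F_1\tilde{\boxtimes}\F_2$ is $c_\mathcal{L}(\F_1)\tilde{\boxtimes}\mathrm{id}$, and similarly for $\mathrm{pr}_2$. The twisted product is obtained by descending $\F_1\boxtimes\F_2$ from $G_\mathcal{K}\times\Gr_G$ along the $G_\O$-quotient, using the equivariance of $\F_2$. Cup product with a pulled-back class is compatible with external products and with this descent. For the second factor, the key point is that the class $c_1(\mathcal{L})$ is used as a $G_\O$-equivariant class, which is what is needed for it to descend. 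Applying $m_*$ and summing the two contributions then yields the stated formula.

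\textbf{Main obstacle.} I expect the hardest part to be Step 3 for the second factor. One has to check carefully that the descent isomorphism defining $\tilde{\boxtimes}$ intertwines cup product with the equivariant class on $G_\O\backslash\Gr_G$ with the action on $\F_2$. I would handle this by working with the equivariant derived categories on the stack $G_\O\backslash G_\mathcal{K}\times^{G_\O}\Gr_G$, where $\tilde{\boxtimes}$ is literally $\mathrm{pr}_1^*\otimes\mathrm{pr}_2^*$ up to shift. Cup products with pulled-back classes then commute with pullbacks by functoriality. A further subtlety is that the ind-scheme technicalities (placidity and renormalization) should be handled by restricting to finite-dimensional closed unions of orbits.
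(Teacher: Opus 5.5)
Your proposal is correct and follows the same route as the paper. The paper's proof is exactly your Steps 1 and 2: it uses the projection formula along $m$ to reduce to cup product with $c_1(m^*\mathcal{L})$ on $\F_1\tilde{\boxtimes}\F_2$, then splits this class via the multiplicative isomorphism $m^*\mathcal{L}\cong \mathrm{pr}_1^*\mathcal{L}\otimes\mathrm{pr}_2^*\mathcal{L}$. Your Step 3, identifying the two summands with $c_\mathcal{L}(\F_1)*\mathrm{id}_{\F_2}$ and $\mathrm{id}_{\F_1}*c_\mathcal{L}(\F_2)$, spells out what the paper leaves implicit.
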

\begin{proof}
    By the projection formula, $c_\mathcal{L}(\F_1 * \F_2)$ is the image under $m_*$ of the endomorphism given by cup product with $c_1(m^*\mathcal{L})$. Now the result follows from the multiplicative structure of $\mathcal{L}$.
\end{proof}
Restricting to the image of $\operatorname{D-mod}_{G_\O}(\Gr_G)^\heartsuit$ inside $\operatorname{D-mod}_{G_\O}(\Gr_G)$ and using the derived Satake isomorphism, we get an endomorphism $c_\mathcal{L}$ of the functor $$\mathrm{Rep}(\check{G})\to \mathrm{QCoh}(\check{\mathfrak{g}}^*[2] / \check{G})$$ satisfying $$c_\mathcal{L}(V_1\otimes V_2) = c_\mathcal{L}(V_1) \otimes \mathrm{id}_{V_2} + \mathrm{id}_{V_1} \otimes c_\mathcal{L}(V_2).$$
Applying the same argument as in \cite[Remark 5.4]{YZ} with $\Spec(R)$ replaced by the stack $\check{\mathfrak{g}}^*[2] / \check{G}$ we get that $c_\mathcal{L}$ determines an element in $$(\check{\mathfrak{g}}\otimes \Sym(\check{\mathfrak{g}}[-2]))^{\check{G}}.$$
Since it is of cohomological degree 2, we get an element $e_\mathcal{L}\in (\check{\mathfrak{g}}\otimes \check{\mathfrak{g}})^{\check{G}}$.
Spelling out the above construction explicitly, the endomorphism $c_\mathcal{L}(V)$ is given by the image of $e_\mathcal{L}$ under the map $$(\check{\mathfrak{g}}\otimes \check{\mathfrak{g}})^{\check{G}}\to (\mathrm{End}(V)[-2]\otimes \Sym(\check{\mathfrak{g}}[-2]))^{\check{G}}.$$

\begin{lemma}\label{lemma: line bundle to bilinear form for torus}
    Assume that $G=T$ is a torus, and let $\mathcal{L}$ be a multiplicative line bundle on $\Gr_T$ as above.
    Let $e_\mathcal{L}'$ be the bilinear form on $X_*(T)$ given by sending $\lambda,\mu\in X_*(T)$ to the weight of $\mu$ acting on $\mathcal{L}_\lambda$ (the fiber at $t^\lambda\in \Gr_T$ of $\mathcal{L}$). Then the restriction of $e_\mathcal{L}$ to $X_*(T)\subset \mathfrak{t}\cong \check{\mathfrak{t}}^*$ is equal to $e_\mathcal{L}$.
\end{lemma}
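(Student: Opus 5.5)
The plan is to compute $e_\mathcal{L}$ directly for a torus, where derived Satake is completely explicit, and compare it with the weights of $\mathcal{L}$ at the fixed points $t^\lambda$. (The last $e_\mathcal{L}$ in the statement should be read as $e'_\mathcal{L}$.)

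For $G=T$, the reduced affine Grassmannian is the discrete set $X_*(T)$, and $\Gr_T$ is a disjoint union of formal thickenings of the points $t^\lambda$. Since $T_\O$ acts through $T$ on the reduced points, $\operatorname{D-mod}_{T_\O}(\Gr_T)\cong\bigoplus_{\lambda}\operatorname{D-mod}_{T}(\mathrm{pt})$. Derived Satake identifies this with $\mathrm{QCoh}(\check{\mathfrak t}^*[2])$ graded by $X^*(\check T)=X_*(T)$, with trivial $\check T$-action on $\check{\mathfrak t}^*$. It sends the delta sheaf $\delta_\lambda$ to the character $\lambda$ tensored with $\O_{\check{\mathfrak t}^*[2]}$. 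In particular $\Ext^\bullet(\delta_\lambda,\delta_\lambda)=H^\bullet_T(\mathrm{pt})=\Sym(\mathfrak t^*[-2])$ is identified with $\Sym(\check{\mathfrak t}[-2])$ via $\mathfrak t^*=\check{\mathfrak t}$. This normalization, that $H^2_T(\mathrm{pt})=X^*(T)\otimes\CC$ matches the linear functions on $\check{\mathfrak t}^*=\mathfrak t$, is the one step that must be pinned down carefully. I would do it by checking that the standard isomorphism for $\Gr_T$ is exactly the identity $H^2_T(\mathrm{pt})\cong\mathfrak t^*$, possibly up to a universal sign fixed by convention.

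Next I compute $c_\mathcal{L}(\delta_\lambda)$. This is cup product with the restriction of $c_1^{T_\O}(\mathcal{L})$ to the point $t^\lambda$, that is, with $c_1^T(\mathcal{L}_\lambda)$. By definition that class is the character of $T$ acting on the fiber, namely the linear functional $\mu\mapsto e'_\mathcal{L}(\lambda,\mu)$ in $X^*(T)\otimes\CC=\mathfrak t^*=\check{\mathfrak t}$. So under Satake, $c_\mathcal{L}(\lambda)\in\mathrm{End}(\lambda)[-2]\otimes\check{\mathfrak t}[-2]$ is the element $e'_\mathcal{L}(\lambda,-)$.

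On the other side, by the explicit description of $e_\mathcal{L}\in(\check{\mathfrak t}\otimes\check{\mathfrak t})$, the endomorphism $c_\mathcal{L}(V)$ for the one-dimensional representation $V=\lambda$ is obtained by letting the first tensor factor act on $V$. An element $\xi\in\check{\mathfrak t}$ acts on the character $\lambda$ by $\langle\lambda,\xi\rangle$. The result is therefore $e_\mathcal{L}(\lambda,-)\in\check{\mathfrak t}$, viewing $\lambda\in X^*(\check T)\subset\check{\mathfrak t}^*$ and $e_\mathcal{L}$ as a bilinear form on $\check{\mathfrak t}^*=\mathfrak t$.

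Comparing the two computations for every $\lambda$ gives $e_\mathcal{L}(\lambda,\mu)=e'_\mathcal{L}(\lambda,\mu)$ for all $\lambda,\mu\in X_*(T)$. Because $\check{\mathfrak t}$ is abelian, the construction of $e_\mathcal{L}$ involves no symmetry subtlety, and additivity in $\lambda$ is automatic from multiplicativity. The main obstacle is therefore only the sign and identification convention in the first paragraph: how derived Satake for tori identifies $H^\bullet_T(\mathrm{pt})$ with functions on $\check{\mathfrak t}^*[2]$.
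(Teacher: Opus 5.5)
Your proposal is correct and follows essentially the same route as the paper. Under derived Satake for $T$, the skyscraper at $t^\lambda$ corresponds to the character $\lambda$ tensored with the structure sheaf, so $c_\mathcal{L}$ on it is cup product with the $T$-weight of $\mathcal{L}_\lambda$, viewed in $\mathfrak{t}^*\cong\check{\mathfrak{t}}$; this is compared with the contraction of $e_\mathcal{L}$ against $\lambda$ and then paired with $\mu$. You also correctly read the final $e_\mathcal{L}$ in the statement as $e'_\mathcal{L}$, and the normalization $H^2_T(\mathrm{pt})\cong\check{\mathfrak{t}}$ that you flag is likewise taken for granted in the paper.
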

\begin{proof}
    Let $\lambda, \mu$ be as above. Under the derived Satake isomorphism for $T$, $V^\lambda\otimes \O_{\check{\mathfrak{t}}^*}$ corresponds to the skyscraper sheaf at $\Gr_T^\lambda$. Thus, we get that the pairing of $\lambda$ and $e_\mathcal{L}$ is equal to the weight of $T$ on $\mathcal{L}_\lambda$, viewed as an element in $\mathfrak{t}^*\cong \check{\mathfrak{t}}$. Pairing $\mu$ with this element, we get the weight of $\mu$ on $\mathcal{L}_\lambda$, as desired.
\end{proof}

Fix a Borel $\check{B}$ of $\check{G}$ and an embedding of $\check{T}$ into $\check{B}$. Let $\check{N}$ be the unipotent radical of $\check{B}$, and let $\check{B}^-$ and $\check{N}^-$ be the opposite Borel and its unipotent radical.

In order to compute $e_\mathcal{L}$ from $\mathcal{L}$, we will use the following result:
\begin{theorem}[\cite{GR}]
    The derived Satake isomorphisms for $G$ and $T$ intertwine the !-pullback functor along the embedding $T_\O \backslash\Gr_T\to G_\O\backslash \Gr_G$ and the functor $\mathrm{QCoh}(\check{\mathfrak{g}}^*[2] / \check{G})\to \mathrm{QCoh}(\check{\mathfrak{t}}^*[2] / \check{T})$ sending $M$ to $$(M \otimes_{\O_{\check{\mathfrak{g}}^*[2]}} \O^{[]}_{T^*(\check{G}/\check{N})})^{\check{G}},$$ where $\O^{[]}_{T^*(\check{G}/\check{N})}$ is the structure sheaf of $\check{G}\times^{\check{N}} \check{\mathfrak{n}}^\perp[2]$, and $\check{\mathfrak{n}}^\perp\subset \check{\mathfrak{g}}^*$ is the annihilator of $\check{\mathfrak{n}}$.
\end{theorem}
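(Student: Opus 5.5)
The plan is to compute both sides of the theorem of \cite{GR} on the level of degree-2 endomorphisms of the identity functor, since this is the only case needed for computing $e_\mathcal{L}$ in the appendix. I will first verify that both functors are monoidal: the $!$-pullback to $T_\O\backslash\Gr_T$ through hyperbolic localization, and the spectral functor through the Kostant--Whittaker type description. Granting that, the identification reduces to checking compatibility of the underlying fiber functors and of the actions of $\Sym(\check{\mathfrak{g}}[-2])$ and $\Sym(\check{\mathfrak{t}}[-2])$.

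Concretely, the steps are as follows.
\begin{enumerate}
\item Identify the geometric functor with hyperbolic restriction (the Mirković--Vilonen weight functors) up to shift. Using Braden's theorem, the $!$-restriction to the fixed points $\Gr_T$ of $\Gr_G$ differs from hyperbolic restriction by a cohomological shift by $\langle 2\rho,\lambda\rangle$ on each component $\Gr_T^\lambda$. On the heart this recovers $V\mapsto\bigoplus_\lambda V_\lambda[-\langle 2\rho,\lambda\rangle]$.
\item Match this with the spectral side. The sheared structure sheaf $\O^{[]}_{T^*(\check{G}/\check{N})}$, after taking $\check{G}$-invariants of $V\otimes-$, gives $V$ restricted to $\check{B}$ and then graded via $\check{T}$, tensored with $\O(\check{\mathfrak{t}}^*[2])$ through $\check{\mathfrak{n}}^\perp\to\check{\mathfrak{t}}^*$. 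The shearing produces exactly the shifts $[-\langle2\rho,\lambda\rangle]$.
\item Compare the equivariant parameters. On the geometric side $H^\bullet_{G_\O}(\mathrm{pt})\to H^\bullet_{T_\O}(\mathrm{pt})$ is restriction. On the spectral side the map is $\check{\mathfrak{g}}^*\supset\check{\mathfrak{n}}^\perp\to\check{\mathfrak{t}}^*$, which on invariants is the Chevalley restriction.
\item Extend from the heart to the whole category. The derived Satake category is generated by the heart together with these actions, by the Bezrukavnikov--Finkelberg description in \cite{BF}, so the identification extends.
\end{enumerate}

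The main obstacle is the compatibility of the linear (degree-2) parts, namely that the $\check{\mathfrak{g}}$-action on $V$ induced from the $\Sym(\check{\mathfrak{g}}[-2])$-module structure restricts to the $\check{\mathfrak{b}}$-action. This requires the Ginzburg/\cite{BF} description of $c_1$ of the determinant bundle as the action of the principal nilpotent, combined with the Yun--Zhu description of equivariant cohomology. Rather than reprove all of this, I would cite \cite{GR} for the full statement, and only check that the normalizations of shearing and signs match the conventions of Section~\ref{sec: derived Satake}.
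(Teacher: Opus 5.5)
The paper does not prove this statement: it is imported from \cite{GR} and used as a black box, so your closing fallback of citing \cite{GR} is exactly what the paper does. The argument you sketch before that has a genuine gap, because its two central identifications are false.

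In Step~1, Braden's theorem (as used by Mirkovi\'c--Vilonen) identifies $H^\bullet_c(S^+_\lambda,-)$ with $H^\bullet_{S^-_\lambda}(\Gr_G,-)$. It concerns the attracting and repelling semi-infinite orbits, not the plain costalk $i_\lambda^!$ at $t^\lambda$, and that costalk is not a shift of the weight functor. Take $G=\mathrm{PGL}_2$, with $\omega$ the minuscule coweight, and $\lambda=0$. Then $i_0^!\delta_{t^0}$ sits in degree $0$. On the other hand $\overline{\Gr^{2\omega}}=\Gr^{2\omega}\sqcup\{t^0\}$ is a rationally smooth surface, so $i_0^!\mathrm{IC}_{2\omega}\cong\CC[-2]$. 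Both zero weight spaces are one-dimensional and placed in degree $0$ by the weight functor, so the required shift depends on the object, not only on $\lambda$. In general $i_\lambda^!\mathrm{IC}_\mu$ is graded by the Brylinski--Kostant filtration on $V(\mu)_\lambda$; for $\mathrm{PGL}_3$ and the adjoint representation it occupies two distinct degrees at $t^0$.

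For the same reason your opening claim that $i^!$ is monoidal is false. The object $i_0^!(\mathrm{IC}_\omega*\mathrm{IC}_\omega)=i_0^!(\mathrm{IC}_{2\omega}\oplus\delta_{t^0})$ lives in degrees $0$ and $2$. By contrast, the component at $0\in X_*(T)$ of $i^!\mathrm{IC}_\omega*i^!\mathrm{IC}_\omega$ is two copies of $i^!_\omega\mathrm{IC}_\omega\otimes i^!_{-\omega}\mathrm{IC}_\omega\cong\CC[-2]$. Monoidality only holds after localizing over $\mathrm{Frac}\,H^\bullet_{T_\O}(\mathrm{pt})$.

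Step~2 fails in the matching way. A constant map $\check{\mathfrak n}^\perp\to V$ is $\check N$-equivariant only if its value lies in $V^{\check N}$. So $(V\otimes\O(\check{\mathfrak n}^\perp))^{\check N}$ is not $V\otimes\O(\check{\mathfrak t}^*)$ with the naive weight grading. For example, take $\check G=\SL_2$ and $V=\CC^2$, and write points of $\check{\mathfrak n}^\perp\cong\check{\mathfrak b}$ as $ah+be$, so that $\exp(ue)$ acts by $(a,b)\mapsto(a,b-2au)$. Then the module is free over $\CC[a]$ on $e_1$ in degree $0$ and $ae_2-\tfrac{b}{2}e_1$ in degree $2$. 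The lowest weight line is not generated by $e_2$, and the degrees again record the Brylinski--Kostant filtration. Your description is correct only over $\check{\mathfrak t}^*_{\mathrm{rss}}$, where $\check{\mathfrak n}^\perp_{\mathrm{rss}}\cong\check N\times\check{\mathfrak t}^*_{\mathrm{rss}}$; this is the only locus on which the appendix subsequently evaluates the theorem.

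So Steps~1--3, even corrected, only see the statement generically over $\check{\mathfrak t}^*$. The integral statement, that the lattice $(V\otimes\O(\check{\mathfrak n}^\perp))^{\check N}$ computes the equivariant costalks, is precisely the content of \cite{GR} (going back to Ginzburg). It is not detected by weight functors. Step~4 does not repair this: generation by the heart lets you extend a natural isomorphism of functors, but you never construct one. Matching objects together with the actions of $H^\bullet_{G_\O}(\mathrm{pt})$ and $H^\bullet_{T_\O}(\mathrm{pt})$ does not provide it.

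Two further points. Checking only degree-$2$ endomorphisms would not establish the stated intertwining. And deducing them from a description of $c_1$ of the determinant bundle runs backwards relative to the appendix, which uses this theorem precisely to obtain Corollary~\ref{cor: derived Satake of canonical endomorphism}.
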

Note that this functor sends $V\otimes \O_{\check{\mathfrak{g}}^*[2]}$ to $(V\otimes \O_{\check{\mathfrak{n}}^\perp[2]})^{\check{N}}$.
\begin{proposition}
    Let $\mathcal{L}_T$ be the restriction of $\mathcal{L}$ to a $T_\O$-equivariant multiplicative line bundle on $\Gr_T$. The bilinear form $e_{\mathcal{L}_T}\in \check{\mathfrak{t}}\otimes \check{\mathfrak{t}}$ associated to it under the previous construction for the group $T$ is the image of $e_\mathcal{L}\in (\check{\mathfrak{g}}\otimes \check{\mathfrak{g}})^{\check{G}}$ under the map $\check{\mathfrak{g}}\to \check{\mathfrak{t}}$ annihilating $\check{\mathfrak{n}}$ and $\check{\mathfrak{n}}^-$.
\end{proposition}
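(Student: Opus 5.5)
We prove that $e_{\mathcal{L}_T}$ is the image of $e_\mathcal{L}$ under the projection $p:\check{\mathfrak{g}}\to\check{\mathfrak{t}}$. The plan is to compare the two endomorphisms of identity functors through the theorem of \cite{GR}. On the geometric side, let $i:T_\O\backslash\Gr_T\to G_\O\backslash\Gr_G$ be the embedding. Since $\mathcal{L}_T=i^*\mathcal{L}$ and $!$-pullback is compatible with cup products by Chern classes, we have
\[
i^!(c_1(\mathcal{L})\cup -)=c_1(\mathcal{L}_T)\cup i^!(-).
\]
Hence $c_{\mathcal{L}_T}$ applied to $i^!\F$ equals $i^!$ applied to $c_\mathcal{L}(\F)$. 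The multiplicativity of $\mathcal{L}$ restricts to that of $\mathcal{L}_T$, so $e_{\mathcal{L}_T}$ is defined.

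On the spectral side, we take $V\in\mathrm{Rep}(\check G)$. The endomorphism $c_\mathcal{L}(V)$ of $V\otimes\O_{\check{\mathfrak{g}}^*[2]}$ is the image of $e_\mathcal{L}=\sum a_k\otimes b_k$. Concretely, it acts by $v\otimes f\mapsto \sum a_k v\otimes b_k f$, where $b_k$ is viewed as a linear function on $\check{\mathfrak{g}}^*$ of degree $2$. Transporting this through the functor $M\mapsto (M\otimes\O^{[]}_{T^*(\check G/\check N)})^{\check G}$, we get the induced endomorphism of $(V\otimes\O_{\check{\mathfrak{n}}^\perp[2]})^{\check N}$, which is given by the same formula with the $b_k$ restricted to $\check{\mathfrak{n}}^\perp$. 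That module is the Kostant–Whittaker type description of $V$ restricted to $\check T$. After reducing modulo $\check{\mathfrak n}$ it is identified with $V|_{\check T}\otimes\O_{\check{\mathfrak t}^*[2]}$ through the projection $\check{\mathfrak n}^\perp=\check{\mathfrak b}^{*}\to\check{\mathfrak t}^*$, and this identification is filtered, with associated graded the weight spaces.

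The key step is the following. Both endomorphisms are primitive, i.e.\ they satisfy the Leibniz rule with respect to tensor products. Therefore $e_{\mathcal{L}_T}$ is determined by its action on $V|_{\check T}$ for all $V$, and by \cite[Remark 5.4]{YZ} for $\check T$ it suffices to compute it on weight lines. I would use the weight filtration of $V$ by $\check{\mathfrak b}$-stable subspaces. On the associated graded, the terms $a_k\otimes b_k$ with $a_k\in\check{\mathfrak n}\oplus\check{\mathfrak n}^-$ act by zero: $a_k\in\check{\mathfrak n}$ strictly shifts the filtration, while $a_k\in\check{\mathfrak n}^-$ pairs with $b_k$ and, by $\check G$-invariance of $e_\mathcal{L}$, $b_k$ then lies in $\check{\mathfrak n}$, which vanishes on $\check{\mathfrak n}^\perp$. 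What survives is $(p\otimes p)(e_\mathcal{L})$ acting on weight spaces. Since $i^!$ is conservative on the relevant objects and equivariant Ext's are free by purity, equality on the associated graded determines the element of $\check{\mathfrak t}\otimes\check{\mathfrak t}$.

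I expect the main obstacle to be justifying that the \cite{GR} equivalence intertwines the natural transformations $c_\mathcal{L}$ with the given spectral formula, not merely the objects. This requires that the identification of $\mathrm{Hom}$'s is compatible with the module structure over $\Sym(\check{\mathfrak g}[-2])$, which I would take from the monoidality and $H^\bullet_{G_\O}(\mathrm{pt})$-linearity statements in \cite{GR}. As a cross-check, Lemma \ref{lemma: line bundle to bilinear form for torus} identifies $e_{\mathcal{L}_T}$ with the weight pairing, which can be computed directly on fixed points $t^\lambda$.
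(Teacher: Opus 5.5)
Your argument is correct in substance, but it runs differently from the paper's.

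\textbf{How the paper argues.} The paper localizes. Over the regular semisimple locus one has $\check{\mathfrak n}^\perp_{\mathrm{rss}}\cong\check N\times\check{\mathfrak t}^*_{\mathrm{rss}}$. So $(V\otimes\O_{\check{\mathfrak n}^\perp})^{\check N}$ is trivialized there by restricting along the $\check T$-equivariant slice $s:\check{\mathfrak t}^*\to(\check{\mathfrak b}^-)^*\cong\check{\mathfrak n}^\perp$, i.e. extension by zero on $\check{\mathfrak n}\oplus\check{\mathfrak n}^-$. On that slice every $b_k\notin\check{\mathfrak t}$ vanishes identically. Hence the transported endomorphism is literally $\sum a_k\otimes p(b_k)$, which lies in $\check{\mathfrak t}\otimes\check{\mathfrak t}$ by $\check T$-weights. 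Torsion-freeness then carries the equality from the dense open set to all of $\check{\mathfrak t}^*$.

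\textbf{How you argue.} You stay over all of $\check{\mathfrak t}^*$ and use the $\check B$-stable weight filtration of $V$. The $\check{\mathfrak n}^-$-terms vanish identically, because their partners lie in $\check{\mathfrak n}$; note this uses only $\check T$-invariance. The $\check{\mathfrak n}$-terms are strictly filtration-raising. So only $(p\otimes p)(e_\mathcal{L})$ survives on $\mathrm{gr}$. This avoids the slice and the localization, at the cost of filtration bookkeeping. Left exactness of $\check N$-invariants already embeds $\mathrm{gr}_\nu$ of the invariants into $V_\nu\otimes\O_{\check{\mathfrak t}^*}$, which is all you need.

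\textbf{Your closing step gives the wrong reason.} Conservativity of $i^!$ plays no role. What you need is the following.
\begin{itemize}
\item $c_{\mathcal{L}_T}$ is given by an element of $\check{\mathfrak t}\otimes\check{\mathfrak t}$, so it acts on the $\check T$-weight-$\mu$ component by multiplication by the linear function $e_{\mathcal{L}_T}(\mu,-)$.
\item Your filtration is $\check T$-stable, and $\mathrm{gr}_\nu$ is pure of weight $\nu$, nonzero for weights $\nu$ of $V$ (e.g. generically), and torsion-free over $\O_{\check{\mathfrak t}^*}$.
\item Comparing the two actions on $\mathrm{gr}_\nu$ therefore gives $e_{\mathcal{L}_T}(\nu,-)=\overline{e_\mathcal{L}}(\nu,-)$ for all such $\nu$, and these weights span.
\end{itemize}
The paper uses this same ``diagonal in weights'' fact implicitly.

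\textbf{Your anticipated main obstacle is not one.} The theorem of \cite{GR} is an isomorphism of functors. It therefore carries the degree-2 endomorphism $c_\mathcal{L}(V)$ to the spectral functor applied to the $e_\mathcal{L}$-endomorphism; by definition of $e_\mathcal{L}$, that is its image under derived Satake for $G$. Combined with $i^!(c_1(\mathcal{L})\cup -)=c_1(\mathcal{L}_T)\cup i^!(-)$, this is exactly what the paper uses. No extra $H^\bullet_{G_\O}(\mathrm{pt})$-linearity input is required.

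\textbf{Minor slip.} $\check{\mathfrak n}^\perp$ is canonically $(\check{\mathfrak b}^-)^*$, not $\check{\mathfrak b}^*$.
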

\begin{proof}
    Let $\overline{e_\mathcal{L}}\in\check{\mathfrak{t}}\otimes \check{\mathfrak{t}}$ be the second bilinear form described above, which we wish to show is equal to $e_{\mathcal{L}_T}$.
    We must compare the two endomorphisms of $(V\otimes \O_{\check{\mathfrak{n}}^\perp})^{\check{N}}\in \mathrm{QCoh}(\check{\mathfrak{t}}^*[2]/\check{T})$ (for $V\in \mathrm{Rep}(\check{G})$) resulting from $\overline{e_\mathcal{L}}$ and $e_{\mathcal{L}_T}$. It is enough to compare them on the regular semisimple locus of $\check{\mathfrak{t}}^*$. We have $\check{\mathfrak{n}}^\perp_{\mathrm{rss}} \cong \check{N}\times \check{\mathfrak{t}}^*_{\mathrm{rss}}$, with the projection on $\check{\mathfrak{t}}^*_{\mathrm{rss}}$ given by the composition $\check{\mathfrak{n}}^\perp\subset \check{\mathfrak{g}}^*\to \check{\mathfrak{t}}^*$ (note that this map is indeed $\check{N}$-equivariant) and a section given by $s:\check{\mathfrak{t}}^*\to (\check{\mathfrak{b}}^-)^*\cong \check{\mathfrak{n}}^\perp$. Thus, it is enough to compare the two endomorphisms when restricted to $\check{\mathfrak{t}}^*$ through $s$.
    
    This restriction of the endomorphism induced by $e_{\mathcal{L}_T}$ is given by the image of $e_\mathcal{L}$ under the map $(\check{\mathfrak{g}}\otimes \check{\mathfrak{g}})^{\check{G}}\to (\check{\mathfrak{g}}\otimes \check{\mathfrak{t}})$, given by annihilating $\check{\mathfrak{n}}$ and $\check{\mathfrak{n}}^-$. However, this map factors through $\check{\mathfrak{t}}\otimes \check{\mathfrak{t}}$ (this can be seen by considering the weights of $\check{T}$), and the desired equality $e_{\mathcal{L}_T} = \overline{e_\mathcal{L}}$ follows.
\end{proof}
In the case that $\mathfrak{g}$ is semisimple with Weyl group $W$, all invariant bilinear forms on $\check{\mathfrak{g}}^*$ are symmetric, and the above map $(\check{\mathfrak{g}}\otimes \check{\mathfrak{g}})^{\check{G}}\to \check{\mathfrak{t}}\otimes \check{\mathfrak{t}}$ defines an isomorphism $(\check{\mathfrak{g}}\otimes \check{\mathfrak{g}})^{\check{G}}\cong (\Sym^2\check{\mathfrak{t}})^{W}$.
Combining the above results, we get the following Corollary:
\begin{corollary}\label{cor: derived Satake of canonical endomorphism}
    Assume that $G$ is semisimple, and let $\mathcal{L}$ be a multiplicative line bundle on $\Gr_G$. Let $e_{\mathcal{L}_T}$ be the bilinear form on $X_*(T)$ pairing $\lambda,\mu$ to the weight of $\mu$ on the fiber of $\mathcal{L}$ at $t^\lambda\in \Gr_G$. It is symmetric and $W$-invariant.
    Let $e_\mathcal{L}$ be the invariant symmetric bilinear form on $\check{\mathfrak{g}}^*$ whose image in $(\Sym^2\check{\mathfrak{t}})^{W}$ is given by $e_{\mathcal{L}^T}$. Given $V\in\mathrm{Rep}(\check{G})$, the element $$-\cup c_1(\mathcal{L})\in \Ext^2_{\operatorname{D-mod}_{G_\O}(\Gr_G)}(\phi_G(V), \phi_G(V))$$ corresponds under derived Satake to the image of $$e_\mathcal{L}\in (\Sym^2\check{\mathfrak{g}})^{\check{G}}\to (\check{\mathfrak{g}}\otimes \check{\mathfrak{g}})^{\check{G}}\to (\mathrm{End}(V)\otimes \Sym(\check{\mathfrak{g}}[-2]))^{\check{G}}[2]\cong \mathrm{Ext}^2_{\mathrm{QCoh}_{\check{G}}(\check{\mathfrak{g}}^*[2])}(V\otimes \mathcal{O}_{\check{\mathfrak{g}}^*[2]}, V\otimes \mathcal{O}_{\check{\mathfrak{g}}^*[2]}).$$
\end{corollary}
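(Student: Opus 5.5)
The plan is to assemble the two ingredients already established before the corollary: the identification of the $\mathcal{L}$-endomorphism on the Satake side with an element $e_\mathcal{L}\in(\check{\mathfrak{g}}\otimes\check{\mathfrak{g}})^{\check{G}}$, and the computation of its restriction to the torus. Concretely, $c_\mathcal{L}$ is a derivation with respect to convolution, by the proposition on $c_\mathcal{L}(\F_1*\F_2)$. The argument modelled on \cite[Remark 5.4]{YZ} then shows that, after transport via derived Satake, $c_\mathcal{L}(V)$ is the image of a single element $e_\mathcal{L}\in(\check{\mathfrak{g}}\otimes\check{\mathfrak{g}})^{\check{G}}$ under
$$(\check{\mathfrak{g}}\otimes\check{\mathfrak{g}})^{\check{G}}\to(\mathrm{End}(V)\otimes\Sym(\check{\mathfrak{g}}[-2]))^{\check{G}}[2],$$
uniformly in $V$. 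It therefore remains only to identify this element with the symmetric invariant form determined by the torus data.

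First, I would check the claims about $e_{\mathcal{L}_T}$ itself. By Lemma \ref{lemma: line bundle to bilinear form for torus}, applied to the restriction $\mathcal{L}_T$ of $\mathcal{L}$ to $\Gr_T$, the pairing $(\lambda,\mu)\mapsto$ (weight of $\mu$ on $\mathcal{L}_{t^\lambda}$) is exactly the tensor $e_{\mathcal{L}_T}\in\check{\mathfrak{t}}\otimes\check{\mathfrak{t}}$. By the preceding proposition, which uses the result of \cite{GR}, this tensor is the image of $e_\mathcal{L}$ under the projection $\check{\mathfrak{g}}\otimes\check{\mathfrak{g}}\to\check{\mathfrak{t}}\otimes\check{\mathfrak{t}}$ that kills $\check{\mathfrak{n}}$ and $\check{\mathfrak{n}}^-$.

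Second, I would use semisimplicity. Since $e_\mathcal{L}$ is $\check{G}$-invariant, its image in $\check{\mathfrak{t}}\otimes\check{\mathfrak{t}}$ is $W$-invariant. Since $\check{\mathfrak{g}}$ is semisimple, every invariant bilinear form on $\check{\mathfrak{g}}^*$ is symmetric: it is a sum of multiples of Killing forms on the simple factors. Moreover, restriction gives an isomorphism $(\check{\mathfrak{g}}\otimes\check{\mathfrak{g}})^{\check{G}}\cong(\Sym^2\check{\mathfrak{t}})^W$. Hence $e_{\mathcal{L}_T}$ is symmetric and $W$-invariant, and $e_\mathcal{L}$ is the unique symmetric invariant form restricting to it. This is precisely the form described in the corollary, and combining with the first paragraph gives the stated correspondence for $-\cup c_1(\mathcal{L})$.

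The main obstacle is the injectivity of $(\check{\mathfrak{g}}\otimes\check{\mathfrak{g}})^{\check{G}}\to\check{\mathfrak{t}}\otimes\check{\mathfrak{t}}$. I would argue it simple factor by simple factor: an invariant form is a scalar multiple of the Killing form on each factor, and on a Cartan subalgebra the Killing form is nondegenerate, so the restriction of a nonzero invariant form is nonzero. A secondary point is the sign and normalization convention in passing from $c_1(\mathcal{L})$ to weights on fibers; here I would rely on Lemma \ref{lemma: line bundle to bilinear form for torus} exactly as stated rather than recompute it.
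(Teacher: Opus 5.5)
Your proposal is correct and follows the paper's own route: the paper proves the corollary simply by combining the derivation property of $c_\mathcal{L}$, the \cite{YZ}-style extraction of $e_\mathcal{L}\in(\check{\mathfrak g}\otimes\check{\mathfrak g})^{\check G}$, the torus lemma, the \cite{GR}-based restriction proposition, and the isomorphism $(\check{\mathfrak g}\otimes\check{\mathfrak g})^{\check G}\cong(\Sym^2\check{\mathfrak t})^W$ for semisimple groups. Your Killing-form argument for injectivity, taken simple factor by simple factor, is a harmless justification of a fact the paper takes as standard; note that the substantive content lies in the \cite{YZ} and \cite{GR} steps rather than in this injectivity.
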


\subsection{The determinant line bundle}\label{appendix: det line bundle}
The missing ingredient in the above discussion is the construction of multiplicative line bundles. In this section we construct a multiplicative line bundle for $\SL_n$, called the determinant line bundle, and discuss the resulting pullbacks to symplectic and orthogonal groups.

This line bundle is well studied, starting from \cite{BD}, and actually possesses a factorizable structure (which in particular implies a multiplicative structure). The reader is warned that while this line bundle, as well as its pullback from $\SL_{2n}$ to $\Sp_{2n}$, is widely referred to as the determinant line bundle, the pullback from $\SL_n$ to $\SO_{n}$ of this line bundle is usually not called the determinant line bundle. Rather, what is usually called the determinant line bundle (or Pfaffian line bundle, cf. \cite{BD}) on the connected component of $1\in \Gr_{\SO_n}$ is the ample generator of the Picard group, which is a square root of the restriction of the above line bundle.
\subsubsection{Special linear groups}
Let $G=\SL_n$. Recall that points in $\Gr_{\SL_n}$ classify lattices in $\mathcal{K}^n$ with determinant equal to $\O$. Let $\mathcal{D}$ be the determinant line bundle on $\Gr_{\SL_n}$, whose fiber at a lattice $\Lambda$ is given by $\det(\Lambda/\Lambda\cap \O^n)^{-1}\otimes \det(\O^n/\Lambda\cap \O^n)$, i.e. the inverse of the relative determinant of $\Lambda$ with respect to $\O^n$. Due to the transitivity of relative determinants (i.e. if $\Lambda_1,\Lambda_2, \Lambda_3$ are lattices then the relative determinant of $\Lambda_1$ with respect to $\Lambda_3$ is the tensor product of the relative determinant of $\Lambda_1$ with respect to $\Lambda_2$ and the relative determinant of $\Lambda_2$ with respect to $\Lambda_3$), we get that $\mathcal{D}$ has a natural multiplicative structure.

Let $\mu=(1,0,\dots, 0, -1)$ be the dominant minimal cocharacter of $T$.
It acts on the fiber of $\mathcal{D}$ at $t^\mu$ with weight $2$. The associated $W$-invariant quadratic form on $X_*(T)$, in the sense of Corollary \ref{cor: derived Satake of canonical endomorphism} is then given by $\lambda=(\lambda_1, \dots, \lambda_n)\mapsto \sum \lambda_i^2$.

Given any reductive group $H$ mapping to $\SL_n$, we get a multiplicative line bundle on $\Gr_H$ by pulling back $\mathcal{D}$ along the induced map $\Gr_H\to \Gr_{\SL_n}$. The associated quadratic form is then given by the restriction along $X_*(T_H)\to X_*(T)$ of the above quadratic form on $X_*(T)$.

\subsubsection{Orthogonal and symplectic groups}
We now consider the situations of $\Sp(V), \SO(V')$, where we use the notation of Section \ref{sec: notation}.
Start with $G=\SO(V')$. Consider the quadratic form on $\mathrm{End}(U')\cong U'\otimes (U')^*$, which can be also written as $X\mapsto \mathrm{tr}(X^2)$. Its restriction to $\mathfrak{so}(U')$ is non-degenerate, and we consider inverse, which is an invariant quadratic form on $\mathfrak{so}(U')^*$ we denote by $B_{U'}$.
In the standard choice of a basis of $X_*(T)$ (that is, $(0,\dots, 0, 1, 0, \dots, 0)$) the resulting $W$-invariant quadratic form on $X_*(T)$ takes the value $1/2$ on every basis element.
Considering the embedding $\SO(V')\to \SL(V')$, we get the multiplicative line bundle $\mathcal{D}_{\SO(V')}$ on $\Gr_{\SO(V')}$.
It corresponds (in the sense of Corollary \ref{cor: derived Satake of canonical endomorphism}) to the invariant bilinear form on $\mathfrak{so}(U')^*$ given by $4B_{U'}$.

Let $Q\subset \Gr_{\SO(V')}$ be the minuscule orbit, isomorphic to the quadratic of isotropic vectors in $\PP(V')$. Any $\SO(V')$-equivariant line bundle on $Q$ is an integer power of the restriction from $\PP(V')$ of $\O(1)$, with the natural equivariant structure.
Let us compute the power of $\O(1)$ equal to the restriction of $\mathcal{D}_{\SO(V')}$ to $Q$. Letting $\mu$ be the dominant minuscule cocharacter of $T$, we get that $\mu$ acts on the fiber of $\mathcal{D}_{\SO(V')}$ at $t^\mu$ with weight $2$. In comparison, a direct computation shows that the cocharacter $\mu$ acts on the fiber of $\O(1)$, which is the dual to the tautological bundle, with weight $1$. We deduce that as $\SO(V')_\O$-equivariant line bundles, we have $\mathcal{D}_{\SO(V')}|_Q\cong \O(2)$.

For $G=\Sp(V)$ the situation is similar. We define $B_U$ in the same way as before, and again in the standard basis of cocharacters the associated $W$-invariant quadratic form on $X_*(T)$ takes the value $1/2$ on every basis element. Again we have $\mathcal{D}_{\Sp(V)}$, which is the determinant line bundle pulled back from $\SL(V)$, whose associated quadratic form is $4B_U$.
Now considering the minimal orbit $\Gr_{\Sp(V)}^\mathrm{min}\in \Gr_{\Sp(V)}$, and $\mu\in X_*(T)$ the minimal dominant cocharacter, we once again have that all $\Sp(V)_\O$-equivariant line bundles on $\Gr_{\Sp(V)}^\mathrm{min}$ are pulled back from $\Sp(V)$-equivariant line bundles on $\PP(V)$ along the projection $\Gr_{\Sp(V)}^\mathrm{min}\to \PP(V)$ described in Section \ref{sec: Hecke actions}. These are all integral powers of $\O(1)$ with the natural equivariant structure. Again the weight of the action of $\mu$ on the fiber at $t^\mu\in \Gr_{\Sp(V)}^\mathrm{min}$ of $\mathcal{D}_{\Sp(V)}$ is $2$, while the weight of the action of $\mu$ on the pullback of $\O(1)$ is $1$ (by the same kind of direct computation as before). Thus, we get $\mathcal{D}_{\Sp(V)}|_{\Gr_{\Sp(V)}^\mathrm{min}} \cong \O(2)$.

\end{appendices}

\printbibliography
\end{document}